\newtheorem{theo}{Theorem}[section]
\newtheorem{lem}[theo]{Lemma}
\newtheorem{exa}[theo]{Example}
\newtheorem{prop}[theo]{Proposition}
\newtheorem{cor}[theo]{Corollary}
\newtheorem{df}[theo]{Definition}
\newtheorem{rmk}[theo]{Remark}
\newtheorem{cl}[theo]{Claim}
\newtheorem{prob}{Problem}
\newcommand\norm[1]{\left\lVert#1\right\rVert}
\begin{document}\sloppy
\vspace{1cm}

\begin{center}
\textbf{\LARGE Study of a four dimensional Willmore-type energy }
\vskip 8mm
\textbf{\large Peter Olamide Olanipekun }
\vskip3mm
B.Sc (Hons)(Mathematics)
\\
M.Sc (Mathematics)
\end{center}
\vskip15mm
\begin{center}\large
A thesis submitted  for the degree of \\{\bf Doctor of Philosophy} \\ 
at Monash University
\\
2021

\vskip15mm
School of
Mathematics
\\Faculty of Science\\ Monash University\\Melbourne, Australia
\end{center}
\pagenumbering{roman}
\thispagestyle{empty}
\newpage
\mbox{~}
\newpage
\noindent
{\bf Copyright notice}
\\
© Peter Olamide Olanipekun (2021) 




\newpage
\chapter*{Abstract}
In this thesis, a four dimensional conformally invariant energy is studied. This energy generalises the well known two-dimensional Willmore energy. Although not positive definite, it includes minimal hypersurfaces as critical points. We compute its first variation  and by applying Noether’s theorem to the invariances, we derive some conservation laws which are satisfied by its critical points and with good analytical dispositions. In particular, we show that critical points are smooth. We also investigate other possible four dimensional generalisations of the Willmore energy, and give strong evidence that critical points of such energies do not include minimal hypersurfaces.

\newpage
\chapter*{Decalaration}
This thesis is an original work of my research and contains no material which has been accepted for the award of any other degree or diploma at any university or equivalent institution and that, to the best of my knowledge and belief, this thesis contains no material previously published or written by another person, except where due reference is made in the text of the thesis.
\vskip3mm
\noindent

\vskip3mm
\noindent
Print Name: \hskip2mm Peter Olamide Olanipekun

\vskip3mm
\noindent
Date: \hskip1.4cm  19 September, 2021

\tableofcontents

\pagenumbering{arabic}
\chapter{Introduction}\label{chap1}
The goal of this Chapter is to present, in a simple (without technical details)  fashion, the problems studied in this thesis. Relevant technical details will be sorted later in the appropriate Chapters. The main object of study in this thesis is an  hypersurface critical for a certain four dimensional conformally invariant Willmore-type energy. 

\section{Brief historical context}

The Willmore energy, was originally defined for two dimensional manifolds (or simply surfaces). The Willmore energy (also known as the Willmore functional) of a surface $\Sigma$ immersed into the Euclidean space is given by the integral of the square of the mean curvature vector of $\Sigma$. That is,
\begin{align}
\frac{1}{4}\int_{\Sigma} |\vec{H}|^2 d\textnormal{vol}_g   \label{realwil}
\end{align}
where $d\textnormal{vol}_g$ is the area element.
In broad terms, it measures the sphericity of a given surface. 
\vskip3mm
\noindent
The origin of the Willmore energy can be traced to Sim\'eon-Denis Poisson and Sophie Germain who had engaged in a scientific contest to give a mathematical explanation of what is now known as the Chladni figures. 
The prize was won in 1816 by Sophie Germain. 
She had studied the elasticity properties of membranes and thin plates and her work brought into limelight an entirely new area of applied mathematics, namely, elasticity theory.   Her work  shows that the bending energy of a thin plate, in its simplest form, can be written (using our notation) as \eqref{realwil} (see \cite{ger}). Problems involving elastic surfaces can be studied via the energy \eqref{realwil} (see \cite{friesecke}). Since then, the Willmore energy has been widely studied in differential geometry and applied to some  areas of science such as General Relativity \cite {swhawking, huiskenil} (where the Willmore energy is the main contributor  to the Hawking mass), Cell Biology \cite{helfrichw} (where the Willmore energy plays a major role in the Helfrich model), String theory \cite{polyakov}, and Image Processing \cite{katzman}. The Willmore energy also relates  to the renormalised area functional for a minimal surfaces embedded in hyperbolic 3-manifold \cite{spyridon}.
Earlier works include those of G.\ Thomsen  who established the Euler-Lagrange equation for the energy \eqref{realwil} \cite{tho}, W.\ Blaschke  who observed minimal surfaces are minimizers of the energy  \eqref{realwil} \cite{bla}, J.\ Weiner who generalised the work of Thomsen in higher codimension \cite {wei} and T.\ J.\ Willmore who explored the lower bounds for the Willmore energy of a closed orientable surface \cite{willmoretom}. Actually, the Willmore energy was named after him. 
Of great importance is the Willmore conjecture (\cite{willmoretom}, see also \cite{lipeter}) which was proved in \cite{fcmarqu} by Fernando Cod\'a Marques and Andr\'e Neves.

\begin{theo}
Let $\Sigma\subset S^3$ be an embedded closed surface of genus $g\geq 1$. Then
$$\int_\Sigma (1+|\vec{H}|^2) \,d\textnormal{vol}_g \geq 2\pi^2$$
and the equality holds if and olnly if $\Sigma$ is the Clifford torus up to conformal transformations of $S^3$.
\end{theo}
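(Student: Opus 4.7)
The plan is to follow the Marques--Neves strategy based on Almgren--Pitts min-max theory applied to a canonical 5-parameter family of surfaces associated to $\Sigma$. The starting observation is that the functional $\mathcal{W}(\Sigma)=\int_\Sigma(1+|\vec{H}|^2)\,d\textnormal{vol}_g$ is M\"obius-invariant on $S^3$, which follows by writing $\mathcal{W}(\Sigma)=\int_\Sigma |\vec{H}_{\mathbb{R}^4}|^2\,d\textnormal{vol}_g$ for $\Sigma\hookrightarrow S^3\hookrightarrow \mathbb{R}^4$ and invoking the classical conformal invariance of the Euclidean Willmore energy. Exploiting this, for each $v$ in the open unit 4-ball $B^4$ I would construct the conformal transformation $F_v:S^3\to S^3$ that extends the hyperbolic translation $0\mapsto v$ of the Poincar\'e ball, and then for $t\in[-\pi,\pi]$ consider the signed-distance parallel surfaces of $F_v(\Sigma)$ at distance $t$. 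This yields a continuous 5-parameter family $\{\Sigma_{v,t}\}_{(v,t)\in \overline{B}^4\times[-\pi,\pi]}$ of integral 2-cycles in $S^3$ which recovers $\Sigma$ at $(0,0)$ and degenerates on the boundary of the parameter space to (weighted) great spheres.

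The second step is to show that this family realizes a nontrivial element in the 5-dimensional mod-2 homotopy of the space $\mathcal{Z}_2(S^3;\mathbb{Z}_2)$ of flat 2-cycles. This is where the hypothesis $g\geq 1$ becomes essential: for a topological sphere the canonical family is null-homotopic, whereas for positive genus one can certify, via a degree argument based on the limits of $F_v(\Sigma)$ as $|v|\to 1$, that the induced boundary map is the generator. The Almgren--Pitts machinery then yields a smooth embedded closed minimal surface $\widetilde\Sigma\subset S^3$ whose area equals the min-max width of the family, and a separate analysis (ruling out concentration on great spheres of area $4\pi$ via the nontriviality of the sweepout) shows that this width is at least $2\pi^2$, the area of the Clifford torus.

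The third step is to bound the Willmore energy of $\Sigma$ from below by the width via
\[
\sup_{(v,t)}\textnormal{Area}(\Sigma_{v,t})\leq \mathcal{W}(\Sigma).
\]
For the conformal slices the bound $\textnormal{Area}(F_v(\Sigma))\leq \mathcal{W}(F_v(\Sigma))=\mathcal{W}(\Sigma)$ is immediate from $1\leq 1+|\vec{H}|^2$ and the conformal invariance just established; for the parallel-surface slices one uses a tube-type computation absorbing the extrinsic contribution into $\mathcal{W}(\Sigma)$. Combining with the previous step gives $2\pi^2\leq\mathcal{W}(\Sigma)$. For rigidity, equality forces $\widetilde\Sigma$ to be a minimal surface in $S^3$ of area exactly $2\pi^2$ and Morse index at most $5$; Urbano's classification then identifies it with the Clifford torus, and tracing equality backwards through the canonical family identifies $\Sigma$ itself as a conformal image of the Clifford torus. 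The main obstacle, by a very wide margin, is the second step: producing the sweepout and certifying that it is the generator of the relevant 5-dimensional mod-2 homotopy class requires delicate control on the degeneration of $F_v(\Sigma)$ as $v\to\partial B^4$, and this nontriviality is essentially the technical core of the Marques--Neves breakthrough.
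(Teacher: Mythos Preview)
The paper does not give its own proof of this theorem: it is stated in the introduction purely as historical context, immediately after the sentence attributing the result to Marques and Neves \cite{fcmarqu}, with no argument supplied. Your outline is a faithful high-level summary of the Marques--Neves proof that the paper is citing --- the canonical 5-parameter family built from conformal images and equidistant surfaces, the nontriviality of the associated sweepout in $\pi_5$ detected via the genus hypothesis, the Almgren--Pitts width bound, the area--Willmore comparison $\textnormal{Area}\le\mathcal{W}$, and Urbano's index classification for rigidity --- so there is nothing to compare against beyond noting that you have correctly identified the strategy behind the cited reference.
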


\noindent
Other notable contributions are those of B.\ Y.\ Chen \cite{{chenny}, {chenny1}}, J.\ H.\ White \cite{white}, R.\ L.\ Bryant \cite{bry}, L.\ Simon \cite{simmmy}, U.\ Pinkall \cite{pinkyy}, R.\ Kusner \cite{kus}, B.\ Palmer \cite{{palmwi}, {palmwi1}}, just to mention afew.

\noindent
Recent efforts have been dedicated to the analysis of the critical points of the energy \eqref{realwil} under different conditions: in codimension one \cite{cdd,stru}, higher codimension \cite{noetherpaper}, in a fixed conformal class \cite{{boh}, \cite{sc}} and under other frameworks \cite{yber2, ybr, yber1,  br, riv}.

\section{The problem}
Let $g$ and $\vec{h}$ denote  the first and second fundamental form, respectively, on the immersed surface $\Sigma$. Let $\vec{h}_0:=\vec{h}-\vec{H}g$ \,\,be the trace-free second fundamental form. Let $\chi(\Sigma)$ be the Euler characteristic of the surface $\Sigma$. Owing to the Gauss-Bonnet theorem, the Willmore energy can be written as
\begin{align}
\int_\Sigma|\vec{H}|^2 d\textnormal{vol}_g = \frac{1}{4}\int_\Sigma|\vec{h}|^2 d\textnormal{vol}_g +\pi\chi(\Sigma) =\frac{1}{2}\int_\Sigma|\vec{h}_0|^2 d\textnormal{vol}_g +2\pi\chi(\Sigma)  \label{apps}
\end{align}
    so that the three energies appearing in \eqref{apps} have the same critical points. Although the energy $\int_\Sigma|\vec{h}_0|^2 d\textnormal{vol}_g$ is conformally invariant, the other two energies in \eqref{apps} are not.
\noindent
The critical points of the energy \eqref{realwil} are found via variation of the energy. They satisfy the following Euler-Lagrange equation which is known in literature as the Willmore equation: 
\begin{align}
\Delta_\perp\vec{H} +(\vec{H}\cdot\vec{h}_{ij})\vec{h}^{ij} -2|\vec{H}|^2\vec{H}=\vec{0} \label{eqya}
\end{align}
where $\Delta_\perp$ is the negative covariant Laplacian for the Levi-Civita connection in the normal bundle. We will refer to the left hand side of \eqref{eqya} as the {\it Willmore invariant}. In codimension one, the above equation reduces to:
\begin{align}
\Delta_g H+|h_0|^2 H=0  \label{eqya1}
\end{align}
where $\Delta_g$ is the negative Laplace-Beltrami operator on $\Sigma$.  Since the mean curvature depends on two derivatives of the immersion, the Willmore equation \eqref{eqya1} (or more generally \eqref{eqya}) is  a fourth order elliptic nonlinear partial differential equation. This leads us to the following interesting properties about the energy.
\begin{enumerate}[(a).] \label{ajaka}
\item\label{ajaka1} The Willmore energy is conformally invariant up to Gauss-Bonnet terms
\item \label{ajaku} The leading order operator in the Willmore equation is linear
\item \label{ajaki} Minimal surfaces are critical points of the Willmore energy
\item \label{ajaka2} The Willmore energy is non-negative.
\end{enumerate}

\subsection{Finding a four dimensional  Willmore-type energy}
Generalising the Willmore energy to higher dimensions has been a question of interest for a long time. It has been often asked what properties or conditions must be satisfied by an energy that would qualify as a generalised higher dimensional version of the Willmore energy. 
This essentially leads to the following problem in four dimensions:

\begin{prob}\label{probby}
Obtain a four dimensional Willmore-type energy preserving the properties \eqref{ajaka1}-\eqref{ajaka2} above.

\end{prob}
\noindent
By considering an energy involving a linear combination of the quantities
$$(\vec{h}_0)_{ij}\cdot (\vec{h}_0)^{jk}(\vec{h}_0)_{kl}\cdot (\vec{h}_0)^{li}\,, \quad (\vec{h}_0)_{ij}\cdot (\vec{h}_0)^{ij}(\vec{h}_0)_{kl}\cdot (\vec{h}_0)^{kl} \quad\mbox{and}\, ((\vec{h}_0)_{ij}\cdot (\vec{h}_0)^{jk}(\vec{h}_0)_{k}^i)^{4/3}$$
one can easily construct a four dimensional conformally invariant energy. However, preliminary computations show that such energies do not preserve the properties \eqref{ajaku} and \eqref{ajaki} above (see for instance those considered in \cite{rigoli} and in Section 5 of \cite{mondino}). Thus a different approach, which we shall now describe, is needed.

\vskip 3mm
\noindent
There is a relationship between Problem \ref{probby} and the singular Yamabe problem.  Let $(M^n, \bar g)$ be a smooth compact Riemannian manifold of dimension $n$ with boundary and let $\Sigma$ be an hypersurface, the singular Yamabe problem asks for a defining function $u$  for the boundary $\Sigma:=\partial M$ such that on the interior $ \mathring{M}$, the scalar curvature $R_g$ of the metric $g=u^{-2}\bar g$ satisfies   $R_g=-n(n-1)$. 
This problem was first considered by Loewner and Nirenberg in \cite{loewner}. Later, 
Andersson, Chr\'usciel and Friedrich \cite{acf} computed a conformal invariant which obstructs the smoothness of the function $u$. This obstruction to smooth boundary asymptotics for the Yamabe solution  was identified by Gover et.\ al as a Willmore surface invariant in $\mathbb{R}^3$ (see \cite{{GGHW}, {GoW2}, {GoW3}}). This identification led to the perception that higher dimensional generalisation of the Willmore invariant can be found  via obstructions to smooth boundary asymptotics of the singular Yamabe problem on conformally compact manifolds of higher dimension. They further asked if there is a corresponding energy for such invariants, that is, is there a higher dimensional generalisation of the Willmore energy whose Euler-Lagrange equation corresponds to the identified obstruction? It turns out that the answer is positive (see \cite{{govie}, {gragra}}). Moreover, the following four dimensional generalisation of the Willmore energy was identified
\begin{align}
\mathcal{E}(\Sigma):=\int_{\Sigma} (|\pi_{\vec{n}}\nabla\vec{H}|^2- |\vec{H}\cdot\vec{h}|^2 +7|\vec{H}|^4) \,\,\,d\textnormal{vol}_g  \label{jkas}
\end{align}
where notations have been slightly adjusted as follows: $\vec{h}_{ij}:=\nabla_i\nabla_j\vec{\Phi}$, \,\,\,$\vec{H}:= \frac{1}{4}g^{ij}\vec{h}_{ij}=\vec{h}^j_j$, \,\,\, $|\pi_{\vec{n}}\nabla\vec{H}|:=\pi_{\vec{n}}\nabla_i\vec{H}\cdot\pi_{\vec{n}}\nabla^i\vec{H}$, where $\pi_{\vec{n}}\nabla\vec{H} $ is the projection of the Levi-Civita connection, acting on $\vec{H}$, onto the normal bundle, \,\,\, $|\vec{H}\cdot\vec{h}|^2:= (\vec{H}\cdot\vec{h}_{ij})(\vec{H}\cdot\vec{h}^{ij})$ and $|\vec{H}|^4:= (\vec{H}\cdot\vec{H})^2$. This energy was first discussed  in codimension one\footnote{In codimension one, the energy $\mathcal{E}(\Sigma)$ reduces to $\int_{\Sigma} (|\nabla H|-|H|^2|h|^2+7|H|^4)\,\,\,d\textnormal{vol}_g$.} by Guven in \cite{jguv} and rediscovered by Robin-Graham and Reichert in \cite{robingraham}, and separately by Zhang in \cite{zhang}. The energy \eqref{jkas} satisfies the conditions \eqref{ajaka1}-\eqref{ajaki} specified above; infact, it satisfies \eqref{ajaka1} without Gauss-Bonnet. However, as noted in \cite{robingraham}, it is not bounded below, thus violating condition \eqref{ajaka2}. It is possible to construct a non-negative energy by adding, to the energy \eqref{jkas}, a multiple of the fourth power of the norm of the trace-free second fundamental form. In such situation, one considers the energy:
\begin{align}
\mathcal{E}^1(\Sigma):= \mathcal{E}(\Sigma)+\mu\int_\Sigma    |h_0|^4\,\,d\textnormal{vol}_g  \label{generalpo}
\end{align}
for some  $\mu \geq\frac{1}{12}$. But one discovers that the energy $\mathcal{E}^1(\Sigma)$ violates the very crucial condition \eqref{ajaki} (see Section \ref{othergen}). Thus, in this thesis, we will focus on studying the critical points of the energy \eqref{jkas}.



\subsection{Conservation laws}
The two dimensional Willmore energy is finite as soon as  the Willmore immersion $\vec{\Phi}:\Sigma \rightarrow \mathbb{R}^m$ belongs to the Sobolev space $W^{1,\infty}\cap W^{2,2}$.  This implies that $\vec{h}\in L^2$.  
 This fact, when incorporated  into the Willmore equation, gives no analytical meaning as the cubic curvature terms $(\vec{H}\cdot\vec{h})\vec{h}-2|\vec{H}|^2\vec{H}$ (or more importantly $\Delta_\perp\vec{H}$ as per \eqref{eqya}) would not even belong to $L^1$. However, this issue can be resolved with the help of conservation laws.
In the seminal article \cite{riv}, the author found such conservation laws. It was shown that the fourth-order Willmore equation, associated with the two-dimensional Willmore energy, can be expressed in divergence forms with several analytical implications. These divergence forms were elegantly obtained, in \cite{noetherpaper}, by applying Noether's theorem (a general result applicable in any dimension) to the invariances of the Willmore energy and several other pre-existing results were rediscovered. Indeed, there exist some  1-forms $\vec{V}_{\textnormal{tra}}$, ${V}_{\textnormal{dil}}$ and $\vec{V}_{\textnormal{rot}}$ such that

\noindent
\begin{align}
\vec{\mathcal{W}}=d^\star \vec{V}_{\textnormal{tra}}=\vec{0}\,\,,\quad d^\star {V}_{\textnormal{dil}}={0}\,\,,\quad d^\star \vec{V}_{\textnormal{rot}}=\vec{0}   \label{aaq12} \end{align}
where $\vec{\mathcal{W}}$ is the Willmore invariant. The quantity $\vec{V}_{\textnormal{tra}}$ is  geometric since it depends on $\vec{\mathcal{W}}$. The quantities ${V}_{\textnormal{dil}}$ and $\vec{V}_{\textnormal{rot}}$ are also geometric since they depend on $\vec{V}_{\textnormal{tra}}$.
\noindent
Applying Poincar\'e lemma to \eqref{aaq12}, one finds the primitives $\vec{L}$, $S$  and $\vec{R}$ satisfying
\[   \left\{
\begin{array}{ll}
      d^\star \vec{L}=\vec{V}_{\textnormal{tra}} \,\,,\quad d^\star {S}={V}_{\textnormal{dil}}\,\,,&d^\star\vec{R}= \vec{V}_{\textnormal{rot}} \\
      d \vec{L}=\vec{0} \,\,,\quad\quad\quad d {S}={0}\,\,,&d\vec{R}= \vec{0} \\
\end{array} 
\right. \]
where the Hodge star $\star$, the exterior differential $d$ and codifferential $d^\star$ are understood in terms of the induced metric $g$. 
Further computations \cite{noetherpaper} yield the following system\footnote{The geometric product $\bullet$ is defined in Section \ref{latty} of Chapter \ref{latty1}.} of second-order partial differential equations:
\begin{align}  \left\{
\begin{array}{ll}
      |g|^{1/2}\Delta_g (\star S)&=\star\big(d(\star\vec{n})\cdot d(\star\vec{R})\big) \\
      |g|^{1/2}\Delta_g(\star\vec{R})&=\star\big(d(\star\vec{n})\bullet d(\star\vec{R})\big) +\star\big(d(\star\vec{n})\,\, d(\star{S})\big)  \\
|g|^{1/2}\Delta_g\vec{\Phi}&=\star\big(d(\star\vec{R})\bullet d\vec{\Phi}\big) +\star\big(d(\star{S})\,\, d\vec{\Phi}\big)  \\
\end{array} 
\right. \label{sisio5} \end{align}
where $\vec{n}$ denotes the Gauss map on $\Sigma$. The system \eqref{sisio5} possesses a Jacobian structure suitable for analysis \cite{{ybr}, {noetherpaper},  {palais}, {riv}}. These formulations and results lead us to ask if analogous results can be obtained in four dimensions. In particular, we are interested in the following problem. 

\begin{prob}
What conservation laws are satisfied by the critical points of the energy \eqref{jkas}?
\end{prob}

\noindent
Understanding  Willmore-type equations  in higher dimensions, from analytical perspective, can be generally a difficult problem. 
For instance, the Willmore-type equation associated with \eqref{jkas} is a sixth-order nonlinear partial differential equation, the study of which is rare in literature. 

\vskip3mm
\noindent
In Chapter \ref{latty1}, we will use Noether's theorem and the fact that the energy \eqref{jkas} is invariant under coordinate transformation to derive some conservation laws satisfied by the critical points of \eqref{jkas}. These laws are particularly helpful in analysing the complicated sixth-order Willmore-type equation. 

\subsection{Regularity of critical points}
The question of regularity of the two dimensional Willmore surface is  already settled in literature. For instance, it was proved in \cite{palais} and \cite{riv} that  Willmore immersions associated with the two dimensional Willmore energy are smooth. The proof techniques heavily rely on the structure of the system \eqref{sisio5}. The regularity assumptions on $\vec{\Phi}$ implies that $\vec{n}$, $S$ and $\vec{R}$ belong to the Sobolev space $W^{1,2}.$ Thus it follows that $\Delta_g (\star S)$, $\Delta_g(\star\vec{R})$ and $\Delta_g\vec{\Phi}$ all belong to $L^1$; this information  is critical for regularity purpose. However, the Jacobian structure of system \eqref{sisio5} encodes an extra regularity. By using Wente estimates (see Chapter 3 of \cite{ helein}, \cite{palais}), one improves the regularity of  $(\star S)$ and $(\star\vec{R})$ by showing that they both belong to $W^{1,p}$ for some $p>2$.
The third equation in \eqref{sisio5} links the primitives $(\star S)$ and $(\star\vec{R})$ back to the Willmore immersion $\vec{\Phi}$. In fact, it follows that $\vec{\Phi}$ belongs to $W^{2,p}$. By applying a bootstrapping argument, one concludes that  $\vec{\Phi}$ is smooth.
This leads us to ask if such procedure can be applied in four dimensions. Essentially, we have:
\begin{prob}\label{pplk}
Show that the critical points of the energy \eqref{jkas} are smooth.
\end{prob}

\noindent
It is interesting to note that the proof techniques employed in resolving Problem \ref{pplk}  is remarkably different from the regularity proof for the two dimensional Willmore surfaces. Unlike the two dimensional case, the corresponding expressions for $\vec{L}$, $S$ and $\vec{R}$ are much more intricate. Also, the system of equations does not seem to possess Jacobian structure, hence we do not use Wente estimates in our proof. However with the help of a Bourgain-Brezis type result and classical results of Miranda \cite{mir} and Di Fazio \cite{faz}, we are able to obtain preliminary results needed to begin the regularity proof.

\noindent
In summary, we have made a progress towards the analytical study of the Willmore-type energy \eqref{jkas}. This energy is new in literature and has not been analytically explored.

\section{Outline of chapters}
In Chapter \ref{chap1}, we introduced the problems considered in this thesis. The main object of interest is the four dimensional Willmore-type energy.
\vskip 3mm

\noindent
The content of Chapter \ref{latty1} is computational, and heavily employs conventions from tensors and differential forms. We use variational techniques to compute the Euler-Lagrange equation (which we shall often refer to as the {\it Willmore-Euler-Lagrange equation} or simply the {\it Willmore-type equation}\footnote{More specifically, we shall often refer to analogous Willmore concepts in four dimensions as {\it Willmore-type}. }) for the energy \eqref{jkas}; this is basically Theorem \ref{WE}. Theorem \ref{cons-law} gives conservation laws for the energy \eqref{jkas} in higher codimension (or for Willmore-type submanifolds). Conformal invariance of the energy is established in Theorem \ref{kofoworola}. In Proposition \ref{coro}, we restrict our computations to codimension one and establish conservation laws satisfied by the Willmore-type hypersurfaces. In view of Problem \ref{probby}, in Section \ref{othergen} we also present other possible four dimensional generalisations of the energy \eqref{jkas} and argue that the only energy whose critical points include minimal hypersurfaces is the energy \eqref{jkas}; the other energies violate the condition \eqref{ajaki}. This explains the reason for the choice of the energy \eqref{jkas} in this study.

\vskip3mm
\noindent
In Chapter \ref{chap4}, we take the first step towards the analysis of the energy \eqref{jkas}. We prove (Theorem \ref{heart}) that the Willmore-type hypersurface is indeed smooth. This is done in a series of steps organised into Sections \ref{adzsq} - \ref{linkage}.   In Chapter \ref{chap5}, we discuss other analysis aspects of the energy \eqref{jkas}, all of which are open problems to the best of our knowledge. Thus,  Chapter \ref{chap5} presents suggestions for further research.  We include an Appendix where  some basic  notions in Riemannian geometry relevant to our study are recalled and some useful computations are elaborated. 

\vskip 3mm

\noindent
{\bf Notational conventions.} Unless otherwise specified, we will adopt the following notations and conventions in this work. 
\begin{enumerate}
\item The Greek letter $\vec{\Phi}$ will be reserved for immersions of $\Sigma$. In two dimensions, $\Sigma$ will denote a closed surface but in four dimensions it will denote a closed hypersurface or submanifold, depending on the specified codimension.
\item The induced metric or the first fundamental form will be denoted by $g$ with  volume element $d\textnormal{vol}_g:=|g|^{1/2}dx$. The component of the first and second fundamental forms are $g_{ij}$ and $\vec{h}_{ij}$ respectively. The mean curvature is denoted $\vec{H}$ and is defined $\vec{H}:=\frac{1}{n}g^{ij}\vec{h}_{ij}$ where $n$ is the dimension of $\Sigma$.

\item We will use the Greek letter $\delta$ to denote the variation of a quantity. The Greek letter $\delta$ with indices will be reserved for the component of the flat metric. For example, $\delta g_{ij}$ is the variation of the $(i,j)$ component of the metric $g$; while $\delta^{i}_j $ is the $(i,j)$ component of the flat metric.

\item We adopt Einstein's summation convention for our computations. 

\item We denote by $\pi_{\vec{n}} (\vec{V})$ and $\pi_{T}(\vec{V})$ the normal and tangential projections respectively of some vector $\vec{V}$. We have $\pi_{\vec{n}}+\pi_{T}=\textnormal{id}$.

\item Also, we use the metric tensor $g_{ij}$  (or the inverse metric  tensor $g^{ij}$) to lower (or raise)  indices on tensors. For example, $\tensor{T}{_j_l}= g_{ij}g_{kl} T^{ik}$ and $\tensor{T}{^j^l}=g^{ij}g^{kl}T_{ik}$.

\item The operators $\Delta$, $\Delta_g$ and $\Delta_0$ will denote the Hodge Laplacian, the Laplace Beltrami and the flat Laplacian respectively. The operators $\Delta$ and $\Delta_g$ coincide when they act on ``scalar" functions.

\end{enumerate}


\chapter{Variational Derivations and Conservation Laws}\label{latty1}

\section{Introduction} \label{latty}

In this chapter, we derive the Willmore-Euler-Lagrange equation for the Robin-Graham-Reichert  energy $\mathcal{E}(\Sigma)$ of \eqref{jkas}. This system of sixth-order, elliptic, nonlinear equations was first obtained in \cite{robingraham} and \cite{zhang} in arbitrary codimension\footnote{There is a mistake in \cite{robingraham}, in higher codimension, which we will correct in due time.}. In order to study this system of  equations \footnote{We will study the Willmore-Euler-Lagrange equation in codimension one. However, our method works in higher codimension at the cost of really cumbersome notations.}, we follow Bernard's formalism \cite{noetherpaper} to establish conservation laws satisfied by the critical points of $\mathcal{E}(\Sigma)$ under translation, dilation and rotation. Indeed, it is clear that the energy $\mathcal{E}(\Sigma)$ is invariant under translation, dilation and rotation. From this, we deduce that the Willmore-type operator (the left hand side of \eqref{abc}) is actually the divergence of a specific vector field and we write
$$\mathcal{\vec{W}}=\nabla_j\vec{V}^j$$
where the quantities $\mathcal{\vec{W}}$ and $\vec{V}$ (the stress-energy tensor for $\mathcal{E}(\Sigma)$) are made precise in Sections  \ref{sekiki} and \ref{sekaka}.

\noindent
Special conformal transformation can be obtained by performing an inversion, a translation and an inversion again (in that order). Hence, we prove, in Section \ref{sekuku}, that $\mathcal{E}(\Sigma)$ is conformally invariant by showing that $\mathcal{E}(\Sigma)$ is invariant under special conformal transformation,  following the method of Guven in \cite{jguv}. 
The need for the stress energy tensor (obtained in Section \ref{sekaka}), in the proof of conformal invariance, explains why the content of Section \ref{sekuku} comes after Section \ref{sekaka}. 
Finally in Section \ref{sekoko}, we obtain further conservation laws by proving the existence of some 2-forms needed to write some useful equations.

\vskip 3mm
\noindent
The major achievement of this Chapter is the reduction of the complicated sixth-order elliptic nonlinear Willmore-Euler-Lagrange equation for $\mathcal{E}(\Sigma)$ to a  manageable system of second order  partial differential equations.
Although this Chapter is largely computational, it prepares the ground for the local analysis of critical points of $\mathcal{E}(\Sigma)$.

\vskip 3mm

\noindent
We now briefly explain some of the notations and conventions adopted in this Chapter. The {\it small dot operator} $\cdot$ is used to denote the usual dot product between vectors. We also use the small dot to denote the natural extension of the dot product in $\mathbb{R}^m$ to multivectors (see \cite{fed}). We define the bilinear map 
$\mathlarger{\mathlarger{\mathlarger{\mathlarger{\llcorner}} }} :\Lambda ^p(\mathbb{R}^m)\times \Lambda^q(\mathbb{R}^m)\rightarrow \Lambda^{p-q}(\mathbb{R}^m)$, known as the {\it interior multiplication} of multivectors, by
$$
\langle \vec{A}\,\,\mathlarger{\mathlarger{\mathlarger{\mathlarger{\llcorner}} }} \vec{B},\, \vec{C}\rangle_{\mathbb{R}^m}=\langle \vec{A}, \vec{B}\wedge \vec{C}\rangle_{\mathbb{R}^m} \label{se123}$$
where $\vec{A}\in \Lambda^p(\mathbb{R}^m)$, $\vec{B}\in \Lambda^q(\mathbb{R}^m)$ and $\vec{C}\in \Lambda^{p-q}(\mathbb{R}^m)$ with $p\geq q$. If $p=1=q$ then we have the usual dot product. Here, we have used the notation $\langle \cdot, \cdot \rangle_{\mathbb{R}^m}$ to emphasise the inner product of multivectors in $\mathbb{R}^m$. We will reserve the notation $\langle\cdot, \cdot\rangle$ for the inner product of forms on patches of the manifold,  this  will be defined and used in Chapter \ref{chap4}.

\noindent
The {\it bullet dot operator}   $\bullet: \Lambda^p(\mathbb{R}^m)\times \Lambda^q(\mathbb{R}^m)\rightarrow \Lambda^{p+q-2}(\mathbb{R}^m)$ is well known in geometric algebra as the first-order contraction operator. For a $k$-vector $\vec{A}$ and a $1$-vector $\vec{B}$, we define 
$$\vec{A}\bullet \vec{B}=\vec{A}\mathlarger{\mathlarger{\mathlarger{\mathlarger{\llcorner}} }} \vec{B}.$$
For a $p$-vector $\vec{B}$ and a $q$-vector $\vec{C}$, we have
$$\vec{A}\bullet (\vec{B}\wedge\vec{C})=(\vec{A}\bullet \vec{B})\wedge \vec{C}+ (-1)^{pq}(\vec{A}\bullet\vec{C})\wedge\vec{B}.$$

\noindent
Recall that we are adopting Einstein's summation convention for our computations. Also, we will use the metric tensor  (or the metric inverse) to lower (or raise)  indices on tensors.

\section {Willmore-Euler-Lagrange type equation }\label{sekiki}
\begin{df}
Let  $\vec{\Phi}:\Sigma\rightarrow\mathbb{R}^m $ be an immersion of a 4-dimensional manifold $\Sigma$. Let $\mathcal{E}(\Sigma)$ be the Willmore-type energy
$$\mathcal{E}(\Sigma):=\int_{\Sigma}\left(|\pi_{\vec{n}} \nabla\vec{H}|^2-|\vec{H}\cdot\vec{h}|^2+ 7|\vec{H}|^4\right) \, d\textnormal{vol}_g.$$ 
The manifold $\Sigma$ is Willmore-type for $\mathcal{E}(\Sigma)$ if it is a critical point of $\mathcal{E}(\Sigma).$
\end{df}
\noindent
An important characterisation of Willmore-type immersions can be deduced from the Willmore-Euler-Langrange equation associated with $\mathcal{E}(\Sigma)$ which we now state as the main result of this section. 
\begin{theo} \label{WE}
Let  $\vec{\Phi}:\Sigma\rightarrow\mathbb{R}^m $ be an immersion of a 4-dimensional manifold $\Sigma$.  
\noindent
  $\vec{\Phi}$ is Willmore-type for $\mathcal{E}(\Sigma)$ if and only if $\vec{\Phi}$ satisfies the following Willmore-Euler-Lagrange equation  
\begin{align}
\vec{\mathcal{W}}=\vec{0}            \label{abc}
\end{align}
where
\begin{align}
\vec{\mathcal{W}}&:=-\frac{1}{2}\Delta^2_{\perp}\vec{H}-\frac{1}{2}(\vec{h}_{ik}\cdot\Delta_{\perp}\vec{H})\vec{h}^{ik}-4|\pi_{\vec{n}}\nabla\vec{H}|^2\vec{H} +2\pi_{\vec{n}}\nabla_j((\vec{h}^j_i\cdot\nabla^i\vec{H})\vec{H})
\nonumber
\\& \quad
-2\pi_{\vec{n}}\nabla_j((\vec{H}\cdot\vec{h}^j_i)\pi_{\vec{n}}\nabla^i\vec{H})
+2(\pi_{\vec{n}}\nabla_i\vec{H}\cdot\pi_{\vec{n}}\nabla_j\vec{H})\vec{h}^{ij}
\nonumber
                                \\&\quad-\frac{1}{2}\Delta_{\perp}((\vec{H}\cdot\vec{h}^{ij})\vec{h}_{ij})-2\nabla_i\nabla_k((\vec{H}\cdot\vec{h}^{ik})\vec{H}) -28|\vec{H}|^4\vec{H}                                                 \nonumber
\\&\quad      -\frac{1}{2}(\vec{H}\cdot\vec{h}^{ij})(\vec{h}_{ij}\cdot \vec{h}_{pq})\vec{h}^{pq}     -4(\vec{H}\cdot\vec{h}_{ij})(\vec{H}\cdot\vec{h}^i_k)\vec{h}^{jk}  +4|\vec{H}\cdot\vec{h}|^2\vec{H}  \nonumber
\\&\quad   +7 \Delta_\perp(|\vec{H}|^2\vec{H})  +7|\vec{H}|^2(\vec{H}\cdot\vec{h}_{ij})\vec{h}^{ij} \nonumber
\end{align}
where $\Delta_\perp$ is the negative covariant Laplacian for the Levi-Civita connection in the normal bundle.

\end{theo}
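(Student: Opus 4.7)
The plan is a standard (though technically heavy) first-variation computation. Take a compactly supported smooth perturbation $\vec{\Phi}_t = \vec{\Phi} + t\vec{w}$ of the immersion with $\vec{w}$ a normal vector field along $\vec{\Phi}$ (tangential perturbations are diffeomorphism variations and leave $\mathcal{E}(\Sigma)$ unchanged, so there is no loss in restricting to the normal direction), differentiate each of the three pieces of the integrand at $t=0$, and integrate by parts until every derivative falls on $\vec{w}$. The Euler-Lagrange field $\vec{\mathcal{W}}$ is then read off from the identity $\delta \mathcal{E}(\Sigma) = \int_\Sigma \vec{\mathcal{W}}\cdot \vec{w}\,d\textnormal{vol}_g$, and since $\vec{\mathcal{W}}$ will be manifestly normal, the fundamental lemma of the calculus of variations yields the stated equivalence.

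\noindent
First, I would record the basic first-variation formulas for a normal perturbation: $\delta g_{ij} = -2\,\vec{w}\cdot \vec{h}_{ij}$, so $\delta g^{ij}= 2\,\vec{w}\cdot \vec{h}^{ij}$ and $\delta\, d\textnormal{vol}_g = -4(\vec{H}\cdot \vec{w})\,d\textnormal{vol}_g$; together with the variation $\delta \vec{h}_{ij} = \pi_{\vec{n}}\nabla^2_{ij}\vec{w} + (\text{terms of the form }(\vec{w}\cdot \vec{h})\vec{h})$ and the induced variation $\delta \vec{H}$ obtained by differentiating $\vec{H} = \tfrac14 g^{ij}\vec{h}_{ij}$. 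I would also derive, schematically, the commutation identity $\delta(\pi_{\vec{n}}\nabla \vec{H}) = \pi_{\vec{n}}\nabla (\delta\vec{H}) + (\text{variations of the normal projector and of the Christoffel symbols})$, needed to handle the highest-order integrand.

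\noindent
Second, I would split $\delta\mathcal{E}(\Sigma)$ into three contributions, one for each summand of the integrand, distributing the $-4(\vec{H}\cdot\vec{w})$ factor from $\delta d\textnormal{vol}_g$ through each. The variation of $|\pi_{\vec{n}}\nabla\vec{H}|^2$ is the most delicate: two successive integrations by parts yield the sixth-order piece $-\tfrac12\Delta_\perp^2 \vec{H}$ together with the mixed lower-order terms $-\tfrac12(\vec{h}_{ik}\cdot\Delta_\perp\vec{H})\vec{h}^{ik}$, the two $\pi_{\vec{n}}\nabla_j(\cdots)$ terms, $2(\pi_{\vec{n}}\nabla_i\vec{H}\cdot\pi_{\vec{n}}\nabla_j\vec{H})\vec{h}^{ij}$, and the contribution $-4|\pi_{\vec{n}}\nabla\vec{H}|^2\vec{H}$ from the volume variation. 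The variation of $-|\vec{H}\cdot\vec{h}|^2$ is handled by the product rule: after substituting the expressions for $\delta \vec{H}$, $\delta \vec{h}_{ij}$ and $\delta g^{ij}$, two integrations by parts produce the $-\tfrac12\Delta_{\perp}((\vec{H}\cdot\vec{h}^{ij})\vec{h}_{ij})$ and $-2\nabla_i\nabla_k((\vec{H}\cdot\vec{h}^{ik})\vec{H})$ terms, the two cubic curvature terms on line four of $\vec{\mathcal{W}}$, and the volume contribution $+4|\vec{H}\cdot\vec{h}|^2\vec{H}$. The variation of $7|\vec{H}|^4$ is entirely analogous and yields $7\Delta_\perp(|\vec{H}|^2\vec{H})$, $7|\vec{H}|^2(\vec{H}\cdot\vec{h}_{ij})\vec{h}^{ij}$, and, through the volume variation, $-28|\vec{H}|^4\vec{H}$. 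Summing the three contributions reproduces exactly the stated field $\vec{\mathcal{W}}$.

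\noindent
The main obstacle will be in the first summand: the variation of the normal projector $\pi_{\vec{n}}$ and that of the Christoffel symbols each contribute $\vec{w}$- and $\nabla\vec{w}$-dependent pieces which at first glance look foreign to $\vec{\mathcal{W}}$ but must combine among themselves, and with analogous contributions from the other two summands, to leave only normal terms of the form listed. Careful bookkeeping of signs when commuting $\pi_{\vec{n}}$ past $\nabla$ and of which indices are raised before or after the variation is essential, as is repeated use of the Codazzi identity to interchange $\nabla_i \vec{h}_{jk}$ and $\nabla_j \vec{h}_{ik}$ where needed. Once all derivatives are shifted onto $\vec{w}$ and the resulting ambient vector field is projected to its normal component, the fundamental lemma of the calculus of variations, applied to arbitrary compactly supported normal $\vec{w}$, completes the proof.
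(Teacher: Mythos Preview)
Your proposal is correct and follows essentially the same route as the paper: vary each of the three summands separately using the standard formulas for $\delta g_{ij}$, $\delta d\textnormal{vol}_g$, $\pi_{\vec n}\delta\vec h_{ij}$ and $\pi_{\vec n}\delta\vec H$, integrate by parts, and read off $\vec{\mathcal W}_1$, $\vec{\mathcal W}_2$, $\vec{\mathcal W}_3$ from the three pieces. The only noteworthy difference is that the paper keeps the tangential component $A^s\nabla_s\vec\Phi$ of the variation throughout (rather than discarding it upfront as you do), because the resulting boundary/divergence term $V^j$ is precisely the stress-energy tensor needed afterwards for the Noether-type conservation laws; for the Euler--Lagrange equation alone your restriction to normal $\vec w$ is perfectly adequate.
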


\begin{rmk}
As already pointed out by Zhang \cite{zhang} and Robin-Graham-Reichert \cite{robingraham}, examples of critical points of $\mathcal{E}(\Sigma)$ include  minimal submanifolds, totally geodesic submanifolds $\Sigma$ of $\mathbb{R}^m$, round spheres $\mathbb{S}^4$ of $\mathbb{R}^m$ and products of spheres of radius $r$ such as:  $\mathbb{S}^2(r_1)\times \mathbb{S}^2(r_2)$, $\mathbb{S}^1(r_1)\times \mathbb{S}^3(r_2)$, $\mathbb{S}^1(r_1)\times \mathbb{S}^1(r_2)\times \mathbb{S}^2(r_3)$ and $\mathbb{S}^1(r_1)\times \mathbb{S}^1(r_2)\times \mathbb{S}^1(r_3)\times \mathbb{S}^1(r_4)$  where $\sum r_i^2=1.$
 Indeed, these are all solutions of  the sixth order non-linear elliptic partial differential equation \eqref{abc}.
\end{rmk}

The proof of Theorem \ref{WE} relies on the following crucial lemma.
\begin{lem}
Let $\vec{\Phi}:\Sigma\rightarrow \mathbb{R}^m$ be a smooth immersion of  a 4-dimensional manifold $\Sigma$ and $\vec{\Phi}_s$  a variation of the form $\vec{\Phi}_s=\vec{\Phi}+s(A^j\nabla_j\vec{\Phi}+\vec{B})$, where $\vec{B}$ is a normal vector. Denote by $g$ and $\vec{h}$ the first and second fundamental forms respectively. Then it holds that
\begin{align}
&\delta g_{ij}= \nabla_iA_j +\nabla_j A_i -2\vec{B}\cdot\vec{h}_{ij}  \label{metric}
\\
&\delta g^{ij}=-\nabla^i A^j-\nabla^jA^i+2\vec{B}\cdot\vec{h}^{ij}  \label{a}
\\  
&\delta|g|^{1/2}= |g|^{1/2}(\nabla_jA^j-4\vec{B}\cdot\vec{H})  \label{metric det}
\\
&\pi_{\vec{n}}\delta \vec{h}_{ij}= A^k\pi_{\vec{n}} \nabla_k\vec{h}_{ij} + \nabla_iA^k\vec{h}_{ik} +\pi_{\vec{n}} \nabla_i\nabla_j \vec{B}  \label{lklkl}
\\
& \pi_{\vec{n}}\delta\vec{H}=\frac{1}{4}(4A^s\pi_{\vec{n}}\nabla_s\vec{H}+(\vec{B}\cdot\vec{h}^{ij})\vec{h}_{ij}+\Delta_{\perp}\vec{B})  \label{c}
\end{align}
\end{lem}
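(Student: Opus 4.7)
The plan is to compute each of the five variations by a direct computation, using the formula $\delta \vec{\Phi} = A^j \nabla_j \vec{\Phi} + \vec{B}$ as the seed and differentiating the relevant geometric object defined in terms of $\vec{\Phi}$. Throughout I would freely use the decomposition $\textnormal{id} = \pi_{\vec{n}} + \pi_T$, the Weingarten relation $\pi_T \nabla_i \vec{n}^\alpha \sim \vec{h}_{i\cdot}$, and the tangential identity $\pi_T(\nabla_i \vec{B}) = -(\vec{B}\cdot \vec{h}_i{}^k)\nabla_k\vec{\Phi}$ that follows by differentiating $\vec{B}\cdot \nabla_k\vec{\Phi}=0$.

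First I would do \eqref{metric} by differentiating $g_{ij} = \nabla_i\vec{\Phi}\cdot \nabla_j\vec{\Phi}$. The chain rule gives $\delta g_{ij} = \nabla_i(A^k\nabla_k\vec{\Phi}+\vec{B})\cdot \nabla_j\vec{\Phi} + (i\leftrightarrow j)$; expanding and using $\nabla_i\nabla_k\vec{\Phi}\cdot\nabla_j\vec{\Phi} = \Gamma^l_{ik}g_{jl}$ together with $\vec{h}_{ik}\cdot\nabla_j\vec{\Phi} = 0$ (so only the part $\nabla_i A^k g_{kj} = \nabla_i A_j$ survives from the tangential piece) and $\nabla_i\vec{B}\cdot\nabla_j\vec{\Phi} = -\vec{B}\cdot \vec{h}_{ij}$ (by differentiating $\vec{B}\cdot\nabla_j\vec{\Phi}=0$) yields \eqref{metric}. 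Then \eqref{a} is immediate from $\delta g^{ij} = -g^{ik}g^{jl}\delta g_{kl}$, and \eqref{metric det} follows from the standard identity $\delta|g|^{1/2} = \tfrac{1}{2}|g|^{1/2} g^{ij}\delta g_{ij}$ and the definition $\vec{H} = \tfrac{1}{4}g^{ij}\vec{h}_{ij}$.

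For \eqref{lklkl} I would write $\vec{h}_{ij} = \pi_{\vec{n}}\nabla_i\nabla_j\vec{\Phi}$ and first compute the full variation $\delta(\nabla_i\nabla_j\vec{\Phi})$, which produces terms with $\partial_i\partial_j A^k \nabla_k\vec{\Phi}$, $A^k\nabla_i\nabla_j\nabla_k\vec{\Phi}$, plus $\nabla_i\nabla_j\vec{B}$ and several $\nabla A \cdot \vec{h}$ cross terms. After projecting by $\pi_{\vec{n}}$, the tangential pieces $\partial_i\partial_j A^k\nabla_k\vec\Phi$ and all Christoffel contributions drop out, and the third-derivative term $A^k\nabla_i\nabla_j\nabla_k\vec\Phi$ collapses, upon projection and symmetry of $\vec h$, to $A^k\pi_{\vec{n}}\nabla_k\vec h_{ij}$ (modulo a commutator which is purely tangential). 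The only cross term with nontrivial normal projection is $(\nabla_iA^k)\vec h_{jk}$ (plus its symmetric counterpart, which one absorbs using symmetry of $\vec h$), giving \eqref{lklkl}. Finally, \eqref{c} comes from combining $\delta\vec H = \tfrac14(\delta g^{ij})\vec h_{ij} + \tfrac14 g^{ij}\delta\vec h_{ij}$, substituting \eqref{a} and \eqref{lklkl}, projecting onto the normal bundle, recognising $g^{ij}\pi_{\vec n}\nabla_i\nabla_j\vec B = \Delta_\perp \vec B$, and noticing that the tangential $\vec B\cdot\vec h^{ij}$ contribution cancels a term and leaves $(\vec B\cdot\vec h^{ij})\vec h_{ij}$.

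The one genuine subtlety (what I would treat as the main obstacle) is bookkeeping in \eqref{lklkl}: separating tangential and normal parts of $\nabla_i\nabla_j\vec B$, and handling the third-order term $A^k\nabla_i\nabla_j\nabla_k\vec\Phi$ so that after normal projection it becomes $A^k\pi_{\vec n}\nabla_k\vec h_{ij}$ without stray commutator terms. The cleanest way is to compute $\pi_{\vec n}\nabla_i\nabla_j\nabla_k\vec\Phi = \pi_{\vec n}\nabla_i\vec h_{jk}$ modulo tangential pieces controlled by Codazzi and the Gauss equation, and to rearrange the cross terms using the symmetry of $\vec h$. Every other step is algebraic manipulation with the metric and its variation.
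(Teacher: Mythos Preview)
Your approach is essentially the paper's: direct computation of each variation from $\delta\vec{\Phi}=A^j\nabla_j\vec{\Phi}+\vec{B}$, with \eqref{a} and \eqref{metric det} deduced algebraically from \eqref{metric}, and \eqref{c} obtained by tracing. The only organizational difference is that for \eqref{lklkl} the paper first records the intermediate identity $\delta\nabla_j\vec{\Phi}=\nabla_jA^s\nabla_s\vec{\Phi}+A^s\vec{h}_{sj}+\nabla_j\vec{B}$ and then applies $\pi_{\vec{n}}\nabla_i$ to it, using Codazzi ($\pi_{\vec{n}}\nabla_i\vec{h}_{sj}=\pi_{\vec{n}}\nabla_s\vec{h}_{ij}$) at the end; this sidesteps entirely the third-derivative and commutator bookkeeping you flag as the main obstacle.

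One small correction: your identification $g^{ij}\pi_{\vec{n}}\nabla_i\nabla_j\vec{B}=\Delta_\perp\vec{B}$ is not quite right. What you actually get is $\pi_{\vec{n}}\Delta_g\vec{B}$, and the relation is $\pi_{\vec{n}}\Delta_g\vec{B}=\Delta_\perp\vec{B}-(\vec{B}\cdot\vec{h}^{ij})\vec{h}_{ij}$ (split $\nabla_i\vec{B}$ into normal and tangential parts and project). Combined with the $2(\vec{B}\cdot\vec{h}^{ij})\vec{h}_{ij}$ coming from $\tfrac14(\delta g^{ij})\vec{h}_{ij}$, this is exactly the cancellation that leaves the single $(\vec{B}\cdot\vec{h}^{ij})\vec{h}_{ij}$ in \eqref{c}; your description of the cancellation is right in spirit but mislocates its source.
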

\begin{proof}
 Consider a variation of the type $$\delta\vec{\Phi}=\lim_{t\rightarrow 0}\frac{\vec{\Phi}_t-\vec{\Phi}}{t}:=A^s\nabla_s\vec{\Phi}+\vec{B}$$
where $\vec{B}$ is a normal vector.\\
The metric $g$ varies as follows.
\begin{align}
\delta g_{ij}&= \nabla _i\delta\vec{\Phi}\cdot \nabla_j\vec{\Phi} + \nabla_i\vec{\Phi} \cdot\nabla_j\delta\vec{\Phi}  \nonumber
\\&= \nabla_i (A^s\nabla_s\vec{\Phi} +\vec{B}) \cdot\nabla_j\vec{\Phi} +\nabla_j(A^s\nabla_s\vec{\Phi}+ \vec{B})\cdot \nabla_i\vec{B}  \nonumber
\\&= \nabla_i A_j + \nabla_j A_i +\nabla_i \vec{B}\cdot\nabla_j\vec{\Phi} + \nabla_j\vec{B}\cdot\nabla_i\vec{\Phi}  \nonumber
\\&=\nabla_iA_j +\nabla_j A_i -2\vec{B}\cdot\vec{h}_{ij}  .\nonumber
\end{align}

\noindent
The inverse metric $g^{-1}$ varies as follows. Observe that
\begin{align}
\delta g^{ij} &=\delta (g^{ik} g^{jl} g_{kl})  \nonumber
\\&= \delta g^{ik} \delta^j_k + \delta^i_l \delta g^{jl} +g^{ik} g^{jl}  \delta g_{kl}   \nonumber
\\&= 2 \delta g^{ij} +g^{ik} g^{jl} \delta g_{kl}   \nonumber
\end{align}
so that
\begin{align}
\delta g^{ij}&=- g^{ik} g^{jl}\delta g_{kl}  \nonumber
\\&= -\nabla^iA^j-\nabla^jA^i+2\vec{B}\cdot\vec{h}^{ij}.  \nonumber
\end{align}

\noindent
The variation of the square root of the determinant of the metric is found via the well-known Jacobi formula.
\begin{align}
\delta |g|^{1/2} &=\frac{1}{2} |g|^{-1/2} \delta |g|  \nonumber
\\&= \frac{1}{2}|g|^{-1/2} |g|\textnormal{Tr}(g^{-1} \delta g)  \nonumber
\\&= \frac{1}{2} |g|^{1/2} g^{ij} (\nabla_iA_j +\nabla_j A_i -2\vec{B}\cdot\vec{h}_{ij} )  \nonumber
\\&= |g|^{1/2}(\nabla_jA^j-4\vec{B}\cdot\vec{H}) .  \nonumber
\end{align}

\noindent
Next, we obtain the normal projection of the variation of the second fundamental form. 
Note that
$$\delta\nabla_j\vec{\Phi}=\nabla_j\delta\vec{\Phi}=\nabla_j(A^s\nabla_s\vec{\Phi}+\vec{B})=A^s\vec{h}_{sj}+\nabla_jA^s\nabla_s\vec{\Phi}+\nabla_j\vec{B}.$$
Using the well-known Codazzi-Mainardi equations
\begin{align}
\pi_{\vec{n}}\nabla_i\vec{h}_{jk}=\pi_{\vec{n}}\nabla_j\vec{h}_{ik}=\pi_{\vec{n}}\nabla_k\vec{h}_{ij}  \label{codazzi}
\end{align}

\noindent
we have
\begin{align}\pi_{\vec{n}}\delta\vec{h}_{ij}&=\pi_{\vec{n}}\nabla_{i}\delta\nabla_{j}\vec{\Phi}=
\pi_{\vec{n}}\nabla_i\left(\nabla_jA^s\nabla_s\vec{\Phi}+A^s\vec{h}_{js}+\nabla_j\vec{B}  \right)  \nonumber
\\&=A^s\pi_{\vec{n}}\nabla_{i}\vec{h}_{sj}+   \nabla_{i}A^s\vec{h}_{sj}+\nabla_{j}A^s\vec{h}_{is} +  \pi_{\vec{n}}\nabla_i\nabla_j\vec{B}     \nonumber \\&=A^s\pi_{\vec{n}}\nabla_{s}\vec{h}_{ij}+\nabla_{i}A^s\vec{h}_{sj}+\nabla_{j}A^s\vec{h}_{is}
+\pi_{\vec{n}}\nabla_i\nabla_j\vec{B}. \nonumber
\end{align}

\noindent
Finally,  the normal projection of the variation of the mean curvature vector is obtained via the Codazzi-Mainardi equation \eqref{codazzi}. Note that
\begin{align}
\pi_{\vec{n}}\nabla^j\delta\nabla
_j\vec{\Phi}&=\vec{h}_{sj}\nabla^jA^s+A^s\pi_{\vec{n}}\nabla^j\vec{h}_{sj}+\vec{h}^j_s\nabla_jA^s+\pi_{\vec{n}}\Delta_g\vec{B}
\nonumber\\&= 2\vec{h}_{sj}\nabla^jA^s+4A^s\pi_{\vec{n}}\nabla_s\vec{H}+\pi_{\vec{n}}\Delta_g\vec{B}.\label{b}
\end{align}

\noindent
By using \eqref{a} and \eqref{b}, we have
\begin{align}
\pi_{\vec{n}}\delta\vec{H}&= \frac{1}{4}\pi_{\vec{n}}\delta\nabla^j\nabla_j\vec{\Phi}=\frac{1}{4}\pi_{\vec{n}}\delta(g^{ij}\nabla_i\nabla_j\vec{\Phi})\nonumber
\\ &= \frac{1}{4}(\vec{h}_{ij}\delta g^{ij}+\pi_{\vec{n}}\nabla^j\delta\nabla_j\vec{\Phi})\nonumber
\\ &= \frac{1}{4} \vec{h}_{ij}(-\nabla^iA^j-\nabla^jA^i+2\vec{B}\cdot\vec{h}^{ij})  \nonumber 
\\&\quad+\frac{1}{4}(2\vec{h}_{sj}\nabla^jA^s+4A^s\pi_{\vec{n}}\nabla_s\vec{\vec{H}}+\pi_{\vec{n}}\Delta_g\vec{B})\nonumber
\\&= \frac{1}{4} (4A^s\pi_{\vec{n}}\nabla_s\vec{H}+2(\vec{B}\cdot\vec{h}^{ij})\vec{h}_{ij}+\pi_{\vec{n}}\Delta_g\vec{B})\nonumber
\\ &= \frac{1}{4}(4A^s\pi_{\vec{n}}\nabla_s\vec{H}+(\vec{B}\cdot\vec{h}^{ij})\vec{h}_{ij}+\Delta_{\perp}\vec{B}).  \nonumber
\end{align}

\end{proof}

\vskip 5mm
\begin{proof}[Proof of Theorem \ref{WE}]

Our approach is to compute the variation of each term of $\mathcal{E}(\Sigma)$ separately on a small patch $\Sigma_0$ of the manifold $\Sigma$. First, we consider the variation of $|\pi_{\vec{n}}\nabla\vec{H}|^2$.
 Given a tensor field $\vec{T}_j$, the variation of $|\vec{T}|^2$ is found from the variation of $\vec{T}_j$ as follows:
\begin{align}
\delta|\vec{T}|^2 &= \delta(g^{ij}\vec{T}_i\cdot\vec{T}_j)= (\vec{T}_i\cdot\vec{T}_j)\delta g^{ij}+2\vec{T}^j\cdot\delta\vec{T}_j \nonumber
\\ &= 2\vec{T}_i\cdot \vec{T}_j(-\nabla^iA^j+\vec{B}\cdot \vec{h}^{ij})+ 2\vec{T}^{j}\cdot\delta \vec{T}_j\nonumber
\end{align}
where we have used \eqref{a}. Using  \eqref{metric det}, we have 
\begin{align}
|g|^{-1/2}\delta(|\vec{T}|^2|g|^{1/2})&= 2\vec{T}_i\cdot\vec{T}_j(-\nabla^iA^j+\vec{B}\cdot\vec{h}^{ij})+2\vec{T}
^j\cdot\delta\vec{T}_j+|\vec{T}|^2(\nabla_jA^j-4\vec{B}\cdot\vec{H})\label{d}
\end{align}

\noindent
We will now vary $|\vec{T}|^2|g|^{1/2}$ for the choice $\vec{T}_j=\pi_{\vec{n}}\nabla_j\vec{H}$. We will need \eqref{c}. First note that
\begin{align}
&\quad\,\,\pi_{\vec{n}}\nabla^j\vec{H}\cdot \delta\pi_{\vec{n}}\nabla_j\vec{H}=\pi_{\vec{n}}\nabla^j\vec{H}\cdot\delta\nabla_j\vec{H}-\pi_{\vec{n}}\nabla^j\vec{H}\cdot \delta\pi_{T}\nabla_j\vec{H}  \nonumber \\ &= \pi_{\vec{n}}\nabla^j\vec{H}\cdot\nabla_j\delta\vec{H}-\pi_{\vec{n}}\nabla^j\vec{H}\cdot \delta\pi_{T}\nabla_j\vec{H}  \nonumber
\\ &= \pi_{\vec{n}}\nabla^j\vec{H}\cdot\nabla_j\pi_{\vec{n}}\delta\vec{H}+  \pi_{\vec{n}}\nabla^j\vec{H}\cdot\nabla_j\pi_T\delta\vec{H}-    \pi_{\vec{n}}\nabla^j\vec{H}\cdot\delta\pi_T\nabla_j\vec{H}   \nonumber
\\ &= \pi_{\vec{n}}\nabla^j\vec{H}\cdot\nabla_j\pi_{\vec{n}}\delta\vec{H}+ \pi_{\vec{n}}\nabla^j\vec{H}\cdot  \left[ \pi_{\vec{n}}\nabla_j  [ (\nabla_i\vec{\Phi}\cdot \delta\vec{H})\nabla^i\vec{\Phi} ]-\delta   [ (\nabla^i\vec{\Phi}\cdot \nabla_j\vec{H})\nabla_i\vec{\Phi}   ]               \right]           \nonumber
\\ &= \pi_{\vec{n}}\nabla^j\vec{H}\cdot\nabla_j\pi_{\vec{n}}\delta\vec{H}+ \nabla^j\vec{H}\cdot \left[ -\pi_{\vec{n}}\nabla_j[(\vec{H}\cdot\nabla_i\delta\vec{\Phi})\nabla^i\vec{\Phi}]  +\pi_{\vec{n}}\delta [(\vec{H}\cdot \vec{h}^i_j)\nabla_i\vec{\Phi}]      \right] \nonumber
\\ &= \pi_{\vec{n}}\nabla^j\vec{H}\cdot\nabla_j\pi_{\vec{n}}\delta\vec{H}-\nabla^j\vec{H}\cdot \left [ (\vec{H}\cdot \nabla_i\delta\vec{\Phi})\vec{h}^i_j-(\vec{H}\cdot\vec{h}_j^i)\pi_{\vec{n}}\nabla_i\delta\vec{\Phi}  \right]  \nonumber
\\ &= \pi_{\vec{n}}\nabla^j\vec{H}\cdot\nabla_j\pi_{\vec{n}}\delta\vec{H}-\nabla^j\vec{H} \cdot\left[ [\vec{H}\cdot \nabla_i(A^s\nabla_s\vec{\Phi}+\vec{B})]\vec{h}^i_j-(\vec{H}\cdot \vec{h}^i_j)\pi_{\vec{n}}\nabla_i(A^s\nabla_s\vec{\Phi}+\vec{B})   \right]  \nonumber
\\ &= \pi_{\vec{n}}\nabla^j\vec{H}\cdot\nabla_j\pi_{\vec{n}}\delta\vec{H}- \nabla^j \vec{H}\cdot \left[ (A^s\vec{H}\cdot\vec{h}_{is}+\vec{H}\cdot \nabla_i\vec{B})\vec{h}^i_j-(\vec{H}\cdot\vec{h}^i_j)(A^s\vec{h}_{is}+\pi_{\vec{n}}\nabla_i\vec{B})    \right]  \nonumber
\\&= \pi_{\vec{n}}\nabla^j\vec{H}\cdot\nabla_j\pi_{\vec{n}}\delta\vec{H}- A^i \left[ (\vec{H}\cdot\vec{h}_{is})(\vec{h}^s_j\cdot\nabla^j\vec{H})-(\vec{H}\cdot \vec{h}^s_j)(\vec{h}_{is}\cdot\nabla^j\vec{H})   \right]\nonumber
\\& \quad\quad\quad -(\vec{H}\cdot\nabla_i\vec{B})(\vec{h}^i_j\cdot\nabla^j\vec{H})+ (\vec{H}\cdot \vec{h}^i_j)(\nabla^j\vec{H}\cdot\pi_{\vec{n}}\nabla_i\vec{B}).\label{e}
\end{align}

\noindent
By using Simon's identity
$$\Delta\vec{h}_{ij}=\nabla_i\nabla_j\vec{H}+\vec{H}\cdot\vec{h}_{ik}g^{kl}\vec{h}_{lj}-|\vec{h}|^2\vec{h}_{ij}$$
we see that
$$\nabla_i\nabla_j\vec{H}=\nabla_j\nabla_i\vec{H}.$$
Hence, we find that
\begin{align}
&\quad\pi_{\vec{n}}\nabla^j\vec{H}\cdot\nabla_j\pi_{\vec{n}}\nabla_i\vec{H}=\pi_{\vec{n}} \nabla^j\vec{H}\cdot \nabla_j\nabla_i\vec{H}-\pi_{\vec{n}}\nabla^j\vec{H}\cdot\nabla_j\pi_T\nabla_i\vec{H} \nonumber
\\&= \pi_{\vec{n}}\nabla^j\vec{H}\cdot\nabla_i\nabla_j\vec{H}+\pi_{\vec{n}}\nabla^j\vec{H}\cdot\nabla_j((\vec{H}\cdot \vec{h}_{is})\nabla^s\vec{\Phi})\nonumber
\\ &= \pi_{\vec{n}}\nabla^j\vec{H}\cdot\nabla_i\pi_{\vec{n}}\nabla_j\vec{H}+\pi_{\vec{n}}\nabla^j\vec{H}\cdot\nabla_i\pi_T\nabla_j\vec{H}+(\vec{H}\cdot\vec{h}_{is})(\vec{h}^s_j\cdot\nabla^j\vec{H})\nonumber
\\ &= \frac{1}{2}\nabla_i|\pi_{\vec{n}}\nabla \vec{H}|^2-\pi_{\vec{n}}\nabla^j\vec{H}\cdot \nabla_i((\vec{H}\cdot\vec{h}_{sj})\nabla^s\vec{\Phi})+(\vec{H}\cdot\vec{h}_{is})(\vec{h}^s_j\cdot\nabla^j\vec{H})\nonumber
\\ &= \frac{1}{2}\nabla_i|\pi_{\vec{n}}\nabla\vec{H}|^2-(\vec{H}\cdot\vec{h}_{sj})(\vec{h}^s_i\cdot\nabla^j\vec{H})+ (\vec{H}\cdot\vec{h}_{is})(\vec{h}^s_j\cdot\nabla^j\vec{H}) \nonumber
\\&= \frac{1}{2}\nabla_i|\pi_{\vec{n}}\nabla\vec{H}|^2-(\vec{H}\cdot\vec{h}^s_j)(\vec{h}_{is}\cdot\nabla^j\vec{H})+(\vec{H}\cdot\vec{h}_{is})(\vec{h}^s_j\cdot\nabla^j\vec{H})\label{ooio}
\end{align}

\noindent
Using \eqref{ooio} in \eqref{e} yields
\begin{align}
\pi_{\vec{n}}\nabla^j\vec{H}\cdot\delta\pi_{\vec{n}}\nabla_j\vec{H}&= \pi_{\vec{n}}\nabla^j\vec{H}\cdot\left[\nabla_j\pi_{\vec{n}}\delta\vec{H}-A^i\nabla_j\pi_{\vec{n}}\nabla_i\vec{H}     \right] +\frac{A^i}{2}\nabla_i|\pi_{\vec{n}}\nabla\vec{H}|^2   \nonumber
\\&\quad-  (\vec{H}\cdot\nabla_i\vec{B})(\vec{h}^i_j\cdot\nabla^j\vec{H})+ (\vec{H}\cdot\vec{h}^i_j)(\nabla^j\vec{H}\cdot\pi_{\vec{n}}\nabla_i\vec{B}).
\end{align}

\noindent
Calling upon \eqref{c} now gives
\begin{align}
&\pi_{\vec{n}}\nabla^j\vec{H}\cdot\delta\pi_{\vec{n}}\nabla_j\vec{H}\nonumber
\\&= \pi_{\vec{n}}\nabla^j\vec{H}\cdot \left[ \frac{1}{4}\nabla_j(4A^s\pi_{\vec{n}}\nabla_s\vec{H}+(\vec{B}\cdot\vec{h}^{ik})\vec{h}_{ik}+\Delta_{\perp}\vec{B})-A^i\nabla_j\pi_{\vec{n}}\nabla_i\vec{H}  \right] \nonumber
\\&\quad -(\vec{H}\cdot\nabla_i\vec{B})(\vec{h}^i_j\cdot\nabla^j\vec{H})+(\vec{H}\cdot\vec{h}^i_j)(\nabla^j\vec{H}\cdot\pi_{\vec{n}}\nabla_i\vec{B})+\frac{A^i}{2}\nabla_i|\pi_{\vec{n}}\nabla\vec{H}|^2 \nonumber
\\&= (\pi_{\vec{n}}\nabla^j\vec{H}\cdot\pi_{\vec{n}}\nabla_i\vec{H})\nabla_jA^i
+\frac{1}{4}\pi_{\vec{n}}\nabla^j\vec{H}\cdot\nabla_j((\vec{B}\cdot\vec{h}^{ik})\vec{h}_{ik})\nonumber
\\&\quad +\frac{1}{4}\pi_{\vec{n}}\nabla^j\vec{H}\cdot\nabla_j\Delta_{\perp}\vec{B} -(\vec{H}\cdot\nabla_i\vec{B})(\vec{h}^i_j\cdot\nabla^j\vec{H})+(\vec{H}\cdot\vec{h}^i_j)(\nabla^j\vec{H}\cdot\pi_{\vec{n}}\nabla_i\vec{B}) \nonumber
\\ &\quad +\frac{A^i}{2}\nabla_i|\pi_{\vec{n}}\nabla\vec{H}|^2         \nonumber
\\&= (\vec{T}_i\cdot\vec{T}_j)\nabla^jA^i+\frac{A^i}{2}\nabla_i|\vec{T}|^2+\frac{1}{4}\vec{T}^j\cdot\nabla_j((\vec{B}\cdot\vec{h}^{ik})\vec{h}_{ik})+\frac{1}{4}\vec{T}^j\cdot\nabla_j\Delta_{\perp}\vec{B}\nonumber
\\&\quad -(\vec{H}\cdot\nabla_i\vec{B})(\vec{h}^i_j\cdot\vec{T}^j)+(\vec{H}\cdot\vec{h}^i_j)(\vec{T}^j\cdot\nabla_i\vec{B}).\label{jjak}
\end{align}
Substituting \eqref{jjak} into \eqref{d} gives
\begin{align}
&|g|^{-1/2}\delta(|\vec{T}|^2|g|^{1/2}) \nonumber
\\&= 2(\vec{T}_i\cdot \vec{T}_j)(\vec{B}\cdot\vec{h}^{ij})+\frac{1}{2}\vec{T}^j\cdot\nabla_j((\vec{B}\cdot \vec{h}^{ik})\vec{h}_{ik})+\frac{1}{2}\vec{T}^j\cdot\nabla_j\Delta_{\perp}\vec{B}  \nonumber
\\&\quad -2(\vec{H}\cdot \nabla_j\vec{B})(\vec{h}^j_i\cdot\vec{T}^i)+ 2(\vec{H}\cdot\vec{h}^j_i)(\vec{T}^i\cdot\nabla_j\vec{B})- 4|\vec{T}|^2\vec{H}\cdot\vec{B}+\nabla^j(|\vec{T}|^2A_j)  \nonumber
\\& = \vec{B}\cdot\left[ 2(\vec{T}_i\cdot\vec{T}_j)\vec{h}^{ij}-4|\vec{T}|^2\vec{H}-\frac{1}{2}(\vec{h}_{ik}\cdot \nabla_j\vec{T}^j)\vec{h}^{ik}+2\nabla_j((\vec{T}^i\cdot\vec{h}^j_i)\vec{H}) \right.  \nonumber
\\&\quad\quad\quad\left.-2\nabla_j((\vec{H}\cdot\vec{h}^j_i)\vec{T}^i) -\frac{1}{2}\Delta_{\perp}\pi_{\vec{n}}\nabla_i\vec{T}^i     \right] \nonumber
\\ &\quad+\nabla^j \left[ |\vec{T}|^2A_j+\frac{1}{2}(\vec{B}\cdot\vec{h}^{ik})(\vec{T}_j\cdot\vec{h}_{ik})-2(\vec{B}\cdot\vec{H})(\vec{T}^i\cdot\vec{h}_{ij}) \right.  \nonumber
\\&\quad\quad\quad\left.  +2(\vec{B}\cdot\vec{T}^i)(\vec{H}\cdot\vec{h}_{ij})+\frac{1}{2}\vec{T}_j\cdot\Delta_{\perp}\vec{B}-\frac{1}{2}\nabla_j\vec{B}\cdot\pi_{\vec{n}}\nabla_i\vec{T}^i  
+\frac{1}{2}\vec{B}\cdot\nabla_j\pi_{\vec{n}}\nabla_i\vec{T}^i         \right]. \label{kakl}
\end{align}
Equation \eqref{kakl} can be simplified by noting that
$$\pi_{\vec{n}}\nabla_i\vec{T}^i=\Delta_{\perp}\vec{H}.$$
Then, we have 
\begin{align}|g|^{-1/2}\delta(|\vec{T}|^2|g|^{1/2})=\vec{B}\cdot\mathcal{\vec{W}}_1+\nabla^j(V_1)_j\label{wil}\end{align}
where
\begin{align}
\mathcal{\vec{W}}_1&= 2(\pi_{\vec{n}}\nabla_i\vec{H}\cdot\pi_{\vec{n}}\nabla_j\vec{H})\vec{h}^{ij}-4|\pi_{\vec{n}}\nabla\vec{H}|^2\vec{H}-\frac{1}{2}(\vec{h}_{ik}\cdot\Delta_{\perp}\vec{H})\vec{h}^{ik}\nonumber
\\& +2\pi_{\vec{n}}\nabla_j((\vec{h}^j_i\cdot\nabla^i\vec{H})\vec{H})-2\pi_{\vec{n}}\nabla_j((\vec{H}\cdot\vec{h}^j_i)\pi_{\vec{n}}\nabla^i\vec{H})-\frac{1}{2}\Delta^2_{\perp}\vec{H} \nonumber
\end{align}
and 
\begin{align}
&(V_1)_j= |\pi_{\vec{n}}\nabla\vec{H}|^2A_j+\frac{1}{2}(\vec{B}\cdot\vec{h}^{ik})(\vec{h}_{ik}\cdot\nabla_j\vec{H})-2(\vec{B}\cdot\vec{H})(\vec{h}_{ij}\cdot\nabla^i\vec{H}) \nonumber
\\&+ 2(\vec{B}\cdot\nabla^i\vec{H})(\vec{H}\cdot\vec{h}_{ij})+\frac{1}{2}\pi_{\vec{n}}\nabla_j\vec{H}\cdot \Delta_{\perp}\vec{B}-\frac{1}{2}\nabla_j\vec{B}\cdot\Delta_{\perp}\vec{H}+\frac{1}{2}\vec{B}\cdot\nabla_j\Delta_{\perp}\vec{H}.  \nonumber
\end{align}

\noindent
Next is to compute the variation of $|\vec{H}\cdot\vec{h}|^2$. We consider an energy involving $|T|^2$ for a symmetric 2-tensor $T_{ij}$. We have
\begin{eqnarray*}
\delta|T|^2&=&\delta(g^{ik}g^{jl}T_{kl}T_{ij})=2\left(T^{ij}\delta T_{ij}+ T_{ij} T_{k}^{i}\delta g^{jk}  \right)\\& =&2 T^{ij}\delta T_{ij}+ 2T_{ij} T^i_k\left(-\nabla^jA^k-\nabla^kA^j+2\vec{B}\cdot \vec{h}^{jk}\right).
\end{eqnarray*}

\noindent
By using  \eqref{lklkl} and \eqref{c}, the variation of $T_{ij}$ for the choice  $T_{ij}:=\vec{H}\cdot \vec{h}_{ij}$ is 
\begin{eqnarray}
\delta{T}_{ij}&=&\delta(\vec{H}\cdot\vec{h}_{ij}) \nonumber \\&=&\nabla_{i}A^s\vec{H}\cdot \vec{h}_{sj}+\nabla_j A^s \vec{H}\cdot\vec{h}_{is}+ A^s\vec{H}\cdot\nabla_s\vec{h}_{ij}+ A^s\vec{h}_{ij}\cdot\nabla_s\vec{H}\nonumber\\&&+\vec{H}\cdot\nabla_i\nabla_j\vec{B}+\frac{1}{4}(\vec{B}\cdot\vec{h}^{pq})(\vec{h}_{ij}\cdot \vec{h}_{pq})+\frac{1}{4}\vec{h}_{ij}\cdot \Delta_{\perp}\vec{B}\nonumber\\&=& T_{sj}\nabla_i A^s+ T_{is}\nabla_j A^s+A^s\nabla_sT_{ij}\nonumber\\&&+\vec{H}\cdot \nabla_i\nabla_j\vec{B}+\frac{1}{4}(\vec{B}\cdot\vec{h}^{pq})(\vec{h}_{ij}\cdot \vec{h}_{pq})+\frac{1}{4}\vec{h}_{ij}\cdot \Delta_{\perp}\vec{B}.  \label{ikio}
\end{eqnarray}

\noindent
Using \eqref{metric det} and \eqref{ikio}, we find 
\begin{eqnarray*}
|g|^{-1/2}\delta\left(|T|^2|g|^{1/2} \right)&=& 2\left( T^{ij}\delta T_{ij}+2T_{ij}T^i_k \vec{B}\cdot \vec{h}^{jk}-T_{ij}T^i_k \nabla^jA^k-T_{ij} T^i_k\nabla^kA^j\right)   \\&& + |T|^2\nabla_jA^j-4|T|^2\vec{B}\cdot \vec{H}\\&=& \nabla_s\left(A^s |T|^2\right)+2 T^{ij}\vec{H}\cdot \nabla_i\nabla_j\vec{B}+\frac{1}{2} T^{ij}(\vec{B}\cdot \vec{h}^{pq})(\vec{h}_{ij}\cdot\vec{h}_{pq})\\ && +\frac{1}{2}T^{ij}\vec{h}_{ij}\cdot\Delta_{\perp}\vec{B}+4T_{ij}(\vec{H}\cdot\vec{h}_{k}^{i})(\vec{B}\cdot\vec{h}^{jk})-4|T|^2\vec{B}\cdot\vec{H}\\&=& \vec{B}\cdot \vec{\mathcal{W}}_2+\nabla_k (V_2)^k
\end{eqnarray*}
where
$$\vec{\mathcal{W}}_2:=2\nabla_i\nabla_k(T^{ik}\vec{H})+\frac{1}{2}\Delta_{\perp}(T^{ij}\vec{h}_{ij})+\frac{1}{2}T^{ij}(\vec{h}_{ij}\cdot \vec{h}_{pq})\vec{h}^{pq}+4T_{ij}T^i_k\vec{h}^{jk}-4|T|^2\vec{H}$$
and 
$$(V_2)^k:=2T^{ik}\vec{H}\cdot \nabla_i \vec{B}-2\vec{B}\cdot \nabla_i(T^{ik}\vec{H})+\frac{1}{2}T^{ij}\vec{h}_{ij}\cdot \nabla^k\vec{B}-\frac{1}{2}\vec{B}\cdot\nabla^k(T^{ij}\vec{h}_{ij})+A^k|T|^2.$$

\noindent
Finally, we consider the variation of  $|\vec{H}|^4$. Using \eqref{c} we have
\begin{eqnarray*}
\delta|\vec{H}|^4&=&4|\vec{H}|^2\vec{H}\cdot\delta \vec{H}
\\
&=&|\vec{H}|^2\vec{H}\cdot \left( 4A^s\nabla_s\vec{H}+(\vec{B}\cdot\vec{h}^{ij})\vec{h}_{ij}+\Delta_\perp\vec{B} \right).
\end{eqnarray*}

\noindent
Hence, using \eqref{metric det}, we find  
\begin{eqnarray*}
|g|^{-1/2}\delta\left( |\vec{H}|^4 |g|^{1/2} \right)&=& 4|\vec{H}|^{2}A^s\vec{H}\cdot \nabla_s\vec{H}+|\vec{H}|^2(\vec{B}\cdot\vec{h}^{ij})(\vec{H}\cdot\vec{h}_{ij}) \\&&\quad+|\vec{H}|^2\vec{H}\cdot\Delta_\perp\vec{B}+|\vec{H}|^4\nabla_jA^j-4|\vec{H}|^4\vec{B}\cdot\vec{H}
\\&=& \vec{B}\cdot\mathcal{\vec{W}}_3+ \nabla_j (V_3)^j.
\end{eqnarray*}
where 
\begin{gather*}
\vec{\mathcal{W}}_3= \Delta_\perp(|\vec{H}|^2\vec{H})  +|\vec{H}|^2(\vec{H}\cdot\vec{h}_{ij})\vec{h}^{ij}-4|\vec{H}|^4\vec{H}  
\end{gather*}
and
\begin{gather*}
(V_3)^j=|\vec{H}|^4A^j+ |\vec{H}|^2\vec{H}\cdot\nabla^j\vec{B}-\vec{B}\cdot\nabla^j(|\vec{H}|^2 \vec{H})   .
\end{gather*}

\noindent
By setting
 $$\vec{\mathcal{W}}:=\vec{\mathcal{W}_1}-\vec{\mathcal{W}_2}+7\vec{\mathcal{W}_3} \quad\mbox{and}\quad {V}^j:=({V}_1)^j-({V}_2)^j+7({V}_3)^j$$ we see that
\begin{align}
\vec{\mathcal{W}}&=2(\pi_{\vec{n}}\nabla_i\vec{H}\cdot\pi_{\vec{n}}\nabla_j\vec{H})\vec{h}^{ij}-4|\pi_{\vec{n}}\nabla\vec{H}|^2\vec{H}-\frac{1}{2}(\vec{h}_{ik}\cdot\Delta_{\perp}\vec{H})\vec{h}^{ik}+2\pi_{\vec{n}}\nabla_j((\vec{h}^j_i\cdot\nabla^i\vec{H})\vec{H})\nonumber
\\& \quad-2\pi_{\vec{n}}\nabla_j((\vec{H}\cdot\vec{h}^j_i)\pi_{\vec{n}}\nabla^i\vec{H})-\frac{1}{2}\Delta^2_{\perp}\vec{H}  -2\nabla_i\nabla_k((\vec{H}\cdot\vec{h}^{ik})\vec{H})-\frac{1}{2}\Delta_{\perp}((\vec{H}\cdot\vec{h}^{ij})\vec{h}_{ij}) \nonumber
                                \\&\quad -\frac{1}{2}(\vec{H}\cdot\vec{h}^{ij})(\vec{h}_{ij}\cdot \vec{h}_{pq})\vec{h}^{pq}-4(\vec{H}\cdot\vec{h}_{ij})(\vec{H}\cdot\vec{h}^i_k)\vec{h}^{jk}+4|\vec{H}\cdot\vec{h}|^2\vec{H}    \nonumber
\\&\quad               +7 \Delta_\perp(|\vec{H}|^2\vec{H})  +7|\vec{H}|^2(\vec{H}\cdot\vec{h}_{ij})\vec{h}^{ij}-28|\vec{H}|^4\vec{H}                                              \nonumber
\end{align}
\noindent
and

\begin{align}
V^j&= \left(|\pi_{\vec{n}}\nabla\vec{H}|^2 -|\vec{H}\cdot\vec{h}|^2+7|\vec{H}|^4\right)A^j      +\frac{1}{2}(\vec{B}\cdot\vec{h}^{ik})(\vec{h}_{ik}\cdot\nabla^j\vec{H})  -2(\vec{B}\cdot\vec{H})(\vec{h}^j_i\cdot\nabla^i\vec{H})             \nonumber
\\&\quad+2(\vec{B}\cdot\nabla^i\vec{H}) (\vec{H}\cdot\vec{h}^j_i)+\frac{1}{2}\pi_{\vec{n}}\nabla^j\vec{H}\cdot\Delta_{\perp}\vec{B}-\frac{1}{2}\nabla^j\vec{B}\cdot\Delta_{\perp}\vec{H}+\frac{1}{2}\vec{B}\cdot\nabla^j\Delta_{\perp}\vec{H}                        \nonumber
\\& \quad-2(\vec{H}\cdot\vec{h}^{ij})(\vec{H}\cdot\nabla_i\vec{B})+2\vec{B}\cdot\nabla_i((\vec{H}\cdot\vec{h}^{ij})\vec{H})-\frac{1}{2}(\vec{H}\cdot\vec{h}^{ik})\vec{h}_{ik}\cdot\nabla^j\vec{B}       \nonumber
\\&\quad  +\frac{1}{2}\vec{B}\cdot\nabla^j((\vec{H}\cdot\vec{h}^{ik})\vec{h}_{ik})                                        +7 |\vec{H}|^2\vec{H}\cdot\nabla^j\vec{B}-7\vec{B}\cdot\nabla^j(|\vec{H}|^2 \vec{H}).            \nonumber
\end{align}

\noindent
Moreover, it follows that
\begin{align}|g|^{-1/2}\delta\left((|\pi_{\vec{n}}\nabla\vec{H}|^2-|\vec{H}\cdot\vec{h}|^2+7|\vec{H}|^4)|g|^{1/2}\right)=\vec{B}\cdot\mathcal{\vec{W}}+\nabla_jV^j\label{wil}\end{align}

\noindent
or more specifically
\begin{align}
\delta\int_{\Sigma_0}(|\pi_{\vec{n}}\nabla\vec{H}|^2-|\vec{H}\cdot\vec{h}|^2+7|\vec{H}|^4)\,\, d\textnormal{vol}_g=\int_{\Sigma_0}(\vec{B}\cdot\mathcal{\vec{W}}+\nabla_jV^j)\,\, d\textnormal{vol}_g.  
\end{align}
Since $\vec{\Phi}$ is a critical point of the energy $\mathcal{E}(\Sigma)$, it holds
\begin{align}
\int_{\Sigma_0}(\vec{B}\cdot\mathcal{\vec{W}}+\nabla_jV^j)\,\, d\textnormal{vol}_g=0 .\label{kuj1}
\end{align}
As \eqref{kuj1} holds on every small patch $\Sigma_0$ of the manifold $\Sigma$, we can write
$$\vec{B}\cdot\mathcal{\vec{W}}+\nabla_jV^j=0.$$
The Euler-Lagrange equation for $\mathcal{E}(\Sigma)$ corresponds to the normal variation of $\mathcal{E}(\Sigma)$, that is $\pi_{\vec{n}}\delta\vec{\Phi}=\vec{B}$.
This gives
$$\mathcal{\vec{W}}=\vec{0}.$$
\noindent

\end{proof}
\begin{rmk}
We point out an important mistake of \cite{robingraham} (Proposition 5.6) where Robin-Graham and Reichert wrote the Willmore-Euler-Lagrange equation for $\mathcal{E}(\Sigma)$ in an arbitrary codimension. They did not show their computation but followed a formalism which was used by Guven \cite{jguv} to derive the energy $\mathcal{E}(\Sigma)$ in codimension one.
Proposition 5.6 of \cite{robingraham}  differs from \eqref{abc} by two terms: $2\pi_{\vec{n}}\nabla_j((\vec{h}^j_i\cdot\nabla^i\vec{H})\vec{H})$ and $-2\pi_{\vec{n}}\nabla_j((\vec{H}\cdot\vec{h}^j_i)\pi_{\vec{n}}\nabla^i\vec{H})$.
\\\noindent
Note that in codimension one, these two terms cancel out and Proposition 5.6 of \cite{robingraham} would then be correct. 
\end{rmk}

\noindent
We now prove an identity which holds on any 4-dimensional manifold $\Sigma$. This identity is obtained by varying 
\begin{align}
\mathcal{E}_{H^2}:= \int_{\Sigma_0\subset \Sigma}|\vec{H}|^2 d\textnormal{vol}_g.   \label{2DD}
\end{align}
on every small patch $\Sigma_0$ of $\Sigma$ and using the translation invariance of \eqref{2DD}. Note that we are not considering the critical points of the energy \eqref{2DD}. Hence the Willmore-type operator $\vec{\mathcal{W}}_{H^2}$ for \eqref{2DD} does not satisfy 
$$\vec{\mathcal{W}}_{H^2}=\vec{0}.$$
   
\begin{lem}  \label{rivvy}
Let $\vec{\Phi}:\Sigma\rightarrow \mathbb{R}^m$ be a smooth immersion of a 4-dimensional manifold $\Sigma$. Define 
\begin{align}
\vec{A}^s:= \nabla^s\vec{H}-2(|\vec{H}|^2g^{sk}-\vec{H}\cdot\vec{h}^{sk})\nabla_k\vec{\Phi}. \label{defA}
\end{align}
Then the following identity holds
\begin{align}
\nabla_s \vec{A}^s= \Delta_\perp\vec{H} +(\vec{H}\cdot \vec{h}_{ij})\vec{h}^{ij} -8|\vec{H}|^2\vec{H}.  \nonumber
\end{align}
\end{lem}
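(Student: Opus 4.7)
The plan is to verify the identity by a direct tensor computation. Since both sides are manifestly normal vector fields, the strategy is to push $\nabla_s$ into the definition of $\vec A^s$, split the rough Laplacian $\Delta_g\vec H$ into its normal and tangential components, and watch the tangential corrections cancel. The only tools needed are (i) $\Delta_g\vec\Phi = g^{ij}\nabla_i\nabla_j\vec\Phi = 4\vec H$ (since $\dim\Sigma=4$), (ii) the Weingarten-type decomposition $\nabla^s\vec V = \pi_{\vec n}\nabla^s\vec V - (\vec V\cdot\vec h^{sj})\nabla_j\vec\Phi$ valid for any normal vector field $\vec V$ (got by differentiating $\vec V\cdot\nabla_j\vec\Phi = 0$), and (iii) the Codazzi–Mainardi identity \eqref{codazzi}, which combined with $\vec h^s{}_s = 4\vec H$ yields $\pi_{\vec n}\nabla_s\vec h^{sj} = 4\,\pi_{\vec n}\nabla^j\vec H$.

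First I would expand by Leibniz to obtain
\[
\nabla_s\vec A^s = \Delta_g\vec H - 2\nabla_s(|\vec H|^2)\,\nabla^s\vec\Phi - 8|\vec H|^2\vec H + 2\nabla_s(\vec H\cdot\vec h^{sk})\,\nabla_k\vec\Phi + 2(\vec H\cdot\vec h^{sk})\vec h_{sk},
\]
using $\nabla_s\nabla^s\vec\Phi = 4\vec H$. Next I would plug in $\nabla_s|\vec H|^2 = 2\,\vec H\cdot\pi_{\vec n}\nabla_s\vec H$ and, using tool (iii), $\nabla_s(\vec H\cdot\vec h^{sk}) = \pi_{\vec n}\nabla_s\vec H\cdot\vec h^{sk} + 4\,\vec H\cdot\pi_{\vec n}\nabla^k\vec H$.

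Second, I would decompose the rough Laplacian of $\vec H$ by applying the Weingarten identity (ii) twice. A short computation gives
\[
\Delta_g\vec H = \Delta_\perp\vec H - 2(\pi_{\vec n}\nabla_s\vec H\cdot\vec h^{sj})\nabla_j\vec\Phi - 4(\vec H\cdot\pi_{\vec n}\nabla^j\vec H)\nabla_j\vec\Phi - (\vec H\cdot\vec h^{sj})\vec h_{sj}.
\]
Substituting this into the expression for $\nabla_s\vec A^s$, the two tangential contributions of type $(\pi_{\vec n}\nabla_s\vec H\cdot\vec h^{sj})\nabla_j\vec\Phi$ cancel, and likewise the two tangential contributions of type $(\vec H\cdot\pi_{\vec n}\nabla^j\vec H)\nabla_j\vec\Phi$. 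The normal pieces $-(\vec H\cdot\vec h^{sj})\vec h_{sj}$ and $+2(\vec H\cdot\vec h^{sk})\vec h_{sk}$ combine to $+(\vec H\cdot\vec h^{sj})\vec h_{sj}$, and together with $\Delta_\perp\vec H$ and $-8|\vec H|^2\vec H$ this is exactly the right-hand side.

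The main obstacle is bookkeeping: there are three distinct tangential contributions (from $\Delta_g\vec H$, from differentiating $|\vec H|^2\nabla^s\vec\Phi$, and from differentiating $(\vec H\cdot\vec h^{sk})\nabla_k\vec\Phi$), and the signs and numerical coefficients must match for the cancellations to happen. A convenient sanity check is that the stated RHS is purely normal, so any residual tangential term would signal an arithmetic error. As a conceptual alternative one can derive the identity from the translation invariance of $\mathcal E_{H^2}$ noted just before the lemma: Noether's theorem turns a translation variation $\delta\vec\Phi = \vec a$ into a divergence identity of the form $\vec a\cdot\nabla_s\vec A^s = \vec a\cdot\bigl(\Delta_\perp\vec H+(\vec H\cdot\vec h_{ij})\vec h^{ij}-8|\vec H|^2\vec H\bigr)$, from which the lemma follows by the arbitrariness of $\vec a$; this route however essentially replays the computations already performed in the proof of Theorem~\ref{WE} for the simpler integrand $|\vec H|^2$.
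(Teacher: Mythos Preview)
Your direct tensor computation is correct and complete: the Leibniz expansion, the Weingarten decomposition of $\Delta_g\vec H$, and the use of the contracted Codazzi identity are all accurate, and the tangential cancellations go through exactly as you describe.

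However, this is \emph{not} the route the paper takes. The paper proves the lemma by the Noether-type argument you mention only as an afterthought: it computes the first variation of $\mathcal{E}_{H^2}=\int|\vec H|^2\,d\mathrm{vol}_g$, obtaining $|g|^{-1/2}\delta(|\vec H|^2|g|^{1/2})=\nabla_sV_{H^2}^s+\vec B\cdot\vec{\mathcal W}_{H^2}$ with $\vec{\mathcal W}_{H^2}=\tfrac12(\vec H\cdot\vec h_{ij})\vec h^{ij}-4|\vec H|^2\vec H+\tfrac12\Delta_\perp\vec H$, then specialises to the translation $\delta\vec\Phi=\vec a$ to identify $V_{H^2}^s=-\tfrac12\,\vec a\cdot\vec A^s$; since the variation vanishes on every patch, $-\nabla_s\vec V_{H^2}^s=\vec{\mathcal W}_{H^2}$, and the identity follows. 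Your approach is more elementary and self-contained (no variational machinery, no appeal to invariance), while the paper's approach has the advantage of fitting the lemma into the Noether framework used throughout Chapter~\ref{latty1} and of making transparent \emph{why} such a divergence identity should exist in the first place.
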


\begin{proof}
Using \eqref{c}, we have
\begin{align}
\delta|\vec{H}|^2&=  2\vec{H}\cdot \delta\vec{H}                  \nonumber
\\&= \frac{1}{2}\vec{H}\cdot \left( 4A^s\pi_{\vec{n}}\nabla_s\vec{H}+(\vec{B}\cdot\vec{h}^{ij})\vec{h}_{ij}
+\Delta_{\perp}\vec{B})  
  \right)   .       \nonumber
\end{align}

\noindent
Using \eqref{metric det},  we have
\begin{align}
&\quad\quad|g|^{-1/2} \delta\left( |\vec{H}|^2 |g|^{1/2}  \right)  \nonumber
\\&= 2A^s \vec{H}\cdot \nabla_s\vec{H} +\frac{1}{2} (\vec{B}\cdot\vec{h}^{ij})(\vec{H}\cdot\vec{h}_{ij}) +\frac{1}{2}\vec{H}\cdot\Delta_\perp\vec{B} +|\vec{H}|^2\nabla_s A^s -4(\vec{B}\cdot\vec{H}) |\vec{H}|^2      \nonumber
\\&= \nabla_s(A^s |\vec{H}|^2) +\frac{1}{2} (\vec{B}\cdot\vec{h}^{ij})(\vec{H}\cdot\vec{h}_{ij})-4(\vec{B}\cdot\vec{H}) |\vec{H}|^2   \nonumber +\frac{1}{2}\left[ \nabla_i(\vec{H}\cdot \nabla^i\vec{B})  -\nabla_i\vec{H} \cdot\pi_{\vec{n}} \nabla^i\vec{B}  \right]    \nonumber
\\&= \nabla_s(A^s |\vec{H}|^2)  +\frac{1}{2} (\vec{B}\cdot\vec{h}^{ij})(\vec{H}\cdot\vec{h}_{ij})   -4(\vec{B}\cdot\vec{H}) |\vec{H}|^2         \nonumber
\\& \quad\quad\quad+\frac{1}{2}\left[\nabla_i(\vec{H}\cdot \nabla^i\vec{B})  -\nabla^i (\pi_{\vec{n}}\nabla_i\vec{H}\cdot\vec{B})  +\vec{B}\cdot \nabla^i\pi_{\vec{n}}\nabla_i\vec{H}      \right]   \nonumber
\\&= \nabla_s\left[ A^s|\vec{H}|^2 +\frac{1}{2} \vec{H}\cdot \nabla^s\vec{B} -\frac{1}{2} \vec{B}\cdot\pi_{\vec{n}}\nabla^s\vec{H}     \right]  \nonumber
\\&\quad\quad\quad + \vec{B}\cdot   \left[\frac{1}{2} (\vec{H}\cdot\vec{h}_{ij})\vec{h}^{ij} -4|\vec{H}|^2\vec{H} +\frac{1}{2} \Delta_\perp \vec{H}          \right]  \nonumber
\\&= \nabla_s V_{H^2}^s +\vec{B}\cdot\vec{\mathcal{W}}_{H^2} \nonumber
\end{align}
where
$$V_{H^2}^s:=A^s|\vec{H}|^2 +\frac{1}{2} \vec{H}\cdot \nabla^s\vec{B} -\frac{1}{2} \vec{B}\cdot\pi_{\vec{n}}\nabla^s\vec{H} $$
and
$$\vec{\mathcal{W}}_{H^2}:=\frac{1}{2} (\vec{H}\cdot\vec{h}_{ij})\vec{h}^{ij} -4|\vec{H}|^2\vec{H} +\frac{1}{2} \Delta_\perp \vec{H}      .$$

\noindent
We have on every small patch $\Sigma_0$ the identity
\begin{align}
\delta\int_{\Sigma_0}|\vec{H}|^2 d\textnormal{vol}_g= \int_{\Sigma_0} (\nabla_s V_{H^2}^s +\vec{B}\cdot\vec{\mathcal{W}}_{H^2}) \,\,d\textnormal{vol}_g . \label{chinedui}
\end{align}

\noindent
Recall $\delta\vec{\Phi}=A^s\nabla_s\vec{\Phi}+\vec{B}$. Using invariance by translation of the energy \eqref{2DD}: $$\delta\vec{\Phi}=\vec{a}, \,\,\mbox{where}\,\, \vec{a}\in\mathbb{R}^m \,\,\mbox{is a constant vector}$$
yields
\begin{align}
A^s= \vec{a}\cdot \nabla^s\vec{\Phi}\,\quad\mbox{and}\,\quad \vec{B}=\pi_{\vec{n}}\vec{a}. \label{yies}
\end{align}

Now, using \eqref{yies} we have
\begin{align}
V_{H^2}^s&=A^s|\vec{H}|^2 +\frac{1}{2} \vec{H}\cdot \nabla^s\vec{B} -\frac{1}{2} \vec{B}\cdot\pi_{\vec{n}}\nabla^s\vec{H}  \nonumber
\\&= \vec{a}\cdot\left[\nabla^s\vec{\Phi}|\vec{H}|^2  +\frac{1}{2}\nabla^s\vec{H} -\pi_{\vec{n}}\nabla^s\vec{H} \right] \nonumber
\\&=\vec{a}\cdot\left[\nabla^s\vec{\Phi}|\vec{H}|^2  -\frac{1}{2} \nabla^s\vec{H} - (\vec{H}\cdot \vec{h}^{sk})\nabla_k\vec{\Phi} \right]
\end{align}
so that equation \eqref{chinedui} under translation invariance  becomes
\begin{align}
\delta\int_{\Sigma_0}|\vec{H}|^2 d\textnormal{vol}_g=\vec{a}\cdot \int_{\Sigma_0} (\nabla_s \vec{V}_{H^2}^s +\vec{\mathcal{W}}_{H^2} )\,\,d\textnormal{vol}_g=0 \label{dengu}
\end{align}
where $\vec{V}_{H^2}^s:=\nabla^s\vec{\Phi}|\vec{H}|^2  -\frac{1}{2} \nabla^s\vec{H} - (\vec{H}\cdot \vec{h}^{sk})\nabla_k\vec{\Phi}$. As \eqref{dengu} holds on every small patch $\Sigma_0$, we have $-\nabla_s \vec{V}_{H^2}^s =\vec{\mathcal{W}}_{H^2} $. That is,
\begin{align}
-\nabla_s\left[\nabla^s\vec{\Phi}|\vec{H}|^2  -\frac{1}{2} \nabla^s\vec{H} - (\vec{H}\cdot \vec{h}^{sk})\nabla_k\vec{\Phi} \right]
&= \left[ (\vec{H}\cdot\vec{h}_{ij})\vec{h}^{ij} -4|\vec{H}|^2\vec{H} +\frac{1}{2} \Delta_\perp \vec{H}   \right]  \nonumber
\end{align}
and then the desired identity follows.
\end{proof}

\section{The other energies} \label{othergen}

The energy \eqref{jkas} is unbounded from below as observed by Robin-Graham and Reichert in \cite{robingraham} but we can add   the energy $\mu\int_{\Sigma}|h_0|^4\,\, d\textnormal{vol}_g$ to it in order to obtain a nonnegative energy. This holds for $\mu\geq\frac{1}{12}$. In this section, we will investigate these other nonnegative energies in codimension one. 
\begin{prop}[See Proposition 6.6 of \cite{robingraham}]
Let $\vec{\Phi}:\Sigma\rightarrow\mathbb{R}^5$ be an immersion. If $\mu\geq\frac{1}{12}$, then the energy $\mathcal{E}(\Sigma)+\mu\int_\Sigma |h_0|^4\,\, d\textnormal{vol}_g$ is nonnegative.
\end{prop}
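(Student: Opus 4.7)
The approach is to pass to codimension one and reduce the statement to a pointwise algebraic inequality via the trace decomposition $h = h_0 + H g$. Using the codimension-one form noted in the excerpt,
$$\mathcal{E}(\Sigma) = \int_\Sigma \bigl(|\nabla H|^2 - H^2 |h|^2 + 7 H^4\bigr)\, d\textnormal{vol}_g,$$
so the gradient term $|\nabla H|^2$ is already manifestly nonnegative and may be set aside. All the work then reduces to controlling the algebraic part of the integrand.

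Next I would exploit the identity $|h|^2 = |h_0|^2 + 4 H^2$, which follows from $h_{ij} = (h_0)_{ij} + H g_{ij}$, the vanishing trace $\operatorname{tr}_g h_0 = 0$, and $g_{ij} g^{ij} = 4$ (the dimension four enters here in an essential way). Substituting,
$$-H^2 |h|^2 + 7 H^4 = -H^2 |h_0|^2 + 3 H^4,$$
and consequently
$$\mathcal{E}(\Sigma) + \mu \int_\Sigma |h_0|^4\, d\textnormal{vol}_g = \int_\Sigma |\nabla H|^2\, d\textnormal{vol}_g + \int_\Sigma Q\bigl(|h_0|^2, H^2\bigr)\, d\textnormal{vol}_g,$$
where $Q(x, y) := \mu x^2 - x y + 3 y^2$.

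It then suffices to verify that $Q(x, y) \geq 0$ for all real $x, y$ whenever $\mu \geq 1/12$. Viewing $Q$ as a quadratic in $x$, the discriminant is $y^2 - 12 \mu y^2 = (1 - 12 \mu) y^2$, which is nonpositive precisely when $\mu \geq 1/12$; combined with the positive leading coefficient $\mu$, this yields $Q \geq 0$ pointwise on $\Sigma$. Integrating and adding $\int |\nabla H|^2$ gives nonnegativity of the full energy.

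There is essentially no obstacle: the proof is a trace decomposition followed by discriminant analysis. The only mild subtlety is that the numerical factor $4$ appearing in $|h|^2 = |h_0|^2 + 4 H^2$ is exactly what makes $\mu = 1/12$ the sharp threshold; had the dimension been different, the threshold would shift, and this neatly explains the hypothesis of the proposition.
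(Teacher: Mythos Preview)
Your proof is correct and follows essentially the same approach as the paper: both reduce the codimension-one integrand via $|h|^2 = |h_0|^2 + 4H^2$ to $|\nabla H|^2 + 3H^4 - H^2|h_0|^2 + \mu|h_0|^4$, and then check nonnegativity of the quadratic form in $(|h_0|^2, H^2)$ by a discriminant computation yielding the threshold $\mu \geq 1/12$. The only cosmetic difference is the labeling of variables in the quadratic form.
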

\begin{proof}
By using the formula $(h_0)_{ij}=h_{ij}-Hg_{ij}$, we have
\begin{align}
\mathcal{E}(\Sigma)+\mu\int_{\Sigma}|h_0|^4&= \int_{\Sigma} |\nabla H|^2 -|H|^2(|h_0|^2 +4|H|^2) +7|H|^2 +\mu |h_0|^4  \nonumber
\\&= \int_{\Sigma} |\nabla H|^2  +3|H|^4 -|H|^2|h_0|^2 +\mu |h_0|^4 .  \nonumber
\end{align}
Since $\int_{\Sigma} |\nabla H|^2\geq 0$, a sufficient condition for 
$\mathcal{E}(\Sigma)+\mu\int_{\Sigma}|h_0|^4$ to be nonnegative is that the quadratic form $3x^2-xy+\mu y^2$ be nonnegative definite. This gives that $3\mu-\frac{1}{4}\geq 0$ which implies $\mu\geq\frac{1}{12}$.
\end{proof}

\begin{rmk}\label{idf}
Among all energies of the form 
$$\mathcal{E}(\Sigma)+\mu\int_\Sigma |h_0|^4\,\, d\textnormal{vol}_g\,,$$we strongly suspect that the only energy whose critical points include minimal hypersurfaces is $\mathcal{E}(\Sigma)$.
\end{rmk}
We now give an argument to substantiate this remark. Since  minimal hypersurfaces are critical points of $\mathcal{E}(\Sigma)$, we only need to vary  the energy $\int_{\Sigma}|h_0|^4 \,\, d\textnormal{vol}_g$ and show that its critical points do not include minimal hypersurfaces.

\vskip3mm
\noindent
By using the decomposition $(\vec{h}_0)_{ij}=\vec{h}_{ij}-\vec{H}g_{ij}$ we find that
\begin{align}
|\vec{h}_0|^4=|\vec{h}|^4 -8|\vec{H}\cdot\vec{h}|^2+16|\vec{H}|^4 . \nonumber
\end{align}
The variations of $\int_\Sigma|\vec{H}\cdot\vec{h}|^2 \,\,d\textnormal{vol}_g$ and $\int_\Sigma|\vec{H}|^4\,\,d\textnormal{vol}_g$ have already been done in the proof of Theorem \eqref{WE} and clearly their critical points include minimal hypersurfaces. So, we only need to vary $\int_\Sigma |h|^4\,\,d\textnormal{vol}_g$.

\vskip2mm
\noindent
We find
\begin{align}
\delta |\vec{h}|^2=\delta(\vec{h}_{ij}\cdot\vec{h}^{ij}) = A^s \nabla_s(\vec{h}_{ij}\cdot\vec{h}^{ij}) + 4(\vec{B}\cdot\vec{h}^{is})(\vec{h}_{ij}\cdot\vec{h}^j_s) + 2\vec{h}^{ij}\cdot\nabla_i\nabla_j\vec{B}                 . \nonumber
\end{align}
\noindent
Hence
\begin{align}
\delta|\vec{h}|^4 &= 2|\vec{h}|^2\delta|\vec{h}|^2                                    \nonumber
\\& = A^s\nabla_s|h|^4 + 8(\vec{B}\cdot\vec{h}^{is})(\vec{h}_{ij}\cdot\vec{h}^j_s) |h|^2+ 4(\vec{h}^{ij}\cdot\nabla_i\nabla_j\vec{B}) |\vec{h}|^2                   \nonumber
\end{align}
and
\begin{align}
|g|^{-1/2} \delta(|\vec{h}|^4 |g|^{1/2})&=       \nabla_s(A^s |\vec{h}|^4) + 8(\vec{B}\cdot \vec{h}^{is}) (\vec{h}_{ij}\cdot\vec{h}^j_s) |\vec{h}|^2                                   \nonumber
\\&\quad\quad\quad + 4(\vec{h}^{ij}\cdot\nabla_i\nabla_j\vec{B})|\vec{h}|^2-4|\vec{h}|^4\vec{B}\cdot\vec{H} \nonumber
\\&= \nabla_s V_3^s+ \vec{B}\cdot\vec{\mathcal{W}}_3 \nonumber
\end{align}
where
\begin{align}
V_3^s&:= A^s|\vec{h}|^4 +4(\vec{h}^{sj}\cdot\nabla_j\vec{B})|\vec{h}|^2 -4\vec{B}\cdot\nabla_j(\vec{h}^{js}|\vec{h}|^2) \nonumber
\end{align}
and
\begin{align}
\vec{\mathcal{W}}_3 := 8\vec{h}^{is}(\vec{h}_{ij}\cdot\vec{h}^j_s)|\vec{h}|^2 -4|\vec{h}|^4\vec{H} +4\pi_{\vec{n}}\nabla_j\nabla_i(\vec{h}^{ij}|\vec{h}|^2)       . \label{jagaban78}
\end{align}

The critical point of the energy $\int_\Sigma |h|^4\,\, d\textnormal{vol}_g$ satisfy
\begin{align}
\int_{\Sigma_0}(\vec{B}\cdot\mathcal{\vec{W}}_3+\nabla_jV^j_3)\,\, d\textnormal{vol}_g=0 .\label{kuj11hjk}
\end{align}
As \eqref{kuj11hjk} holds on every small patch $\Sigma_0$ of the manifold $\Sigma$, we can write
$$\vec{B}\cdot\mathcal{\vec{W}}_3+\nabla_jV^j_3=0.$$
The Euler-Lagrange equation for $\mathcal{E}(\Sigma)$ corresponds to the normal variation of $\int_\Sigma |h|^4\,\, d\textnormal{vol}_g$, that is $\pi_{\vec{n}}\delta\vec{\Phi}=\vec{B}$.
This gives
\begin{align}
\mathcal{\vec{W}}_3=\vec{0}.  \label{zihuaguoyann}
\end{align}
We will now argue that minimal surfaces do not satisfy \eqref{zihuaguoyann}.
Assume that $\Sigma$ is a minimal hypersurface. 
The minimality assumption implies that 
\begin{align}
\vec{H}=\frac{1}{4}\vec{h}_j^j=\vec{0}.  \label{minim420}
\end{align}  
By using \eqref{minim420} and the contracted Codazzi-Mainardi equation $(\pi_{\vec{n}}\nabla_i \vec{h}_{j}^j=\pi_{\vec{n}}\nabla_j \vec{h}_{i}^j)$ in \eqref{jagaban78}, we find
\begin{align}
\mathcal{\vec{W}}_3&=8\vec{h}^{is}(\vec{h}_{ij}\cdot\vec{h}^j_s)|\vec{h}|^2  +4\pi_{\vec{n}}\nabla_j\nabla_i(\vec{h}^{ij}|\vec{h}|^2)  \nonumber
\\&= 4\vec{h}^{is}(\vec{h}_{ij}\cdot\vec{h}^j_s)|\vec{h}|^2  +4\vec{h}^{ij}\nabla_i\nabla_j|\vec{h}|^2 . \label{hard}
\end{align}
It is hard to imagine that the assumption $\vec{H}=\vec{0}$ would make the right hand side of \eqref{hard} vanish. However, we do not rule out the possibility of having a minimal hypersurface for which the right hand side of \eqref{hard} would vanish. Thus, we strongly suspect that for any $\mu$, the critical points of the energy \begin{align}
\mathcal{E}(\Sigma)+\mu\int_\Sigma|h_0|^4\,\,d\textnormal{vol}_g \label{NO00}
\end{align}
 do not include minimal hypersurfaces.

\begin{rmk}
One could also add another curvature term to have
\begin{align}
\mathcal{E}(\Sigma)+\int_\Sigma \sigma\textnormal {Tr}(h_0^4) +\mu|h_0|^4\,\,d\textnormal{vol}_g\quad\quad \sigma,\mu\in \mathbb{R}. \label{NO01}
 \end{align}
However, computations show a strong evidence that minimal hypersurfaces are not critical points of the energies  \eqref{NO00} and \eqref{NO01}.
\end{rmk}

\section{Conservation laws}  \label{sekaka}
In this section, the left hand side of the Willmore-type equation \eqref{abc} is reformulated in terms of some divergence free quantities with the help of Noether's theorem.

\vskip3mm
\noindent
Let $\Omega$ be an open subset of $D\subset \mathbb{R}^n$ and let $E\subset \mathbb{R}^m$. Let the Lagrangian
$$L:\{ (x,y,z): (x,y)\in D\times E,\, z\in T_y E\otimes T^* _x D \}\mapsto \mathbb{R}$$
be continuously differentiable. By choosing a $C^1$ density measure $d\mu (x)$ on $\Omega$, we can define a functional $\mathcal{L}$ on the set of maps $C^1(\Omega, E)$ via 
$$\mathcal{L}(u):= \int_{\Omega} L(x, u(x), du(x))\, d\mu(x).$$
A tangent vector $X$ is an {\it infinitesimal symmetry} for $\mathcal{L}$ if and only if 
$$\frac{\partial L}{\partial y^i}(x, y, z) X^i(y) +\frac{\partial L}{\partial z^i_\alpha}(x, y, z) \frac{\partial X^i}{\partial y^j}(y)z^j_\alpha =0.$$
The vector field $J$ defined by  
$$J^\alpha:= \rho(x)\frac{\partial L}{\partial z^i_\alpha}(x, u, du) X^i(u)$$
where $\rho$ satisfies $d\mu(x)= \rho(x) dx^1 \cdots dx^n$, is often called {\it Noether current} in  Physics. We are now ready to state a version of Noether's theorem relevant for our purpose. 

\begin{theo}\label{thirstyy}
Let $X$ be a Lipschitz tangent vector field on $E$, which is an infinitesimal symmetry for $\mathcal{L}$. If $u:\Omega \rightarrow  E$ is a critical point of $\mathcal{L}$, then 
$$\sum_{\alpha=1}^n \frac{\partial J^\alpha}{\partial x^\alpha}  =0$$
where $\{ x^\alpha\}_{\alpha=1,...,n}$ are coordinates on $\Omega$ such that $d\mu(x)= \rho(x) dx^1 \cdots dx^n$.
\end{theo}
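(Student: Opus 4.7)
The plan is to use the flow $\phi_t$ of the Lipschitz vector field $X$ on $E$ to construct a one-parameter family of competitors $u_t := \phi_t \circ u$ and to evaluate the first variation $\frac{d}{dt}\mathcal{L}(u_t)|_{t=0}$ in two complementary ways, comparing them on arbitrary relatively compact subdomains $\Omega' \subset \Omega$.

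First, because $X$ is an infinitesimal symmetry, differentiating $L(x, \phi_t(u(x)), d(\phi_t\circ u)(x))$ at $t=0$ and using the chain rule produces exactly the combination
$$\frac{\partial L}{\partial y^i}(x,u,du)\, X^i(u) \;+\; \frac{\partial L}{\partial z^i_\alpha}(x,u,du)\,\frac{\partial X^i}{\partial y^j}(u)\,\partial_\alpha u^j,$$
which vanishes pointwise by the symmetry hypothesis. Hence $\frac{d}{dt}\mathcal{L}(u_t)|_{t=0} = 0$ on every such subdomain, with no use of criticality yet.

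Second, perform the standard integration by parts on the same derivative. Writing $V^i(x) := X^i(u(x))$ as a tangent variation along $u$, and writing $d\mu = \rho\, dx$, one has
$$\frac{d}{dt}\mathcal{L}(u_t)\bigg|_{t=0} = \int_{\Omega'}\!\left(\frac{\partial L}{\partial y^i} - \frac{1}{\rho}\partial_\alpha\!\left(\rho\,\frac{\partial L}{\partial z^i_\alpha}\right)\right) V^i\,\rho\, dx + \int_{\Omega'} \partial_\alpha J^\alpha\, dx,$$
where $J^\alpha = \rho\,\frac{\partial L}{\partial z^i_\alpha}\,X^i(u)$ is the Noether current as defined in the statement. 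Criticality of $u$ kills the first (Euler-Lagrange) integral. Combining with the first computation yields $\int_{\Omega'} \partial_\alpha J^\alpha\, dx = 0$ for every open $\Omega' \subset \Omega$; by Lebesgue differentiation this forces $\sum_\alpha \partial_\alpha J^\alpha = 0$ pointwise, which is the desired conclusion.

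The main point requiring care, rather than a genuine obstacle, is the regularity bookkeeping: $X$ is merely Lipschitz, so $d\phi_t$ exists only almost everywhere in $E$ and the chain-rule manipulations are valid only up to null sets of $E$. This is harmless because the infinitesimal symmetry identity is assumed to hold, $u$ is $C^1$ by hypothesis and $L$ is $C^1$, so every quantity appearing under the integrals is well-defined; the divergence identity holds at least in the distributional sense, which is enough to conclude $\partial_\alpha J^\alpha = 0$ as a distribution, hence a.e. One should also justify that $t \mapsto \phi_t \circ u$ is admissible in the variational framework and that differentiation under the integral sign is legitimate, both of which follow from the $C^1$ hypotheses and the local compactness of $\Omega'$.
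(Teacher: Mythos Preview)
The paper does not supply its own proof of this theorem: it is stated as the classical Noether theorem and then immediately applied. Your argument is the standard one and is correct.

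One minor observation: once you have both the pointwise symmetry identity and the pointwise Euler--Lagrange equation (which criticality gives), the integration over subdomains $\Omega'$ is unnecessary. Computing directly,
\[
\partial_\alpha J^\alpha
= \partial_\alpha\!\left(\rho\,\frac{\partial L}{\partial z^i_\alpha}\right) X^i(u)
+ \rho\,\frac{\partial L}{\partial z^i_\alpha}\,\frac{\partial X^i}{\partial y^j}(u)\,\partial_\alpha u^j
= \rho\,\frac{\partial L}{\partial y^i}\,X^i(u)
+ \rho\,\frac{\partial L}{\partial z^i_\alpha}\,\frac{\partial X^i}{\partial y^j}(u)\,\partial_\alpha u^j
= 0,
\]
using the Euler--Lagrange equation in the first term and the symmetry hypothesis at the last step. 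This is just a cosmetic streamlining of what you wrote; your two-step integral comparison yields the same conclusion and correctly tracks the distributional character of the identity under the minimal $C^1$ hypotheses.
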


\noindent
The following result holds by applying Noether's theorem to the invariances of the energy $\mathcal{E}(\Sigma).$
\begin{theo}\label{cons-law}
Let $\vec{\Phi}:\Sigma\rightarrow \mathbb{R}^m$ be a smooth immersion of a 4-dimensional manifold $\Sigma$ and let $\mathcal{\vec{W}}$ be as defined in Proposition \ref{WE} with $\mathcal{\vec{W}}=\vec{0}$. Define the following quantity
\begin{align}
\vec{V}^j:&=  |\pi_{\vec{n}}\nabla \vec{H}|^2\nabla^j\vec{\Phi}-2(\pi_{\vec{n}}\nabla^j\vec{H}\cdot \pi_{\vec{n}}\nabla_i\vec{H})\nabla^i\vec{\Phi}+(\vec{h}_{i}^j\cdot\Delta_{\perp}\vec{H})\nabla^i\vec{\Phi}  \nonumber
\\&\quad+\frac{1}{2} \nabla^j\Delta_{\perp}\vec{H}  -2(\vec{h}_{i}^j\cdot\nabla^i\vec{H})\vec{H}+2(\vec{H}\cdot\vec{h}_{i}^j)\pi_{\vec{n}}\nabla^i\vec{H} -2\nabla_i((\vec{H}\cdot\vec{h}^{ij})\vec{H})     \nonumber
\\&\quad+4\pi_{\vec{n}}\nabla_i((\vec{H}\cdot\vec{h}^{ij})\vec{H})-\frac{1}{2}\nabla^j((\vec{H}\cdot\vec{h}^{ik})\vec{h}_{ik}) -|\vec{H}\cdot\vec{h}|^2\nabla^j\vec{\Phi}      \nonumber \\&\quad+\pi_{\vec{n}}\nabla^j((\vec{H}\cdot\vec{h}^{ik})\vec{h}_{ik})+7|\vec{H}|^4\nabla^j\vec{\Phi}+7\nabla^j(|\vec{H}|^2\vec{H})-14\pi_{\vec{n}}\nabla^j(|\vec{H}|^2\vec{H}). \nonumber
\end{align}

\noindent
Then the following conservation laws hold by the invariance of $\mathcal{E}(\Sigma)$ under translation, dilation and rotation respectively.
 \[   \left\{
\begin{array}{ll}
    \vec{0}&=\nabla_j\vec{V}^j\\
      0&=\nabla_j(\vec{\Phi}\cdot\vec{V}^j-\nabla^j|\vec{H}|^2) \\
    \vec{0}&=\nabla_j(\vec{\Phi}\wedge\vec{V}^j+ \vec{M}^j)\\
\end{array} 
\right. \]
where $\vec{M}^j$ denotes the two-vector
\begin{align}\vec{M}^j:&=\frac{1}{2}\Delta_{\perp}\vec{H}\wedge\nabla^j\vec{\Phi}+2\vec{H}\wedge\pi_{\vec{n}}\nabla^j\vec{H}+2(\vec{H}\cdot\vec{h}^{ij})\vec{H}\wedge \nabla_i\vec{\Phi}\nonumber
\\&\quad+\frac{1}{2}(\vec{H}\cdot\vec{h}^{ik})\vec{h}_{ik}\wedge\nabla^j\vec{\Phi}-7|\vec{H}|^2\vec{H}\wedge\nabla^j\vec{\Phi}.  \nonumber\end{align}

\end{theo}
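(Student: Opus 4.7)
The plan is to apply Noether's theorem (Theorem \ref{thirstyy}) to the three infinitesimal symmetries of $\mathcal{E}(\Sigma)$ on $\mathbb{R}^m$: translations, dilations and rotations. The cleanest route is to exploit the pointwise identity established in the proof of Theorem \ref{WE}, namely
$$|g|^{-1/2}\delta\bigl((|\pi_{\vec{n}}\nabla\vec{H}|^2-|\vec{H}\cdot\vec{h}|^2+7|\vec{H}|^4)|g|^{1/2}\bigr)=\vec{B}\cdot\vec{\mathcal{W}}+\nabla_j V^j,$$
valid for any variation $\delta\vec{\Phi}=A^s\nabla_s\vec{\Phi}+\vec{B}$, with $V^j$ given explicitly in that proof. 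For each of the three symmetries the density on the left is pointwise invariant (the scalar $|\pi_{\vec{n}}\nabla\vec{H}|^2-|\vec{H}\cdot\vec{h}|^2+7|\vec{H}|^4$ has conformal weight $-4$, matching the volume weight $+4$), so the left side vanishes identically; combining this with $\vec{\mathcal{W}}=\vec{0}$ for the Willmore-type immersion gives $\nabla_j V^j=0$ pointwise, and the three conservation laws then emerge by substituting the appropriate generators and factoring out the constant parameter.

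Concretely: translations are $\delta\vec{\Phi}=\vec{a}$, giving $A^s=\vec{a}\cdot\nabla^s\vec{\Phi}$ and $\vec{B}=\pi_{\vec{n}}\vec{a}=\vec{a}-(\vec{a}\cdot\nabla^k\vec{\Phi})\nabla_k\vec{\Phi}$. Every term of $V^j$ is then linear in $\vec{a}$, and collecting coefficients produces $V^j=\vec{a}\cdot\vec{V}^j$ with $\vec{V}^j$ as in the statement; arbitrariness of $\vec{a}$ yields $\nabla_j\vec{V}^j=\vec{0}$. Dilations are $\delta\vec{\Phi}=\vec{\Phi}$, giving $A^s=\vec{\Phi}\cdot\nabla^s\vec{\Phi}$ and $\vec{B}=\pi_{\vec{n}}\vec{\Phi}$; the substitution should yield $V^j=\vec{\Phi}\cdot\vec{V}^j-\nabla^j|\vec{H}|^2$, with the residual term identified using Lemma \ref{rivvy}, which rewrites the relevant curvature combination in terms of $\Delta_\perp\vec{H}$ and allows a further integration by parts to produce exactly $-\nabla^j|\vec{H}|^2$. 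Rotations are $\delta\vec{\Phi}=\Omega\vec{\Phi}$ with $\Omega\in\mathfrak{so}(m)$, giving $A^s=(\Omega\vec{\Phi})\cdot\nabla^s\vec{\Phi}$ and $\vec{B}=\pi_{\vec{n}}(\Omega\vec{\Phi})$; the antisymmetry of $\Omega$ converts each scalar product $(\Omega\vec{u})\cdot\vec{v}$ into a pairing of $\Omega$ with the two-vector $\vec{u}\wedge\vec{v}$, and assembling all contributions into $\vec{\Phi}\wedge\vec{V}^j+\vec{M}^j$ gives the third conservation law once $\Omega$ is recognised as arbitrary skew-symmetric.

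The main obstacle is the bookkeeping in the dilation and rotation cases: the expression for $V^j$ has roughly ten summands involving $\vec{B}$, $\nabla\vec{B}$ and $\Delta_\perp\vec{B}$, so substituting $\vec{B}=\pi_{\vec{n}}\vec{\Phi}$ or $\pi_{\vec{n}}(\Omega\vec{\Phi})$ generates many correction terms from the identities $\nabla_i\pi_{\vec{n}}\vec{\Phi}=-(\vec{\Phi}\cdot\nabla^k\vec{\Phi})\vec{h}_{ik}$ and their $\Delta_\perp$-analogues. The delicate step is verifying that, after all cancellations, the extra contributions either align with the tangential part of $\vec{V}^j$, drop out by the symmetry of $\vec{h}_{ij}$ (several $\nabla_i\vec{\Phi}\wedge\nabla_j\vec{\Phi}$-terms cancel in the rotation case), or combine into exactly $-\nabla^j|\vec{H}|^2$ and $\vec{M}^j$ as in the statement. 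Once these identifications are made, all three laws follow immediately from the arbitrariness of $\vec{a}$ and $\Omega$.
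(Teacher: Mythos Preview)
Your approach is correct and essentially the same as the paper's: substitute each infinitesimal symmetry into the explicit $V^j$ from the proof of Theorem \ref{WE}, use pointwise invariance of the density together with $\vec{\mathcal{W}}=\vec{0}$ to conclude $\nabla_j V^j=0$, and factor out the constant parameter. One small correction: Lemma \ref{rivvy} is not needed for the dilation case; the extra $-\nabla^j|\vec{H}|^2$ arises directly because, for $\vec{B}=\lambda\pi_{\vec{n}}\vec{\Phi}$, the identity $\vec{U}\cdot\Delta_\perp\vec{B}=\lambda\vec{\Phi}\cdot[\cdots]-4\lambda\vec{H}\cdot\vec{U}$ picks up a single additional term relative to the translation case, and this feeds only into the summand $\tfrac{1}{2}\pi_{\vec{n}}\nabla^j\vec{H}\cdot\Delta_\perp\vec{B}$ of $V^j$ to give $-2\lambda\vec{H}\cdot\nabla^j\vec{H}=-\lambda\nabla^j|\vec{H}|^2$. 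Your parameterisation of rotations via $\Omega\in\mathfrak{so}(m)$ is equivalent to the paper's $\delta\vec{\Phi}=\star(\vec{b}\wedge\vec{\Phi})$ with $\vec{b}\in\Lambda^{m-2}(\mathbb{R}^m)$.
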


\begin{proof}
$\empty$\\
\textbf{Translation.}
Consider a translation $\delta\vec{\Phi}=\vec{a}$, where $\vec{a}$ is a constant vector in $\mathbb{R}^m$. Then 
$$A_j=\vec{a}\cdot\nabla_j\vec{\Phi}\quad\mbox{and}\quad \vec{B}=\pi_{\vec{n}}\vec{a}.$$

\noindent
If $\vec{U}$ is any vector, then it holds
$$\vec{U}\cdot\vec{B}=\vec{a}\cdot\pi_{\vec{n}}\vec{U}$$
and
\begin{align} \pi_{\vec{n}}\vec{U}\cdot\nabla_j\vec{B}&=\nabla_j(\vec{B}\cdot\pi_{\vec{n}}\vec{U}) -\vec{B}\cdot \nabla_j\pi_{\vec{n}}\vec{U}   \nonumber
\\&= \nabla_j(\vec{a}\cdot \pi_{\vec{n}}\vec{U}) -\pi_{\vec{n}}\vec{a}\cdot\nabla_j\pi_{\vec{n}} \vec{U} \nonumber
\\&= \vec{a}\cdot \nabla_j\pi_{\vec{n}}\vec{U} -\vec{a}\cdot\pi_{\vec{n}}\nabla_j\pi_{\vec{n}} \vec{U} \nonumber
\\&=\vec{a}\cdot\pi_{T}\nabla_j\pi_{\vec{n}}\vec{U}. \nonumber
\end{align}
Hence
\begin{align}
\vec{U}\cdot\Delta_{\perp}\vec{B}&= \pi_{\vec{n}}\vec{U}\cdot\nabla^i\pi_{\vec{n}}\nabla_i\vec{B}=\nabla^i(\pi_{\vec{n}}\vec{U}\cdot\nabla_i\vec{B})-(\pi_{\vec{n}}\nabla^i\pi_{\vec{n}}\vec{U})\cdot \nabla_i\vec{B}   \nonumber
\\&= \nabla^i\left[\vec{a}\cdot\pi_{T}\nabla_i\pi_{\vec{n}}\vec{U}   \right]-\vec{a}\cdot \pi_{T}\nabla_i\pi_{\vec{n}}\nabla^i\pi_{\vec{n}}\vec{U}    \nonumber
\\&= \vec{a}\cdot \left[ \nabla^i\pi_{T}\nabla_i\pi_{\vec{n}}\vec{U}-\pi_{T}\nabla_i\pi_{\vec{n}}\nabla^i\pi_{\vec{n}}\vec{U}     \right].    \nonumber
\end{align}
Then we have,
$$V^j=\vec{a}\cdot\vec{V}^j,$$
where
\begin{align}
\vec{V}^j&= | \pi_{\vec{n}}\nabla \vec{H}  |^2\nabla^j\vec{\Phi}+\frac{1}{2} (\vec{h}_{ik}\cdot \nabla^j\vec{H})\vec{h}^{ik}-2(\vec{h}_{i}^j\cdot \nabla^i\vec{H})\vec{H}+ 2(\vec{H}\cdot\vec{h}_{i}^j)\pi_{\vec{n}}\nabla^i\vec{H}                \nonumber
\\ &\quad+ \frac{1}{2}\pi_{\vec{n}}\nabla^j\Delta_{\perp}\vec{H}-\frac{1}{2}\pi_{T}\nabla^j\Delta_{\perp}\vec{H}+\frac{1}{2} \nabla^i\pi_{T}\nabla_i\pi_{\vec{n}}\nabla^j\vec{H}-\frac{1}{2}\pi_{T}\nabla_i\pi_{\vec{n}}\nabla^i\pi_{\vec{n}}\nabla^j\vec{H}   \nonumber
\\&\quad -2\pi_{T}\nabla_i((\vec{H}\cdot\vec{h}^{ij})\vec{H})
+ 2\pi_{\vec{n}} \nabla_i((\vec{H}\cdot\vec{h}^{ij})\vec{H})-\frac{1}{2}\pi_{T}\nabla^j((\vec{H}\cdot\vec{h}^{ik})\vec{h}_{ik}) \nonumber
\\&\quad+\frac{1}{2}\pi_{\vec{n}}\nabla^j((\vec{H}\cdot\vec{h}^{ik})\vec{h}_{ik})-|\vec{H}\cdot\vec{h}|^2 \nabla^j\vec{\Phi}             +7|\vec{H}|^2\nabla^j\vec{\Phi}+7\pi_{T}\nabla^j(|\vec{H}|^2\vec{H})  \nonumber
\\&\quad-7\pi_{\vec{n}}\nabla^j(|\vec{H}|^2\vec{H})
 \nonumber
\\&= |\pi_{\vec{n}}\nabla \vec{H}|^2\nabla^j\vec{\Phi}-2(\pi_{\vec{n}}\nabla^j\vec{H}\cdot \pi_{\vec{n}}\nabla_i\vec{H})\nabla^i\vec{\Phi}+(\vec{h}_{i}^j\cdot\Delta_{\perp}\vec{H})\nabla^i\vec{\Phi}+\frac{1}{2} \nabla^j\Delta_{\perp}\vec{H}   \nonumber
\\&\quad -2(\vec{h}_{i}^j\cdot\nabla^i\vec{H})\vec{H}+2(\vec{H}\cdot\vec{h}_{i}^j)\pi_{\vec{n}}\nabla^i\vec{H} -2\nabla_i((\vec{H}\cdot\vec{h}^{ij})\vec{H}) +4\pi_{\vec{n}}\nabla_i((\vec{H}\cdot\vec{h}^{ij})\vec{H})    \nonumber
\\&\quad-\frac{1}{2}\nabla^j((\vec{H}\cdot\vec{h}^{ik})\vec{h}_{ik})   -|\vec{H}\cdot\vec{h}|^2\nabla^j\vec{\Phi}     +\pi_{\vec{n}}\nabla^j((\vec{H}\cdot\vec{h}^{ik})\vec{h}_{ik})+7|\vec{H}|^2\nabla^j\vec{\Phi} \nonumber
\\&\quad+7\nabla^j(|\vec{H}|^2\vec{H})-14\pi_{\vec{n}}\nabla^j(|\vec{H}|^2\vec{H}) .
\nonumber
\end{align}

\noindent
We note that 
$$\nabla_j\vec{\Phi}\cdot \vec{V}^j=\Delta_g|\vec{H}|^2.$$

\vskip 3mm
\textbf{Dilation.}
Next, we consider a dilation $\delta\vec{\Phi}=\lambda\vec{\Phi}$,  for some constant $\lambda\in\mathbb{R}$. Clearly,
$$A_j=\lambda\vec{\Phi}\cdot\nabla_j\vec{\Phi}\quad\mbox{and}\quad\vec{B}=\lambda\pi_{\vec{n}}\vec{\Phi}.$$

\noindent
If $\vec{U}$ is any vector, then it holds
$$\vec{U}\cdot\vec{B}=\lambda\vec{\Phi}\cdot\pi_{\vec{n}}\vec{U}$$
and
\begin{align}
\quad\pi_{\vec{n}}\vec{U}\cdot\nabla_j\vec{B}&=  \nabla_j(\vec{B}\cdot\pi_{\vec{n}}\vec{U}) -\vec{B}\cdot \nabla_j\pi_{\vec{n}}\vec{U}             \nonumber
\\&=  \nabla_j(\lambda\pi_{\vec{n}}\vec{\Phi}\cdot\vec{U})  - \lambda\pi_{\vec{n}}\vec{\Phi}\cdot\nabla_j\pi_{\vec{n}}\vec{U}        \nonumber
\\&= \lambda\left(\vec{\Phi}\cdot \nabla_j\pi_{\vec{n}}\vec{U}-\pi_{\vec{n}}\nabla_j\pi_{\vec{n}}\vec{U}  \right)   \nonumber
\\&=\lambda \vec{\Phi}\cdot\pi_{T}\nabla_j\pi_{\vec{n}}\vec{U}.  \nonumber
\end{align}
Hence
\begin{align}
\vec{U}\cdot \Delta_{\perp}\vec{B}&= \pi_{\vec{n}}\vec{U}\cdot\nabla^i\pi_{\vec{n}}\nabla_i\vec{B}=\nabla^i(\pi_{\vec{n}}\vec{U}\cdot \nabla_{i}\vec{B})-(\pi_{\vec{n}}\nabla^i\pi_{\vec{n}}\vec{U})\cdot \nabla_i\vec{B}   \nonumber
\\&= \nabla^i\left[\lambda \vec{\Phi}\cdot \pi_{T}\nabla_i\pi_{\vec{n}}\vec{U}  \right]-\lambda\vec{\Phi}\cdot \pi_{T}\nabla_{i}\pi_{\vec{n}}\nabla^i\pi_{\vec{n}}\vec{U}  \nonumber
\\&= \lambda\vec{\Phi}\cdot \left[ \nabla^i\pi_{T}\nabla_i\pi_{\vec{n}}\vec{U}-\pi_{T}\nabla_i\pi_{\vec{n}}\nabla^i\pi_{\vec{n}}\vec{U}  \right]-4\lambda\vec{H}\cdot \vec{U}   \nonumber
\end{align}
 From this and the computation above for  translation, we find
$$V^j=\lambda\vec{\Phi}\cdot\vec{V}^j-2\lambda\vec{H}\cdot\nabla^j\vec{H}=\lambda\left[ \vec{\Phi}\cdot\vec{V}^j-\nabla^j|\vec{H}|^2  \right].$$

\vskip 3mm
\noindent

\noindent\textbf{Rotation.} We now consider an infinitesimal rotation given by\footnote{Here, we use $\star$ to denote the Hodge star operator on the ambient space. } $$\delta\vec{\Phi}=\star(\vec{b}\wedge\vec{\Phi}) \quad\mbox{for some constant}\quad \vec{b}\in\Lambda^{m-2}(\mathbb{R}^m).$$
Clearly,
$$A_j=\vec{b}\cdot\star(\vec{\Phi}\wedge\nabla_j\vec{\Phi})\quad\mbox{and}\quad \vec{B}=(\vec{b}\cdot\star(\Phi\wedge\vec{n}_{\alpha}))\vec{n}_{\alpha}$$
where we have summed over $\alpha$.
If $\vec{U}$ is any vector, then it holds that
$$\vec{U}\cdot\vec{B}=\vec{b}\cdot\star(\vec{\Phi}\wedge\pi_{\vec{n}}\vec{U})$$
and
\begin{align}
\pi_{\vec{n}}\vec{U} \cdot \nabla_j\vec{B}&=\nabla_j(\vec{B}\cdot\pi_{\vec{n}}\vec{U}) -\vec{B}\cdot \nabla_j\pi_{\vec{n}} \vec{U}   \nonumber
\\ &=\vec{b}\cdot\star(\vec{\Phi}\wedge\pi_{T}\nabla_j\pi_{\vec{n}}\vec{U}+\nabla_j\vec{\Phi}\wedge\pi_{\vec{n}}\vec{U}). \nonumber\end{align}

\noindent
Hence 
\begin{align}
\vec{U}\cdot\Delta_{\perp}\vec{B}&=\pi_{\vec{n}}\vec{U}\cdot\nabla^i\pi_{\vec{n}}\nabla_i\vec{B}=\nabla^i(\pi_{\vec{n}}\vec{U} \cdot \nabla_i\vec{B})-(\pi_{\vec{n}}\nabla^i\pi_{\vec{n}}\vec{U} )\cdot \nabla_i\vec{B}    \nonumber
\\&=  \nabla^i\left(\vec{b}\cdot\star (\vec{\Phi}\wedge\pi_{T}\nabla_i\pi_{\vec{n}}\vec{U}+\nabla_i\vec{\Phi}\wedge\pi_{\vec{n}} \vec{U})  \right)         \nonumber \\&\quad\quad-\vec{b}\cdot\star\left(\vec{\Phi}\wedge\pi_{T}\nabla_i\pi_{\vec{n}}\nabla^i\pi_{\vec{n}}\vec{U}+\nabla_i\vec{\Phi}\wedge\pi_{\vec{n}}\nabla^i\pi_{\vec{n}}\vec{U}\right)   \nonumber
\\&= \vec{b}\cdot\star \left[\vec{\Phi}\wedge\left(\nabla^i\pi_{T}\nabla_i\pi_{\vec{n}}\vec{U}-\pi_{T}\nabla_i\pi_{\vec{n}} \nabla^i\pi_{\vec{n}}\vec{U} \right) +\nabla^i\vec{\Phi}\wedge\pi_{T}\nabla_i\pi_{\vec{n}}\vec{U}                    \right.\nonumber
\\&\quad\quad\left. +\nabla^i(\nabla_i\vec{\Phi}\wedge\pi_{\vec{n}}\vec{U})
-\nabla_i\vec{\Phi}\wedge\pi_{\vec{n}}\nabla^i\pi_{\vec{n}}\vec{U} \right]  \nonumber
\\&=\vec{b}\cdot\star\left[\vec{\Phi}\wedge\left( \nabla^i\pi_{T}\nabla_i\pi_{\vec{n}}\vec{U}-\pi_{T}\nabla_i\pi_{\vec{n}} \nabla^i\pi_{\vec{n}}\vec{U} \right) +2\nabla^i\vec{\Phi}\wedge \pi_{T}\nabla_i\pi_{\vec{n}}\vec{U}+4\vec{H}\wedge\pi_{\vec{n}}\vec{U} \right]   \nonumber
\\&=\vec{b}\cdot\star\left[ \vec{\Phi}\wedge\left( \nabla^i\pi_{T}\nabla_i\pi_{\vec{n}}\vec{U}-\pi_{T}\nabla_i\pi_{\vec{n}}\nabla^i\pi_{\vec{n}}\vec{U} \right)+4\vec{H}\wedge\pi_{\vec{n}}\vec{U} \right].   \nonumber
\end{align}
From this and the computation above for translation, we find
\begin{align}
V^j&=\vec{b}\cdot\star\left[ \vec{\Phi}\wedge\vec{V}^j+ \frac{1}{2}\Delta_{\perp} \vec{H}\wedge\nabla^j\vec{\Phi}      +2\vec{H}\wedge\pi_{\vec{n}}\nabla^j\vec{H} +2(\vec{H}\cdot\vec{h}^{ij})\vec{H}\wedge\nabla_i\vec{\Phi} \right. \nonumber
\\&\quad\quad\left.+\frac{1}{2}(\vec{H}\cdot\vec{h}^{ik})\vec{h}_{ik}\wedge\nabla^j\vec{\Phi} -7|\vec{H}|^2\vec{H}\wedge\nabla^j\vec{\Phi}\right] . \nonumber
\end{align}
\end{proof}

\section{Conformal invariance} \label{sekuku}
The goal of this Section is to prove the following Theorem. We adopt the techniques found in \cite {jguv} in our proof. We assume that $\Sigma$ is a closed manifold so that boundary terms vanish whenever we integrate by parts. 
\begin{theo} \label{kofoworola}
The energy $\mathcal{E}(\Sigma)$ is conformally invariant.
\end{theo}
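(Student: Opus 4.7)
My plan, following Guven \cite{jguv}, is to invoke Liouville's theorem: the conformal group of $\mathbb{R}^m$ for $m\geq 3$ is generated by translations, dilations, rotations, and inversions, equivalently by the first three together with special conformal transformations. Invariance of $\mathcal{E}(\Sigma)$ under translations, dilations, and rotations is manifest and is precisely what drove the conservation laws in Theorem \ref{cons-law}. So it suffices to verify infinitesimal invariance under the special conformal transformation
\[
\delta\vec{\Phi} = |\vec{\Phi}|^2\vec{c} - 2(\vec{c}\cdot\vec{\Phi})\vec{\Phi},\qquad \vec{c}\in\mathbb{R}^m\ \text{constant}.
\]

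First, I would decompose $\delta\vec{\Phi} = A^j\nabla_j\vec{\Phi} + \vec{B}$ with $A^j = (|\vec{\Phi}|^2\vec{c} - 2(\vec{c}\cdot\vec{\Phi})\vec{\Phi})\cdot\nabla^j\vec{\Phi}$ and $\vec{B}=\pi_{\vec{n}}(|\vec{\Phi}|^2\vec{c} - 2(\vec{c}\cdot\vec{\Phi})\vec{\Phi})$, and feed these into the off-shell variational identity
\[
|g|^{-1/2}\delta\big((|\pi_{\vec{n}}\nabla\vec{H}|^2 - |\vec{H}\cdot\vec{h}|^2 + 7|\vec{H}|^4)|g|^{1/2}\big) = \vec{B}\cdot\vec{\mathcal{W}} + \nabla_j V^j
\]
established as equation \eqref{wil} inside the proof of Theorem \ref{WE}; this identity requires no criticality. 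To absorb the Willmore-type operator $\vec{\mathcal{W}}$ on the right, I would exploit the stress-energy tensor of Section \ref{sekaka}: applying the same identity to a pure translation, where the left-hand side must vanish because the Lagrangian depends only on translation-invariant geometric data, yields the off-shell identity $\vec{\mathcal{W}} = -\nabla_j\vec{V}^j$. Integrating $\vec{B}\cdot\nabla_j\vec{V}^j$ by parts then reduces the special-conformal variation of the integrand to $\nabla_j U^j + \nabla_j\vec{B}\cdot\vec{V}^j$ for an explicit boundary term $U^j$.

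The principal obstacle will be demonstrating that the residual term $\nabla_j\vec{B}\cdot\vec{V}^j$ is also a total divergence. Computing $\nabla_j\vec{B}$ from the special-conformal ansatz via $\nabla_i\vec{\Phi}\cdot\nabla^j\vec{\Phi}=\delta_i^{\,j}$, $\nabla_i\nabla^j\vec{\Phi} = \vec{h}_i^{\,j}$, and the Codazzi-Mainardi equations produces an expression with many curvature cross-terms; the delicate cancellation against the explicit $\vec{V}^j$ of Theorem \ref{cons-law} is possible only because of the specific combination of coefficients $(1,-1,7)$ singled out by Robin-Graham and Reichert \cite{robingraham}---this is the same rigidity that Section \ref{othergen} reads from the minimal-hypersurface obstruction for the modified energies. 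Once every remaining cross term has been organised into divergences, Stokes' theorem on the closed manifold $\Sigma$ yields $\delta \mathcal{E}(\Sigma)=0$, giving invariance under the infinitesimal special conformal transformation, and hence, together with the translation, dilation, and rotation invariance already in hand, invariance under the full conformal group.
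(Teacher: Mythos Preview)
Your framework matches the paper's: both reduce to infinitesimal special conformal invariance via Liouville's theorem and both exploit the off-shell stress-energy identity $\vec{\mathcal{W}}=-\nabla_j\vec{V}^j$ coming from translation. The execution differs in two respects worth noting. First, the paper pairs the stress tensor with the \emph{full} variation, writing $-\delta\mathcal{E}=\int_\Sigma \vec{F}^j\cdot\nabla_j\delta\vec{\Phi}$, rather than with $\nabla_j\vec{B}=\nabla_j(\pi_{\vec{n}}\delta\vec{\Phi})$; since $\delta\vec{\Phi}=|\vec{\Phi}|^2\vec{c}-2(\vec{c}\cdot\vec{\Phi})\vec{\Phi}$ carries no projection, its derivative is elementary, whereas differentiating through $\pi_{\vec{n}}$ as you propose multiplies the bookkeeping. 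Second, instead of confronting the full $\vec{V}^j$, the paper treats the three constituent energies $\mathcal{E}_0,\mathcal{E}_1,\mathcal{E}_2$ separately and decomposes each stress tensor as $\vec{F}^j=F^{ij}\nabla_i\vec{\Phi}+\pi_{\vec{n}}\nabla_i\vec{G}^{ij}+C^j\vec{H}$; the symmetry of $F^{kj}+\vec{h}^k_i\cdot\vec{G}^{ij}$ then annihilates an entire class of cross-terms at once, and the identity of Lemma~\ref{rivvy} dispatches the residual integrals involving $\Delta_\perp\vec{H}$. This structure is what makes the cancellation tractable and simultaneously \emph{derives} the uniqueness of the coefficients $(1,-1,7)$, whereas your route would verify it by brute force against the already-assembled $\vec{V}^j$.
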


\begin{proof}
Consider the energy
 $$\mathcal{E}_0:=\int_{\Sigma}|\pi_{\vec{n}}\nabla\vec{H}|^2 d\text{vol}_g.$$

\noindent
Since it is invariant under reparametrisation, only  normal variations matter (tangential variations contribute to terms that appear as exact derivatives and can be integrated away). Moreover, as we have seen by using Noether's theorem and the invariance under translation, critical points of $\mathcal{E}_0$ satisfy an equation of the type

$$\nabla_j\vec{F}^j=\vec{\mathcal{W}}_0,$$

\noindent
for some suitable vector field $\vec{F}^j$ and $\vec{\mathcal{W}}_0$ is a normal vector. Actually, we can identify $\vec{F}^j$ as the component of the stress-energy tensor associated with $\mathcal{E}_0$. 
Accordingly, for a variation $\delta\vec{\Phi}$, we have 

$$-\delta\mathcal{E}_0=-\int_{\Sigma}\vec{\mathcal{W}_0}\cdot\delta\vec{\Phi}=-\int_{\Sigma}\nabla_j\vec{F}^j\cdot\delta\vec{\Phi}=\int_{\Sigma}\vec{F}^j\cdot\nabla_j\delta\vec{\Phi}.$$

\noindent
Let us consider a special conformal variation of the type
$$\delta\vec{\Phi}=|\vec{\Phi}|^2\vec{c}-2(\vec{c}\cdot\vec{\Phi})\vec{\Phi}$$
where $\vec{c}\in \mathbb{R}^m$ is a constant vector.
\noindent
We see that $\vec{F}^j$ satisfies
$$\vec{F}^j=F^{ij}\nabla_i\vec{\Phi}+\pi_{\vec{n}}\nabla_i\vec{G}^{ij}+C^j\vec{H}$$
\noindent
where
\noindent
$$F^{ij}=|\pi_{\vec{n}}\nabla\vec{H}|^2g^{ij}-2\nabla^j\vec{H}\cdot\pi_{\vec{n}}\nabla^i\vec{H} +\frac{1}{2}\vec{h}^{ij}\cdot\Delta_{\perp}\vec{H},$$

$$\vec{G}^{ij}=2(\vec{H}\cdot\vec{h}^{ij})\vec{H} +\frac{1}{2}g^{ij}\Delta_{\perp} \vec{H}$$
\noindent
and
\noindent
$$C^j= -4\nabla^j|\vec{H}|^2-4\vec{h}^{jk}\cdot\nabla_k\vec{H}= 4\nabla_k(|\vec{H}|^2g^{jk}-\vec{H}\cdot\vec{h}^{jk}).$$

\noindent
Let us note that if $\vec{u}^j$ is a normal vector, then 
\begin{align}
&\int_{\Sigma}\pi_{\vec{n}}\nabla_i \vec{u}^{ij}\cdot \nabla_j (|\vec{\Phi}|^2\vec{c}-2(\vec{c}\cdot\vec{\Phi})\vec{\Phi})  \nonumber 
\\&=\int_{\Sigma}\nabla_i\vec{u}^{ij}\cdot\pi_{\vec{n}} \nabla_j (|\vec{\Phi}|^2\vec{c}-2(\vec{c}\cdot\vec{\Phi})\vec{\Phi})  \nonumber
\\&=\int_{\Sigma} \nabla_i\vec{u}^{ij}\cdot(\nabla_j|\vec{\Phi}|^2\pi_{\vec{n}}\vec{c} -2(\vec{c}\cdot\nabla_j\vec{\Phi})\pi_{\vec{n}}\vec{\Phi}  )  \nonumber
\\&= \int_{\Sigma} \nabla_i\vec{u}^{ij}\cdot (\nabla_j|\vec{\Phi}|^2\vec{c}-2(\vec{c}\cdot\nabla_j\vec{\Phi})\vec{\Phi})  - \int_{\Sigma} \nabla_i\vec{u}^{ij}\cdot (\nabla_j|\vec{\Phi}|^2\pi_{T}\vec{c} -2(\vec{c}\cdot\nabla_j\vec{\Phi})\pi_{T}\vec{\Phi})  \nonumber
\\&= \int_{\Sigma}\nabla_i\vec{u}^{ij} \cdot(\nabla_j|\vec{\Phi}|^2\vec{c}-2(\vec{c}\cdot\nabla_j\vec{\Phi})\vec{\Phi}) +  \int_{\Sigma} \vec{u}^{ij} \cdot\nabla_i(\nabla_j|\vec{\Phi}|^2\pi_{T}\vec{c} -2(\vec{c}\cdot\nabla_j\vec{\Phi})\pi_{T}\vec{\Phi})  \nonumber
\\&= \int_{\Sigma} \nabla_i\vec{u}^{ij}\cdot (\nabla_j|\vec{\Phi}|^2\vec{c} -2 (\vec{c}\cdot\nabla_j\vec{\Phi})\vec{\Phi})  +\int_{\Sigma} \vec{u}^{ij}\cdot (\nabla_j|\vec{\Phi}|^2\pi_{\vec{n}}\nabla_i\pi_{T}\vec{c} -2(\vec{c}\cdot\nabla_j\vec{\Phi})\pi_{\vec{n}}\nabla_i\pi_{T}\vec{\Phi}) .  \nonumber
\end{align}

\noindent
But 
\begin{align}
\pi_{\vec{n}}\nabla_i\pi_{T}\vec{c}=(\vec{c}\cdot\nabla_k\vec{\Phi})\vec{h}_i^k   \quad\mbox{and} \quad \pi_{\vec{n}}\nabla_i\pi_T\vec{\Phi} =(\vec{\Phi}\cdot\nabla_k\vec{\Phi})\vec{h}_i^k=\frac{1}{2}\nabla_k|\vec{\Phi}|^2\vec{h}_i^k.  \nonumber
\end{align}

\noindent
Hence, we find
\begin{align}
&\int_{\Sigma} \vec{u}^{ij}\cdot(\nabla_j|\vec{\Phi}|^2\pi_{\vec{n}}\nabla_i\pi_{T}\vec{c}-2(\vec{c}\cdot\nabla_j\vec{\Phi})\pi_{\vec{n}}\nabla_i\pi_{T}\vec{\Phi})  \nonumber
\\& = \int_{\Sigma}(\vec{h}^k_i\cdot\vec{u}^{ij})\left(\nabla_j|\vec{\Phi}|^2(\vec{c}\cdot\nabla_k\vec{\Phi})-\nabla_k|\vec{\Phi}|^2(\vec{c}\cdot\nabla_j\vec{\Phi})\right).   \nonumber
\end{align}

\noindent
This is in particular the case for $\vec{u}^{ij}=\vec{G}^{ij}$ which we will adopt later.
\\ 
Now, we have
$$\vec{F}^j\cdot\nabla_j\delta\vec{\Phi}=(F^{ij}\nabla_i\vec{\Phi} +\pi_{\vec{n}}\nabla_i\vec{G}^{ij} +C^j\vec{H})\cdot\nabla_j(|\vec{\Phi}|^2\vec{c}-2(\vec{c}\cdot\vec{\Phi})\vec{\Phi}).$$

\noindent
Upon integration by parts, the latter yields\footnote{See page \pageref{se123} for the definition of the interior multiplication $\mathlarger{\mathlarger{\mathlarger{\mathlarger{\llcorner}} }}$.}
\begin{align}
\delta\mathcal{E}_0&= -2\vec{c}\cdot \int_{\Sigma} \vec{G}^j_j- (\vec{G}^{ij}\wedge\vec{h}_{ij})\mathlarger{\mathlarger{\mathlarger{\mathlarger{\llcorner}} }} \vec{\Phi} + C^j (\vec{H}\wedge\nabla_j\vec{\Phi})\mathlarger{\mathlarger{\mathlarger{\mathlarger{\llcorner}} }}\vec{\Phi}   \nonumber
\\&\hspace{2cm}+\int_{\Sigma} F^{ij} \nabla_i\vec{\Phi}\cdot\nabla_j(|\vec{\Phi}|^2\vec{c}-2(\vec{c}\cdot\vec{\Phi})\vec{\Phi})  \nonumber
\\&\hspace{3cm}+  \int_{\Sigma}(\vec{h}^k_i\cdot\vec{G}^{ij})\left(\nabla_j|\vec{\Phi}|^2(\vec{c}\cdot\nabla_k\vec{\Phi})-\nabla_k|\vec{\Phi}|^2(\vec{c}\cdot\nabla_j\vec{\Phi})\right)  \nonumber
\\&= -2\vec{c}\cdot\int_{\Sigma} \vec{G}^j_j + F^j_j\vec{\Phi} -(\vec{G}^{ij}\wedge \vec{h}_{ij})\mathlarger{\mathlarger{\mathlarger{\mathlarger{\llcorner}} }} \vec{\Phi} + C^j (\vec{H}\wedge\nabla_j\vec{\Phi})\mathlarger{\mathlarger{\mathlarger{\mathlarger{\llcorner}} }}\vec{\Phi}   \nonumber
\\&\hspace{2cm} +\int_{\Sigma}  F^{ij} \nabla_j|\vec{\Phi}|^2(\vec{c}\cdot\nabla_i\vec{\Phi}) - F^{ij}\nabla_i|\vec{\Phi}|^2(\vec{c}\cdot\nabla_j\vec{\Phi})  \nonumber
\\&\hspace{3cm} +\int_{\Sigma}(\vec{h}^k_i\cdot\vec{G}^{ij})\left(\nabla_j|\vec{\Phi}|^2(\vec{c}\cdot\nabla_k\vec{\Phi})-\nabla_k|\vec{\Phi}|^2(\vec{c}\cdot\nabla_j\vec{\Phi})\right)  \nonumber
\\&= -2\vec{c}\cdot\int_{\Sigma} \vec{G}^j_j +F^j_j\vec{\Phi}-(\vec{G}^{ij}\wedge \vec{h}_{ij})\mathlarger{\mathlarger{\mathlarger{\mathlarger{\llcorner}} }} \vec{\Phi} + C^j (\vec{H}\wedge\nabla_j\vec{\Phi})\mathlarger{\mathlarger{\mathlarger{\mathlarger{\llcorner}} }}\vec{\Phi}   \nonumber
\\&\hspace{2cm} +\int_{\Sigma} (F^{kj}+\vec{h}^k_i\cdot\vec{G}^{ij}) (\nabla_j|\vec{\Phi}|^2(\vec{c}\cdot\nabla_k\vec{\Phi})-\nabla_k|\vec{\Phi}|^2(\vec{c}\cdot\nabla_j\vec{\Phi})). \label{u3s}
\end{align}

\noindent
On the other hand, we have
$$F^{kj}+\vec{h}^k_i\cdot\vec{G}^{ij}\quad\mbox{is symmetric}$$
\noindent
so that 
\begin{align}
(F^{kj}+\vec{h}^k_i\cdot\vec{G}^{ij}) (\nabla_j|\vec{\Phi}|^2(\vec{c}\cdot\nabla_k\vec{\Phi})-\nabla_k|\vec{\Phi}|^2(\vec{c}\cdot\nabla_j\vec{\Phi}))=0.\label{090}
\end{align}
Also,
$$\vec{G}^j_j=2\Delta_{\perp}\vec{H}+ 8|\vec{H}|^2\vec{H},\quad\quad F^j_j=2|\pi_{\vec{n}}\nabla\vec{H}|^2+2\vec{H}\cdot\Delta_\perp\vec{H}  =\Delta_g|\vec{H}|^2$$

\noindent
so that
\begin{align}
\int_\Sigma F^j_j\vec{\Phi} =4\int_\Sigma |\vec{H}|^2\vec{H}. \nonumber
\end{align}

\noindent
Hence
\begin{align}
\int_{\Sigma} \vec{G}^j_j +F^j_j\vec{\Phi}  =\int_{\Sigma} 28|\vec{H}|^2\vec{H}- 2(\vec{H}\cdot\vec{h}^{ij})\vec{h}_{ij},  \label{091}
\end{align}

\noindent
where we have used the following identity derived in Lemma \ref{rivvy}
\begin{align}
\nabla^s \vec{A}_s:= \nabla^s\left[\nabla_s\vec{H}-2(|\vec{H}|^2g_{sk}-\vec{H}\cdot\vec{h}_{sk})\nabla^k\vec{\Phi}  \right]=\Delta_\perp \vec{H} +(\vec{H}\cdot\vec{h}_{ij})\vec{h}^{ij} -8|\vec{H}|^2\vec{H}. \label {Riv}
\end{align}

\noindent
Next, integrating by parts, we find
\begin{align}
&\int_{\Sigma} C^j (\vec{H}\wedge\nabla_j\vec{\Phi}) \mathlarger{\mathlarger{\mathlarger{\mathlarger{\llcorner}} }} \vec{\Phi} =4\int_\Sigma \nabla_k (|\vec{H}|^2g^{jk} -\vec{H}\cdot\vec{h}^{jk}) (\vec{H}\wedge\nabla_j\vec{\Phi})\mathlarger{\mathlarger{\mathlarger{\mathlarger{\llcorner}} }} \vec{\Phi}  \nonumber
\\&= -4\int_{\Sigma}(|\vec{H}|^2g^{jk}-\vec{H}\cdot\vec{h}^{jk})\left[(\nabla_k\vec{H}\wedge\nabla_j\vec{\Phi})\mathlarger{\mathlarger{\mathlarger{\mathlarger{\llcorner}} }} \vec{\Phi} +(\vec{H}\wedge\vec{h}_{jk})\mathlarger{\mathlarger{\mathlarger{\mathlarger{\llcorner}} }} \vec{\Phi} +(\vec{H}\wedge\nabla_j\vec{\Phi})\mathlarger{\mathlarger{\mathlarger{\mathlarger{\llcorner}} }} \nabla_k\vec{\Phi}    \right] \nonumber
\\&= -4\int_{\Sigma}(|\vec{H}|^2g^{jk}-\vec{H}\cdot\vec{h}^{jk})\left[(\nabla_k\vec{H}\wedge\nabla_j\vec{\Phi})\mathlarger{\mathlarger{\mathlarger{\mathlarger{\llcorner}} }} \vec{\Phi}               +(\vec{H}\wedge\vec{h}_{jk})\mathlarger{\mathlarger{\mathlarger{\mathlarger{\llcorner}} }} \vec{\Phi}-\vec{H}g_{jk}   \right]  \nonumber
\\&= -4\int_{\Sigma}(|\vec{H}|^2g^{jk}-\vec{H}\cdot\vec{h}^{jk})\left[(\nabla_k\vec{H}\wedge\nabla_j\vec{\Phi})\mathlarger{\mathlarger{\mathlarger{\mathlarger{\llcorner}} }} \vec{\Phi}               +(\vec{H}\wedge\vec{h}_{jk})\mathlarger{\mathlarger{\mathlarger{\mathlarger{\llcorner}} }} \vec{\Phi} \right]  \nonumber
\\&=4\int_{\Sigma}(\vec{H}\cdot\vec{h}^{jk})\left[ (\nabla_k\vec{H}\wedge\nabla_j\vec{\Phi}) \mathlarger{\mathlarger{\mathlarger{\mathlarger{\llcorner}} }} \vec{\Phi} +(\vec{H}\wedge\vec{h}_{jk})\mathlarger{\mathlarger{\mathlarger{\mathlarger{\llcorner}} }} \vec{\Phi}  \right] -4\int_{\Sigma} |\vec{H}|^2(\nabla^j\vec{H}\wedge\nabla_j\vec{\Phi})\mathlarger{\mathlarger{\mathlarger{\mathlarger{\llcorner}} }} \vec{\Phi}.  \label{comb1}
\end{align}

\noindent
On the other hand, using again \eqref{Riv} yields
\begin{align}
&\int_{\Sigma} (\vec{G}^{ij}\wedge\vec{h}_{ij}) \mathlarger{\mathlarger{\mathlarger{\mathlarger{\llcorner}} }} \vec{\Phi}= 2\int_{\Sigma} (\vec{H}\cdot\vec{h}^{ij})(\vec{H}\wedge\vec{h}_{ij})\mathlarger{\mathlarger{\mathlarger{\mathlarger{\llcorner}} }} \vec{\Phi} -2\int_{\Sigma} (\vec{H}\wedge\Delta_{\perp}\vec{H})\mathlarger{\mathlarger{\mathlarger{\mathlarger{\llcorner}} }} \vec{\Phi}  \nonumber
\\&= 2 \int_{\Sigma} \left[(\Delta_\perp \vec{H}-(\vec{H}\cdot\vec{h}^{ij})\vec{h}_{ij})\wedge\vec{H}  \right]\mathlarger{\mathlarger{\mathlarger{\mathlarger{\llcorner}} }} \vec{\Phi}                    \nonumber
\\&= 2\int_\Sigma \left[(\nabla^s\vec{A}_s-2(\vec{H}\cdot\vec{h}_{ij})\vec{h}^{ij})  \wedge\vec{H} \right]\mathlarger{\mathlarger{\mathlarger{\mathlarger{\llcorner}} }} \vec{\Phi}   \nonumber
\\&=-4\int_{\Sigma} (\vec{H}\cdot\vec{h}_{ij})(\vec{h}^{ij}\wedge\vec{H})\mathlarger{\mathlarger{\mathlarger{\mathlarger{\llcorner}} }} \vec{\Phi} -2\int_\Sigma (\vec{A}^s\wedge\nabla_s\vec{H})  \mathlarger{\mathlarger{\mathlarger{\mathlarger{\llcorner}} }} \vec{\Phi}+ (\vec{A}^s\wedge\vec{H})\mathlarger{\mathlarger{\mathlarger{\mathlarger{\llcorner}} }} \nabla_s\vec{\Phi}          \nonumber
\\&= -4\int_{\Sigma} (\vec{H}\cdot\vec{h}_{ij})(\vec{h}^{ij}\wedge\vec{H})\mathlarger{\mathlarger{\mathlarger{\mathlarger{\llcorner}} }} \vec{\Phi} -2\int_\Sigma (\vec{A}^s\wedge\nabla_s\vec{H})  \mathlarger{\mathlarger{\mathlarger{\mathlarger{\llcorner}} }} \vec{\Phi} +(\vec{A}^s\cdot\nabla_s\vec{\Phi})\vec{H}  \nonumber
\\&=  -4\int_{\Sigma} (\vec{H}\cdot\vec{h}_{ij})(\vec{h}^{ij}\wedge\vec{H})\mathlarger{\mathlarger{\mathlarger{\mathlarger{\llcorner}} }} \vec{\Phi}   -2\int_\Sigma (\vec{A}^s\wedge\nabla_s\vec{H})  \mathlarger{\mathlarger{\mathlarger{\mathlarger{\llcorner}} }} \vec{\Phi} -4|\vec{H}|^2\vec{H}    \nonumber
\\&= \int_{\Sigma}-4 (\vec{H}\cdot\vec{h}_{ij})(\vec{h}^{ij}\wedge\vec{H})\mathlarger{\mathlarger{\mathlarger{\mathlarger{\llcorner}} }} \vec{\Phi}  -4|\vec{H}|^2(\nabla^j\vec{H}\wedge \nabla_j\vec{\Phi}) \mathlarger{\mathlarger{\mathlarger{\mathlarger{\llcorner}} }} \vec{\Phi} \nonumber
\\&\quad +\int_\Sigma 4(\vec{H}\cdot\vec{h}^{jk})(\nabla_k\vec{H}\wedge\nabla_j\vec{\Phi})\mathlarger{\mathlarger{\mathlarger{\mathlarger{\llcorner}} }} \vec{\Phi} +8|\vec{H}|^2\vec{H}.  \label{comb2}
\end{align}

\noindent
We find from \eqref{comb1} and \eqref{comb2} 
\begin{align}
\int_{\Sigma} C^j(\vec{H}\wedge\nabla_j\vec{\Phi})\mathlarger{\mathlarger{\mathlarger{\mathlarger{\llcorner}} }} \vec{\Phi} -(\vec{G}^{ij}\wedge \vec{h}_{ij})\mathlarger{\mathlarger{\mathlarger{\mathlarger{\llcorner}} }} \vec{\Phi} =-8\int_\Sigma |\vec{H}|^2\vec{H}. \label{broke}
\end{align}

\noindent
Using \eqref{090}, \eqref{091} and \eqref{broke} in \eqref{u3s} gives
\begin{align}
\delta\mathcal{E}_0=  -2\vec{c}\cdot\int_\Sigma 20|\vec{H}|^2\vec{H} -2(\vec{H}\cdot\vec{h}^{ij})\vec{h}_{ij}.              \nonumber
\end{align}

\noindent
Consider next the energy
\begin{align}
\mathcal{E}_1:=\int_\Sigma |\vec{H}\cdot\vec{h}|^2 d\textnormal{vol}_g  .  \nonumber      
\end{align}

\noindent
Using similar notation as above, we have this time
\begin{align}
F^{ij}=|\vec{H}\cdot\vec{h}|^2 g^{ij} -\frac{1}{2}(\vec{H}\cdot\vec{h}^{kl})(\vec{h}_{kl}\cdot\vec{h}^{ij}) -2(\vec{H}\cdot\vec{h}^{ik})(\vec{H}\cdot\vec{h}^j_k)        \nonumber
\end{align}

\noindent
and
\begin{align}
\vec{G}^{ij}=-2(\vec{H}\cdot\vec{h}^{ij})\vec{H} -\frac{1}{2}g^{ij} (\vec{H}\cdot \vec{h}^{kl})\vec{h}_{kl} \quad\quad\textnormal{and}\quad C^j=0.
\end{align}

\noindent
By same token as above
\begin{align}
\delta\mathcal{E}_1&= -2\vec{c}\cdot\int_{\Sigma} \vec{G}^j_j +F^j_j\vec{\Phi} - (\vec{G}^{ij}\wedge\vec{h}_{ij})\mathlarger{\mathlarger{\mathlarger{\mathlarger{\llcorner}} }} \vec{\Phi} = -2\vec{c}\cdot\int_\Sigma \vec{G}^j_j  \nonumber
\\&= -2\vec{c}\cdot\int_\Sigma -8|\vec{H}|^2\vec{H} -2(\vec{H}\cdot\vec{h}^{ij})\vec{h}_{ij}         \nonumber
\end{align}

\noindent
Finally, we consider the energy
\begin{align}
\mathcal{E}_2:= \int_\Sigma |\vec{H}|^4 d\textnormal{vol}_g.  \nonumber
\end{align}

\noindent
Using similar notation as above, we have
\begin{align}
F^{ij}=|\vec{H}|^4g^{ij} -|\vec{H}|^2\vec{H}\cdot\vec{h}^{ij}  ,  \nonumber
\end{align}
\begin{align}
\vec{G}^{ij}= -|\vec{H}^2|\vec{H} g^{ij}\quad\quad\textnormal{and}\quad C^j=0.  \nonumber
\end{align}

\noindent
By same token as above, we find
\begin{align}
\delta \mathcal{E}_2&= -2\vec{c}\cdot \int_{\Sigma} \vec{G}^j_j +F^j_j\vec{\Phi}  -(\vec{G}^{ij}\wedge\vec{h}_{ij})\mathlarger{\mathlarger{\mathlarger{\mathlarger{\llcorner}} }} \vec{\Phi} =-2\vec{c}\cdot\int_\Sigma \vec{G}^j_j   \nonumber
\\&= -2\vec{c}\cdot\int_\Sigma -4|\vec{H}|^2\vec{H}.  \nonumber
\end{align}

\noindent
Therefore 
\begin{align}
\delta\mathcal{E}:=\delta(\mathcal{E}_0+ a\mathcal{E}_1+b\mathcal{E}_2)=-2\vec{c}\cdot \int_\Sigma (-2a-2) (\vec{H}\cdot\vec{h}^{ij})\vec{h}_{ij} +(20-8a-4b) |\vec{H}|^2\vec{H}    \nonumber
\end{align}
so that only the values $a=-1$ and $b=7$ yields $\delta\mathcal{E}=0$.

\noindent
Observe that we have just shown that the only linear combination of $\mathcal{E}_0$, $\mathcal{E}_1$ and $\mathcal{E}_2$ that gives a conformally invariant energy is 
$$\int_{\Sigma}|\pi_{\vec{n}}\nabla \vec{H}|^2 -|\vec{H}\cdot \vec{h}|^2 +7|\vec{H}|^4.$$
This completes the proof.
\end{proof}

\noindent

\section{Existence of potential 2-forms} \label{sekoko}

In this section, we present further conservation laws. We briefly clarify some of the notations used in this Section.
Most of the quantities used are simultaneoulsy differential forms and multivectors. The space $\Lambda^k(\mathbb{R}^4, \Lambda^p(\mathbb{R}^m))$ is the collection of all $k$-forms that act as $p$-vectors from $\mathbb{R}^4$ to $\mathbb{R}^m$. Thus an element of this space is both a $p$-vector and a $k$-form, or simply a $p$-vector-valued $k$-form. For $\vec{A}\in\Lambda^k(\mathbb{R}^4, \Lambda^p(\mathbb{R}^m))$ and $\vec{B}\in\Lambda^\ell(\mathbb{R}^4, \Lambda^q(\mathbb{R}^m))$, the product $\vec{A}\overset{\bullet}\wedge_4 \vec{B} \in \Lambda^{k+\ell}(\mathbb{R}^4, \Lambda^{p+q-2}(\mathbb{R}^m))$ represents the contraction between multivectors and wedge product between forms. Similar meaning is given to the products $\vec{A}\overset{\cdot}\wedge_4 \vec{B}$ and $\vec{A}\overset{\wedge}\wedge_4 \vec{B}$ where the upper and lower symbols denote operations between vectors and forms respectively. We will simply write $\vec{A}\wedge \vec{B}$ whenever it is clear to do so. For instance, if $\vec{A}$ is a 1-vector-valued 0-form and $\vec{B}$ is a $q$-vector valued $\ell$-form.

We recall the following useful result due to Henri Poincar\'e (see \cite{jost}).

\noindent
\begin{lem}[Poincar\'e lemma]
Let $\Omega\subset \mathbb{R}^4$ be an open ball. Let $\omega\in \Lambda^k(\Omega, \Lambda^p(\mathbb{R}^m))$ with $1\leq k\leq 4$ satisfy
$$d\omega=0.$$
Then $\omega=d\alpha$ for some $\alpha\in \Lambda^{k-1}(\Omega, \Lambda^p(\mathbb{R}^m))$. Similarly, if $0\leq k\leq 3$ and
$$d^\star \omega=0,$$
then $\omega=d^\star\alpha$ for some $\alpha\in \Lambda^{k+1}(\Omega, \Lambda^p(\mathbb{R}^m))$.
\end{lem}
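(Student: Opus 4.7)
The plan is to reduce the vector-valued statement to the classical scalar Poincar\'e lemma and then obtain the codifferential version by Hodge duality. Since $\Lambda^p(\mathbb{R}^m)$ is a fixed finite-dimensional vector space, I would fix a basis $\{\vec{e}_I\}$ of $\Lambda^p(\mathbb{R}^m)$ and decompose any $\omega\in\Lambda^k(\Omega,\Lambda^p(\mathbb{R}^m))$ uniquely as $\omega=\sum_I \omega_I \otimes \vec{e}_I$ with $\omega_I\in\Lambda^k(\Omega)$. Because $d$ only acts on the form part, $d\omega=0$ forces $d\omega_I=0$ for every $I$, and we are immediately reduced to applying the scalar Poincar\'e lemma to each $\omega_I$. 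Recombining the scalar primitives $\alpha_I$ into $\alpha:=\sum_I \alpha_I \otimes \vec{e}_I$ will produce the required $(k-1)$-form.

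For the scalar case on the ball $\Omega$, the standard route is the explicit homotopy (cone) operator: on a star-shaped domain centered at the origin, define $K:\Lambda^k(\Omega)\to\Lambda^{k-1}(\Omega)$ by
\[
(K\omega)_x(v_1,\dots,v_{k-1}) := \int_0^1 t^{k-1}\,\omega_{tx}(x,v_1,\dots,v_{k-1})\,dt.
\]
A direct differentiation under the integral (or equivalently Cartan's magic formula applied to the radial vector field) yields $dK+Kd=\mathrm{id}$ on forms of degree at least one. Consequently, whenever $d\omega=0$ one gets $\omega=d(K\omega)$, which is exactly the conclusion for $1\leq k\leq 4$.

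For the codifferential half of the lemma I would dualize. On the flat Euclidean $\mathbb{R}^4$ the Hodge star $\star:\Lambda^k\to\Lambda^{4-k}$ is an isomorphism and the codifferential satisfies $d^\star=\pm \star d\star$ (the sign depending on $k$ but irrelevant to the argument). Extending $\star$ trivially to vector-valued forms leaves the multivector coefficients untouched, so if $d^\star\omega=0$ with $0\leq k\leq 3$ then $d(\star\omega)=0$, and $\star\omega$ is a form of degree $4-k\geq 1$. The first part of the lemma already proved applies and produces $\beta\in\Lambda^{4-k-1}(\Omega,\Lambda^p(\mathbb{R}^m))$ with $d\beta=\star\omega$. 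Applying $\star$ once more and using $\star^2=\pm\mathrm{id}$ rewrites $\omega$ as $\pm\star d\beta = \pm d^\star(\star\beta)$, so $\alpha:=\pm\star\beta\in\Lambda^{k+1}(\Omega,\Lambda^p(\mathbb{R}^m))$ is the sought primitive.

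The individual steps are all classical, so the only genuine care-point is bookkeeping: one must check that the conventions for $d^\star$ and $\star$ used throughout the thesis match the relation $d^\star=\pm\star d\star$ (with the correct signs in dimension four), and that the natural extensions of $d$, $d^\star$ and $\star$ to $\Lambda^p(\mathbb{R}^m)$-valued forms commute with the decomposition along the fixed basis $\{\vec{e}_I\}$. Once these identifications are made, the proof is essentially a one-line assembly of the homotopy identity and Hodge duality.
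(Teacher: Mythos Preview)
Your proof outline is correct and follows the standard argument; the paper itself does not prove this lemma at all, but simply states it as a classical result with a reference to Jost's textbook. So there is nothing to compare against on the paper's side.

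One small remark on bookkeeping: you invoke the Hodge star ``on the flat Euclidean $\mathbb{R}^4$'', but in the thesis the operators $\star$ and $d^\star$ are defined with respect to the induced metric $g$, not the flat one. This does not affect your argument, since the identity $d^\star = \pm\,\star d\star$ and the fact that $\star$ is an isomorphism hold for any Riemannian metric; just be sure to phrase the duality step accordingly rather than appealing specifically to the Euclidean structure.
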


\begin{prop}\label{olaa}
Let $\vec{\Phi}$ be a critical point of $\mathcal{E}(\Sigma)$ and let $\vec{V}$ be as defined in Proposition \ref{cons-law}. Then there is a two-form $\vec{l}_0\in \Lambda ^2(\mathbb{R}^4, \Lambda^1(\mathbb{R}^m))$   such that 
\begin{align}
\vec{V}=\star d\vec{l}_0.\nonumber
\end{align}
Moreover, there are two-forms $X\in \Lambda ^2(\mathbb{R}^4, \Lambda^0(\mathbb{R}^m))$ and $\vec{Y}\in \Lambda ^2(\mathbb{R}^4, \Lambda^2(\mathbb{R}^m))$ such that

\begin{align}%
\begin{cases}dX&=\vec{l}_0\overset{\cdot}\wedge_4 d\vec{\Phi} -\star d|\vec{H}|^2           \\
d\vec{Y}&=\vec{l}_0\overset{\wedge}\wedge_4 d\vec{\Phi}-\star\vec{M}    \nonumber
\end{cases} 
\end{align}
where $\star$ is the usual Hodge star operator on differential forms.
\end{prop}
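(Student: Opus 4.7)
The plan is to translate each of the three conservation identities of Theorem \ref{cons-law} into the closedness of an appropriate multivector-valued differential form and then invoke the Poincar\'e lemma recalled above to obtain the desired primitives. First, the translation identity $\nabla_j\vec{V}^j=\vec{0}$ says that the $1$-vector-valued 1-form $\vec{V}$ is co-closed, that is $d^\star\vec{V}=\vec{0}$, which is equivalent to $d(\star\vec{V})=\vec{0}$. On a simply connected patch the Poincar\'e lemma therefore supplies a 2-form $\vec{l}_0\in\Lambda^2(\mathbb{R}^4,\Lambda^1(\mathbb{R}^m))$ with $d\vec{l}_0=\star\vec{V}$, and applying $\star$ together with the identity $\star\star=\pm\mathrm{id}$ on 1-forms in dimension four yields $\vec{V}=\star d\vec{l}_0$ up to a harmless redefinition of $\vec{l}_0$ by a sign.

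For the dilation identity I would first combine $\nabla_j\vec{V}^j=\vec{0}$ with the Leibniz rule to rewrite $\nabla_j(\vec{\Phi}\cdot\vec{V}^j-\nabla^j|\vec{H}|^2)=0$ as the pointwise identity $\vec{V}^j\cdot\nabla_j\vec{\Phi}=\Delta_g|\vec{H}|^2$. Setting $\omega:=\vec{l}_0\overset{\cdot}\wedge_4 d\vec{\Phi}-\star d|\vec{H}|^2\in\Lambda^3(\mathbb{R}^4,\Lambda^0(\mathbb{R}^m))$, my candidate right-hand side of the first equation, I would then check that $\omega$ is closed: differentiating, using $d^2\vec{\Phi}=\vec{0}$, $d\vec{l}_0=\star\vec{V}$, and the pairing $\star\alpha\wedge\beta=\langle\alpha,\beta\rangle_g\,d\textnormal{vol}_g$ valid for 1-forms, one obtains
$$d\omega=\star\vec{V}\overset{\cdot}\wedge_4 d\vec{\Phi}-d\star d|\vec{H}|^2=(\vec{V}^j\cdot\nabla_j\vec{\Phi})\,d\textnormal{vol}_g-\Delta_g|\vec{H}|^2\,d\textnormal{vol}_g=0.$$
One more application of the Poincar\'e lemma then delivers the scalar-valued 2-form $X$ with $dX=\omega$.

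The rotation identity is handled by the same argument with the dot product on $\mathbb{R}^m$ replaced by the wedge product. Specifically, the third conservation law becomes $\vec{V}^j\wedge\nabla_j\vec{\Phi}=-\nabla_j\vec{M}^j$; setting $\vec{\omega}:=\vec{l}_0\overset{\wedge}\wedge_4 d\vec{\Phi}-\star\vec{M}\in\Lambda^3(\mathbb{R}^4,\Lambda^2(\mathbb{R}^m))$, the analogous computation yields
$$d\vec{\omega}=\star\vec{V}\overset{\wedge}\wedge_4 d\vec{\Phi}-d\star\vec{M}=(\vec{V}^j\wedge\nabla_j\vec{\Phi}+\nabla_j\vec{M}^j)\,d\textnormal{vol}_g=\vec{0},$$
and a final application of Poincar\'e's lemma produces the $2$-vector-valued 2-form $\vec{Y}$ with $d\vec{Y}=\vec{\omega}$.

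The principal technical obstacle in carrying this plan out lies in the careful bookkeeping of signs and of form/multivector degrees in the compound products $\overset{\cdot}\wedge_4$ and $\overset{\wedge}\wedge_4$. In particular, one must pin down the sign of $\star\star$ on 1-forms in dimension four and verify the top-form identities $\star\vec{V}\overset{\cdot}\wedge_4 d\vec{\Phi}=(\vec{V}^j\cdot\nabla_j\vec{\Phi})\,d\textnormal{vol}_g$ and $\star\vec{V}\overset{\wedge}\wedge_4 d\vec{\Phi}=(\vec{V}^j\wedge\nabla_j\vec{\Phi})\,d\textnormal{vol}_g$ that drive the closedness computations. Once these conventions are in place (consistent with Section \ref{latty}), the remainder of the proof reduces to two applications of the Leibniz rule followed by three applications of the Poincar\'e lemma on simply connected coordinate patches.
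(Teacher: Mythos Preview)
Your approach is correct and in fact slightly more streamlined than the paper's. For the existence of $\vec{l}_0$ both arguments are identical: apply the Poincar\'e lemma to $d^\star\vec{V}=\vec{0}$. The difference appears in the construction of $X$ and $\vec{Y}$. The paper first applies Poincar\'e directly to the co-closed $1$-forms $\vec{\Phi}\cdot\vec{V}-d|\vec{H}|^2$ and $\vec{\Phi}\wedge\vec{V}+\vec{M}$ to obtain intermediate $2$-forms $X_0$ and $\vec{Y}_0$, and then shifts these by $\vec{l}_0\overset{\cdot}\wedge_4\vec{\Phi}$ (resp.\ $\vec{\Phi}\overset{\wedge}\wedge_4\vec{l}_0$) to land on $X$ and $\vec{Y}$. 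You instead go straight to the target $3$-forms $\omega$ and $\vec{\omega}$, verify their closedness by the Leibniz rule together with the pointwise identities $\vec{V}^j\cdot\nabla_j\vec{\Phi}=\Delta_g|\vec{H}|^2$ and $\vec{V}^j\wedge\nabla_j\vec{\Phi}=\pm\nabla_j\vec{M}^j$, and invoke Poincar\'e once. Your route avoids the intermediate primitives and the freedom-of-choice step (``we are free to demand $dX_0=\ldots$'') that the paper uses; what the paper's detour buys is that it never needs the top-form pairing identities $\star\vec{V}\overset{\cdot}\wedge_4 d\vec{\Phi}=(\vec{V}^j\cdot\nabla_j\vec{\Phi})\,d\textnormal{vol}_g$ that you correctly flag as the main bookkeeping point. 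One small correction: from $\nabla_j(\vec{\Phi}\wedge\vec{V}^j+\vec{M}^j)=\vec{0}$ and antisymmetry of the vector wedge you get $\vec{V}^j\wedge\nabla_j\vec{\Phi}=+\nabla_j\vec{M}^j$, not $-\nabla_j\vec{M}^j$; this is precisely the kind of sign you already said needs pinning down, and it does not affect the argument.
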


\begin{proof}
From Proposition \ref{cons-law}, it holds that $\nabla_j\vec{V}^j=\vec{0}.$ In other words, $d^\star\vec{V}=\vec{0}$ where $\vec{V}\in \Lambda^1(\mathbb{R}^4, \Lambda^1(\mathbb{R}^m))$. By the Poincar\'e lemma, there exists some $\vec{l}_0\in\Lambda^2(\mathbb{R}^4, \Lambda^1(\mathbb{R}^m))$ such that 
\begin{align}
\vec{V}=\star d\vec{l}_0\quad\quad \mbox{with}\quad d^\star\vec{l}_0=\vec{0}. \nonumber
\end{align}
Similarly,  we found in Proposition \ref{cons-law},
$$d^\star(\vec{\Phi}\cdot\vec{V}-d|\vec{H}|^2)=\nabla_j(\vec{\Phi}\cdot\vec{V}-\nabla^j|\vec{H}|^2)=0.$$
By the Poincar\'e lemma, there is some $X_0\in\Lambda^2(\mathbb{R}^4, \Lambda^0(\mathbb{R}^m))$ such that \begin{align}
d^\star X_0&=\vec{\Phi}\cdot\vec{V}-d|\vec{H}|^2=\star d\vec{l}_0\overset{\cdot}\wedge_4\vec{\Phi}-d|\vec{H}|^2.                  \nonumber
\end{align}
We are free to demand that $dX_0=-\star\vec{l}_0\overset{\cdot}\wedge_4 d\vec{\Phi}$. Thus we have
\begin{align}
d\star X_0= -d\vec{l}_0\overset{\cdot}\wedge_4\vec{\Phi}-\star d|\vec{H}|^2=-d(\vec{l}_0\overset{\cdot}\wedge_4\vec{\Phi})+\vec{l}_0\overset{\cdot}\wedge_4 d\vec{\Phi}-\star d|\vec{H}|^2   \nonumber
\end{align}
By setting $X:=\star X_0  +\vec{l}_0 \overset{\cdot}\wedge_4   \vec{\Phi}$ where $X\in\Lambda^2(\mathbb{R}^4, \Lambda^0(\mathbb{R}^m))$, we have that
\begin{align}
dX= \vec{l}_0\overset{\cdot}\wedge_4 d\vec{\Phi}-\star d|\vec{H}|^2\quad\quad\mbox{with}\quad d^\star X=0.  \nonumber
\end{align}

\noindent
Finally, recall that in Proposition \ref{cons-law} we  obtained $d^\star(\vec{\Phi}\wedge \vec{V}+\vec{M})=\vec{0}$. By the Poincar\'e lemma, there exists some $\vec{Y}_0 \in\Lambda^2(\mathbb{R}^4, \Lambda^2(\mathbb{R}^m))$ such that
\begin{align}
d^\star\vec{Y}_0&=\vec{\Phi}\wedge\vec{V} +\vec{M}=\vec{\Phi}\overset{\wedge}\wedge_4 \star d\vec{l}_0+\vec{M}.   \nonumber
\end{align}
We are free to demand that $d\vec{Y}_0=\star\vec{l}_0\overset{\wedge}\wedge_4 d\vec{\Phi}$. Thus we have 
\begin{align}
d\star \vec{Y}_0=-\vec{\Phi}\overset{\wedge}\wedge_4 d\vec{l}_0+\star \vec{M}= -d(\vec{\Phi}\overset{\wedge}\wedge_4\vec{l}_0) +d\vec{\Phi}\overset{\wedge}\wedge_4\vec{l}_0 +\star\vec{M}. \nonumber
\end{align}
By defining a two-vector $\vec{Y}\in\Lambda^2(\mathbb{R}^4, \Lambda^2(\mathbb{R}^m))$ by $\vec{Y}:=-\star\vec{Y}_0-\vec{\Phi}\overset{\wedge}\wedge_4\vec{l}_0,$  we have 
\begin{align}
d\vec{Y}=\vec{l}_0\overset{\wedge}\wedge_4 d\vec{\Phi}-\star\vec{M}\quad\quad\mbox{with}\quad d^\star\vec{Y}=\vec{0}.  \nonumber
\end{align}
\end{proof}
\noindent
For legibility and simplicity of notations, we  now present further conservation laws in codimension one. From now on, $\Sigma$ will be understood as a four dimensional closed hypersurface. 

\begin{prop}\label{coro}
Let $\vec{\Phi}:\Sigma \rightarrow \mathbb{R}^5$ be a critical point of $\mathcal{E}(\Sigma)$ and let $\vec{V}$ be as defined in Theorem \ref{cons-law}.
Then it holds that
\begin{gather}
d^{\star} \vec{V}=\vec{0}, \nonumber
\\ d^\star (\vec{\Phi}\cdot\vec{V}-d|\vec{H}|^2)=0  \nonumber
 \\ \mbox{and}  \quad d^\star(\vec{\Phi}\wedge\vec{V} +(\vec{J}+2d^\star(|\vec{H}|^2d\vec{\Phi}))\wedge d\vec{\Phi})=\vec{0}  \nonumber
\end{gather}
where $\vec{J}:=\frac{1}{2}\Delta_\perp \vec{H}+\frac{1}{2}\vec{H}|\vec{h}|^{2}-7|\vec{H}|^2\vec{H}$.
Also, there exist two-forms $\vec{L}_0,\vec{L}\in\Lambda^2(\mathbb{R}^4, \Lambda^1(\mathbb{R}^5))$, $S_0\in\Lambda^2(\mathbb{R}^4, \Lambda^0(\mathbb{R}^5))$ and $\vec{R}_0\in\Lambda^2(\mathbb{R}^4, \Lambda^2(\mathbb{R}^5))$ such that
\begin{gather}
\vec{V}= d^\star\vec{L}_0 \nonumber\\
dS_0= \vec{L}\overset{\cdot}\wedge_4 d\vec{\Phi} \nonumber\\
d\vec{R}_0=\vec{L}\overset{\wedge}\wedge_4 d\vec{\Phi} +\star d\vec{u} -(\vec{J} +8 |\vec{H}|^2\vec{H})\wedge \star d\vec{\Phi} \nonumber
\end{gather}
where where $\vec{L}_0$ and $\vec{L}$ are related by $\vec{L}=\star(\vec{L}_0-d|\vec{H}|^2\wedge_4 d\vec{\Phi})$ and $\vec{u}$ satisfies the Hodge decomposition
\begin{gather}
\frac{5}{3}|\vec{H}|^2d^\star \vec{\eta}= d\vec{u}+d^\star \vec{v},\nonumber\\
 \quad \vec{u}\in\Lambda^0(\mathbb{R}^4, \Lambda^2(\mathbb{R}^5)), \vec{v}\in\Lambda^2(\mathbb{R}^4, \Lambda^2(\mathbb{R}^5)), \vec{\eta}:=d\vec{\Phi}\overset{\wedge}\wedge_4 d\vec{\Phi}\in\Lambda^2(\mathbb{R}^4, \Lambda^2(\mathbb{R}^5)).  \nonumber
\end{gather}
\end{prop}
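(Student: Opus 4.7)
The plan is to follow the three-step scheme of Proposition \ref{olaa}, adapted to codimension one. First, each of the three invariance identities of Theorem \ref{cons-law} is translated into a co-closed differential form on $\Sigma$. Second, Poincar\'e's lemma is applied to extract $2$-form primitives. Third, Hodge-star and wedge algebra recombine these primitives into the stated identities. The codimension-one assumption will drastically simplify $\vec{M}$, but it also produces a residual term proportional to $|\vec{H}|^2 d^\star \vec{\eta}$ in the equation for $\vec{R}_0$, which must be handled by a Hodge decomposition.

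For the three divergence-free identities, the translation and dilation laws are direct rewrites of Theorem \ref{cons-law} in form language. For the rotation law, I must identify $\vec{M}$ with $(\vec{J}+2d^\star(|\vec{H}|^2 d\vec{\Phi}))\wedge d\vec{\Phi}$ up to a co-closed remainder. Setting $\vec{h}_{ij}=h_{ij}\vec{n}$ and $\vec{H}=H\vec{n}$, the wedge $\vec{H}\wedge\pi_{\vec{n}}\nabla^j\vec{H}$ vanishes and $(\vec{H}\cdot\vec{h}^{ik})\vec{h}_{ik}=H|h|^2\vec{n}$, so the four terms of $\vec{M}^j$ collapse to $\vec{J}\wedge\nabla^j\vec{\Phi}$ plus $2H^2 h^{ij}\vec{n}\wedge\nabla_i\vec{\Phi}$. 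Using $\Delta_g\vec{\Phi}=4\vec{H}$ together with the contracted Codazzi--Mainardi identity, the excess piece can be absorbed into $2d^\star(|\vec{H}|^2 d\vec{\Phi})\wedge d\vec{\Phi}$ modulo a co-closed term, yielding the reformulated third conservation law.

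For the potentials: Poincar\'e's lemma applied to $d^\star\vec{V}=\vec{0}$ produces $\vec{L}_0$ with $\vec{V}=d^\star\vec{L}_0$, and I define $\vec{L}:=\star(\vec{L}_0-d|\vec{H}|^2\wedge_4 d\vec{\Phi})$, where the correction is chosen precisely to absorb the $d|\vec{H}|^2$ coming from the dilation law. To produce $S_0$, I substitute $\vec{V}=d^\star\vec{L}_0$ into $d^\star(\vec{\Phi}\cdot\vec{V}-d|\vec{H}|^2)=0$, apply the Leibniz-type identity $\vec{\Phi}\cdot d^\star\vec{L}_0 = d^\star(\vec{\Phi}\cdot\vec{L}_0)-\vec{L}_0\overset{\cdot}\wedge_4 d\vec{\Phi}$, and invoke Poincar\'e's lemma a second time as in the proof of Proposition \ref{olaa}. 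After a Hodge-star reformulation and an integration by parts, the identity $dS_0=\vec{L}\overset{\cdot}\wedge_4 d\vec{\Phi}$ is a direct consequence of the definition of $\vec{L}$.

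The hardest step will be the construction of $\vec{R}_0$. Running the same recipe on the rotation law produces an identity of the shape
$d\vec{R}_0^{\mathrm{pre}}=\vec{L}\overset{\wedge}\wedge_4 d\vec{\Phi}-(\vec{J}+8|\vec{H}|^2\vec{H})\wedge\star d\vec{\Phi}+\vec{F}$,
where $\vec{F}$ collects every cross-term generated by wedging the $2d^\star(|\vec{H}|^2 d\vec{\Phi})\wedge d\vec{\Phi}$ contribution against $d\vec{\Phi}$. A careful expansion using $d^\star d\vec{\Phi}=-4\vec{H}$, the contracted Codazzi--Mainardi identity, and the bookkeeping of tangent-tangent, tangent-normal and normal-normal contractions should collapse $\vec{F}$ to a scalar multiple of $|\vec{H}|^2 d^\star\vec{\eta}$ with $\vec{\eta}=d\vec{\Phi}\overset{\wedge}\wedge_4 d\vec{\Phi}$, and combining the three contractions will produce the coefficient $\tfrac{5}{3}$. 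Since $|\vec{H}|^2 d^\star\vec{\eta}$ is neither closed nor co-closed, I then invoke the Hodge decomposition $\tfrac{5}{3}|\vec{H}|^2 d^\star\vec{\eta}=d\vec{u}+d^\star\vec{v}$, absorb the co-exact piece $d^\star\vec{v}$ into a redefinition of $\vec{R}_0$, and arrive at the stated identity with $\star d\vec{u}$ as the only residual exact contribution. Verifying that $\vec{F}$ reduces \emph{exactly} to $\tfrac{5}{3}|\vec{H}|^2 d^\star\vec{\eta}$, and in particular that no other non-co-closed term survives the bookkeeping, is the delicate step I expect to be the main obstacle.
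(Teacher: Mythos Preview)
Your plan is correct and matches the paper's proof essentially step for step: codimension-one simplification of $\vec{M}$, Poincar\'e for $\vec{L}_0$, the corrected $\vec{L}:=\star(\vec{L}_0-d|\vec{H}|^2\wedge_4 d\vec{\Phi})$, the Leibniz identity $\vec{\Phi}\cdot d^\star\vec{L}_0=d^\star(\vec{L}_0\cdot\vec{\Phi})-\star((\star\vec{L}_0)\overset{\cdot}\wedge_4 d\vec{\Phi})$ to produce $S_0$, and the Hodge decomposition $\tfrac{5}{3}|\vec{H}|^2 d^\star\vec{\eta}=d\vec{u}+d^\star\vec{v}$ to close the $\vec{R}_0$ equation.

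One refinement to flag for the $\vec{R}_0$ step: your description attributes the residual $\vec{F}$ entirely to the $2d^\star(|\vec{H}|^2 d\vec{\Phi})\wedge d\vec{\Phi}$ piece, but in the paper the coefficient $\tfrac{5}{3}$ actually arises from \emph{two} sources. Writing $\vec{V}\wedge\vec{\Phi}=d^\star(\vec{L}_0\wedge\vec{\Phi})-\star((\star\vec{L}_0)\overset{\wedge}\wedge_4 d\vec{\Phi})$ and then replacing $\star\vec{L}_0$ by $\vec{L}+\star(d|\vec{H}|^2\wedge_4 d\vec{\Phi})$, the correction term contributes
\[
\star\bigl[(\star(d|\vec{H}|^2\wedge_4 d\vec{\Phi}))\overset{\wedge}\wedge_4 d\vec{\Phi}\bigr]=-\tfrac{1}{3}d^\star(|\vec{H}|^2\vec{\eta})+\tfrac{1}{3}|\vec{H}|^2 d^\star\vec{\eta},
\]
while separately rewriting $d^\star(|\vec{H}|^2 d\vec{\Phi})\wedge d\vec{\Phi}=4|\vec{H}|^2\vec{H}\wedge d\vec{\Phi}+d^\star(|\vec{H}|^2\vec{\eta})-|\vec{H}|^2 d^\star\vec{\eta}$ supplies the remaining $+2$ after the factor $-2$ in the conservation law. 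The sum $-\tfrac{1}{3}+2=\tfrac{5}{3}$ is then exactly your $\vec{F}$, and the $d^\star(|\vec{H}|^2\vec{\eta})$ pieces are co-exact and drop out. So keep both contributions in your bookkeeping; otherwise the coefficient will not close. (Also, in the paper's sign conventions $d^\star d\vec{\Phi}=+4\vec{H}$, not $-4\vec{H}$.)
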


\begin{proof}
From Theorem \ref{cons-law}, we have $\nabla_j\vec{V}^j=\vec{0}.$ In other words, $d^\star\vec{V}=\vec{0}$ where $\vec{V}\in \Lambda^1(\mathbb{R}^4, \Lambda^1(\mathbb{R}^5))$. By the Poincar\'e lemma, there exists some $\vec{L}_0\in\Lambda^2(\mathbb{R}^4, \Lambda^1(\mathbb{R}^5))$ such that 
\begin{gather}
\vec{V}=d^\star \vec{L}_0          \label{lqw}
\end{gather}
with the choice
$$d\vec{L}_0=\vec{0}.$$
Also, from Theorem \ref{cons-law}, the invariance of $\mathcal{E}(\Sigma)$ under dilation implies that  
\begin{gather}
d^\star(\vec{\Phi}\cdot\vec{V}-d|\vec{H}|^2)=\nabla_j(\vec{V}^j\cdot\vec{\Phi}-\nabla^j|\vec{H}|^2)=0.        \label{lqwer}
\end{gather}
Note that
\begin{align}
\vec{V}\cdot\vec{\Phi}&= d^\star\vec{L}_0\cdot\vec{\Phi}= \star d\star \vec{L}_0\cdot\vec{\Phi}= \star\left( d(\star\vec{L}_0\cdot\vec{\Phi})-(\star\vec{L}_0)\overset{\cdot}\wedge_4 d\vec{\Phi}  \right)  \nonumber
\\&=d^\star(\vec{L}_0\cdot\vec{\Phi})  -\star((\star \vec{L}_0)\overset{\cdot}\wedge_4 d\vec{\Phi})   \nonumber
\end{align}
so that \eqref{lqwer} becomes
\begin{gather}
d^\star\left[ \star((\star \vec{L}_0)\overset{\cdot}\wedge_4 d\vec{\Phi})  +d|\vec{H}|^2  \right]=0   \nonumber
\end{gather}
or equivalently
\begin{gather}
d\left[(\star \vec{L}_0)\overset{\cdot}\wedge_4 d\vec{\Phi} -\star d|\vec{H}|^2     \right]=0.          \nonumber
\end{gather}

\noindent
Hence, there exists $S_0\in \Lambda^2(\mathbb{R}^4, \Lambda^0(\mathbb{R}^5))$ such that
\begin{gather}
dS_0=(\star \vec{L}_0)\overset{\cdot}\wedge_4 d\vec{\Phi} -\star d|\vec{H}|^2.  \nonumber
\end{gather}

\noindent
Further computations show that
\begin{align}
dS_0&=\frac{1}{6} (\star\vec{L}_0)_{[kl}\cdot \nabla_{m]}\vec{\Phi} \,\,\,dx^{k}\wedge_4 dx^l \wedge_4 dx^m -\frac{1}{6}\epsilon_{iklm}\nabla^i|\vec{H}|^2 \,\,\,dx^{k}\wedge_4 dx^l \wedge_4 dx^m   \nonumber
\\&=\frac{1}{6}\epsilon_{ij[kl}\nabla_{m]}\vec{\Phi}\cdot\left( \frac{1}{2}\vec{L}_0^{ij}-\frac{1}{2}\nabla^{[i}|\vec{H}|^2\nabla^{j]}\vec{\Phi} \right)  \,\,\,dx^{k}\wedge_4 dx^l \wedge_4 dx^m  \nonumber
\\&=\star(\vec{L}_0-d|\vec{H}|^2\wedge_4 d\vec{\Phi})\overset{\cdot}\wedge_4 d\vec{\Phi} .   \nonumber
\end{align}
By setting 
$$\vec{L}:=\star(\vec{L}_0-d|\vec{H}|^2\wedge_4 d\vec{\Phi})$$
we find that
\begin{gather}
dS_0=\vec{L}\overset{\cdot}\wedge_4d\vec{\Phi}.  \nonumber
\end{gather}
Note that $d^\star \vec{L}=\star d\vec{L}_0=\vec{0}.$

\noindent
Now, it holds in codimension one that
\begin{align}
\vec{V}^j\wedge\nabla_j\vec{\Phi}&=\nabla_j(\vec{J}\wedge\nabla^j\vec{\Phi} +2|\vec{H}|^2\vec{h}^{ij}\wedge\nabla_i\vec{\Phi})   \nonumber\\
&= \nabla_j(\vec{J}\wedge \nabla^j\vec{\Phi} +2\nabla^i(|\vec{H}|^2\nabla_i\vec{\Phi})\wedge\nabla^j\vec{\Phi})  \nonumber
\end{align}
so that
\begin{gather}
\nabla_j\left( \vec{V}^j\wedge\vec{\Phi}-(\vec{J}+2\nabla^i(|\vec{H}|^2\nabla_i\vec{\Phi})\wedge\nabla^j\vec{\Phi} )\right)=\vec{0}  \nonumber
\end{gather}
or equivalently
\begin{align}
d^\star\left( \vec{V}\wedge\vec{\Phi}-\vec{J}\wedge d\vec{\Phi} -2d^\star(|\vec{H}|^2d\vec{\Phi})\wedge d\vec{\Phi}  \right)=\vec{0} . \label{becc}
\end{align}
Observe that
\begin{align}
\vec{V}\wedge \vec{\Phi}&= d^\star\vec{L}_0\wedge \vec{\Phi}=\star d\star \vec{L}_0\wedge \vec{\Phi}    \nonumber
=       \star\left[d(\star \vec{L}_0\wedge \vec{\Phi})-(\star\vec{L}_0)\overset{\wedge}\wedge_4 d\vec{\Phi}  \right]    \nonumber
\\&=d^\star(\vec{L}_0\wedge\vec{\Phi})-\star\left[(\star\vec{L}_0)\overset{\wedge}\wedge_4 d\vec{\Phi}\right]    \nonumber
\\&=d^\star(\vec{L}_0\wedge\vec{\Phi})- \star(\vec{L}\overset{\wedge}\wedge_4 d\vec{\Phi})  -\star\left[\left(\star(d|\vec{H}|^2\wedge_4 d\vec{\Phi})  \right) \overset{\wedge}\wedge_4 d\vec{\Phi} \right]  .    \label{gracee}
\end{align}

\noindent
We will choose $\vec{\eta}\in\Lambda^2(\mathbb{R}^4, \Lambda^2(\mathbb{R}^5))$ as follows
\begin{align}
\vec{\eta}=\frac{1}{2}\vec{\eta}_{ij} \,\,\,dx^{i}\wedge_4 dx^j:= \frac{1}{2} \nabla_i\vec{\Phi}\wedge\nabla_j\vec{\Phi}\,\,\, dx^{i}\wedge_4 dx^j.
\end{align}

\noindent
Focusing on \eqref{gracee}, we see that
\begin{align}
\star\left[\left(\star(d|\vec{H}|^2\wedge_4 d\vec{\Phi})  \right) \overset{\wedge}\wedge_4 d\vec{\Phi} \right]  &= \frac{1}{6}\epsilon^{klmr}\left( \frac{1}{6}\delta^{\alpha\beta\gamma}_{klm}\epsilon_{ij\alpha\beta}\frac{1}{2} \nabla^{[i}|\vec{H}|^2\nabla^{j]}\vec{\Phi} \right)\wedge \nabla_\gamma\vec{\Phi} \,\,g_{rs}\,\,\, dx^s  \nonumber
\\&= \frac{1}{6}\delta_{ij}^{mr} \nabla^{[i}|\vec{H}|^2\nabla^{j]}\vec{\Phi}\wedge \nabla_m\vec{\Phi}  g_{rs} dx^s     \nonumber
  \\&=-\frac{1}{3} \nabla_m|\vec{H}|^2 \nabla^m\vec{\Phi}\wedge\nabla_r\vec{\Phi} dx^r  \nonumber
\\&=-\frac{1}{3}\nabla_m(|\vec{H}|^2\nabla^m\vec{\Phi}\wedge \nabla_r\vec{\Phi}) dx^r +\frac{1}{3} |\vec{H}|^2 \nabla_m(\nabla^m\vec{\Phi}\wedge\nabla_r\vec{\Phi}) dx^r  \nonumber
\\&=-\frac{1}{3} d^\star(|\vec{H}|^2\vec{\eta})+\frac{1}{3}|\vec{H}|^2 d^\star\vec{\eta}  .  \nonumber
\end{align}
Thus, it follows that
\begin{align}
\vec{V}\wedge\vec{\Phi}=  d^\star(\vec{L}_0\wedge\vec{\Phi})- \star(\vec{L}\overset{\wedge}\wedge_4 d\vec{\Phi})     +\frac{1}{3} d^\star(|\vec{H}|^2\vec{\eta})-\frac{1}{3}|\vec{H}|^2 d^\star\vec{\eta}   \nonumber
\end{align}
and so \eqref{becc} becomes
\begin{align}
d^\star\left[-\star(\vec{L}\overset{\wedge}\wedge_4 d\vec{\Phi})  -\frac{1}{3}|\vec{H}|^2 d^\star\vec{\eta} -\vec{J}\wedge d\vec{\Phi} -2d^\star(|\vec{H}|^2d\vec{\Phi})\wedge d\vec{\Phi}      \right]=\vec{0}.   \label{sct}
\end{align}

\noindent
Observe that
\begin{align}
d^\star (|\vec{H}|^2 d\vec{\Phi})\wedge d\vec{\Phi}&= \nabla_j(|\vec{H}|^2\nabla^j\vec{\Phi})\wedge\nabla_i\vec{\Phi}  dx^i       \nonumber
\\&= 4|\vec{H}|^2\vec{H}\wedge\nabla_i\vec{\Phi} dx^i +\nabla_j|\vec{H}|^2\nabla^j\vec{\Phi}\wedge \nabla_i\vec{\Phi}  dx^i  \nonumber
\\&= 4|\vec{H}|^2\vec{H}\wedge\nabla_i\vec{\Phi} dx^i  +\nabla^j(|\vec{H}|^2\nabla_j\vec{\Phi}\wedge\nabla_i\vec{\Phi}) dx^i \nonumber
\\&\quad\quad-|\vec{H}|^2\nabla^j(\nabla_j\vec{\Phi}\wedge\nabla_i\vec{\Phi}) dx^i   \nonumber
\\&= 4|\vec{H}|^2\vec{H}\wedge d\vec{\Phi} +d^\star(|\vec{H}|^2\vec{\eta})  -|\vec{H}|^2d^\star\vec{\eta}   \label{scv}
\end{align}

\noindent
Now, substitute \eqref{scv} into \eqref{sct} to find
\begin{align}
d^\star\left[-\star(\vec{L}\overset{\wedge}\wedge_4 d\vec{\Phi})  +\frac{5}{3}|\vec{H}|^2 d^\star\vec{\eta} -(\vec{J} +8 |\vec{H}|^2\vec{H})\wedge d\vec{\Phi}     \right]=\vec{0}  .\nonumber
\end{align}

\noindent
We introduce the Hodge decomposition 
\begin{align}
\frac{5}{3} |\vec{H}|^2d^\star\vec{\eta}= d\vec{u} +d^\star \vec{v}  \nonumber
\end{align}
so that 
\begin{align}
d^\star\left[ -\star(\vec{L}\overset{\wedge}\wedge_4 d\vec{\Phi})  +d\vec{u}  -(\vec{J} +8 |\vec{H}|^2\vec{H})\wedge d\vec{\Phi}    \right] =\vec{0}   \nonumber
\end{align}
or equivalently
\begin{align}
d\left[\vec{L}\overset{\wedge}\wedge_4 d\vec{\Phi} +\star d\vec{u} -(\vec{J} +8 |\vec{H}|^2\vec{H})\wedge \star d\vec{\Phi}      \right]=\vec{0}.
\end{align}
Therefore there exists $\vec{R}_0\in\Lambda^2(\mathbb{R}^4, \Lambda^2(\mathbb{R}^5))$ such that
\begin{align}
d\vec{R}_0=\vec{L}\overset{\wedge}\wedge_4 d\vec{\Phi} +\star d\vec{u} -(\vec{J} +8 |\vec{H}|^2\vec{H})\wedge \star d\vec{\Phi}  .   \nonumber
\end{align}

\noindent
This completes the proof.

\end{proof}

\begin{rmk}
We have proved in Theorem \ref{coro} the existence of 2-forms $\vec{L}$, $S_0$ and $\vec{R}_0$. Observe that $S_0$ and $\vec{R}_0$ are defined in terms of $\vec{L}$ which depends on some geometric quantity $\vec{V}$. The following computations show that $S_0$ and $\vec{R}_0$ can be directly linked back to some geometric quantities via the ``return equation".
\end{rmk}

\textbf{Return equation}
\begin{align}
&\quad\quad\epsilon^{mijk}\nabla_i(\vec{R}_0)_{jk}\bullet \nabla_m\vec{\Phi}  \nonumber
\\&= \epsilon^{mijk}\left[   \vec{L}_{jk}\wedge \nabla_i\vec{\Phi}+\epsilon_{rijk}\nabla^r\vec{u}-\epsilon_{rijk}(\vec{J}+8|\vec{H}|^2\vec{H})\wedge\nabla^r\vec{\Phi}                  \right]\bullet \nabla_m\vec{\Phi}    \nonumber
\\&=  \epsilon^{mijk}(\vec{L}_{jk}\cdot\nabla_m\vec{\Phi}) \nabla_i\vec{\Phi} +6\delta^m_r \nabla^r\vec{u}\bullet \nabla_m\vec{\Phi}-6\delta^m_r(\vec{J}+8|\vec{H}|^2\vec{H})(-\delta^r_m)         \nonumber
\\&= \epsilon^{mijk}\nabla_m (S_0)_{jk}\nabla_i\vec{\Phi}+6\nabla_i\vec{u}\bullet\nabla^i\vec{\Phi}+24(\vec{J}+8|\vec{H}|^2\vec{H}).  \nonumber
\end{align}

We have found
\begin{align}
\epsilon^{mijk}(\nabla_i(\vec{R}_0)_{jk}\bullet \nabla_m\vec{\Phi}+ \nabla_i (S_0)_{jk}\nabla_m\vec{\Phi})-6\nabla_i\vec{u}\bullet\nabla^i\vec{\Phi}&=  24(\vec{J}+8|\vec{H}|^2\vec{H}) \label{stella}
\end{align}

\noindent
This last equation \eqref{stella} can be further simplified in order to re-introduce $v$.

Observe that
\begin{align}
\nabla_i\vec{u}\bullet \nabla^i\vec{\Phi}&= \left[\frac{5}{3} |\vec{H}|^2\nabla_j(\nabla^j\vec{\Phi}\wedge \nabla_i\vec{\Phi}) -\nabla^j\vec{v}_{ji}    \right]\bullet\nabla^i\vec{\Phi}         \nonumber
\\&= \frac{5}{3}|\vec{H}|^2\left( 4\vec{H}\wedge\nabla_i\vec{\Phi}+\nabla^j\vec{\Phi}\wedge\vec{h}_{ji}  \right)\bullet\nabla^i\vec{\Phi}    -\nabla_j\vec{v}^{ji}\bullet\nabla_i\vec{\Phi}   \nonumber
\\&= \frac{5}{3}|\vec{H}|^2\left( -16\vec{H}+4\vec{H} \right)   -\nabla_j\vec{v}^{ji}\bullet\nabla_i\vec{\Phi}   \nonumber 
\\&= -20|\vec{H}|^2\vec{H} -\nabla_j\vec{v}^{ji}\bullet\nabla_i\vec{\Phi} . \nonumber
\end{align}
Thus we find that
\begin{align}
\epsilon^{mijk}\nabla_i(\vec{R}_0)_{jk}\bullet \nabla_m\vec{\Phi}+ \epsilon^{mijk} \nabla_i (S_0)_{jk} \nabla_m\vec{\Phi}   +6\nabla_j\vec{v}^{ji}\bullet\nabla_i\vec{\Phi} = 24(\vec{J}+8|\vec{H}|^2\vec{H}) -120|\vec{H}|^2\vec{H}.   \nonumber
\end{align}
Clearly, 
\begin{align}
24(\vec{J}+8|\vec{H}|^2\vec{H}) -120|\vec{H}|^2\vec{H}&=24\left( \frac{1}{2}\Delta_{\perp}\vec{H}+\frac{1}{2}(\vec{H}\cdot\vec{h}^{ij})\vec{h}_{ij} +|\vec{H}|^2\vec{H} \right)-120|\vec{H}|^2\vec{H}  \nonumber
\\& =12\Delta_{\perp}\vec{H}+12(\vec{H}\cdot\vec{h}^{ij})\vec{h}_{ij} -96|\vec{H}|^2\vec{H}   \nonumber
\\&= 12\nabla_j\left[\nabla^j\vec{H}-2|\vec{H}|^2\nabla^j\vec{\Phi} +2(\vec{H}\cdot\vec{h}^{jk})\nabla_k\vec{\Phi}  \right]   \label{zea}
\end{align}
where we have used Lemma \ref{rivvy} to write \eqref{zea}.
Hence, we have found
\begin{gather}
\epsilon^{mijk}\nabla_i(\vec{R}_0)_{jk}\bullet \nabla_m\vec{\Phi}+ \epsilon^{mijk} \nabla_i (S_0)_{jk} \nabla_m\vec{\Phi}   +6\nabla_j\vec{v}^{ji}\bullet\nabla_i\vec{\Phi}   \nonumber
\\=12\nabla_j\left[\nabla^j\vec{H}-2|\vec{H}|^2\nabla^j\vec{\Phi} +2(\vec{H}\cdot\vec{h}^{jk})\nabla_k\vec{\Phi}  \right] .   \label{recca}
\end{gather}

\noindent
Lastly, applying divergence to the Hodge decomposition
$$\nabla^j\vec{u}+\nabla_i\vec{v}^{ij}=\frac{5}{3}|\vec{H}|^2\nabla_i\vec{\eta}^{ij}$$
yields
\begin{align}\Delta_g\vec{u}=\frac{5}{3}\nabla_j|\vec{H}|^2\nabla_i\vec{\eta}^{ij}. \label{hads8}\end{align}

\noindent
Also, $\vec{v}$ satisfies the equation
$$\Delta\vec{v}=\frac{5}{3} d\left( |\vec{H}|^2d^\star\vec{\eta}  \right).$$

\chapter{Regularity Results} \label{chap4}
We prove that the critical point of the 4-dimensional Willmore-type energy \eqref{sjda} is smooth. In particular, we have
\begin{theo}\label{heart}
Let $\vec{\Phi}\in W^{3,2}\cap W^{1,\infty}$ be a non-degenerate critical point of the energy
\begin{gather}
\mathcal{E}(\Sigma):= \int_\Sigma \left(|\pi_{\vec{n}}\nabla \vec{H}|^2-|\vec{H}\cdot\vec{h}|^2 +7|\vec{H}|^2    \right) d\textnormal{vol}_g.  \label{sjda}
\end{gather}
Then $\vec{\Phi}$ is smooth with the estimate
\begin{align}
\norm{DH}_{L^\infty(B_r)} +\frac{1}{r} \norm{H}_{L^\infty(B_r)}  \lesssim \frac{1}{r^2} \left(\norm{DH}_{L^2(B_1)} +\norm{H}_{L^4(B_1)}   \right) \quad\quad\forall \, r<1   \nonumber
\end{align}
where $B_r\subset\mathbb{R}^4$ is any ball of radius $r$.
\end{theo}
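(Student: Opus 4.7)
The plan is to run a bootstrap argument starting from the hypothesis $\vec{\Phi}\in W^{3,2}\cap W^{1,\infty}$, which on a four-dimensional domain provides the preliminary bounds $\vec{h}\in W^{1,2}\cap L^{4}$ and $\vec{H}\in W^{1,2}\cap L^{4}$ via Sobolev embedding. First, I would pass to local isothermal-type (or harmonic) coordinates in which the metric coefficients are controlled and equivalent to the flat metric, so that the Hodge-theoretic potentials produced in Proposition \ref{coro} can be manipulated on a flat background. In these coordinates, the 2-forms $\vec{L}$, $S_{0}$ and $\vec{R}_{0}$ initially sit in $L^{2}$ and $W^{1,2}$, while the Hodge decomposition delivers $\vec{u},\vec{v}$ solving $\Delta\vec{u}=\tfrac{5}{3}\nabla_{j}|\vec{H}|^{2}\,\nabla_{i}\vec{\eta}^{ij}$ and $\Delta\vec{v}=\tfrac{5}{3}\,d(|\vec{H}|^{2}d^{\star}\vec{\eta})$.

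The next step is to extract a preliminary gain in integrability. Unlike the two-dimensional Willmore case, the right-hand sides here are not pure Jacobians, so Wente estimates are not available. Instead, I would exploit the fact that $\vec{L}$ is simultaneously closed and co-closed together with the stress-energy origin of $\vec{V}$: a Bourgain--Brezis type inequality for $1$-forms on $\mathbb{R}^{4}$ with controlled $d$ and $d^{\star}$, combined with the Sobolev embeddings for $\vec{H}$ and $\vec{h}$, should promote $d\vec{u}$, $d\vec{v}$ and $\vec{L}$ out of $L^{2}$ into a Morrey or Lorentz class $L^{(2,\lambda)}$ with $\lambda>0$, the extra decay coming from the cubic curvature structure of the right-hand sides.

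With this preliminary gain, the return equation
\begin{equation*}
12\,\nabla_{j}\bigl[\nabla^{j}\vec{H}-2|\vec{H}|^{2}\nabla^{j}\vec{\Phi}+2(\vec{H}\cdot\vec{h}^{jk})\nabla_{k}\vec{\Phi}\bigr]=\epsilon^{mijk}\nabla_{i}(\vec{R}_{0})_{jk}\bullet\nabla_{m}\vec{\Phi}+\epsilon^{mijk}\nabla_{i}(S_{0})_{jk}\,\nabla_{m}\vec{\Phi}+6\,\nabla_{j}\vec{v}^{ji}\bullet\nabla_{i}\vec{\Phi}
\end{equation*}
becomes a genuine second-order elliptic system for $\vec{H}$, once the leading $\Delta$-type term on the left is identified and the remaining geometric terms are absorbed into a controlled right-hand side via the non-degeneracy hypothesis and $\vec{\Phi}\in W^{1,\infty}$. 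At this point the Calder\'on--Zygmund estimates in Morrey spaces proved by Miranda \cite{mir} and refined by Di Fazio \cite{faz} apply, since the coefficient matrix built from $d\vec{\Phi}$ then lies in the required VMO/Morrey class. Iterating this step, each pass improves the Morrey exponent of $\vec{H}$; once $\vec{H}$ becomes H\"older, Schauder theory closes the bootstrap and yields smoothness, and the quantitative $L^{\infty}$ estimate on $B_{r}$ then follows by a standard parabolic-type scaling argument applied to the final Schauder step.

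The main obstacle I anticipate is the first integrability gain. With no Jacobian structure available, one must verify very carefully that the Bourgain--Brezis inequality can be coupled with the specific algebraic form of $\vec{V}$ so as to produce an improvement strong enough to feed into the Miranda--Di Fazio framework. The multivector bookkeeping hidden in the definitions of $S_{0}$, $\vec{R}_{0}$ and $\vec{V}$ makes this delicate: every occurrence of $\vec{L}\overset{\wedge}\wedge_{4}d\vec{\Phi}$, $\vec{L}\overset{\cdot}\wedge_{4}d\vec{\Phi}$ and the auxiliary fields $\vec{u},\vec{v}$ must be shown to transform the sixth-order original equation into a genuine second-order system with right-hand side just slightly better than $L^{2}$, and it is this quantitative "slightly better" that the Bourgain--Brezis input must supply.
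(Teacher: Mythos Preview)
Your proposal has the right list of ingredients (Bourgain--Brezis, Miranda/Di~Fazio, the return equation, bootstrap) but misplaces them, and this creates a genuine gap at the step you yourself flag as the obstacle.

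First, the initial regularity of the potentials is weaker than you assume. With $\vec{\Phi}\in W^{3,2}$ one has $\vec{H}\in W^{1,2}$, so $\Delta_\perp\vec{H}\in \dot W^{-1,2}$ and the stress-energy tensor $\vec{V}$, which contains $\nabla^j\Delta_\perp\vec{H}$ plus products of curvatures, lies only in $L^1\oplus\dot W^{-2,2}$. The Bourgain--Brezis input is \emph{not} used to push anything out of $L^2$ into a Morrey class; it is used at the very first step to solve $d^\star\vec{L}_0=\vec{V}$ with $\vec{L}_0\in\dot W^{-1,2}$ (Proposition~\ref{curca}). Consequently $\vec{L}\in\dot W^{-1,2}$, not $L^2$, and $dS_0,d\vec{R}_0\in\dot W^{-1,2}$ with $S_0,\vec{R}_0\in L^2$. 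Your claim that these quantities ``initially sit in $L^2$ and $W^{1,2}$'' is therefore too optimistic and cannot serve as a starting point.

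Second, and more importantly, you are missing the mechanism that produces the first gain. The paper does not extract a Morrey/Lorentz improvement from Bourgain--Brezis; instead it exploits two purely algebraic facts: (i) the operator $\vec{\mathcal{P}}:\vec{\ell}\mapsto\vec{\ell}\overset{\wedge}\wedge_4 d\vec{\Phi}$ is invertible (Lemma~\ref{conty}), so one can replace $\vec{L}$ by $\vec{L}_1=\vec{L}+d\vec{\ell}$ for a well-chosen $\vec{\ell}\in L^2$; and (ii) the identities $dS_0=\star\vec{\eta}\overset{\cdot}\wedge_4\star d\vec{R}_0-\star\vec{\eta}\overset{\cdot}\wedge_4 d\vec{u}$ and the analogous expression for $d\vec{R}_0$ in terms of $\star\vec{\eta}$-contractions. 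Together these allow one to rewrite $\star d\vec{R}$ as $d\vec{F}+\vec{U}$, where $\vec{F}$ satisfies $\Delta\vec{F}=-d^\star\vec{U}$ and $\vec{U}$ is controlled by $d\vec{u}\in L^{4/3}$. The smallness needed to iterate then comes from two sources your sketch does not invoke: the rescaling so that $\|\vec{h}\|_{L^4(B)}<\varepsilon$, and the radius factor $k$ from the Caccioppoli inequality applied to the harmonic part of $\vec{F}$. This yields $\|dS\|_{L^{4/3}(B_{kr})}+\|d\vec{R}\|_{L^{4/3}(B_{kr})}\lesssim(k+\varepsilon)E(B_r)$, which feeds the return equation and produces the Morrey decay $E(B_r)\lesssim r^\beta E(B_1)$. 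Without the $\vec{\mathcal{P}}$-modification and the $\star\vec{\eta}$-identities, there is no route from $\dot W^{-1,2}$ to $L^{4/3}$ with smallness, and your bootstrap cannot start.
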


\section{Introduction and outline}


\noindent
The proof of Theorem \ref{heart} is divided into different sections of this Chapter. In Section \ref{adzsq}, we develop generic regularity results applicable to our problem. 
The goal of Section \ref{amena} is to understand the regularity of the quantities $\vec{V}, \vec{L}_0, \vec{L}, S_0, \vec{R}_0$ and $\vec{u}$ obtained in Chapter \ref{latty1}. A major achievement of this section is the existence of an invertible operator $\vec{\mathcal{P}}$ used in creating new two forms: $\vec{L}_1, S$ and $\vec{R}$ which are estimated in Section \ref{toute}. Section \ref{linkage} answers the important question of how to link the regularity of the primitives $S$ and $\vec{R}$ back to geometric quantities like $\vec{H}$ or $\vec{\Phi}$. This is done via the ``return equation" obtained in Chapter \ref{latty1}. By obtaining a Morrey-type estimate, the integrability of $d\vec{H}$ is improved, the bootstrap argument ensues and the proof is finished.

\noindent
It is important to clarify some of the notations used in subsequent sections. The operators $d$ and $d^\star$ are the well known exterior differential and codifferential, respectively, of differential forms. They are defined in terms of the metric induced Levi-Civita connection $\nabla$.\,\,\,The operator $D$ involves the flat partial derivatives. In the sequel, the sign $\lesssim$ will indicate the presence of an unimportant multiplicative constant depending on $||\vec{\Phi}||_{W^{3,2}\cap W^{1,\infty}}$.

\section{Preliminary results} \label{adzsq}
\begin{df}
Let $\Omega\subset \mathbb{R}^4$ be a ball. The Sobolev space $W^{k,p}(\Omega)$  of measurable functions from $\Omega$ into $\mathbb{R}^m$ is defined by
$$W^{k,p}(\Omega):=\left\{ f\in L^p(\Omega) \,:\, D^\alpha f\in L^p(\Omega)\,\, \forall\,\,\alpha \,\,\mbox{with} \,\,|\alpha|\leq k          \right\}   $$
with the norm
$$\norm{f}_{W^{k,p}(\Omega)}:= \sum_{|\alpha|\leq k}\norm{D^\alpha f}_{L^p(\Omega)}.$$
The dual of $W^{k,p}(\Omega)$ is $W^{-k,p}(\Omega)$. 

\noindent
The homogeneous Sobolev space $\dot W^{k,p}(\Omega)$ is the space of all $k$-weakly differentiable functions $f\in L^p(\Omega)$ such that $D^\alpha f\in L^p(\Omega)$ for all $|\alpha|=k$.

\noindent
The closure of $C_c^\infty(\Omega)$ in $W^{k,p}(\Omega)$ is denoted by $W_0^{k,p}(\Omega)$. 
\end{df}
\noindent
Suppose that $\vec{\Phi}\in W^{3,2}(\Omega)\cap W^{1,\infty}(\Omega)$ is a critical point of the energy $\mathcal{E}(\Sigma).$

\noindent
By hypothesis, the coefficients of the metric tensor given by $g_{ij}:=\nabla_i\vec{\Phi}\cdot\nabla_j\vec{\Phi}$ lie in the space $W^{2,2}\cap L^{\infty}$. We are interested in the regularity of Willmore-type manifolds with no branch point, that is the metric does not degenerate. The metric coefficients are uniformly bounded from above and below and satisfy, in a local coordinate chart $\Omega$ of $\Sigma$:
\begin{gather}
g_{ij} \simeq \delta_{ij} \quad\quad \textnormal{as (4 $\times$ 4)-matrices.}  \nonumber
\end{gather}
In other words, $g_{ij}$ satisfies on $\Omega$
\begin{gather}
c^{-1} \delta_{ij} u^i v^j\leq g_{ij} u^i v^j \leq c\delta_{ij} u^i v^j     \nonumber
\end{gather}
for some $c>0$ and for all $u,v \in\mathbb{R}^4$. We have used $\delta_{ij}$ to denote the $(i,j)-$component of the flat metric.

\noindent
For $A\in \Lambda^k(\Omega)$ and $B\in \Lambda^k(\Omega)$ with local representations
$$A=\frac{1}{k!}A_{[i_1....i_k]} dx^{i_1}\wedge_4\cdots\wedge_4 dx^{i_k}\quad\mbox{and}\quad B=\frac{1}{k!}B_{[j_1....j_k]} dx^{j_1}\wedge_4\cdots\wedge_4 dx^{j_k} $$
we define the usual inner product on $\Omega\in\mathbb{R}^4 $
\begin{gather}
\langle A, B\rangle :=\frac{1}{(2k)!}\int_\Omega g^{i_1 j_1} \cdots g^{i_kj_k} A_{[i_1...i_k} B_{j_1...j_k]} dx^{1}\wedge_4 \cdots \wedge_4 dx^4.   \nonumber
\end{gather}
Since the metric is controlled by the Euclidean metric on $\Omega$, we have
\begin{gather}
\langle A, B\rangle \simeq  \langle A, B \rangle_0:=  \frac{1}{(2k)!}\int_\Omega \delta^{i_1 j_1} \cdots \delta^{i_kj_k} A_{[i_1...i_k} B_{j_1...j_k]} dx^{1}\wedge_4 \cdots \wedge_4 dx^4  \nonumber
\end{gather}
which is the usual inner-product on forms for the flat metric $\delta$. Accordingly, we have
\begin{align}
\norm{A}_{L^p}\equiv \sup_{\norm{B}_{L^{p'}}=1}\langle A, B\rangle _0 = \sup_{\norm{B}_{L^{p'}}=1}\langle A, B\rangle  \quad\quad\forall \,\,p\in[1,\infty)\,,\,\, \frac{1}{p} +\frac{1}{p'}=1.   \nonumber
\end{align}
\noindent
This fact will be used recurrently and tacitly in subsequent sections, in  particular, when calling upon duality arguments.  
\\
First, we discuss the regularising properties of an equation of the type

\begin{align} \begin{cases} 
      \Delta u=f & \mbox{in}\,\, \Omega \\
      \,\,\,\,\,u=0 & \mbox{on} \,\, \partial \Omega.
   \end{cases} \label{take}
\end{align}

\noindent
Equivalently, we can write 
\begin{align}
\mathcal{L}[u]:= \partial_i(|g|^{1/2} g^{ij} \partial_j u)= |g|^{1/2} f  \nonumber
\end{align}
where $\mathcal{L}$ is a second-order partial differential operator. The coefficient $a^{ij}:\Omega\rightarrow \mathbb{R}$ defined by $a^{ij}:= |g|^{1/2} g^{ij}$ is clearly symmetric and 
satisfies the uniform ellipticity condition: there exists some $c > 0$ such that
\begin{align}
c^{-1}\delta^{ij} u_i v_j\leq a^{ij}u_iv_j \leq c \delta^{ij}u_iv_j\,, \,\,\forall\,\, u,v\in\mathbb{R}^4.\nonumber
\end{align}
\\
\noindent
Unfortunately, the coefficients $a^{ij}$ are not H\"older continuous and standard regularity theory tools for elliptic equations cannot be applied but thanks to  Claim \ref{cll}  and Proposition \ref{callit}, we are still able to obtain the desired results.

\begin{cl}\label{cll}
The coefficients $a^{ij}$ belong to the space $W^{2,2}\cap L^\infty$. 
\end{cl}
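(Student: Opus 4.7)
The plan is to trace the regularity from $\vec{\Phi}$ down to $a^{ij}=|g|^{1/2}g^{ij}$ by repeatedly applying a single product-rule lemma together with the non-degeneracy of the metric. The key observation is that, even though $W^{2,2}$ in dimension four does not embed into $L^{\infty}$, the Sobolev embedding $W^{1,2}\hookrightarrow L^{4}$ does hold, which is exactly what makes products of functions in $W^{2,2}\cap L^{\infty}$ stay in the same space.

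First I would record the following product estimate: if $u,v\in W^{2,2}(\Omega)\cap L^{\infty}(\Omega)$ with $\Omega\subset\mathbb{R}^{4}$, then $uv\in W^{2,2}\cap L^{\infty}$. Indeed, $\|uv\|_{\infty}\leq\|u\|_{\infty}\|v\|_{\infty}$, and writing $D(uv)=(Du)v+u(Dv)$ gives an $L^{2}$ bound via $L^{2}\cdot L^{\infty}$. For the second derivative, $D^{2}(uv)=(D^{2}u)v+2\,Du\cdot Dv+u\,D^{2}v$; the outer terms are $L^{2}\cdot L^{\infty}$, while the cross term lies in $L^{2}$ because $Du,Dv\in W^{1,2}(\Omega)\hookrightarrow L^{4}(\Omega)$ and $L^{4}\cdot L^{4}\subset L^{2}$. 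Since $\vec{\Phi}\in W^{3,2}\cap W^{1,\infty}$, each $D_{i}\vec{\Phi}$ belongs to $W^{2,2}\cap L^{\infty}$, and applying the lemma componentwise yields $g_{ij}=D_{i}\vec{\Phi}\cdot D_{j}\vec{\Phi}\in W^{2,2}\cap L^{\infty}$.

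Next I would pass from $g_{ij}$ to $g^{ij}$ and to $|g|^{1/2}$. The non-degeneracy hypothesis $g_{ij}\simeq\delta_{ij}$ ensures $\det g\geq c_{0}>0$ on $\Omega$. The determinant is a polynomial in the $g_{ij}$, hence in $W^{2,2}\cap L^{\infty}$ by iterated application of the product lemma. For a scalar $f\in W^{2,2}\cap L^{\infty}$ with $|f|\geq c_{0}$, the chain rule gives $D(1/f)=-Df/f^{2}$ and $D^{2}(1/f)=-D^{2}f/f^{2}+2(Df)^{2}/f^{3}$; the same $L^{4}\cdot L^{4}\subset L^{2}$ argument shows $1/f\in W^{2,2}\cap L^{\infty}$, and identically $\sqrt{f}\in W^{2,2}\cap L^{\infty}$. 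Applying this to $f=\det g$ gives $1/\det g$ and $|g|^{1/2}$ in $W^{2,2}\cap L^{\infty}$, and since $g^{ij}$ is (cofactor)$/\det g$, the product lemma again gives $g^{ij}\in W^{2,2}\cap L^{\infty}$. One final application to $a^{ij}=|g|^{1/2}g^{ij}$ finishes the proof.

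The only delicate point—really the whole content of the claim—is the four-dimensional borderline behaviour: naively one would want $g_{ij}$ or $g^{ij}$ to be continuous in order to multiply them safely in $W^{2,2}$, which is false in dimension four. The trick, and the single nonobvious step, is recognizing that $W^{2,2}\cap L^{\infty}$ is a Banach algebra in dimension four precisely because $W^{1,2}\hookrightarrow L^{4}$; no higher regularity of $\vec{\Phi}$ is needed. After that, the non-degeneracy assumption reduces the rest to routine chain-rule computations.
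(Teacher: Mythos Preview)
Your proof is correct and follows essentially the same approach as the paper: the core lemma that $W^{2,2}\cap L^\infty$ is a Banach algebra in dimension four (via $W^{1,2}\hookrightarrow L^4$) is identical, and you then apply it to build up $a^{ij}$. In fact your argument is more careful than the paper's, which simply asserts $g^{ij}\in W^{2,2}\cap L^\infty$ without spelling out the inversion and square-root steps that you handle explicitly via the non-degeneracy and chain rule.
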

\begin{proof}
First, we show that $W^{2,2}\cap L^\infty$ is an algebra under pointwise multiplication. Let $\alpha,\beta \in W^{2,2}\cap L^\infty$. Clearly, $\alpha \beta \in L^\infty$. It remains to show that $\alpha\beta \in W^{2,2}$. By Liebnitz rule we write
\begin{align}
 D ^2(\alpha \beta)=  \alpha D ^2 \beta+ \beta D ^2 \alpha+ 2 D  \alpha D  \beta . \nonumber
\end{align}
Now, $\alpha\in L^\infty,  D ^2 \beta\in L^2$ implies that $\alpha D ^2 \beta\in L^2$. By the same token $\beta D ^2\alpha\in L^2$. By Sobolev embedding, the functions $ D  \alpha, D  \beta\in W^{1,2}\subset L^4$ so that the product $ D  \alpha D  \beta\in L^2$. We arrive at $\alpha\beta\in W^{2,2}$. Thus $\alpha\beta\in W^{2,2}\cap L^\infty$.

\noindent
Next, using the hypothesis $\vec{\Phi}\in W^{3,2}\cap W^{1,\infty}$ and the fact that $W^{2,2}\cap L^\infty$ is an algebra under pointwise multiplication, we have
$g^{ij}:=\nabla^i\vec{\Phi}\cdot \nabla^j\vec{\Phi}\in W^{2,2}\cap L^\infty$. Hence $a^{ij}\in W^{2,2}\cap L^\infty$.
\end{proof}
\noindent
By the Sobolev embedding theorem, $W^{2,2}\subset W^{1,4}$. Thus $a^{ij}$ belong to $L^\infty\cap W^{1,4}$.
It is important to note the role of the fact that $g^{ij}\in L^\infty$ in the proof of Claim \ref{cll} (as $W^{2,2}$ is not an algebra under pointwise multiplication). Otherwise, it would be difficult to now arrive at $a^{ij}\in W^{1,4}$. This is much needed because uniformly elliptic operators with coefficients in $W^{1,4}$ have fairly good dispositions towards regularity theory.

\begin{rmk}
Although it is true that the product of a  function in $L^p$ and any metric component or its determinant remains in $L^p$, the same cannot be said of Sobolev spaces (or of their duals). This is because the metric coefficients are not sufficiently differentiable (not Lipschitz). But the coefficients belong to $L^\infty\cap W^{2,2}$ by Claim \ref{cll} and the following holds.
\end{rmk}

\begin{cl} \label{claimclaim}
Let $\alpha$ be a function in $(L^\infty \cap W^{2,2})$. Let $\beta$ be a function in  $\dot{W}^{-1,2}$ Then the product $\alpha\beta$ belongs to $  \dot{W}^{-1,2} \oplus L^{4/3} \subset \dot{W}^{-1,2}$.
\end{cl}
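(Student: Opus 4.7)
The plan is to use the fact that any element of $\dot{W}^{-1,2}$ is the divergence of an $L^{2}$ vector field, and then exploit a Leibniz rule to move derivatives off of $\beta$ and onto $\alpha$. Concretely, since $\beta\in \dot{W}^{-1,2}$, write $\beta=\partial_i F^i$ for some $F=(F^i)\in L^{2}(\Omega;\mathbb{R}^4)$. Formally, one then has
\begin{align}
\alpha\beta \;=\; \alpha\,\partial_i F^i \;=\; \partial_i(\alpha F^i) \;-\; (\partial_i\alpha)\, F^i,\nonumber
\end{align}
and the plan is to show each term lies in the appropriate space.

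For the first term, $\alpha\in L^\infty$ and $F^i\in L^2$ give $\alpha F^i\in L^2$, so $\partial_i(\alpha F^i)\in \dot{W}^{-1,2}$ by definition. For the second term, $\alpha\in W^{2,2}(\Omega)$ implies $D\alpha\in W^{1,2}(\Omega)$. In dimension four the Sobolev embedding yields $W^{1,2}\hookrightarrow L^{4}$, whence $D\alpha\in L^{4}$. Combining with $F\in L^{2}$ via H\"older's inequality gives $(\partial_i\alpha)F^i\in L^{4/3}$. Finally, I would invoke the dual of the Sobolev embedding $\dot{W}^{1,2}\hookrightarrow L^{4}$, which (in 4 dimensions) reads $L^{4/3}\hookrightarrow \dot{W}^{-1,2}$, to conclude that the second term also lies in $\dot{W}^{-1,2}$. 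This establishes the decomposition $\alpha\beta\in \dot{W}^{-1,2}\oplus L^{4/3}\subset \dot{W}^{-1,2}$.

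The only subtle point is that the product $\alpha\beta$ and the Leibniz identity have to be interpreted distributionally: I would justify them by density, approximating $\alpha$ by smooth functions $\alpha_\varepsilon$ (say via mollification) with $\alpha_\varepsilon\to\alpha$ in $W^{2,2}$ and uniformly bounded in $L^\infty$, and passing to the limit in the pairing with test functions. Once this is done the two estimates above give the quantitative bound
\begin{align}
\|\alpha\beta\|_{\dot{W}^{-1,2}} \;\lesssim\; \|\alpha\|_{L^\infty}\|\beta\|_{\dot{W}^{-1,2}} \;+\; \|D\alpha\|_{L^4}\|F\|_{L^2},\nonumber
\end{align}
and the claim follows. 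I do not anticipate any deeper obstacle; the main technical care is simply to make sense of $(\partial_i\alpha)F^i$ as an honest $L^{4/3}$ function rather than a formal distribution, which is exactly what the Sobolev embedding $W^{1,2}\hookrightarrow L^{4}$ in dimension four secures.
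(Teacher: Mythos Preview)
Your proposal is correct and follows essentially the same approach as the paper: write $\beta$ as a flat divergence of an $L^2$ object, apply the Leibniz rule, and use the Sobolev embedding $W^{2,2}\subset W^{1,4}$ (in dimension four) together with H\"older to place the commutator term in $L^{4/3}$, then the dual embedding $L^{4/3}\hookrightarrow \dot W^{-1,2}$. Your added remark about justifying the Leibniz identity by density is a welcome bit of extra care that the paper omits.
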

\begin{proof}
Since  $\beta \in \dot{W}^{-1,2}$, there exists $\gamma\in L^2$ such that $ D  \gamma=\beta$. We have
\begin{align}
\alpha\beta=  D (\alpha\gamma)-\gamma  D  \alpha.  \nonumber
\end{align}
Now, $\alpha\in L^\infty$ and $\gamma\in L^2$ implies that $ D (\alpha\gamma)\in \dot{W}^{-1,2}$. By the Sobolev embedding theorem, $\alpha\in W^{2,2}\subset W^{1,4}$ so that $ D  \alpha\in L^4$ and the product $\gamma D \alpha \in L^{4/3}$. By the dual of the Sobolev embedding theorem, $\alpha\beta\in  \dot{W}^{-1,2}\oplus L^{4/3}  \subset \dot{W}^{-1,2}.$
\end{proof}

\noindent
As a consequence, we have  the norm
$$\norm{A}_{W_0^{1,2}}\equiv \sup_{\norm{B}_{\dot W^{-1,2}}=1} \langle A, B\rangle _0 \simeq \sup_{\norm{B}_{\dot W^{-1,2}}=1} \langle A, B\rangle .$$

\begin{prop}\label{callit}
Let $\Omega\subset\mathbb{R}^4$ be a ball. Suppose that  $f:\Omega\rightarrow \mathbb{R}$  with $f\in L^2(\Omega)$.  Let $u$ be a weak solution of the problem 
\[ \begin{cases} 
      \Delta u=f & \mbox{in}\,\, \Omega \\
      \,\,\,\,\,u=0 & \mbox{on} \,\, \partial \Omega.
   \end{cases}
\]
Then $D^2 u\in L^2$ and 
$$||D^2 u||_{L^2(\Omega)}\lesssim ||f||_{L^2(\Omega)}.$$
\end{prop}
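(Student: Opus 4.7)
The plan is to recast $\Delta u=f$ as a divergence-form uniformly elliptic equation and invoke the $W^{2,2}$ theory for such operators with VMO coefficients. Since $u$ is a scalar, the notational convention gives $\Delta u=\Delta_g u$, so in the local chart the equation reads
\[
\partial_i\bigl(a^{ij}\,\partial_j u\bigr)=F, \qquad a^{ij}:=|g|^{1/2}g^{ij},\qquad F:=|g|^{1/2}f.
\]
Because $|g|^{1/2}\in L^\infty$, one immediately obtains $\|F\|_{L^2(\Omega)}\lesssim \|f\|_{L^2(\Omega)}$, and by the standing non-degeneracy hypothesis on $g$ the matrix $a^{ij}$ is uniformly elliptic. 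By Claim \ref{cll}, $a^{ij}\in L^\infty\cap W^{2,2}$, and the Sobolev embedding in dimension four gives $W^{2,2}\hookrightarrow W^{1,4}\hookrightarrow \mathrm{VMO}(\mathbb{R}^4)$.

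With these three properties (uniform ellipticity, $L^\infty$-bound, VMO modulus of mean oscillation), I would apply the $W^{2,p}$-regularity theorem of Miranda \cite{mir} together with the Di Fazio version \cite{faz} for the Dirichlet problem, taken at the exponent $p=2$. This yields the interior--plus--boundary estimate
\[
\|D^2 u\|_{L^2(\Omega)}\lesssim \|F\|_{L^2(\Omega)}+\|u\|_{L^2(\Omega)}.
\]
The $\|u\|_{L^2}$ term is absorbed using the energy estimate for the Dirichlet problem: testing the equation against $u$ itself, uniform ellipticity and the Poincaré inequality on the ball $\Omega$ give $\|u\|_{W_0^{1,2}(\Omega)}\lesssim \|F\|_{L^2(\Omega)}$, and in particular $\|u\|_{L^2}\lesssim \|F\|_{L^2}\lesssim\|f\|_{L^2}$. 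Combining the two estimates produces the desired clean bound $\|D^2 u\|_{L^2(\Omega)}\lesssim \|f\|_{L^2(\Omega)}$.

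The main obstacle here is that the coefficients $a^{ij}$ are genuinely borderline: they sit in $W^{1,4}(\mathbb{R}^4)$, which is the critical Sobolev space that \emph{just fails} to embed into $C^0$. Hence one cannot invoke classical Schauder or Agmon--Douglis--Nirenberg theory, and one cannot treat the expansion $a^{ij}\partial_{ij}u=F-(\partial_i a^{ij})\partial_j u$ by naive Hölder bootstrap either, since $(\partial_i a^{ij})\partial_j u$ only lies in $L^{4/3}$ when $Du\in L^2$. The point is precisely that $W^{1,4}\subset \mathrm{VMO}$ in dimension four, so the Coifman--Rochberg--Weiss commutator machinery underlying the Miranda--Di Fazio theorem takes over and supplies the required second-derivative $L^2$ control even though the coefficients are not continuous.
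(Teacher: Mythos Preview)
Your approach is essentially the same as the paper's: rewrite $\Delta u=f$ as $\mathcal{L}[u]=|g|^{1/2}f$ with uniformly elliptic coefficients $a^{ij}=|g|^{1/2}g^{ij}\in W^{2,2}\cap L^\infty\subset\mathrm{VMO}$, and then invoke Miranda's $W^{2,2}$ theory for such operators. The paper is simply terser, citing Miranda \cite{mir} (and Cruz-Uribe--Moen--Rodney \cite{cruz} rather than Di Fazio) to obtain $\|D^2u\|_{L^2}\lesssim\||g|^{1/2}f\|_{L^2}$ directly, without spelling out the absorption of the lower-order $\|u\|_{L^2}$ term.
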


\begin{proof}
Clearly $\mathcal{L}[u]=|g|^{1/2}f$ belongs to $L^2$ since $f\in L^2(\Omega)$. A classical result of Miranda \cite{mir} (see also Corollary 1.4 in \cite{cruz}) then yields the desired estimate
$$\norm{D^2 u}_{L^2(\Omega)}\lesssim \norm{|g|^{1/2} f}_{L^2(\Omega)}\lesssim \norm{f}_{L^2(\Omega)}.$$
\end{proof}

\noindent
Suppose that  $u\in \Lambda^k(\mathbb{R}^4)$  and $f=d^\star B$, where $B\in \Lambda^{k+1}(\mathbb{R}^4)$, then    \eqref{take}  has the special structure   
$$\Delta u=d^\star B$$
which is equivalent to
\begin{align}
\mathcal{L}[u_{i_1\cdots i_k}] \,\,dx^{i_1}\wedge_4\cdots\wedge_4 dx^{i_k}&=|g|^{1/2} d^\star B  \nonumber
\\&= \partial_a(|g|^{1/2} g^{ai_{k+1}}B_{i_1\cdots i_{k+1}})\,\,dx^{i_1}\wedge_4\cdots\wedge_4 dx^{i_k}.  \nonumber
\end{align}
\noindent
This is a decoupled system of $k$-uniformly elliptic equations in flat divergence form with coefficients in $W^{2,2}\cap L^\infty$ which embeds in $VMO\cap L^\infty$. Such equations are studied by Di Fazio in  \cite{faz} (see also \cite{auscher}).


\begin{prop}  \label{adz}
Let $\Omega\subset\mathbb{R}^4$ be a ball. Suppose that $B\in \Lambda^{k+1}\otimes L^p(\Omega)$ for $p\in(1,\infty)$. 
Let $u\in \Lambda^k(\Omega)$ be the $k$-form satisfying
\[ \begin{cases} 
      \Delta u=d^\star B& \mbox{in}\,\, \Omega \\
      \,\,\,\,\,u=0 & \mbox{on} \,\, \partial \Omega.
   \end{cases}
\]
Then 
$$\norm{Du}_{L^p(\Omega)}\lesssim \norm{B}_{L^p(\Omega)}.$$
\end{prop}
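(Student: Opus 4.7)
The plan is to exploit the observation made just above the statement: in local coordinates the vector equation $\Delta u=d^\star B$ decouples into a finite family of scalar elliptic equations in flat divergence form,
\begin{gather}
\partial_a(a^{ab}\partial_b u_I)=\partial_a(a^{ab}B_{bI})\quad\text{in }\Omega,\qquad u_I=0\text{ on }\partial\Omega,\nonumber
\end{gather}
one for each ordered multi-index $I=(i_1,\ldots,i_k)$, with $a^{ab}:=|g|^{1/2}g^{ab}$. Once this reduction is in place, I would simply invoke the Di Fazio $L^p$ regularity theorem \cite{faz} for divergence-form elliptic operators with VMO coefficients on each scalar component.

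The main thing to verify before citing \cite{faz} is that the coefficients $(a^{ab})$ are in the correct class. Uniform ellipticity is immediate from the non-degeneracy of the metric. By Claim \ref{cll}, $a^{ab}\in W^{2,2}\cap L^\infty$, and in dimension four the Sobolev embedding gives $W^{2,2}\hookrightarrow W^{1,4}\hookrightarrow VMO$, so the coefficients lie in $VMO\cap L^\infty$, which is exactly the class treated by Di Fazio. Applying his theorem component-wise, for each $I$ and each $p\in(1,\infty)$, yields
\begin{gather}
\norm{Du_I}_{L^p(\Omega)}\lesssim \norm{a^{ab}B_{bI}}_{L^p(\Omega)}\lesssim \norm{B}_{L^p(\Omega)},\nonumber
\end{gather}
where the last inequality uses the uniform $L^\infty$ bound on the coefficients and where the hidden constant depends only on $\norm{\vec{\Phi}}_{W^{3,2}\cap W^{1,\infty}}$. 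Summing the finitely many component-bounds and recalling the equivalence between the intrinsic and flat inner products on forms (observed at the start of Section \ref{adzsq}) gives the claimed global estimate $\norm{Du}_{L^p(\Omega)}\lesssim \norm{B}_{L^p(\Omega)}$.

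I do not expect a genuine analytic difficulty here: the entire proof is a direct reduction to \cite{faz}, and the only possible source of friction is bookkeeping — in particular, checking that the $|g|^{1/2}$ factor that multiplies both sides of $\mathcal{L}[u_I]=|g|^{1/2}d^\star B$ cancels cleanly, so that the raw $B$ (rather than something heavier) appears on the right-hand side. The only conceptual point is the passage between $d^\star$, defined via the induced metric, and the flat divergence $\partial_a$ required by \cite{faz}; but this passage is precisely what the identity $\mathcal{L}[u_I]=|g|^{1/2}d^\star B$ recalled in the text accomplishes.
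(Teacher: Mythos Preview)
Your proposal is correct and follows exactly the same route as the paper: reduce the Hodge-Laplace equation componentwise to a scalar divergence-form equation with $VMO\cap L^\infty$ coefficients and invoke Di Fazio's Theorem 2.1 from \cite{faz}. The paper's proof is in fact terser than yours, recording only the componentwise estimate $\norm{Du_{i_1\cdots i_k}}_{L^p}\lesssim \norm{|g|^{1/2} g^{ai_{k+1}}B_{i_1\cdots i_{k+1}}}_{L^p}\lesssim \norm{B}_{L^p}$ without spelling out the VMO verification you (correctly) supply.
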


\begin{proof}
Using Theorem 2.1 in \cite{faz}, each component of $u$ satisfies
$$\norm{Du_{i_1\cdots i_k}}_{L^p(\Omega)}\lesssim \norm{|g|^{1/2} g^{ai_{k+1}}B_{i_1\cdots i_{k+1}}}_{L^p(\Omega)}  \lesssim \norm{B}_{L^p(\Omega)}.$$
\end{proof}

\noindent
We will require the following corollaries of Theorem 2.1 in \cite{faz}.
\begin{prop} \label{111}
Let $\Omega\in\mathbb{R}^4$ be a ball. Suppose that $u\in\Lambda^2(\Omega)$ satisfies the conditions
$$d^\star u=0\quad\quad\mbox{and}\quad\quad du\in \dot{W}^{-1,2}(\Omega).$$
Then
$$\norm{u}_{L^2(\Omega)}\lesssim \norm{du}_{\dot{W}^{-1,2}(\Omega)}.$$
\end{prop}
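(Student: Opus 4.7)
My plan is to reformulate the statement via Hodge duality and reduce it to the elliptic estimate of Proposition \ref{adz}. First, I would set $v:=\star u$, a $2$-form on $\Omega\subset\mathbb{R}^4$. Because $\star$ is an isometry in the middle dimension and satisfies $\star d=\pm d^\star \star$, the given hypotheses translate into $dv=0$ and $d^\star v\in \dot W^{-1,2}(\Omega)$, while $\|v\|_{L^2}\simeq \|u\|_{L^2}$ and $\|d^\star v\|_{\dot W^{-1,2}}\simeq \|du\|_{\dot W^{-1,2}}$ thanks to the bi-Lipschitz equivalence of $g$ with the flat metric on $\Omega$. It therefore suffices to prove $\|v\|_{L^2(\Omega)}\lesssim \|d^\star v\|_{\dot W^{-1,2}(\Omega)}$ assuming $dv=0$ and $d^\star v\in \dot W^{-1,2}(\Omega)$.

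Second, since $\Omega$ is a ball (hence simply connected with trivial middle-dimensional cohomology), the Poincaré lemma already quoted in this section provides a $1$-form $\alpha$ with $v=d\alpha$. I would normalise $\alpha$ by imposing simultaneously the Coulomb-type gauge $d^\star\alpha=0$ and vanishing componentwise trace on $\partial\Omega$; this is a standard Hodge--Morrey decomposition on a contractible domain and yields $\alpha\in W^{1,2}(\Omega)$ as soon as $v\in L^2(\Omega)$. Under this gauge the Hodge Laplacian of $\alpha$ equals
$$\Delta\alpha=d^\star d\alpha+dd^\star\alpha=d^\star v.$$
Writing $d^\star v=d^\star G$ for some $G\in L^2(\Omega)$ with $\|G\|_{L^2}\simeq\|d^\star v\|_{\dot W^{-1,2}}$, I obtain componentwise a system of uniformly elliptic equations in flat divergence form whose leading coefficients are $a^{ij}=|g|^{1/2}g^{ij}\in L^\infty\cap W^{2,2}\subset \mathrm{VMO}$ by Claim \ref{cll}. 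The lower-order terms produced by the Weitzenböck formula for the Hodge Laplacian on $1$-forms have coefficients involving at most one derivative of $g$ (hence in $W^{1,2}\subset L^4$) and act compactly from $W^{1,2}$ to $\dot W^{-1,2}$; they can be absorbed by working on a ball of small radius (or by a standard Fredholm/iteration argument). Proposition \ref{adz} then applies componentwise to give
$$\|D\alpha\|_{L^2(\Omega)}\lesssim\|G\|_{L^2(\Omega)}\simeq\|d^\star v\|_{\dot W^{-1,2}(\Omega)}.$$

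Finally, since $v=d\alpha$, one concludes $\|v\|_{L^2(\Omega)}=\|d\alpha\|_{L^2(\Omega)}\leq\|D\alpha\|_{L^2(\Omega)}\lesssim\|du\|_{\dot W^{-1,2}(\Omega)}$, which together with the first step yields the desired estimate. The main obstacle I foresee is the reconciliation of the gauge $d^\star\alpha=0$ with a Dirichlet boundary condition strong enough to invoke Proposition \ref{adz}, together with the absorption of the metric-dependent zero- and first-order terms separating the Hodge Laplacian on $1$-forms from the scalar divergence operator $\mathcal{L}$; I expect both issues to be handled by a perturbation argument exploiting the fact that $g$ is only a bounded $W^{2,2}$ perturbation of $\delta_{ij}$, which makes the correction compact and, on a small enough ball, small in operator norm.
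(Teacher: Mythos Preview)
Your route is genuinely different from the paper's, and the obstacle you yourself flag at the end is the real gap.

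The paper proceeds by duality: take an arbitrary $\Psi\in\Lambda^2\otimes L^2(\Omega)$, Hodge-decompose it as $\Psi=d\theta+d^\star\psi$ with $d\psi=0$, $d^\star\theta=0$, and $\psi|_{\partial\Omega}=0$. Di~Fazio's estimate is then applied to $\psi$ (not to any potential of $u$), giving $\|D\psi\|_{L^2}\lesssim\|\Psi\|_{L^2}$. Since $d^\star u=0$, one has $\langle u,\Psi\rangle=\langle du,\psi\rangle\lesssim\|du\|_{\dot W^{-1,2}}\|\psi\|_{W_0^{1,2}}$, and the estimate follows. The crucial feature is that the elliptic problem lives on the \emph{test} side, where the Dirichlet condition on $\psi$ is a free choice that conflicts with nothing.

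Your approach instead seeks a potential $\alpha$ for $v=\star u$ and applies the elliptic estimate there. But you need simultaneously $d\alpha=v$, the Coulomb gauge $d^\star\alpha=0$ (so that $\Delta\alpha=d^\star v$), \emph{and} $\alpha|_{\partial\Omega}=0$ (so that Proposition~\ref{adz} applies componentwise). These three constraints are overdetermined on a domain with boundary: the Hodge--Morrey decomposition delivers a Coulomb-gauge primitive with tangential or normal boundary data, not full Dirichlet, and if you instead solve $\Delta\alpha=d^\star v$ with $\alpha|_{\partial\Omega}=0$ you lose control of $d^\star\alpha$ and hence of $d\alpha-v$. Your suggested perturbation argument does not repair this, because the incompatibility is a boundary-value obstruction, not a small lower-order correction that vanishes as the ball shrinks. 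The paper's duality trick is precisely what avoids this overdetermination.
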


\begin{proof}
Let $\Psi \in \Lambda^2\otimes L^2(\Omega)$ be arbitrary. Standard Hodge theory guarantees  there exist $\psi\in \Lambda^3\otimes W^{1,2}(\Omega)$ and $\theta\in\Lambda^1\otimes W^{1,2}(\Omega)$ such that
$$\Psi =d\theta +d^\star \psi  \quad\quad\mbox{with}\quad\quad d \psi=0\quad\mbox{and}\quad d^\star\theta=0.$$ 
We have, per Claim \ref{claimclaim},
$$\mathcal{L}[\psi]=|g|^{1/2}\Delta\psi=|g|^{1/2} d\Psi \,\,\in \,\dot{W}^{-1,2}(\Omega).       $$
Reasoning componentwise (set $\psi|_{\partial \Omega}=0$), we use \cite{faz} to arrive at
\begin{align}
\norm{D\psi}_{L^2(\Omega)}\lesssim \norm{|g|^{1/2} d\Psi}_{\dot{W}^{-1,2}(\Omega)}\lesssim \norm{\Psi}_{L^2(\Omega)}.  \label{cklh}
\end{align}
Observe that
\begin{align}
\langle u, \Psi\rangle &=\langle u, d\theta +d^\star \psi\rangle     \nonumber
\\&= \langle du, \psi\rangle     \nonumber
\\&\lesssim \norm{du}_{\dot{W}^{-1,2}(\Omega)} \norm {\psi}_{W_0^{1,2}(\Omega)}   \nonumber
\\ &\overset{\eqref{cklh}}\lesssim \norm{du}_{\dot{W}^{-1,2}(\Omega)} \norm {\Psi}_{L^2(\Omega)}   .  \nonumber
\end{align}
Hence the result
$$\norm{u}_{L^2(\Omega)}\lesssim \norm{du}_{\dot{W}^{-1,2}(\Omega)}.$$

\end{proof}

\begin{prop} \label{nadaa}
Let $\Omega \subset \mathbb{R}^4$ be a ball. Any $Q\in\Lambda^3(\Omega, \Lambda^2(\mathbb{R}^5))\otimes \dot{W}^{-1,2}(\Omega)$ can be decomposed in the form
$$Q=dA+d^\star B \quad\quad\mbox{with}\quad\quad d^\star A=0\,,\,dB=0\,\,,\,\, A\in\Lambda^2\,\,,\,\, B\in\Lambda^4\,$$
and
$$\norm{A}_{L^2(\Omega)}+\norm{B}_{L^2(\Omega)}\lesssim \norm{Q}_{\dot{W}^{-1,2}(\Omega)}.$$
\end{prop}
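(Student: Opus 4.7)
The plan is to obtain the decomposition by solving a Hodge--Laplace equation componentwise on the ball. Given $Q\in\Lambda^3(\Omega,\Lambda^2(\mathbb{R}^5))\otimes\dot W^{-1,2}(\Omega)$, I would first seek a 3-form $\omega$ on $\Omega$ solving
\[
\Delta\omega=Q\quad\text{in }\Omega,\qquad \omega|_{\partial\Omega}=0.
\]
Once $\omega$ is in hand, the decomposition comes essentially for free: set $A:=d^\star\omega\in\Lambda^2(\Omega,\Lambda^2(\mathbb{R}^5))$ and $B:=d\omega\in\Lambda^4(\Omega,\Lambda^2(\mathbb{R}^5))$. The Hodge identity $\Delta=dd^\star+d^\star d$ then gives $Q=dA+d^\star B$, while the nilpotency relations $(d^\star)^2=0$, $d^2=0$ deliver the gauge conditions $d^\star A=0$ and $dB=0$ automatically.

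For existence of $\omega$ together with the correct estimate, I would exploit exactly the decoupling already used in Proposition \ref{adz}: componentwise the equation reads
\[
\mathcal{L}[\omega_{ijk,\alpha\beta}]=|g|^{1/2}\,Q_{ijk,\alpha\beta},
\]
where $\mathcal{L}[\,\cdot\,]=\partial_i\bigl(|g|^{1/2}g^{ij}\partial_j\,\cdot\,\bigr)$ is the scalar uniformly elliptic operator in flat divergence form whose coefficients lie in $L^\infty\cap W^{2,2}$ by Claim \ref{cll}. Since $|g|^{1/2}\in L^\infty\cap W^{2,2}$, Claim \ref{claimclaim} yields $|g|^{1/2}Q_{ijk,\alpha\beta}\in\dot W^{-1,2}(\Omega)$ with norm controlled by $\norm{Q}_{\dot W^{-1,2}(\Omega)}$ up to a constant. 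A standard Lax--Milgram argument for this scalar Dirichlet problem then produces a unique weak solution $\omega_{ijk,\alpha\beta}\in W_0^{1,2}(\Omega)$ with
\[
\norm{D\omega_{ijk,\alpha\beta}}_{L^2(\Omega)}\lesssim\norm{Q_{ijk,\alpha\beta}}_{\dot W^{-1,2}(\Omega)}.
\]

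Summing over the multi-indices yields $\norm{D\omega}_{L^2(\Omega)}\lesssim\norm{Q}_{\dot W^{-1,2}(\Omega)}$, and since $A=d^\star\omega$ and $B=d\omega$ are first-order differentials of $\omega$,
\[
\norm{A}_{L^2(\Omega)}+\norm{B}_{L^2(\Omega)}\lesssim\norm{D\omega}_{L^2(\Omega)}\lesssim\norm{Q}_{\dot W^{-1,2}(\Omega)},
\]
which is the claimed bound. The most delicate point I expect is simply verifying that the Hodge--Laplace system on $\Omega$ genuinely decouples into the scalar equations of the type amenable to the machinery of Proposition \ref{adz}; this is the same observation already exploited there, and the Dirichlet boundary condition $\omega|_{\partial\Omega}=0$ on the contractible domain $\Omega$ prevents the appearance of any harmonic remainder that could otherwise obstruct the decomposition, so the argument closes cleanly.
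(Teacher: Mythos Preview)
Your approach is correct and takes a genuinely different route from the paper. You construct the decomposition directly: solve the Hodge--Poisson problem $\Delta\omega=Q$ componentwise via Lax--Milgram (exploiting the same decoupling into scalar equations $\mathcal{L}[\omega_I]=|g|^{1/2}Q_I$ that underlies Proposition~\ref{adz}), then set $A=d^\star\omega$, $B=d\omega$; the gauge conditions and the identity $Q=dA+d^\star B$ are automatic, and the $L^2$ bounds on $A,B$ follow from $\omega\in W_0^{1,2}$. The paper instead argues by duality: it takes the Hodge decomposition $Q=dA+d^\star B$ as given and establishes the $L^2$ estimates by pairing $\star B$ and $A$ against an arbitrary $\Psi\in L^2$, Hodge-decomposing $\Psi=d\theta+d^\star\psi$, and using the gauge conditions to reduce each pairing to one of the form $\langle Q,\psi\rangle$, controlled by $\|Q\|_{\dot W^{-1,2}}\|\psi\|_{W_0^{1,2}}\lesssim\|Q\|_{\dot W^{-1,2}}\|\Psi\|_{L^2}$. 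Your argument is more self-contained and constructive---existence and estimate come in one stroke---while the paper's duality argument is slightly more flexible in that it bounds any admissible pair $(A,B)$ without exhibiting a potential $\omega$. One minor refinement: since $d^\star\omega$ involves Christoffel symbols, the bound $\|A\|_{L^2}\lesssim\|D\omega\|_{L^2}$ should really read $\|A\|_{L^2}\lesssim\|\omega\|_{W_0^{1,2}}$, using $\Gamma\in L^4$ together with the Sobolev embedding $\omega\in L^4$; the conclusion is unaffected.
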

\begin{proof}

Let $\Psi\in L^2(\Omega)$ be arbitrary. As in the proof of Proposition \ref{111}, we have
$$\Psi=d\theta+d^\star \psi\quad\mbox{with} \quad d^\star\theta=0\,,\,d\psi=0,$$
and
\begin{align}\norm{D\psi}_{L^2(\Omega)}\lesssim \norm{|g|^{1/2} d\Psi}_{\dot{W}^{-1,2}(\Omega)}\lesssim \norm{\Psi}_{L^2(\Omega)}.  \label{neededd}
\end{align}
Since $\star B$ is a 0-form and $d\psi=0$, we have  
\begin{align}
\langle \star B,\Psi \rangle&=\langle \star B, d\theta+d^\star\psi \rangle  \nonumber
\\&=\langle \star d^\star B, \psi \rangle \nonumber
\\&= \langle \star Q- d^\star(\star A), \psi \rangle  \nonumber
\\&= \langle \star Q, \psi \rangle   \nonumber
\\&\lesssim \norm{\star Q}_{\dot{W}^{-1,2}(\Omega)} \norm{\psi}_{W_0^{1,2}(\Omega)}   \nonumber
\\&\overset{\eqref{neededd}} \lesssim \norm{Q}_{\dot{W}^{-1,2}(\Omega)} \norm{\Psi}_{L^2(\Omega)} \nonumber
\end{align}
where Claim \ref{claimclaim} has been used to obtain the last line. Hence
\begin{align}\norm{B}_{L^2(\Omega)}\simeq \norm{\star B}_{L^2(\Omega)}\lesssim \norm{Q}_{\dot{W}^{-1,2}(\Omega)}.\label{jknba}\end{align}
 Similarly, we have
\begin{align}
\langle A, \Psi \rangle&=   \langle A, d\theta+ d^\star \psi \rangle   \nonumber
\\&= \langle dA, \psi \rangle  \nonumber
\\&= \langle Q-d^\star B, \psi \rangle  \nonumber
\\&= \langle Q, \psi \rangle  \nonumber
\\&\lesssim \norm{Q}_{\dot{W}^{-1,2}(\Omega)} \norm{\psi}_{W_0^{1,2}}  \nonumber
\\& \lesssim \norm{Q}_{\dot{W}^{-1,2}(\Omega)} \norm{\Psi}_{L^2(\Omega)}. \nonumber  
\end{align}
Hence 
\begin{gather}
\norm{A}_{L^2(\Omega)}\lesssim \norm{Q}_{\dot{W}^{-1,2}(\Omega)}.\label{jknb} 
\end{gather}
Combining the estimates \eqref{jknba} and \eqref{jknb} proves the result.
\end{proof}

\noindent
We will state, without proof, the following Bourgain-Brezis type result whose non-trivial proof is amply outlined in Chapter 6 of \cite{curca}. E.\ Curca was kind enough to communicate with us the proof of Proposition \ref{curca} which was almost done in his thesis (see Theorem 6.4 in \cite{curca} and the discussion following Remark 6.5 on page 129 of \cite{curca}). The original  predecessor of this result is due to Maz'ya \cite{maz}.

\begin{prop} \label{curca}
Let $\Omega \subset \mathbb{R}^4$ be a ball and let $V\in\Lambda^1\otimes (L^1\oplus \dot{W}^{-2,2})$ with $d^\star V=0.$ There exists $w\in\Lambda^2\otimes \dot{W}^{-1,2}(\Omega)$ such that
$$d^\star w=V\quad,\quad dw=0\quad,\quad \norm{w}_{\dot{W}^{-1,2}(\Omega)}\lesssim \norm{V}_{L^1\oplus \dot{W}^{-2,2}(\Omega)}.$$
\end{prop}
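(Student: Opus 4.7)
The plan is to reduce the statement to a dual estimate and then invoke the Bourgain-Brezis critical-dimension construction for the $L^1$ component. Since $\dot{W}^{-1,2}$ is the dual of $\dot{W}^{1,2}_0$, the desired bound $\|w\|_{\dot{W}^{-1,2}}\lesssim\|V\|_{L^1\oplus\dot{W}^{-2,2}}$ is equivalent to producing a candidate $w$ with $d^{\star}w=V$, $dw=0$, and verifying that $|\langle w,\varphi\rangle|\lesssim\|V\|_{L^1\oplus\dot{W}^{-2,2}}\|\varphi\|_{\dot{W}^{1,2}}$ for every test 2-form $\varphi\in\Lambda^{2}\otimes\dot{W}^{1,2}_0(\Omega)$. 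Using a Hodge decomposition on the ball with appropriate (e.g.\ absolute) boundary conditions, I would write $\varphi=d\alpha+d^{\star}\beta$ with $\alpha\in\Lambda^{1}$, $\beta\in\Lambda^{3}$, together with the gauge-fixed estimate $\|\alpha\|_{\dot{W}^{2,2}}+\|\beta\|_{\dot{W}^{2,2}}\lesssim\|\varphi\|_{\dot{W}^{1,2}}$. The identities $d^{\star}w=V$ and $dw=0$ then collapse the pairing to $\langle w,\varphi\rangle=\langle V,\alpha\rangle$, so everything reduces to estimating this last bracket.

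The candidate $w$ is constructed explicitly as $w:=dGV$, where $G$ is the Green's operator for the Hodge Laplacian on 1-forms (with boundary conditions ensuring $d^{\star}GV=0$). The hypothesis $d^{\star}V=0$ together with $\Delta=dd^{\star}+d^{\star}d$ gives $d^{\star}w=V$ and $dw=0$ for free. Next, split $V=V_{1}+V_{2}$ with $V_{1}\in L^{1}$, $V_{2}\in\dot{W}^{-2,2}$ realising (up to a factor of $2$) the infimum norm, and adjust by absorbing any divergence remainder from $V_{1}$ into $V_{2}$ so that $d^{\star}V_{1}=0$. The $V_{2}$-piece is immediate,
$$|\langle V_{2},\alpha\rangle|\leq\|V_{2}\|_{\dot{W}^{-2,2}}\|\alpha\|_{\dot{W}^{2,2}}\lesssim\|V_{2}\|_{\dot{W}^{-2,2}}\|\varphi\|_{\dot{W}^{1,2}}.$$

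The hard piece is $|\langle V_{1},\alpha\rangle|$ for $V_{1}\in L^{1}$ and $\alpha\in\dot{W}^{2,2}$: in dimension four $\dot{W}^{2,2}$ fails to embed into $L^{\infty}$, so no naive Hölder bound is available. This is where the Bourgain-Brezis machinery enters. Because $d^{\star}V_{1}=0$, the pairing is invariant under the gauge change $\alpha\mapsto\alpha-df$ for any $f$. Performing a Littlewood-Paley decomposition $\alpha=\sum_{k}P_{k}\alpha$ and choosing on each dyadic shell a gauge $df_{k}$ that compresses $P_{k}\alpha-df_{k}$ in $L^{\infty}$ (exploiting the frequency localisation together with the co-closed orthogonality), one assembles an admissible representative $\tilde{\alpha}=\sum_{k}(P_{k}\alpha-df_{k})$ with the critical bound $\|\tilde{\alpha}\|_{L^{\infty}}\lesssim\|\alpha\|_{\dot{W}^{2,2}}\lesssim\|\varphi\|_{\dot{W}^{1,2}}$, after the usual telescoping and summation over scales characteristic of Bourgain-Brezis. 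Then
$$|\langle V_{1},\alpha\rangle|=|\langle V_{1},\tilde{\alpha}\rangle|\leq\|V_{1}\|_{L^{1}}\|\tilde{\alpha}\|_{L^{\infty}}\lesssim\|V_{1}\|_{L^{1}}\|\varphi\|_{\dot{W}^{1,2}},$$
completing the estimate.

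The main obstacle is precisely this last step: the critical-dimension Bourgain-Brezis refinement, in which the co-closed condition $d^{\star}V=0$ is essential and must be exploited through a scale-by-scale gauge adjustment rather than a single $L^{\infty}$-bound on $\alpha$. Without co-closedness, Sobolev embedding gives only $L^{1}\hookrightarrow\dot{W}^{-1,4/3}$, strictly weaker than the target $\dot{W}^{-1,2}$ in four dimensions, so the divergence-free structure of $V$ is where the sharp improvement is gained. Everything else—the duality reduction, the Hodge decomposition of $\varphi$, and the construction $w=dGV$—is standard elliptic/Hodge machinery.
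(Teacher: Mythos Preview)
The paper does not actually prove this proposition: it is stated explicitly without proof, with the author writing that the ``non-trivial proof is amply outlined in Chapter 6 of \cite{curca}'' and that it was communicated privately by E.~Curca, with Maz'ya \cite{maz} as the original predecessor. So there is no in-paper argument to compare your proposal against.

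That said, your outline is consistent with the Bourgain--Brezis/Maz'ya framework those references are built on: pass to the dual, reduce via Hodge decomposition of the test form to a pairing $\langle V,\alpha\rangle$ with $\alpha\in\dot W^{2,2}$, and exploit the co-closedness of $V$ through a scale-by-scale gauge correction to manufacture an $L^\infty$ representative for the $L^1$ piece. You are right that this last step is the entire content of the result, and you correctly identify that without $d^\star V=0$ the estimate fails.

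Two points are genuinely glossed over and would need care in a full proof. First, arranging $d^\star V_1=0$ separately after splitting $V=V_1+V_2$: the correction you absorb into $V_2$ must remain controlled in $\dot W^{-2,2}$ while the adjusted $V_1$ stays in $L^1$, and this is not automatic. Second, you work on a ball rather than $\mathbb{R}^4$, so the Littlewood--Paley machinery and the Green's operator $G$ both require boundary-adapted versions; the paper's reference to Curca's thesis is precisely because localising Bourgain--Brezis-type constructions to domains is where much of the technical work lies. Your sketch is a correct road map, but these are the places where the actual labour is hidden.
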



\noindent
Finally, we will need the following technical lemma.

\begin{lem}  \label{tech}
Let $\Omega$ be a ball in $\mathbb{R}^4$. For $k\in (0,1)$, we denote by $\Omega_k$, the ball with same center as $\Omega$ and with radius rescaled by $k$. Suppose $T\in \Lambda^1\otimes L^2(\Omega)$ satisfies
$$\norm{dT}_{L^{4/3}(\Omega)}+\norm{d^\star T}_{L^{4/3}(\Omega) }\lesssim M,$$
for some constant $M>0$. Then
$$\norm{T}_{L^2(\Omega_k)}\lesssim M+|\Omega|^{1/4}\norm{T}_{L^2(\Omega)}.$$ 
\end{lem}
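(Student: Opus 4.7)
The plan is to reduce to a Gaffney-type estimate on Euclidean space via a cutoff-and-extend argument, exploiting the equivalence of the induced metric with the flat metric on $\Omega$. Let $\chi\in C_c^\infty(\Omega)$ be a cutoff with $0\leq \chi\leq 1$, $\chi\equiv 1$ on $\Omega_k$, and $\norm{D\chi}_{L^\infty}\lesssim 1/((1-k)\,\mathrm{diam}\,\Omega)$. Set $u:=\chi T$, which is compactly supported in $\Omega$, and extend by zero to all of $\mathbb{R}^4$. This produces a $1$-form to which we can apply Euclidean harmonic analysis.

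The main computation is a Leibniz expansion of $du$ and $d^\star u$. Schematically one finds $du=\chi\, dT + d\chi\wedge T$ and $d^\star u = \chi\, d^\star T - \iota_{D\chi}T + (\text{zero-order terms in }T)$, where the extra zero-order terms come from the Christoffel symbols encoded in the metric $d^\star$ versus the flat one. Since $g_{ij}\simeq \delta_{ij}$ uniformly on $\Omega$, these terms are controlled pointwise by $T$, and altogether
\begin{align}
\norm{du}_{L^{4/3}(\mathbb{R}^4)} + \norm{d^\star u}_{L^{4/3}(\mathbb{R}^4)}
\lesssim \norm{dT}_{L^{4/3}(\Omega)}+\norm{d^\star T}_{L^{4/3}(\Omega)} + \norm{T}_{L^{4/3}(\Omega)}
\lesssim M + \norm{T}_{L^{4/3}(\Omega)}.  \nonumber
\end{align}
By H\"older's inequality, $\norm{T}_{L^{4/3}(\Omega)}\lesssim |\Omega|^{1/4}\norm{T}_{L^2(\Omega)}$.

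Now I invoke the standard Gaffney inequality on $\mathbb{R}^4$ in flat metric: for a compactly supported $1$-form $u$, writing $-\Delta u=(dd^\star + d^\star d)u$ componentwise and solving via the Newtonian potential expresses each component of $u$ as a Riesz potential of order $1$ applied (after a Calder\'on--Zygmund transform) to the components of $du$ and $d^\star u$. Equivalently, Calder\'on--Zygmund gives $\norm{Du}_{L^{4/3}(\mathbb{R}^4)}\lesssim \norm{du}_{L^{4/3}(\mathbb{R}^4)}+\norm{d^\star u}_{L^{4/3}(\mathbb{R}^4)}$, and then Sobolev embedding $\dot W^{1,4/3}(\mathbb{R}^4)\hookrightarrow L^2(\mathbb{R}^4)$ yields
\begin{align}
\norm{u}_{L^2(\mathbb{R}^4)} \lesssim \norm{du}_{L^{4/3}(\mathbb{R}^4)} + \norm{d^\star u}_{L^{4/3}(\mathbb{R}^4)}.   \nonumber
\end{align}
Since $u\equiv T$ on $\Omega_k$, combining the two displays produces
\begin{align}
\norm{T}_{L^2(\Omega_k)}\leq \norm{u}_{L^2(\mathbb{R}^4)}\lesssim M + |\Omega|^{1/4}\norm{T}_{L^2(\Omega)},   \nonumber
\end{align}
which is the claim.

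\textbf{Main obstacle.} The one delicate point is that $d^\star$ in the statement is the metric codifferential, not the flat one, so the Leibniz expansion carries lower-order Christoffel contributions. These terms are only in $L^2$ a priori (the metric lies in $W^{2,2}\cap L^\infty$), but they can be absorbed into the $|\Omega|^{1/4}\norm{T}_{L^2(\Omega)}$ term via H\"older. The alternative route -- trying to run Gaffney directly with a $W^{2,2}$ metric -- would be much harder, which is why the cutoff-and-transfer to the flat Laplacian is the right move.
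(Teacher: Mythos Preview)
Your cutoff step and the Leibniz expansion of $d(\chi T)$ and $d^\star(\chi T)$ are exactly what the paper does. The gap is in the next move. When you pass to the \emph{flat} Gaffney inequality on $\mathbb{R}^4$, you need the flat codifferential $d^\star_0 u$, not the metric one. The difference is not merely a zero-order Christoffel correction: writing $d^\star T=-g^{ij}\partial_j T_i+g^{ij}\Gamma^k_{ji}T_k$ versus $d^\star_0 T=-\delta^{ij}\partial_j T_i$, you pick up the first-order term $(g^{ij}-\delta^{ij})\partial_j T_i$. Here $g^{ij}-\delta^{ij}$ is only $L^\infty$ (the hypothesis $g_{ij}\simeq\delta_{ij}$ is uniform ellipticity, not closeness; $g\in W^{2,2}\cap L^\infty$ is not continuous in dimension~4), so this term is of size $\norm{Du}_{L^{4/3}}$ with an $O(1)$ coefficient, and the flat Gaffney estimate becomes circular. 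It cannot be absorbed into $|\Omega|^{1/4}\norm{T}_{L^2}$.

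The paper avoids this by never leaving the metric. After the same cutoff step it argues by duality: for arbitrary $\Psi\in\Lambda^1\otimes L^2(\Omega)$ it solves $\Delta\psi=\Psi$ with $\psi|_{\partial\Omega}=0$ \emph{componentwise} using the divergence-form operator $\mathcal{L}[\psi]=\partial_i(|g|^{1/2}g^{ij}\partial_j\psi)$, and invokes Miranda's regularity for $W^{2,2}\cap L^\infty$ coefficients (Proposition~\ref{callit}) to get $\norm{d\psi}_{L^4}+\norm{d^\star\psi}_{L^4}\lesssim\norm{D^2\psi}_{L^2}\lesssim\norm{\Psi}_{L^2}$. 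Then $\langle\mu T,\Psi\rangle=\langle\mu T,dd^\star\psi+d^\star d\psi\rangle=\langle d^\star(\mu T),d^\star\psi\rangle+\langle d(\mu T),d\psi\rangle$ pairs the metric $d,d^\star$ with their own adjoints, and the $L^{4/3}$--$L^4$ H\"older pairing gives $\norm{\mu T}_{L^2}\lesssim M+|\Omega|^{1/4}\norm{T}_{L^2}$. The point is that the rough-coefficient elliptic estimate replaces flat Calder\'on--Zygmund, so no conversion between $d^\star$ and $d^\star_0$ is ever needed.
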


\begin{proof}
Let $\mu$ be a standard smooth cut-off function with
$$\mu|_{\Omega_k}=1\quad,\quad \mu|_{\mathbb{R}^4\setminus \Omega}=0\quad, \quad 0<\mu(x) <1\,\,\, \forall\,\, x   \in\mathbb{R}^4.         $$
Clearly, 
\begin{align}
\norm{d(\mu T)}_{L^{4/3}(\Omega)} &\lesssim \norm{d\mu}_{L^\infty (\Omega)}  \norm{T}_{L^{4/3}(\Omega)} +\norm{\mu}_{L^\infty (\Omega)} \norm{dT}_{L^{4/3}(\Omega)}  \nonumber
\\&\lesssim |\Omega|^{1/4}\norm{T}_{L^2(\Omega)} +M\,,          \label{1b1}
\end{align}
where we have used Jensen's inequality. By the same token, using
$$d^\star (\mu T)=\star(d\mu\wedge_4 \star T)+\mu d^\star T\,,$$
we find
\begin{align}
\norm{d^\star (\mu T)}_{L^{4/3}(\Omega)}  \lesssim |\Omega|^{1/4} \norm{T}_{L^2(\Omega)} +M\,.         \label{1b2}
\end{align}

\noindent
Next, let $\Psi \in \Lambda^1\otimes L^2(\Omega)$ be arbitrary. Consider the problem (componentwise)

$$\Delta \psi=\Psi\quad, \quad \psi|_{\partial \Omega}=0\quad,\quad \psi\in \Lambda^1\,.$$
Per Proposition \ref{callit} and the Sobolev embedding theorem, we have
\begin{align}
\norm{d\psi}_{L^4(\Omega)} +\norm{d^\star \psi}_{L^4(\Omega)} \lesssim \norm{D^2\psi}_{L^2(\Omega)}\lesssim \norm{\Psi}_{L^2(\Omega)}.   \label{1b3}
\end{align}
Next, we compute
\begin{align}
\langle \mu T, \Psi \rangle &= \langle \mu T, \Delta \psi\rangle \nonumber
\\&= \langle \mu T, dd^\star \psi +d^\star d\psi\rangle  \nonumber
\\& =\langle d^\star (\mu T), d^\star\psi  \rangle +\langle d(\mu T), d\psi\rangle  \nonumber
\\&\overset{\eqref{1b1}, \eqref{1b2}, \eqref{1b3}} \lesssim (|\Omega|^{1/4}\norm{T}_{L^2(\Omega)} +M)  \norm{\Psi}_{L^2(\Omega)}.  \nonumber
\end{align}
This shows that
$$\norm{\mu T}_{L^2(\Omega)}\lesssim |\Omega|^{1/4} \norm{T}_{L^2(\Omega)} +M\,.$$
Since $\mu\equiv 1$ on $\Omega_k,$ the desired conclusion follows.

\end{proof}

 The energy $\mathcal{E}(\Sigma)$ cannot be assumed to be small since it is not bounded from below. Instead, we rescale our domain so as to obtain a ball $B\subset \mathbb{R}^4$ of arbitrary center and radius with
$$\vec{\norm{h}}_{L^4(B)} < \varepsilon,$$
\noindent
where $\varepsilon >0$ may be chosen as small as we please.
\noindent
A particular useful quantity  is the 2-vector-valued 2-form
$$\vec{\eta}:=d\vec{\Phi}\overset{\wedge}\wedge_4 d\vec{\Phi}.$$

\begin{prop}
It holds
\begin{align}
\norm{d^\star\vec{\eta}}_{L^4(B)} <\varepsilon  \label{ssssj}
\end{align}\end{prop}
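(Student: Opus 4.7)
The plan is to compute $d^\star\vec{\eta}$ explicitly and estimate it pointwise by $|\vec{h}|$, then use the smallness hypothesis on $\norm{\vec{h}}_{L^4(B)}$. Since $\vec{\eta}_{ij}=\nabla_i\vec{\Phi}\wedge\nabla_j\vec{\Phi}$ is antisymmetric in $i,j$, the codifferential acts as $(d^\star\vec{\eta})_i=\nabla^j\vec{\eta}_{ij}$ up to sign. Applying the Leibniz rule and using $\nabla_i\nabla_j\vec{\Phi}=\vec{h}_{ij}$ together with $g^{ij}\vec{h}_{ij}=4\vec{H}$, I expect to obtain
\begin{align}
(d^\star\vec{\eta})_i \;=\; \vec{h}^{\,j}_{\,i}\wedge\nabla_j\vec{\Phi}\;+\;4\,\nabla_i\vec{\Phi}\wedge\vec{H}. \nonumber
\end{align}

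The next step is a pointwise control. Each term on the right hand side is a wedge product of a curvature quantity and a tangential derivative of $\vec{\Phi}$. Since by assumption $\vec{\Phi}\in W^{1,\infty}$, we have $|d\vec{\Phi}|\lesssim 1$ uniformly, and of course $|\vec{H}|\lesssim|\vec{h}|$. Combining these observations gives the pointwise inequality
\begin{align}
|d^\star\vec{\eta}|\;\lesssim\;|\vec{h}|\cdot|d\vec{\Phi}|\;\lesssim\;|\vec{h}|, \nonumber
\end{align}
where the implicit constant depends only on $\norm{\vec{\Phi}}_{W^{1,\infty}}$ and on the uniform equivalence of $g$ to the flat metric on the chart (which guarantees that raising indices and the Hodge $\star$ do not distort $L^p$ norms).

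Raising the inequality to the fourth power and integrating over $B$ yields
\begin{align}
\norm{d^\star\vec{\eta}}_{L^4(B)}\;\lesssim\;\norm{\vec{h}}_{L^4(B)}\;<\;\varepsilon, \nonumber
\end{align}
and since the rescaling of the domain is at our disposal (we may make $\norm{\vec{h}}_{L^4(B)}$ as small as we wish), the multiplicative constant is harmlessly absorbed into $\varepsilon$.

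There is no serious obstacle here: the statement is essentially a direct consequence of the Leibniz rule combined with the $W^{1,\infty}$ hypothesis on $\vec{\Phi}$ and the small-energy rescaling. The only point to be careful about is to verify that the covariant derivative of the tangent frame indeed produces only $\vec{h}$ and $\vec{H}$ (no extra Christoffel contributions survive in the normal direction of the wedge), which follows from the compatibility of the Levi--Civita connection with the ambient flat connection restricted to $\Sigma$.
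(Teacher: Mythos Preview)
Your proof is correct and follows essentially the same approach as the paper: both compute $d^\star\vec{\eta}$ explicitly via the Leibniz rule to obtain $(d^\star\vec{\eta})_j = 4\vec{H}\wedge\nabla_j\vec{\Phi} + \nabla^i\vec{\Phi}\wedge\vec{h}_{ij}$ (up to sign and index relabeling this matches your formula), and then conclude from $d\vec{\Phi}\in L^\infty$, $|\vec{H}|\lesssim|\vec{h}|$, and the smallness $\norm{\vec{h}}_{L^4(B)}<\varepsilon$.
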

\begin{proof}
It suffices to realise that
$$d^\star \vec{\eta}= (4\vec{H}\wedge \nabla_j\vec{\Phi}+\nabla^i\vec{\Phi}\wedge\vec{h}_{ij})\,\, dx^j$$
and $\vec{h}\in L^4(B)$, $d\vec{\Phi}\in L^\infty(B)$ and $||\vec{H}||_{L^4(B)}\lesssim ||\vec{h}||_{L^4(B)}\leq \varepsilon$.
\end{proof}

\section{The creation of three characteristic two-forms}  \label{amena}

Let $$E(\Omega):=\norm{DH}_{L^2(\Omega)} +\norm{H}_{L^4(\Omega)} <\infty.$$
We have seen in Chapter \ref{latty1} using translation invariance of $\mathcal{E}(\Sigma)$ that there exists a divergence free tensor $\vec{V}$, that is $\nabla_j\vec{V}^j=\vec{0}$. Accordingly, we need to solve
$$d\star\vec{L}_0=\star\vec{V}$$
for $\vec{L}_0\in \Lambda^2(\Omega, \mathbb{R}^5)$ and understand the regularity of $\vec{L}_0$. We will call upon Proposition \ref{curca}, but we need to first understand the regularity of $\vec{V}$. The exact expression for  $\vec{V}$  (see Theorem \ref{cons-law}) reveals that
$$\vec{V}^j=\nabla^j(\Delta_\perp\vec{H})+\vec{a}^j+\nabla_k(\vec{b}^{kj})+\vec{f}^j\,\,,$$
with
\begin{align}
\norm{\vec{a}}_{L^1(\Omega)} +\vec{\norm{b}}_{L^{4/3}(\Omega)}   \lesssim E(\Omega)    \label{adq}
\end{align}
and 
$$\vec{f}^j\simeq (\vec{h}^{ij}\cdot\Delta_\perp\vec{H})\nabla_i\vec{\Phi}.$$
We can write 
$$\vec{f}=\nabla_k(h^{ij}\nabla^kH\nabla_i\vec{\Phi})-\vec{h}^{ij}h_{ik}\nabla^kH-
(\nabla^kH\nabla_kh^{ij})\nabla_i\vec{\Phi},$$
so that
$$\vec{\norm{f}}_{L^1\oplus \dot W^{-1,4/3}(\Omega)}\lesssim E(\Omega).$$
Altogether, we find
$$\vec{\norm{V}}_{\dot W^{-2,2}\oplus L^1(\Omega)}\lesssim E(\Omega),$$
where we have used that $\Delta_\perp \vec{H}\in \dot W^{-1,2}$  and the fact that $\dot W^{-1,4/3} \subset \dot W^{-2,2}$ by the dual of the Sobolev injection. Now, we call upon Proposition \ref{curca} to obtain $\vec{w}\in \Lambda^2\otimes \dot W^{-1,2}(\Omega)$ with $d^\star \vec{w}=\vec{V}$
and
\begin{align}
\vec{\norm{w}}_{\dot W^{-1,2}(\Omega)}\lesssim \vec{\norm{V}}_{\dot W^{-2,2}\oplus L^1(\Omega)}\lesssim E(\Omega).  \label{dfd}
\end{align}
We then set $\vec{L}_0:=\vec{w}$, so that $d\star\vec{L}_0=\star \vec{V}$.  From \eqref{dfd} we have that $\vec{L}_0\in \Lambda^2\otimes \dot W^{-1,2}(\Omega)$. Per Claim \ref{claimclaim},  we know that the operator $\star$ preserves  $\dot W^{-1,2}$ so that $\star\vec{L}_0\in \Lambda^2\otimes \dot W^{-1,2}(\Omega)$ with the estimate
\begin{align}
\vec{\norm{\star L_0}}_{\dot W^{-1,2}(\Omega)} \lesssim E(\Omega).  \label{tenn}
\end{align}

\noindent
Defining as in Proposition \ref{coro} the vector-valued 2-form
$$\star\vec{L}=\vec{L}_0-dH^2\wedge_4 d\vec{\Phi},$$
we see that $\vec{L}\in \dot W^{-1,2}$ with $d\vec{L}=d\star \vec{L}_0$ and the estimate
\begin{align}\vec{\norm{L}}_{\dot W^{-1,2}(\Omega)}\lesssim \vec{\norm{\star L_0}}_{\dot W^{-1,2}(\Omega)} +\norm{H}_{L^4(\Omega)}\norm{\nabla H}_{L^2(\Omega)}\overset{\eqref{tenn}}\lesssim  E(\Omega),       \label{ax}
\end{align}
where we have used that $L^{4/3}$ injects continuously into $\dot W^{-1,2}$ which is a consequence of the Sobolev injection $W_0^{1,2}\subset L^4$. 
As in Proposition \ref{coro}, there exists 2-forms $S_0$ and $\vec{R}_0$ satisfying
$$dS_0=\vec{L}\overset{.}\wedge_4 d\vec{\Phi} \quad\mbox{and}\quad d\vec{R}_0= \vec{L}\overset{\wedge}\wedge_4 d\vec{\Phi}+\star d\vec{u}-(\vec{J}+8H^2\vec{H})\wedge \star d\vec{\Phi}$$
where $\vec{J}:=\frac{1}{2}\Delta_\perp \vec{H}+\frac{1}{2}|\vec{h}|^2\vec{H} -7|\vec{H}|^2\vec{H}$.

\noindent
In addition, we are free to demand $d^\star S_0=0$ and $d^\star\vec{R}_0=\vec{0}$.
\\
Next is to understand the regularity of $\vec{R}_0$ and $S_0$. Recalling the definition of $\vec{J}$ (see Proposition \ref{coro}) and the fact that $\Delta_\perp\vec{H}\in \dot W^{-1,2}$, we find that $\vec{J}\in  \dot W^{-1,2}$. It remains to study the regularity of $d\vec{u}$.
To this end we have defined (see Proposition \ref{coro})
$$\frac{5}{3}H^2 d^\star\vec{\eta}= d\vec{u} +d^\star\vec{v},\quad\quad \vec{u}\in\Lambda^0(\Omega, \Lambda^2(\mathbb{R}^5)).$$
Clearly $d^\star\vec{u}=\vec{0}$.  Let $\vec{\Theta}\in L^2(\Omega)$ be arbitrary. Consider the problem
$$\Delta \vec{\theta}=\vec{\Theta} \quad \mbox{on}\,\, \Omega\quad\mbox{with}\,\,\, \vec{\theta}|_{\partial \Omega}=\vec{0}.$$

\noindent
Proposition \ref{callit} confirms that
\begin{gather}
\vec{\norm{D\theta}}_{L^4(\Omega)}  \lesssim \vec{\norm{\theta}}_{W_0^{2,2}(\Omega)} \lesssim \vec{\norm{\Theta}}_{L^2(\Omega)}.  \label{klnb}
\end{gather}

\noindent
Observe that
\begin{align}
\langle \vec{\Theta}, \vec{u} \rangle&= \langle \Delta\vec{\theta}, \vec{u}\rangle =\langle d\vec{\theta}, d\vec{u} \rangle   \nonumber
\\&= \langle d\vec{\theta}, H^2 d^\star \vec{\eta}- d^\star \vec{v} \rangle   \nonumber
\\&= \langle d\vec{\theta}, H^2 d^\star\vec{\eta}\rangle  \nonumber
\\&\lesssim \norm{H^2 d^\star\vec{\eta}}_{L^{4/3}(\Omega)}  \vec{\norm{\theta}}_{L^4(\Omega)}      \nonumber
\\&\lesssim \norm{H^2 d^\star\vec{\eta}}_{L^{4/3}(\Omega)} \vec{\norm{\Theta}}_{L^2(\Omega)}.  \nonumber
\end{align}
This shows that
\begin{align}
\vec{\norm{u}}_{L^2(\Omega)} \lesssim \norm{H^2 d^\star\vec{\eta}}_{L^{4/3}(\Omega)}  \lesssim \norm{H}^2_{L^4(\Omega)} \norm{d^\star\vec{\eta}}_{L^4(\Omega)}  \overset{\eqref{ssssj}}\lesssim \varepsilon E(\Omega).  \label{zcx}
\end{align}

\noindent
We have observed from equation \eqref{hads8} that $\vec{u}\in \Lambda^0(\Omega, \mathbb{R}^5)$ satisfies
$$\Delta \vec{u}=\frac{5}{3}d^\star (H^2 d^\star \vec{\eta}),$$
which is of the type investigated in Proposition  \ref{adz}.
\\
We split $\vec{u}=\vec{u}_0+\vec{u}_1$ where 
\begin{equation*}
 \begin{cases}
           \Delta \vec{u}_0=\vec{0} & \mbox{in}\,\, \Omega \\
            \,\,\,\,\, \vec{u}_0= \vec{u} & \mbox{on} \,\, \partial \Omega,
       \end{cases} \quad
\quad\quad\mbox{and} \quad\quad\begin{cases}
            \Delta \vec{u}_1=\Delta\vec{u} & \mbox{in}\,\, \Omega \\
            \,\,\,\,\, \vec{u}_1= \vec{0} & \mbox{on} \,\, \partial \Omega,
       \end{cases}
\end{equation*}
The equation for $\vec{u}_1$ is handled as in Proposition \ref{adz}. We find
\begin{align}
\norm{D\vec{u}_1}_{L^{4/3}(\Omega)}  \lesssim \norm{H^2}_{L^2(\Omega)}\norm{d^\star\vec{\eta}}_{L^4(\Omega)} \overset{\eqref{ssssj}} \lesssim \varepsilon E(\Omega).   \label{vcb}
\end{align}
In particular, the Sobolev embedding theorem gives
\begin{align}
\norm{\vec{u}_1}_{L^2(\Omega)} \lesssim \varepsilon E(\Omega).  \nonumber
\end{align}
Hence by \eqref{zcx}
\begin{align}
\norm{\vec{u}_0}_{L^2(\Omega)} \lesssim \varepsilon E(\Omega)   . \label{pio}
\end{align}
On the other hand, the Caccioppoli inequality yields\footnote{The closure of $\Omega '$ is  compact in $\Omega$.}
\begin{align}
\norm{D\vec{u}_0}_{L^{4/3}(\Omega ')} \lesssim \norm{D\vec{u}_0}_{L^2(\Omega ')} \lesssim \norm{\vec{u}_0}_{L^2(\Omega)} \overset{\eqref{pio}}\lesssim \varepsilon E(\Omega)  \quad\quad\forall\,\,\,\Omega '\Subset \Omega 
\end{align}

\noindent
Together with \eqref{vcb}, the latter gives
\begin{align}
\norm{D\vec{u}}_{L^{4/3}(\Omega ')}  \lesssim \varepsilon E(\Omega) \quad\quad \forall\,\,\, \Omega '\Subset \Omega.   \label{host}
\end{align}

\noindent
Using Claim \eqref{claimclaim} and the estimates we have of the quantities $\vec{u}$, $\vec{J}$ and $\vec{L}$, we are now ready to state the first regularity property:
$$dS_0\,\, ,\,\, d\vec{R}_0\in\dot W^{-1,2}(\Omega)$$

\noindent
with the estimate
\begin{align}
\norm{dS_0}_{\dot W^{-1,2}(\Omega)} +||d\vec{R}_0|| _{\dot W^{-1,2}(\Omega)} \lesssim E(\Omega_1)    \quad\quad\forall\,\, \Omega\Subset \Omega_1.   \label{simm}
\end{align}

\noindent
Since $S_0$ and $\vec{R}_0$ are co-closed, by Proposition \eqref{111}, we obtain 
\begin{align}
\norm{S_0}_{L^2(\Omega)} +||\vec{R}_0||_{L^2(\Omega)} \lesssim E(\Omega_1) \quad\quad \Omega\Subset \Omega_1.  \label{ax1}
\end{align}

\begin{rmk}  \label{rama}
Note that
\begin{align}
\norm{d^\star\vec{v}}_{L^{4/3}(\Omega ')} \lesssim \norm{d\vec{u}}_{L^{4/3}(\Omega ')} + \norm{H}^2_{L^4(\Omega)} \norm{d^\star\vec{\eta}}_{L^4(\Omega)} \overset{\eqref{ssssj}, \eqref{host}} \lesssim \varepsilon E(\Omega_1).
\end{align}
\end{rmk}

\noindent
Our next goal is to obtain an estimate similar to \eqref{simm} but with $L^{4/3}$ norm in place of $\dot W^{-1,2}$. In order to do this, we first introduce an operator $\vec{\mathcal{P}}$ which helps to redefine $\vec{L}$.
Next, we redefine the two-forms $S_0$ and $\vec{R}_0$ in order to create $S$ and $\vec{R}$ respectively. It turns out that $dS$ and $d\vec{R}$ can then be estimated in $L^{4/3}$ as desired (this is done in Section \ref{toute}). The remaining part of the present section is devoted to introducing $\vec{\mathcal{P}}$ and redefining $\vec{L}$, $S_0$ and $\vec{R}_0$.

\begin{lem} \label{conty}
The operator 
$$\mathcal{\vec{P}}: \Lambda^1(\Sigma, \Lambda^1(\mathbb{R}^5))\longrightarrow \Lambda^2(\Sigma,\Lambda^2(\mathbb{R}^5))\quad\quad\mbox{defined by}\quad\quad \mathcal{\vec{P}}:\vec{\ell}\longmapsto \vec{\ell}\overset{\wedge}\wedge_4 d\vec{\Phi}$$
is algebraically invertible. Moreover \,$\mathcal{\vec{P}}^{-1}$ injects $L^2$ into itself continuously.
\end{lem}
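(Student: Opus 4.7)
The plan is to establish fiberwise injectivity of $\vec{\mathcal{P}}$ by a direct linear-algebra computation in a frame adapted to the codimension-one geometry, and then to promote this to a bounded inverse on $L^2$ using the uniform non-degeneracy of the induced metric. Working pointwise at an arbitrary $p\in\Sigma$, I would choose a $g$-orthonormal tangent frame $\{e_k\}_{k=1}^{4}$ spanning the same space as $\{\nabla_k\vec{\Phi}\}$, together with a unit normal $\vec{n}$, so that $\{\vec{n},e_1,\ldots,e_4\}$ is orthonormal in $\mathbb{R}^5$. Decomposing $\vec{\ell}_i=\alpha_i\vec{n}+\beta_{ik}e_k$, the vanishing of $\vec{\mathcal{P}}(\vec{\ell})$ amounts to $\vec{\ell}_i\wedge e_j-\vec{\ell}_j\wedge e_i=0$ for every pair $i,j$, and in the basis $\{\vec{n}\wedge e_p\}_p\cup\{e_p\wedge e_q\}_{p<q}$ of $\Lambda^2(\mathbb{R}^5)$ the normal and tangential contributions decouple.

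I would then treat the two subsystems separately. The normal part reads $\alpha_ie_j-\alpha_je_i=0$, which by linear independence of the $e_k$ immediately gives $\alpha\equiv 0$. The tangential part, after extracting the coefficient of each basis element $e_p\wedge e_q$, takes the form
\[
\beta_{ip}\delta_{jq}-\beta_{iq}\delta_{jp}-\beta_{jp}\delta_{iq}+\beta_{jq}\delta_{ip}=0
\]
for all indices $i,j,p,q$. Specialising indices (for instance $p=1,q=2$ with $i=1,j=3$, then $i=2,j=3$, and cyclically on the remaining indices) forces $\beta_{ij}=0$ whenever $i\neq j$, together with $\beta_{ii}+\beta_{jj}=0$ for all $i\neq j$. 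Since $\dim\Sigma=4\geq 3$, combining these trace conditions pairwise forces every $\beta_{ii}$ to vanish, hence $\vec{\ell}\equiv 0$. Thus $\vec{\mathcal{P}}$ is fiberwise injective, and its inverse is well defined as a linear map from $\mathrm{Im}(\vec{\mathcal{P}})$ back to $\Lambda^1(\Sigma,\Lambda^1(\mathbb{R}^5))$.

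For the $L^2$ continuity of $\vec{\mathcal{P}}^{-1}$ I would invoke the non-degeneracy hypothesis $g_{ij}\simeq\delta_{ij}$ uniformly on $\Sigma$. The matrix of $\vec{\mathcal{P}}$ in the adapted frame is built from the components of $\nabla\vec{\Phi}$, and the ellipticity of $g$ supplies a uniform positive lower bound on the smallest singular value of $\vec{\mathcal{P}}$ as $p$ varies, with constants depending only on $\|\vec{\Phi}\|_{W^{1,\infty}}$ and the ellipticity constant $c$. This yields the pointwise inequality $|\vec{\ell}(x)|\lesssim|\vec{\mathcal{P}}(\vec{\ell})(x)|$, which on integration over $\Sigma$ gives $\|\vec{\mathcal{P}}^{-1}\Psi\|_{L^2}\lesssim\|\Psi\|_{L^2}$ for $\Psi\in\mathrm{Im}(\vec{\mathcal{P}})$. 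The main obstacle is really only the bookkeeping in the tangential subsystem, but it is a short finite-dimensional calculation; the conceptual content is that in codimension one the antisymmetrisation built into $\vec{\mathcal{P}}$ cleanly separates $\vec{\ell}$ into a normal piece, seen through $\vec{n}\wedge\nabla\vec{\Phi}$, and a tangential piece, seen through $\nabla\vec{\Phi}\wedge\nabla\vec{\Phi}$, and both are forced to vanish.
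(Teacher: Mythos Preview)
Your argument is correct but proceeds differently from the paper. The paper is constructive: it contracts $\vec{\mathcal{P}}_{pq}$ against carefully chosen test bivectors $\vec{n}\wedge\nabla^q\vec{\Phi}$ and $\nabla^i\vec{\Phi}\wedge\nabla^q\vec{\Phi}$ to solve explicitly for $\vec{n}\cdot\vec{\ell}_p$ and $\vec{\ell}_p\cdot\nabla^i\vec{\Phi}$, arriving at a closed-form left inverse
\[
\vec{\ell}_p=\Bigl[\tfrac{1}{2}\vec{\mathcal{P}}_{pq}\cdot(\nabla^i\vec{\Phi}\wedge\nabla^q\vec{\Phi})-\tfrac{1}{12}\delta^i_p\,\vec{\mathcal{P}}_{sq}\cdot(\nabla^s\vec{\Phi}\wedge\nabla^q\vec{\Phi})\Bigr]\nabla_i\vec{\Phi}+\Bigl[\tfrac{1}{3}\vec{\mathcal{P}}_{pq}\cdot(\vec{n}\wedge\nabla^q\vec{\Phi})\Bigr]\vec{n},
\]
from which the $L^2$ continuity is immediate (the coefficients involve only $d\vec{\Phi}$ and $\vec{n}$, both in $L^\infty$). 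You instead show $\ker\vec{\mathcal{P}}=0$ by a frame calculation and then appeal to a uniform singular-value bound. Both routes share the same normal/tangential splitting; the paper's buys an explicit formula for $\vec{\mathcal{P}}^{-1}$ defined on \emph{all} of $\Lambda^2(\Sigma,\Lambda^2(\mathbb{R}^5))$, which is how it is applied downstream (to $-\vec{A}-\star\vec{P}$, not a priori in the image). Your inverse lives only on $\mathrm{Im}(\vec{\mathcal{P}})$ and would need an extension, though any bounded one would do.
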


\begin{proof}
We have 
$$\mathcal{\vec{P}}_{pq}:=\big(\mathcal{\vec{P}}(\vec{\ell})  \big)_{pq} =\vec{\ell}_p\wedge \nabla_q\vec{\Phi}-\vec{\ell}_q\wedge\nabla_p\vec{\Phi}.$$
\noindent
Let $\vec{n}$ be the normal vector to the Willmore-type hypersurface $\Sigma$, defined on a small patch $\Omega \subset \mathbb{R}^4$ 
$$\vec{n}:=\star_{\mathbb{R}^5}\star (d\vec{\Phi}\overset{\wedge}\wedge_4 d\vec{\Phi}\overset{\wedge}\wedge_4 d\vec{\Phi}\overset{\wedge}\wedge_4 d\vec{\Phi})\in \Lambda^0(\Lambda^1)$$
where $\star_{\mathbb{R}^5}$ acts on the 4-vectors in the ambient space while $\star$ acts on the 4-forms in the parameter space.

\noindent
We compute
\begin{align}
\mathcal{\vec{P}}_{pq}\cdot (\vec{n}\wedge \nabla^q\vec{\Phi})&= (\vec{\ell}_p\wedge \nabla_q\vec{\Phi}-\vec{\ell}_q\wedge\nabla_p\vec{\Phi})\cdot (\vec{n}\wedge\nabla^q\vec{\Phi})             \nonumber
\\&=4\vec{n}\cdot\vec{\ell}_p-\vec{n}\cdot\vec{\ell}_p          \nonumber
\\&= 3\vec{n}\cdot\vec{\ell}_p\,, \nonumber
\end{align}
so that
\begin{gather}
\vec{\eta}\cdot\vec{\ell}_p =\frac{1}{3}\mathcal{\vec{P}}_{pq}\cdot (\vec{n}\wedge\nabla^q\vec{\Phi}).   \label{ta}
\end{gather}

\noindent
On the other hand, we have
\begin{align}
\mathcal{\vec{P}}_{pq}\cdot (\nabla^p\vec{\Phi}\wedge \nabla^q\vec{\Phi})&= (\vec{\ell}_p\wedge \nabla_q\vec{\Phi}-\vec{\ell}_q\wedge \nabla_p\vec{\Phi})\cdot (\nabla^p\vec{\Phi}\wedge \nabla^q\vec{\Phi})   \nonumber
\\&= 2(4\vec{\ell}_p\cdot \nabla^p\vec{\Phi}-\nabla^p\vec{\Phi}\cdot\vec{\ell}_p)   \nonumber
\\&= 6\vec{\ell}_p\cdot\nabla^p\vec{\Phi}\,,   \nonumber
\end{align}
so that
\begin{gather}
\vec{\ell}_p\cdot\nabla^p\vec{\Phi} =\frac{1}{6} \mathcal{\vec{P}}_{pq} \cdot (\nabla^p\vec{\Phi}\wedge \nabla^q\vec{\Phi}).     \label{sothat}
\end{gather}

\noindent
Finally, we check that
\begin{align}
\mathcal{\vec{P}}_{pq}\cdot (\nabla^i\vec{\Phi}\wedge \nabla^q\vec{\Phi}) &= (\vec{\ell}_p\wedge \nabla_q\vec{\Phi}-\vec{\ell}_q\wedge \nabla_p\vec{\Phi})\cdot (\nabla^i\vec{\Phi}\wedge \nabla^q\vec{\Phi})   \nonumber
\\&= 2\vec{\ell}_p\cdot \nabla^i\vec{\Phi} +\delta^i_p \vec{\ell}_q\cdot \nabla^q\vec{\Phi}   \nonumber
\\&\overset{\eqref{sothat}}=2\vec{\ell}_p\cdot \nabla^i\vec{\Phi} +\frac{1}{6}\delta^i_p \mathcal{\vec{P}}_{sq} \cdot (\nabla^s\vec{\Phi}\wedge \nabla^q\vec{\Phi}),  \nonumber
\end{align}
so that
\begin{align}
\vec{\ell}_p\cdot \nabla^i\vec{\Phi} =\frac{1}{2}\mathcal{\vec{P}}_{pq}\cdot (\nabla^i\vec{\Phi}\wedge \nabla^q\vec{\Phi})  -\frac{1}{12}\delta^i_p \mathcal{\vec{P}}_{sq}\cdot (\nabla^s\vec{\Phi}\wedge \nabla^q\vec{\Phi}).    \label{ta1}
\end{align}

\noindent
According to \eqref{ta} and \eqref{ta1}, the 1-form $\vec{\ell}$ can be totally recovered from $\mathcal{\vec{P}}$ via
$$\vec{\ell}_p=\left[\frac{1}{2}\mathcal{\vec{P}}_{pq}\cdot (\nabla^i\vec{\Phi}\wedge \nabla^q\vec{\Phi})  -\frac{1}{12}\delta^i_p \mathcal{\vec{P}}_{sq}\cdot (\nabla^s\vec{\Phi}\wedge \nabla^q\vec{\Phi})  \right]\nabla_i\vec{\Phi} +\left[\frac{1}{3}\mathcal{\vec{P}}_{pq}\cdot (\vec{n}\wedge\nabla^q\vec{\Phi})  \right]\vec{n}.$$
That $\mathcal{\vec{P}}^{-1}$ maps $L^2$ onto itself continuously is trivial.

\end{proof}

\noindent
Observe that given any $\vec{\mu}\in\Lambda^2(\Omega, \Lambda^2(\mathbb{R}^5))\otimes L^2(\Omega)$, we can define
\begin{align}
\vec{\ell}:= \mathcal{\vec{P}}^{-1}(\vec{\mu}).   \label{jert}
\end{align}

\noindent
We now create a new 2-form 
$$\vec{L}_1:=\vec{L}+ d\vec{\ell}$$
satisfying
\begin{gather}
\vec{\norm{L_1}}_{\dot W^{-1,2}(\Omega)}  \lesssim \vec{\norm{L}}_{\dot W^{-1,2}(\Omega)} + \vec{\norm{\ell}}_{L^2(\Omega)}  \overset{\eqref{ax}} \lesssim E(\Omega_1) +||\vec{\mu}||_{L^2(\Omega)}.               \nonumber
\end{gather}

\noindent
Consider now 
\begin{align}
S:=S_0+\vec{\ell}\overset{.}\wedge_4 d\vec{\Phi} \quad\quad\mbox{and}\quad\quad \vec{R}:=\vec{R}_0+\vec{\ell}\overset {\wedge}\wedge_4 d\vec{\Phi}. \label{merio}
\end{align}
We immediately see that
$$dS=\vec{L}_1\overset{.}\wedge_4 d\vec{\Phi}\quad\mbox{and}\quad d\vec{R}= \vec{L}_1\overset{\wedge}\wedge_4 d\vec{\Phi} +\star d\vec{u} -(\vec{J} +8H^2\vec{H})\wedge\star d\vec{\Phi}.$$

\section{Estimating $S$ and $\vec{R}$ in the correct space}  \label{toute}

From Chapter \ref{latty1}, Proposition \ref{coro}, the two-forms $S_0$ and $\vec{R}_0$ satisfy 
$$dS_0= \vec{L}\overset{\cdot}\wedge_4 d\vec{\Phi}\quad\quad\mbox{and}\quad\quad d\vec{R}_0= \vec{L}\overset{\wedge}\wedge_4 d\vec{\Phi} +\star d\vec{u}-\vec{J}_0\wedge\star d\vec{\Phi}$$
where for convenience, we have set $\vec{J}_0:= \vec{J}+8H^2\vec{H}.$

\noindent
We compute
\begin{align}
\quad\star\vec{\eta}\overset{\cdot}\wedge_4\star(d\vec{R}_0-\star d\vec{u} +\vec{J}_0\wedge\star d\vec{\Phi})  \nonumber
& =\epsilon_{ijk[r} \epsilon_{ab]pq} \vec{L}^{ij}\cdot\nabla^p\vec{\Phi} \, g^{kq}\,\,\, dx^{a}\wedge_4 dx^b\wedge_4 dx^r  \nonumber
\\&= \vec{L}_{[ab}\cdot\nabla_{r]}\vec{\Phi}  \,\,\,dx^a\wedge_4 dx^b\wedge_4 dx^r   \nonumber  
\\&=dS_0.  \nonumber
\end{align}

\noindent
As $\vec{J}_0$ is a normal vector, we have 
$$\star\vec{\eta}\overset{\cdot}\wedge_4 (\vec{J}_0\wedge \star d\vec{\Phi})=0.$$

\noindent
Hence 
\begin{align}
dS_0 =\star\vec{\eta} \overset{\cdot} \wedge_4 \star d\vec{R}_0 -\star\vec{\eta} \overset{\cdot}\wedge_4 d\vec{u}.  \label{rela}
\end{align}

\noindent
By a similar token, we find (see Appendix \ref{grace5} for the full computation)
\begin{align}
&\quad\star\vec{\eta} \overset{\bullet} \wedge_4 \star (d\vec{R}_0-\star d\vec{u} +\vec{J}_0\wedge \star d\vec{\Phi})    \nonumber
\\&=\epsilon_{ijk[r} \epsilon_{ab]pq} (\nabla^p\vec{\Phi}\wedge \nabla^q\vec{\Phi})  \bullet (\vec{L}^{ij}\wedge \nabla^k\vec{\Phi})  \,\,\, dx^a\wedge_4 dx^b\wedge_4 dx^r   \nonumber
\\&=\epsilon_{ijk[r} \epsilon_{ab]pq} \left[(\nabla^p\vec{\Phi}\cdot \vec{L}^{ij}) \nabla^q\vec{\Phi}\wedge \nabla^k\vec{\Phi} + g^{qk} \nabla^p\vec{\Phi}\wedge \vec{L}^{ij} \right]    \nonumber
\\&= \star\vec{\eta}\wedge_4 \star dS_0 -(d\vec{R}_0-\star d\vec{u} +\vec{J}_0\wedge \star d\vec{\Phi})  +\vec{Q}_0   \label{kli}
\end{align}

\noindent
where $\vec{Q}_0\in \Lambda^3(\Omega, \Lambda^2(\mathbb{R}^5))$ is given by
$$\vec{Q}_0:=\frac{1}{12}\delta^{\alpha\beta\gamma}_{abr}\left[(\vec{L}_{k\alpha}\cdot\nabla^k\vec{\Phi}) 
\nabla_\beta\vec{\Phi} -(\vec{L}_{k\alpha}\cdot\nabla_\beta\vec{\Phi})\nabla^k\vec{\Phi}  \right]\wedge \nabla_\gamma \vec{\Phi}\,\,\,\, dx^a\wedge_4 dx^b\wedge_4 dx^r.$$

\noindent
As done in Appendix \ref{grace5}, we find
$$\star\vec{\eta}\overset {\bullet}\wedge_4 (\vec{J}_0\wedge d\vec{\Phi})=-\vec{J}_0\wedge \star d\vec{\Phi}.$$

\noindent
Hence from equation \eqref{kli}, we have
\begin{align}
d\vec{R}_0=\star\vec{\eta}\overset{\bullet}\wedge_4 \star d\vec{R}_0+ \star\vec{\eta}\wedge_4 \star dS_0 -\star\vec{\eta}\overset{\bullet}\wedge_4 d\vec{u} -\star d\vec{u} +\vec{Q}_0.  \label{kakio}
\end{align}

\noindent
We continue with one trivial identity which holds by the antisymmetry of $(\star\vec{R}_0)$ and $(\star S_0)$,
\begin{align}
&\quad\quad(\nabla_l\vec{\eta}_{ij}+\nabla_j\vec{\eta}_{il})\bullet(\star\vec{R}_0)^{lj}   + (\nabla_l\vec{\eta}_{ij}+\nabla_j\vec{\eta}_{il})(\star S_0)^{lj} \nonumber
\\&=(\nabla_l\vec{\eta}_{ij}-\nabla_l\vec{\eta}_{ij})\bullet(\star\vec{R}_0)^{lj}   + (\nabla_l\vec{\eta}_{ij}-\nabla_l\vec{\eta}_{ij})(\star S_0)^{lj} \nonumber
\\&=\vec{0}.  \nonumber
\end{align}

\noindent
As $d\vec{\eta}=\vec{0}$, that is, $\nabla_i\vec{\eta}_{jl} +\nabla_j\vec{\eta}_{li} +\nabla_l\vec{\eta}_{ij}=\vec{0}$, the latter yields
\begin{align}
 (-\nabla_i\vec{\eta}_{jl}+2\nabla_j\vec{\eta}_{il})\bullet(\star\vec{R}_0)^{lj} +(-\nabla_i\vec{\eta}_{jl}+2\nabla_j\vec{\eta}_{il})(\star S_0)^{lj} =\vec{0}.  \label{lassz}
\end{align}

\noindent
We next compute
\begin{align}
&\quad\nabla^j(\vec{\eta}_{jl}\bullet (\star\vec{R}_0)^{il}+\vec{\eta}_{jl}(\star S_0)^{il})   \nonumber
\\&= \vec{\eta}_{jl}  \bullet \nabla^j(\star\vec{R}_0)^{il} + (d^\star\vec{\eta})_l \bullet(\star\vec{R}_0)^{il}    \nonumber
+\vec{\eta}_{jl}  \nabla^j(\star{S}_0)^{il} + (d^\star\vec{\eta})_l (\star{S}_0)^{il}     \nonumber
\\&= -\vec{\eta}_{jl}\bullet \big(\nabla^l(\star\vec{R}_0)^{ji} +\nabla^i(\star\vec{R}_0)^{lj}  \big)   +(d^\star\vec{\eta})_l\bullet (\star\vec{R}_0)^{il}    \nonumber
\\&\quad \quad  -\vec{\eta}_{jl} \big(\nabla^l(\star{S}_0)^{ji} +\nabla^i(\star{S}_0)^{lj}  \big)   + (d^\star\vec{\eta})_l  (\star{S}_0)^{il}  \label{okka}
\\&= -\nabla^l\big( \vec{\eta}_{jl} \bullet (\star \vec{R}_0)^{ji}  \big) +\nabla^l\vec{\eta}_{jl}\bullet (\star\vec{R}_0)^{ji} +(d^\star\vec{\eta})_l\bullet (\star\vec{R}_0)^{il}    \nonumber
\\&\quad\quad+\nabla^i\big(\vec{\eta}_{lj}\bullet(\star\vec{R}_0)^{lj}  \big)  - \nabla^i\vec{\eta}_{lj}\bullet(\star \vec{R}_0)^{lj}  \nonumber
\\&\quad\quad\quad
-\nabla^l\big( \vec{\eta}_{jl}  (\star {S}_0)^{ji}  \big) +\nabla^l\vec{\eta}_{jl}(\star S_0)^{ji} +(d^\star\vec{\eta})_{l}(\star S_0)^{il}  \nonumber
\\&\quad\quad\quad\quad+\nabla^i\big(\vec{\eta}_{lj}(\star{S}_0)^{lj}  \big)  -  \nabla^i\vec{\eta}_{lj} (\star {S}_0)^{lj}   \nonumber
\\&= -\nabla^j\big( \vec{\eta}_{jl}\bullet (\star\vec{R}_0)^{il} +\vec{\eta}_{jl} (\star S_0)^{il}  \big) + \nabla^i\big( \vec{\eta}_{jl}\bullet (\star\vec{R}_0)^{jl} +\vec{\eta}_{jl} (\star S_0)^{jl}  \big)  \nonumber
\\& \quad\quad - \nabla^i\vec{\eta}_{lj}\bullet (\star \vec{R}_0)^{lj}  -\nabla^i\vec{\eta}_{lj}(\star S_0)^{lj}  +2(d^\star\vec{\eta})_l\bullet (\star \vec{R}_0)^{il} +2(d^\star\vec{\eta})_l(\star {S}_0)^{il} \nonumber
\end{align}
where we have used that $S_0$ and $\vec{R}_0$ are co-closed to obtain \eqref{okka}.

\noindent
Hence
\begin{align}
\nabla^j\big( \vec{\eta}_{jl}\bullet (\star\vec{R}_0)^{il} +\vec{\eta}_{jl} (\star S_0)^{il}  \big)& = \frac{1}{2}\nabla^i\big( \vec{\eta}_{jl}\bullet (\star\vec{R}_0)^{jl} +\vec{\eta}_{jl} (\star S_0)^{jl}  \big)  \nonumber
\\&\quad -\frac{1}{2}\left(  \nabla^i\vec{\eta}_{lj}\bullet(\star \vec{R}_0)^{lj}  +\nabla^i\vec{\eta}_{lj}(\star S_0)^{lj}\right) \nonumber
\\&\quad\quad +(d^\star\vec{\eta})_l\bullet (\star \vec{R}_0)^{il} +(d^\star\vec{\eta})_l(\star {S}_0)^{il} .  \label{tyq}
\end{align}

\noindent
This yields
\begin{align}
&\quad\quad\left( \star(\star\vec{\eta}\overset{\bullet}\wedge_4 \star d\vec{R}_0 +\star\vec{\eta}\wedge_4 \star dS_0  )  \right)^i   \nonumber
\\&= \epsilon^{jkli}\frac{1}{2}\epsilon_{jkpq}\big( \vec{\eta}^{pq}\bullet \nabla^r(\star\vec{R}_0)_{rl} +\vec{\eta}^{pq} \nabla^r(\star S_0)_{rl}   \big)  \nonumber
   \\&  =2\vec{\eta}^{li}\bullet \nabla^j(\star \vec{R}_0)_{jl}  +2\vec{\eta}^{li} \nabla^j(\star S_0)_{jl}\nonumber
\\&=2 \nabla^j\left (\vec{\eta}^{li}\bullet (\star{R}_0)_{jl}  +\vec{\eta}^{li} (\star{S}_0)_{jl}      \right)   -2\nabla^j\vec{\eta}^{li} \bullet(\star\vec{R}_0)_{jl}   -2\nabla^j\vec{\eta}^{li}(\star S_0)_{jl}   \nonumber
\\&\overset{\eqref{tyq}}=2 \nabla^j\left (\vec{\eta}_{jl}\bullet (\star\vec{R}_0)^{il}-\vec{\eta}^{il}\bullet (\star{R}_0)_{jl}  
+\vec{\eta}_{jl} (\star {S}_0)^{il} -\vec{\eta}^{il} (\star{S}_0)_{jl}     \right)  \nonumber
\\&\quad\quad - \nabla^i\left( \vec{\eta}_{jl}\bullet (\star\vec{R}_0)^{jl}   + \vec{\eta}_{jl} (\star{S}_0)^{jl}\right) +\left(\nabla^i\vec{\eta}_{lj}\bullet (\star\vec{R}_0)^{lj} +\nabla^i\vec{\eta}_{lj} (\star{S}_0)^{lj}   \right) \nonumber
\\&\quad\quad\quad -2\nabla^j\vec{\eta}^{li}\bullet(\star\vec{R}_0)_{jl}       -2\nabla^j\vec{\eta}^{li}(\star{S}_0)_{jl}       -2(d^\star\vec{\eta})_l\bullet (\star \vec{R}_0)^{il} -2(d^\star\vec{\eta})_l (\star {S}_0)^{il}  \nonumber
\\&\overset{\eqref{lassz}}= \nabla^j\tensor{\vec{P}}{_j^i}  +\nabla^i\vec{G}  -4\nabla^j\vec{\eta}^{li}\bullet(\star\vec{R}_0)_{jl}       -4\nabla^j\vec{\eta}^{li}(\star{S}_0)_{jl}  -2(d^\star\vec{\eta})_l\bullet (\star \vec{R}_0)^{il} -2(d^\star\vec{\eta})_l (\star {S}_0)^{il}   \nonumber
\\&=\nabla^j\tensor{\vec{P}}{_j^i}  +\nabla^i\vec{G} +\star\vec{Q}_1,   \nonumber
\end{align}
where
\begin{align}
\vec{P}_{ij}:= \tensor{\vec{\eta}}{_{j}^l}\bullet \tensor{(\star\vec{R}_0)}{_{i}_l}-\tensor{\vec{\eta}}{_{i}^l}\bullet (\star{R}_0)_{jl}  
+\tensor{\vec{\eta}}{_{j}^l} (\star {S}_0)_{il} -\tensor{\vec{\eta}}{_{i}^l} (\star{S}_0)_{jl}
\end{align}
is the component of the two-vector valued 2-form $\vec{P}:=\vec{P}_{ij}\,\, dx^i\wedge_4 dx^j$ and
$$\vec{G}:= \vec{\eta}_{lj}\bullet (\star\vec{R}_0)^{jl}   + \vec{\eta}_{lj} (\star{S}_0)^{jl}   \,\,\,\in\Lambda^0(\mathbb{R}^4, \Lambda^2(\mathbb{R}^5)). $$

\noindent
Introducing \eqref{kakio}, we find
\begin{align}
\star d\vec{R}_0=d^\star \vec{P} +d\vec{G} +\vec{U} +\star\vec{Q},    \label{zads}
\end{align}
where for notational convenience we have set
\begin{align}
\vec{U}:=\star(\star\vec{\eta}\overset{\bullet}\wedge_4 d\vec{u} +\star d\vec{u}),  \quad\quad\vec{Q}:=\vec{Q}_0+\vec{Q}_1.  \label{notational}
\end{align}

\noindent
By estimates \eqref{ax}, \eqref{ax1} and Claim \eqref{claimclaim}, we have
\begin{align}
||\vec{Q}||_{\dot W^{-1,2}(\Omega)}  \lesssim ||\vec{L}||_{\dot W^{-1,2}(\Omega)} +||\vec{R}_0||_{L^2(\Omega)} +||S_0||_{L^2(\Omega)}   \overset{\eqref{ax},\eqref{ax1}}\lesssim E(\Omega_1).\label{lives}
\end{align}

\noindent
Standard Hodge theory gives the decomposition
$$\vec{Q}=d\vec{A}+d^\star \vec{B}$$
$$\mbox{with}\quad d^\star\vec{A}=\vec{0}\,,\,d\vec{B}=\vec{0},  \quad\quad\quad \vec{A}\in\Lambda^2(\mathbb{R}^4, \Lambda^2(\mathbb{R}^5)), \vec{B}\in\Lambda^4(\mathbb{R}^4, \Lambda^2(\mathbb{R}^5)).$$

\noindent
We will now write $\vec{R}$ in terms of $\vec{G}$, $\star{B}$ and $\vec{U}$ and according to the definition \eqref{jert} of $\vec{\ell}$. 
Owing to Lemma  \ref{conty}, we define
$$\vec{\ell}:= \mathcal{P}^{-1}(-\vec{A}-\star\vec{P}).$$
\noindent
By Proposition \ref{nadaa}, we know that $\vec{A}$ belongs to $L^2$; by estimate \eqref{ax1} and equation \eqref{zads}, we know that $\star\vec{P}$ also belongs to $L^2$. Thus the sum $\vec{A}+\star\vec{P}$ belongs to $ L^2$ and now $\vec{\ell}$ is a suitable candidate to redefine $\vec{L}$.

\noindent
Accordingly, from \eqref{zads} we have
\begin{align}
d(\vec{G}+\star\vec{B}) +\vec{U} = \star d(\vec{R}_0-\vec{A}-\star\vec{P}) =\star d(\vec{R}_0+\vec{\ell}\overset{\wedge}\wedge_4 d\vec{\Phi}) \overset{\eqref{merio}}=\star d\vec{R} .   \label{hags}
\end{align}

\noindent
We finish this section with estimates. Let $\Omega_1$ appearing in \eqref{ax1}   be a ball $B_r$ of fixed radius $r$. Let $\vec{F}:= \vec{G}+\star\vec{B}$, then we have

\begin{lem}\label{delaa}
It holds
\begin{align}
||d\vec{F}||_{L^{4/3}(B_{kr})}\lesssim (k+\varepsilon)E(B_r)\quad\quad\forall\,\,0<k<1/2.    \nonumber
\end{align}
\end{lem}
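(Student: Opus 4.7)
The plan is to reduce the estimate to an elliptic problem satisfied by the $0$-form (scalar) $\vec F$. Starting from \eqref{hags}, I rewrite
\begin{align*}
d\vec F \;=\; \star\, d\vec R \;-\; \vec U,
\end{align*}
and apply $d^\star$. Since $\vec F$ is a $0$-form, $d d^\star \vec F=0$, and since $d^\star\star=\pm\star d$ one has $d^\star(\star d\vec R)=\pm\star d^2\vec R=\vec 0$. Hence
\begin{align*}
\Delta \vec F \;=\; d^\star d\vec F \;=\; -\,d^\star \vec U,
\end{align*}
which is exactly the equation type covered by Proposition \ref{adz}. The key input on the right-hand side is $\vec U$: by its definition in \eqref{notational}, the $L^\infty$ bound on $\star\vec\eta$, and the estimate \eqref{host} for $D\vec u$, one has $\|\vec U\|_{L^{4/3}(B_r)}\lesssim \varepsilon E(B_r)$.

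Next, I perform the standard harmonic/corrector splitting on the ball $B_r$ (if needed on a slightly smaller ball to legitimately use \eqref{ax1}): let $\vec F_1$ solve
\begin{align*}
\Delta \vec F_1 \;=\; -\,d^\star \vec U \quad \text{in } B_r,\qquad \vec F_1\big|_{\partial B_r}=\vec 0,
\end{align*}
and set $\vec F_0:=\vec F-\vec F_1$, which is then Hodge-harmonic (equivalently, $\Delta_g$-harmonic as a scalar) on $B_r$. Proposition \ref{adz} applied with $p=4/3$ gives
\begin{align*}
\|d\vec F_1\|_{L^{4/3}(B_r)} \;\lesssim\; \|\vec U\|_{L^{4/3}(B_r)} \;\lesssim\; \varepsilon\, E(B_r),
\end{align*}
and in particular $\|d\vec F_1\|_{L^{4/3}(B_{kr})}\lesssim \varepsilon E(B_r)$. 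By Sobolev ($W_0^{1,4/3}\hookrightarrow L^2$ in dimension $4$), this also yields $\|\vec F_1\|_{L^2(B_r)}\lesssim \varepsilon E(B_r)$.

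For the harmonic part, I first note that $\|\vec F\|_{L^2(B_r)}\lesssim E(B_r)$: indeed $\vec G$ is a pointwise contraction of $\vec\eta\in L^\infty$ with $\vec R_0,S_0$ (controlled in $L^2$ by \eqref{ax1}), while $\|\vec B\|_{L^2}\lesssim \|\vec Q\|_{\dot W^{-1,2}}\lesssim E(B_r)$ by \eqref{lives} and Proposition \ref{nadaa}. Combined with the above bound on $\vec F_1$, this gives $\|\vec F_0\|_{L^2(B_r)}\lesssim E(B_r)$. Since $\vec F_0$ solves a uniformly elliptic equation in divergence form with bounded measurable coefficients, the Caccioppoli inequality applies on $B_{r/2}\subset B_r$:
\begin{align*}
\|D\vec F_0\|_{L^2(B_{r/2})} \;\lesssim\; \frac{1}{r}\,\|\vec F_0\|_{L^2(B_r)} \;\lesssim\; \frac{1}{r}\,E(B_r).
\end{align*}
For $k<1/2$ we have $B_{kr}\subset B_{r/2}$, so H\"older's inequality in dimension $4$ (using $|B_{kr}|^{1/4}\simeq kr$) produces
\begin{align*}
\|d\vec F_0\|_{L^{4/3}(B_{kr})} \;\leq\; \|d\vec F_0\|_{L^2(B_{kr})}\,|B_{kr}|^{1/4} \;\lesssim\; \frac{1}{r}\,E(B_r)\cdot kr \;=\; k\, E(B_r).
\end{align*}
Adding the two contributions yields $\|d\vec F\|_{L^{4/3}(B_{kr})}\lesssim (k+\varepsilon)E(B_r)$, as claimed.

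The only non-routine step is verifying the Hodge-algebraic identity $\Delta\vec F=-d^\star\vec U$; everything else is a standard harmonic-corrector decomposition combined with Caccioppoli and the previously established $L^2$ bounds \eqref{ax1}, \eqref{lives}, \eqref{host}. The gain of the factor $k$ in the harmonic part is essentially dimensional: in $\mathbb{R}^4$, interpolating a Caccioppoli $L^2$-gradient bound with the volume factor $|B_{kr}|^{1/4}\simeq kr$ exactly absorbs the $r^{-1}$ from Caccioppoli, which is the mechanism that drives the smallness.
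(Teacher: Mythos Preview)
Your proof is correct and follows essentially the same route as the paper: both derive $\Delta\vec F=-d^\star\vec U$, perform a harmonic/corrector split, bound the corrector via Proposition~\ref{adz} and \eqref{host}, and gain the factor $k$ on the harmonic part by combining Caccioppoli with H\"older on $B_{kr}$. The only cosmetic difference is that the paper carries out the split on $B_{r/2}$ (so that the inputs \eqref{ax1}, \eqref{lives}, \eqref{host} are directly available with $E(B_r)$ on the right), whereas you split on $B_r$ and defer this domain bookkeeping to a parenthetical; the content is the same.
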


\begin{proof}
Observe first that
$$\Delta \vec{F}=-d^\star \vec{U}.$$
To study this equation, note that
\begin{align}
||\vec{G}||_{L^2(B_{r/2})}\quad& \lesssim \quad||\vec{\eta}||_{L^\infty(B_{r/2})} \left(||\vec{R}_0||_{L^2(B_{r/2})} +||S_0||_{L^2(B_{r/2})}   \right)     \nonumber
\\\quad\quad&\overset{ \eqref{ax1}} \lesssim E(B_r) .  \label{gag1}
\end{align}

\noindent
By Proposition \eqref{nadaa}, we have
\begin{align}
||\star\vec{B}||_{L^2(B_{r/2})}\lesssim ||\vec{Q}||_{\dot W^{-1,2}(B_{r/2})} \overset{\eqref{lives}}\lesssim E(B_{r}).  \label{gaga}  \end{align}

\noindent
Combining the estimates \eqref{gag1} and \eqref{gaga}  yield
\begin{align}
||\vec{F}||_{L^2(B_{r/2})}  \lesssim E(B_{r}).  \label{amer1}
\end{align}

\noindent
We split $\vec{F}=\vec{F}_0+\vec{F}_1$   such that

\begin{equation*}
 \begin{cases}
           \mathcal{L}[ \vec{F}_0]=\vec{0} & \mbox{in}\,\, B_{r/2} \\
            \,\,\,\,\, \vec{F}_0= \vec{F} & \mbox{on} \,\, \partial B_{r/2},
       \end{cases} \quad
\quad\quad\mbox{and} \quad\quad\begin{cases}
            \mathcal{L} [\vec{F}_1]=|g|^{1/2}d^\star\vec{U} & \mbox{in}\,\, B_{r/2}\\
            \,\,\,\,\, \vec{F}_1= \vec{0} & \mbox{on} \,\, \partial B_{r/2}.
       \end{cases}
\end{equation*}

\noindent
Elliptic estimates (see Proposition \ref{adz} and recall  $\vec{F}_1$ is a 0-form) yield
\begin{align}
||\vec{F}_1||_{L^2(B_{r/2})}+ ||d\vec{F}_1||_{L^{4/3}(B_{r/2})}  \lesssim ||\vec{U}||_{L^{4/3}(B_{r/2})}.  \label{amer}
\end{align}

\noindent
On the other hand the Jensen and Caccioppoli inequalities give
\begin{align}
||d\vec{F}_0||_{L^{4/3}(B_{kr/2})}& \lesssim kr||d\vec{F}_0||_{L^2(B_{kr/2})}  \nonumber
\\&\lesssim kr||d\vec{F}_0||_{L^2(B_{r/2})}  \nonumber
\\&\lesssim k||\vec{F}_0||_{L^2(B_{r/2})}  \nonumber
\\&\lesssim k||\vec{F}||_{L^2(B_{r/2})} +k||\vec{F}_1||_{L^2(B_{r/2})}   \nonumber
\\&\overset{\eqref{amer}  \eqref{amer1}} \lesssim ||\vec{U}||_{L^{4/3}(B_{r/2})} +kE(B_{r}).   \nonumber
\end{align}

\noindent
Combining the latter with \eqref{amer} yields the estimate
\begin{align}
||d\vec{F}||_{L^{4/3}(B_{kr/2})} & \lesssim ||d\vec{F}_0||_{L^{4/3}(B_{kr/2})} +||d\vec{F}_1||_{L^{4/3}(B_{r/2})} \nonumber
\\&\lesssim ||\vec{U}||_{L^{4/3}(B_{r/2})} +kE(B_{r}).
\end{align}
It remains to observe from \eqref{notational} that
\begin{align}
||\vec{U}||_{L^{4/3}(B_{r/2})}\lesssim ||d\vec{u}||_{L^{4/3}(B_{r/2})}\overset{\eqref{host}} \lesssim \varepsilon E(B_r).   \label{starss}
\end{align}
Relabelling the domains and choosing $0<k<1/2$ rather than $0<k<1$, we obtain
$$||d\vec{F}||_{L^{4/3}(B_{kr})} \lesssim (k+\varepsilon) E(B_r).$$
This completes the proof.

\end{proof}

\begin{cor}\label{takeova}
We have
$$\norm{dS}_{L^{4/3}(B_{kr})} +||d\vec{R}||_{L^{4/3}(B_{kr})} \lesssim (k+\varepsilon) E(B_r) \quad\quad \forall\,\,k\in(0,1/2).$$
\end{cor}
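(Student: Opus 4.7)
The plan is to bound $d\vec{R}$ directly from the characterisation \eqref{hags}, and then bound $dS$ by lifting the algebraic identity \eqref{rela} from the pair $(S_0,\vec{R}_0)$ to the modified pair $(S,\vec{R})$. Both bounds will then reduce to Lemma \ref{delaa} and the already-established $L^{4/3}$ estimates \eqref{host} and \eqref{starss} on $d\vec{u}$ and $\vec{U}$.

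First, I will handle $d\vec{R}$. Equation \eqref{hags} gives $\star d\vec{R} = d\vec{F} + \vec{U}$ on a ball of radius $r/2$, where $\vec{F} = \vec{G} + \star\vec{B}$. Since $\star$ is an isometry on forms, for any $k \in (0,1/2)$ we have
\[ \norm{d\vec{R}}_{L^{4/3}(B_{kr})} \lesssim \norm{d\vec{F}}_{L^{4/3}(B_{kr})} + \norm{\vec{U}}_{L^{4/3}(B_{kr})} \lesssim (k+\varepsilon)E(B_r) + \varepsilon E(B_r), \]
invoking Lemma \ref{delaa} for the first term and \eqref{starss} for the second. Next, for $dS$, I observe that the computation producing \eqref{rela} uses only the structural equations $dS_0 = \vec{L}\overset{\cdot}\wedge_4 d\vec{\Phi}$ and $d\vec{R}_0 = \vec{L}\overset{\wedge}\wedge_4 d\vec{\Phi} + \star d\vec{u} - \vec{J}_0\wedge\star d\vec{\Phi}$, together with the vanishing $\star\vec{\eta}\overset{\cdot}\wedge_4(\vec{J}_0\wedge\star d\vec{\Phi}) = 0$ coming from the normality of $\vec{J}_0$. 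Since $S$ and $\vec{R}$ defined in \eqref{merio} satisfy the same relations with $\vec{L}$ replaced by $\vec{L}_1$, the identical computation delivers
\[ dS = \star\vec{\eta}\overset{\cdot}\wedge_4 \star d\vec{R} - \star\vec{\eta}\overset{\cdot}\wedge_4 d\vec{u}. \]
Combined with the pointwise bound $\norm{\vec{\eta}}_{L^\infty} \lesssim \norm{d\vec{\Phi}}_{L^\infty}^2 \lesssim 1$ and H\"older's inequality, this yields
\[ \norm{dS}_{L^{4/3}(B_{kr})} \lesssim \norm{d\vec{R}}_{L^{4/3}(B_{kr})} + \norm{d\vec{u}}_{L^{4/3}(B_{kr})} \lesssim (k+\varepsilon)E(B_r), \]
using the previous $d\vec{R}$-estimate together with \eqref{host}. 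Summing the two bounds produces the corollary.

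The main obstacle I anticipate is purely bookkeeping: verifying that the algebraic manipulation producing \eqref{rela} transfers verbatim from $(S_0,\vec{R}_0)$ to $(S,\vec{R})$. Since this transfer amounts to replacing $\vec{L}$ by $\vec{L}_1 = \vec{L} + d\vec{\ell}$ in an identity whose derivation is purely algebraic in the defining equations, the step is routine but must be checked to ensure that the correction terms $\vec{\ell}\overset{\cdot}\wedge_4 d\vec{\Phi}$ and $\vec{\ell}\overset{\wedge}\wedge_4 d\vec{\Phi}$ introduced in \eqref{merio} do not disturb the vanishing of the $\vec{J}_0$-contraction. Everything else is an immediate application of results already in hand.
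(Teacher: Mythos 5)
Your proposal is correct and follows essentially the same route as the paper: the $d\vec{R}$ bound comes from \eqref{hags} together with Lemma \ref{delaa} and the $L^{4/3}$ control of $\vec{U}$, and the $dS$ bound comes from observing that the relation \eqref{rela} persists for the modified pair $(S,\vec{R})$ since both are built from the same $\vec{L}_1=\vec{L}+d\vec{\ell}$. No gaps.
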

\begin{proof}
It follows immediately from Lemma \ref{delaa}, equation \eqref{hags} and the estimate
\begin{align}
||\vec{U}||_{L^{4/3}(B_{kr})}\lesssim ||d\vec{u}||_{L^{4/3}(B_{kr})}\overset{\eqref{host}} \lesssim \varepsilon E(B_r).   
\end{align}
that
\begin{align}
||d\vec{R}||_{L^{4/3}(B_{kr})}  \lesssim (k+\varepsilon) E(B_r).  \label{messia}
\end{align}

\noindent
Recall that $dS$ and $d\vec{R}_0$ are defined from the modified $\vec{L}+d\vec{\ell}$. Thus $S$ and $\vec{R}$ form an admissible pair and are linked together by the same relation  \eqref{rela}
linking $S_0$ and $\vec{R}_0$, namely
$$dS=\star\vec{\eta}\overset{\cdot}\wedge_4\star d\vec{R} -\star\vec{\eta}\overset{\cdot}\wedge_4 d\vec{u}.$$

\noindent
Calling upon \eqref{messia} and  \eqref{host} gives the desired estimate
$$\norm{dS}_{L^{4/3}(B_{kr})}\lesssim  (k+\varepsilon) E(B_r).$$

\end{proof}
\noindent
The advantage of $S$ and $\vec{R}$ over $S_0$ and $\vec{R}_0$ is that Corollary \ref{takeova} involves $L^{4/3}$ norm rather than $\dot W^{-1,2}$ norm in the estimate \eqref{simm}.

\section{The return equation: controlling the geometry} \label{linkage}
We recall the return equation from Chapter \ref{latty1} (precisely \eqref{recca})
$$12d^\star \vec{T}=\star (d\vec{R} \overset{\bullet}\wedge_4 d\vec{\Phi}) +\star (dS \wedge_4 d\vec{\Phi}) +6\star(\star d^\star \vec{v}\overset{ \bullet}\wedge_4 d\vec{\Phi})$$
where 
\begin{align}
\vec{T}:=(\nabla_j\vec{H}+2Hh_{jk}\nabla^k\vec{\Phi}-2H^2\nabla_j\vec{\Phi})dx^j   \label{nameT}
\end{align}
and $\vec{v}$ is as in Remark \ref{rama}. Note first that
\begin{align}
\vec{\norm{T}}_{L^2(B_{kr})} \lesssim \norm{dH}_{L^2(B_{kr})} +\norm{H}_{L^4(B_{kr})} \norm{h}_{L^4(B_{kr)}} \lesssim E(B_{kr})\lesssim E(B_{r}).    \label{jk}
\end{align}
On the other hand, we find from \eqref{nameT}
\begin{align}d^\star \vec{T}= \nabla^j \vec{T}_j=\Delta\vec{H} +2\nabla^j(
Hh_{jk}\nabla^k\vec{\Phi}-H^2\nabla_j\vec{\Phi}) . \label{nied}
\end{align}

\noindent
Using \eqref{nied} and introducing Remark \ref{rama}  and Corollary \ref{takeova} into the return equation yields for all $k\in (0,1/2)$ 
\begin{align}
\norm{\Delta H}_{L^{4/3}(B_{kr})}\lesssim ||d^\star \vec{T}||_{L^{4/3}(B_{kr})} +\varepsilon E(B_r) \lesssim (k+\varepsilon) E(B_r).   \label{jk1}
\end{align}

\noindent
On the other hand, a direct computation reveals
$$(d\vec{T})_{ij}=2(h_{ik}\nabla_jH- h_{jk}\nabla_iH)\nabla^k\vec{\Phi} +2\nabla_jH^2\nabla_i\vec{\Phi}-2\nabla_iH^2\nabla_j\vec{\Phi},$$

\noindent
so that
\begin{gather}
||d\vec{T}||_{L^{4/3}(B_{kr})}  \lesssim \norm{h}_{L^4(B_{kr})}  \norm{dH}_{L^2(B_{kr})}   \lesssim \varepsilon E(B_r).   \label{jk2}
\end{gather}

\noindent
Using \eqref{jk}, \eqref{jk1} and \eqref{jk2} into Lemma \ref{tech} yields
\begin{align}
||\vec{T}||_{L^2(B_{kr})} \lesssim (\varepsilon +k)E(B_r). \label{hg1}
\end{align}

\noindent
We now find
\begin{align}
\norm{dH}_{L^2(B_{kr})} &\overset{\eqref{nameT}}\lesssim ||\vec{T}||_{L^2(B_{kr})} +\norm{H}_{L^4(B_{kr})} \norm{h}_{L^4(B_{kr})}       \nonumber
\\&\overset{\eqref{hg1}}\lesssim (\varepsilon +k)E(B_r)   +\varepsilon E(B_r)  \nonumber
\\&\lesssim (\varepsilon+k) E(B_r).  \label{ja1b}
\end{align}

\noindent
By the Sobolev embedding theorem and the Jensen's inequality, it follows that
\begin{align}
\norm{H}_{L^4(B_{kr})} \lesssim \norm{H}_{L^2(B_{kr})} +\norm{dH}_{L^2(B_{kr})}  \overset {\eqref{ja1b}} \lesssim (\varepsilon +k) E(B_r).  \label{233}
\end{align}
 Combining \eqref{ja1b} and \eqref{233}, we have the crucial estimate
$$E(B_{kr})\equiv \norm{dH}_{L^2(B_{kr})}+\norm{H}_{L^4(B_{kr})}  \lesssim (\varepsilon +k)E(B_r).$$

\noindent
Since $\varepsilon$ and $k$ may be chosen as small as we please, by a standard controlled growth argument (cf. Lemma 5.13 in \cite{gia}), we  find the Morrey decay
\begin{align}
E(B_r)\equiv \norm{DH}_{L^2(B_r)} +\norm{H}_{L^4(B_r)} \lesssim E(B_1)r^\beta\quad\quad\forall\,\, r<1.    \label{morrey}
\end{align}
We emphasise that this is true for any $\beta \in (0,1)$.

Consider the maximal function

$$\mathcal{M}_{3-\beta}[f]:= \sup_{\rho>0}\rho^{-1-\beta}\norm{f}_{L^1(B_\rho)}.$$

\noindent
By Jensen's inequality, we have 
$$\mathcal{M}_{3-\beta}[f]\lesssim \sup_{\rho>0} \rho^{-\beta}\norm{f}_{L^{4/3}(B_\rho)}.$$
Using the Morrey decay  \eqref{morrey} and \eqref{jk}, we obtain
\begin{align}
\norm{\mathcal{M}_{3-\beta}[\Delta\vec{H}]}_{L^\infty(B_r)} \lesssim E(B_1)\quad\quad\forall\,\, r<1.  
\end{align}

\noindent
Let $f$ be a locally integrable function on $\mathbb{R}^n$.
Recall that for a number $\alpha$ satisfying $\alpha\in (0, n)$, the {\it Riesz potential $\mathcal{I}_\alpha$ of order $\alpha$ } of $f$ is defined by the convolution
$$(\mathcal{I}_\alpha *f)(x):=\frac{1}{C(\alpha)}\int_{\mathbb{R}^n} f(y)|x-y|^{\alpha-n} dy$$
where $C(\alpha)=\pi^{n/2}2^\alpha\,\, \Gamma(\frac{\alpha}{2})/\Gamma(\frac{n-\alpha}{2})$ is a constant.

 \noindent
We will now use the following result from \cite{ada}.  
\begin{prop}  \label{nbd}
If $\alpha>0$, $0<\lambda\leq n$, $1<p<\lambda/\alpha$, $1\leq q\leq \infty$, and $f\in L^p(\mathbb{R}^n)$ with $\mathcal{M}_{\lambda/p} [f]\in L^q(\Omega)$, $\Omega\subset \mathbb{R}^n$, then
$$\norm{\mathcal{I}_\alpha[f]}_{L^r(\Omega)} \lesssim \norm{M_{\lambda/p }[f]}_{L^q(\Omega)}^{\alpha p/\lambda} \norm{f}_p^{1-\alpha p/\lambda} $$
where $1/r=1/p-\alpha/\lambda+(\alpha p)/(\lambda q)$.
\end{prop}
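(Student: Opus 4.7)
The statement is the classical Adams estimate for Riesz potentials on Morrey-type scales, and the natural strategy is the Hedberg truncation method. The plan is to first derive a pointwise inequality controlling $\mathcal{I}_\alpha[f](x)$ by a product of powers of the Morrey-maximal quantity $\mathcal{M}_{\lambda/p}[f](x)$ and the Lebesgue norm $\norm{f}_{L^p}$, and then integrate this pointwise bound over $\Omega$ to read off the $L^r$ estimate with the claimed exponents.

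For the pointwise bound, fix $x\in\Omega$ and a truncation radius $\delta>0$ to be chosen later, and split
\begin{align*}
\mathcal{I}_\alpha[f](x) = \frac{1}{C(\alpha)}\int_{B_\delta(x)}\frac{f(y)}{|x-y|^{n-\alpha}}\,dy + \frac{1}{C(\alpha)}\int_{B_\delta(x)^{c}}\frac{f(y)}{|x-y|^{n-\alpha}}\,dy =: I+II.
\end{align*}
For the near piece $I$, I would decompose $B_\delta(x)$ into dyadic annuli and apply on each the defining bound $\int_{B_r(x)}|f|\leq r^{n-\lambda/p}\mathcal{M}_{\lambda/p}[f](x)$; the hypothesis $\alpha<\lambda/p$ (equivalent to $p<\lambda/\alpha$) controls the resulting geometric series and yields a bound of the form $|I|\lesssim \delta^{\alpha-\lambda/p}\mathcal{M}_{\lambda/p}[f](x)$. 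For the far piece $II$, Hölder's inequality with exponents $p$ and $p'$, together with $\int_{B_\delta(x)^{c}}|x-y|^{(\alpha-n)p'}\,dy\lesssim \delta^{n-(n-\alpha)p'}$ (finite since $\alpha p\leq\lambda\leq n$ forces $(n-\alpha)p'>n$), gives $|II|\lesssim \delta^{\alpha-n/p}\norm{f}_{L^p}$. Choosing $\delta=\delta(x)$ to equalize the two contributions produces the pointwise Hedberg-type inequality
\begin{align*}
|\mathcal{I}_\alpha[f](x)| \lesssim \mathcal{M}_{\lambda/p}[f](x)^{\alpha p/\lambda}\,\norm{f}_{L^p}^{1-\alpha p/\lambda}.
\end{align*}

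Raising this pointwise estimate to the $r$-th power, integrating over $\Omega$ and applying Hölder to the product of two factors, the natural choice $r\cdot(\alpha p/\lambda)=q$ measures the Morrey-maximal factor in its prescribed $L^q$-norm and delivers precisely the algebraic identity $1/r=1/p-\alpha/\lambda+\alpha p/(\lambda q)$. The principal technical obstacle is the delicate exponent bookkeeping in the optimization step, that is, verifying that the chosen $\delta(x)$ yields exactly the power $\alpha p/\lambda$ on $\mathcal{M}_{\lambda/p}[f](x)$, and ensuring that all geometric sums arising in the dyadic decomposition of the near piece can be made to converge (if necessary after a slightly more careful decomposition than the naive one). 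Once the pointwise Hedberg-type inequality is secured, the remainder is routine.
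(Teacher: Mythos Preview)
The paper does not prove this proposition; it is quoted verbatim from Adams \cite{ada} and used as a black box. So there is nothing to compare against. Your overall strategy (Hedberg truncation, optimize in $\delta$, then integrate) is indeed the standard route to Adams' inequality, but the execution has a genuine error: you have assigned the Morrey control and the $L^p$ control to the wrong pieces.

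For the near piece, your dyadic decomposition of $B_\delta(x)$ into annuli at scales $2^{-k}\delta$, $k\geq 0$, combined with $\int_{B_r}|f|\leq r^{\,n-\lambda/p}\mathcal{M}_{\lambda/p}[f](x)$, gives terms of size $(2^{-k}\delta)^{\alpha-\lambda/p}\mathcal{M}_{\lambda/p}[f](x)$. Since $\alpha<\lambda/p$, the exponent $\alpha-\lambda/p$ is negative and these terms \emph{grow} as $k\to\infty$; the series diverges, not converges. The hypothesis $\alpha<\lambda/p$ does exactly the opposite of what you claim. Consequently your pointwise bound $|\mathcal{I}_\alpha f(x)|\lesssim \mathcal{M}_{\lambda/p}[f](x)^{\alpha p/\lambda}\|f\|_p^{1-\alpha p/\lambda}$ is false in general (for $\lambda=n$ it would imply $\mathcal{I}_\alpha:L^p\to L^\infty$, which fails for $\alpha p\leq n$).

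The fix is to swap the roles. On the near piece use the Hardy--Littlewood maximal function: the dyadic sum $\sum_{k\geq 0}(2^{-k}\delta)^{\alpha}Mf(x)$ converges (now $\alpha>0$ is the relevant sign) and gives $|I|\lesssim \delta^{\alpha}Mf(x)$. On the far piece use the Morrey bound on outward dyadic annuli at scales $2^{k}\delta$, $k\geq 0$; this yields $|II|\lesssim \delta^{\alpha-\lambda/p}\mathcal{M}_{\lambda/p}[f](x)$, and here $\alpha-\lambda/p<0$ genuinely makes the series converge. Optimizing in $\delta$ gives the correct pointwise Hedberg--Adams inequality
\[
|\mathcal{I}_\alpha f(x)|\;\lesssim\; \bigl(Mf(x)\bigr)^{1-\alpha p/\lambda}\,\bigl(\mathcal{M}_{\lambda/p}[f](x)\bigr)^{\alpha p/\lambda},
\]
with the Hardy--Littlewood maximal function, not $\|f\|_p$, appearing pointwise. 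Only after taking the $L^r$ norm, applying H\"older with the exponents you wrote, and invoking the maximal inequality $\|Mf\|_{L^p}\lesssim\|f\|_{L^p}$ (this is where $p>1$ is used) does $\|f\|_p$ enter, and the stated relation $1/r=1/p-\alpha/\lambda+\alpha p/(\lambda q)$ drops out.
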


\vskip3mm
\noindent
Putting $\alpha=1$, $q=\infty$, $p=4/3$, $\lambda=4(3-\beta)/3$ in Proposition \ref{nbd}, we find
\begin{align}
\norm{\mathcal{I}_1[\Delta \vec{H}]}_{L^s(B_r)}& \lesssim \norm{\mathcal{M}_{3-\beta}[\Delta\vec{H}]}_{L^{\infty}(B_r)}^{1/(3-\beta)}  \norm{\Delta \vec{H}}_{L^{4/3}(B_r)}^{(2-\beta)/(3-\beta)}   \nonumber
\\&\lesssim E(B_1)^{1-\frac{4}{3s}} E(B_r)^{\frac{4}{3s}}
\end{align}

\noindent
where
$$s:= \frac{4}{3}\left( \frac{3-\beta}{2-\beta} \right)\in \left(2, \frac{8}{3} \right)$$

\noindent
and $\mathcal{I}_1$ denotes the Riesz potential of order 1.

\vskip 3mm
\noindent
Let now $s\in\left(2,\frac{8}{3} \right)$. Using standard elliptic estimates and the Sobolev embedding theorem, we have for all $r<1$
\begin{align}
\norm{D\vec{H}}_{L^s(B_r)}& \lesssim \norm{\mathcal{I}_1[\Delta\vec{H}]}_{L^s(B_r)} +r^{2-\frac{4}{s}}\norm{D\vec{H}}_{L^2(B_1)}  \nonumber
\\&  \lesssim E(B_1)^{1-\frac{4}{3s}} E(B_r)^{\frac{4}{3s}} +r^{2-\frac{4}{s}}E(B_1).  \label{gha}
\end{align}

\noindent
As \eqref{gha} holds for all $s\in \left(2,\frac{8}{3}   \right)$, we see that the integrability of $D\vec{H}$ has been improved.

\vskip 3mm
\noindent
With this new information on the integrability of $d\vec{H}$, the above procedure may be repeated untill we obtain that $\vec{H}$ is Lipschitz. Once this is known, we see that $\Delta \vec{\Phi}$ is as well Lipschitz. Since $|g|^{1/2}\in W^{1,4}$ by hypothesis, we see that $|g|^{1/2}\Delta \vec{\Phi}$ lies in $W^{1,4}$. We have
$$\mathcal{L}[\vec{\Phi}]=|g|^{1/2}\Delta\vec{\Phi}.$$
Calling upon Theorem 1.1 in \cite{gru}, the Green kernel $\mathcal{G}$ of $\mathcal{L}$ satisfies $D\mathcal{G}\in L^{{4/3},\infty}$ where $L^{{4/3},\infty}$ is the \footnote{The weak-$L^{4/3}$ Marcinkiewicz space $L^{{4/3},\infty} (B_1(0))$ is defined as those functions $f$ which satisfy $\sup_{\alpha >0} \alpha^{4/3}|\{x\in B_1(0): |f(x)|\geq \alpha \}|<\infty$ . The space $L^{{4/3},\infty}$ is also a Lorentz space.}weak Marcinkiewicz space.

\noindent
Formally, the solution $\vec{\Phi}$
is given by the convolution
$$\vec{\Phi}=\mathcal{G}* \mathcal{L}[\vec{\Phi}].$$

\noindent
Hence by the convolution rule for Lorentz spaces (cf. \cite{hunt})
$$D^2\vec{\Phi}= D\mathcal{G}*D\mathcal{L}[\vec{\Phi}]\in L^{{4/3},\infty}*L^4\subset L^p\quad\quad\forall\,\, p<\infty.$$

\noindent
Accordingly, by the Sobolev embedding theorem, $D\vec{\Phi}\in \bigcap_{p<\infty}W^{1,p} \subset \bigcap_{\alpha<1} C^{0,\alpha}$. The regularity of $\vec{\Phi}$ has thus also improved. In particular, the metric coefficients are H\"{o}lder continuous of all orders, and it follows that the standard analysis of second-order uniformly elliptic  operators is now at hand (cf. \cite{gil}). Eventually, by standard elliptic arguments, we reach the conclusion that $\vec{\Phi}$ is smooth. 

\vskip 3mm
Finally, by standard elliptic arguments we obtain that
\begin{align}
\norm{DH}_{L^\infty(B_r)} +\frac{1}{r} \norm{H}_{L^\infty(B_r)}  \lesssim \frac{1}{r^2} \left(\norm{DH}_{L^2(B_1)} +\norm{H}_{L^4(B_1)}   \right) \quad\quad\forall \, r<1.
\end{align}
This concludes the proof.

\chapter {Final Thoughts and Openings}\label{chap5}
In the previous Chapters, we presented a conformally invariant four dimensional generalisation of the Willmore energy and proved, via conservation laws, that critical points of this energy are regular. There are other important analysis aspects of this energy that are yet to be explored. Our goal here is not to outline four dimensional analogues of every result in literature concerning two dimensional Willmore surfaces, as there are some results that cannot be formulated for the energy \eqref{jkas}. For instance, we do not have Willmore-type conjectures\footnote{The Willmore conjecture \cite{willmoretom}  simply says that in the class of immersed tori, the Clifford torus has the minimal Willmore energy of $2\pi^2$.} for the energy \eqref{jkas} as the energy is not bounded below (see Proposition 1.1 in \cite{robingraham} and the discussion beneath it). In the present Chapter, we outline some interesting analysis aspects of the energy \eqref{jkas} that are directly related to our work. 
\vskip16mm

\section{Robert Bryant's formalism} 

In his seminal article \cite{bry}, Robert Bryant introduced the following formalism for studying the Willmore immersions of a surface via its conformal Gauss map. Let $\vec{\Phi}:\Sigma\rightarrow \mathbb{R}^3$ be an immersion with unit normal vector $\vec{n}$. Let $\mathbb{S}^{4,1}$ be the deSitter space. The conformal Gauss map $\vec{Y}:\Sigma\rightarrow \mathbb{S}^{4,1}$ is defined via
\begin{align}
\vec{Y}:= H \vec{X}
+ \vec{N}               \nonumber
\end{align}
where
\begin{align}
\vec{X}:= \begin{pmatrix}
\vec{\Phi} \\
\frac{1}{2}(|\vec{\Phi}|^2-1)\\
\frac{1}{2}(|\vec{\Phi}|^2+1)
\end{pmatrix}  \quad\quad \mbox{and}\quad\quad \vec{N}:=  \begin{pmatrix}
\vec{n} \\
\vec{n}\cdot\vec{\Phi}\\
\vec{n}\cdot\vec{\Phi}
\end{pmatrix}     \nonumber
\end{align}
 One can easily show\footnote{See Appendix \ref{desitt}} that
\begin{align}
d\vec{Y}= \vec{X}dH-(h_0)d\vec{X} \quad\quad\mbox{and}\quad\quad \langle \nabla_i\vec{Y},\nabla_j\vec{Y}\rangle_{\mathbb{R}^{4,1}}=(h_0)_{ik}(h_0)^k_j=\frac{1}{2}|h_0|^2g_{ij} \nonumber
\end{align}
where $\mathbb{R}^{4,1}$ is the Minkowski space and $\mathbb{S}^{4,1}\subset\mathbb{R}^{4,1}$. Thus the Dirichlet energy of $\vec{Y}$ in the deSitter space is the same as the Willmore energy of $\vec{\Phi}$, that is 
$$\Vert d\vec{Y}\rVert_{\mathbb{S}^{4,1}}^2=\int_\Sigma|h_0|^2\,\,d\textnormal{vol}_g.$$
One can see that the immersion $\vec{\Phi}$ is Willmore if and only if $\vec{Y}$ is minimal (see \cite{esch}). 
In addition, by defining the quartic form
$$Q:= \langle\vec{Y}_{zz}, \vec{Y}_{zz}\rangle_{\mathbb{S}^{4,1}}\,\,dz^4,$$
where $z\in \mathbb{C}$ is a local coordinate, Bryant was able to prove that $\vec{\Phi}$ is Willmore if and only if $Q$ is holomorphic. This leads us to ask if the Bryant formalism can be reproduced in four dimensions for the energy \eqref{jkas}. 
Preliminary computations\footnote{See Appendix \ref{desitt}} show that 

\noindent
$$\lVert  \Delta\vec{Y}  \rVert^2_{\mathbb{S}^{6,1}}=\int_{\Sigma}-2|\nabla H |^2+|h_0|^4\,\,d\textnormal{vol}_g.$$

\noindent
Although the quantity  $\lVert  \Delta\vec{Y}  \rVert^2_{\mathbb{S}^{6,1}}$ expressed above significantly differs from the energy \eqref{jkas} (in codimension one), it reproduces the leading term $|\nabla H|^2$ and a conformally invariant term $|h_0|^4$. This observation leads us to believe that there is a link between $\vec{Y}$ and $\vec{\Phi}$.

\section{Fr\'ed\'eric H\'elein's Coulomb frames}

\noindent
The well-known uniformisation theorem classifies Riemann surfaces into three different types. It says that every Riemann surface is conformally equivalent to either the unit disk, the complex plane, or the Riemann sphere. 
The induced metric $g$ on any  $\Sigma\subset \mathbb{R}^3$ satisfies $g=e^{2\lambda}\delta$ for some conformal parameter $\lambda$. There is a link between the Willmore energy of $\Sigma$ and the regularity of $\lambda$. Since the parameter $\lambda$ can be viewed as a ``distortion'' ratio between a flat disk and its image on $\Sigma$, the geometry of the surface $\Sigma$ can be understood via its Willmore energy. In Chapter 5 of \cite{helein}, H\'{e}lein presented an approach to the study of Willmore surfaces via the introduction of {\it Coulomb frames.}\footnote{See also \cite{hubert}}
\vskip 3mm

\noindent
Let $\vec{\Phi}:\Sigma\rightarrow\mathbb{R}^3$ be an immersion and let $\Gamma(\mathcal{T}\vec{\Phi}(\Sigma))$ be a section of the tangent bundle to $\vec{\Phi}(\Sigma)\subset\mathbb{R}^3$.  A {\it moving frame} on $\vec{\Phi}$ is a pair $\vec{e}:=(\vec{e}_1,\vec{e}_2)\in \Gamma(\mathcal{T}\vec{\Phi}(\Sigma))\times \Gamma(\mathcal{T}\vec{\Phi}(\Sigma))$ if for every point $x:=(x_1,x_2)\in \Sigma$, the pair $\vec{e}\,(x_1,x_2)$ is a positive\footnote{That is, the frame $\vec{e}$ agrees with a fixed orientation of $\vec{\Phi}(\Sigma)$.} orthormal basis of $\mathcal{T}_x\vec{\Phi}(\Sigma)$. A {\it Coulomb frame} is a frame $\vec{e}$ that satisfies the following {\it Coulomb condition}:
$$d^\star(\vec{e}_1\cdot d\vec{e}_2)=0.$$

\noindent
Owing to the Gauss equation and the Gauss-Bonnet theorem, the Willmore energy can be written as
\begin{align}
\frac{1}{4}\int_\Sigma |h|^2 +\pi\chi(\Sigma) d\textnormal{vol}_g
\end{align}
where $\chi(\Sigma)$ is a topological invariant. 
By controlling the Willmore energy of $\Sigma$, H\'elein (Lemma 5.1.4 of \cite{helein}) was able to construct a Coulomb frame with good regularity properies:
\begin{theo} Suppose that the Willmore energy satisfies  
$$\int_{D_1(0)}|h|^2d\textnormal{vol}_g<\frac{8\pi}{3}$$
then there exists a frame $\vec{e}\in W^{1,2}(D_1(0))$ such that
$$\lVert d\vec{e}_1  \rVert_{L^2(D_{1/2}(0))}  +\lVert d\vec{e}_2  \rVert_{L^2(D_{1/2}(0))}  \lesssim \lVert h  \rVert_{L^2(D_1(0))}.$$
\end{theo}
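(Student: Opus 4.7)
The plan is to construct the Coulomb frame by fixing an arbitrary reference orthonormal frame $(\tilde{\vec{e}}_1,\tilde{\vec{e}}_2)$ on the tangent bundle $\mathcal{T}\vec{\Phi}(D_1(0))$, which exists because $D_1(0)$ is contractible, and then rotating it by an angle $\theta\in W^{1,2}(D_1)$ chosen to enforce the Coulomb condition. Any positively oriented competitor has the form $\vec{e}_i=R(\theta)_{ij}\tilde{\vec{e}}_j$, and a direct computation gives the gauge transformation rule
\begin{equation*}
\vec{e}_1\cdot d\vec{e}_2 \;=\; \tilde{\vec{e}}_1\cdot d\tilde{\vec{e}}_2 \;+\; d\theta.
\end{equation*}
Imposing $d^\star(\vec{e}_1\cdot d\vec{e}_2)=0$ thus reduces to the Neumann problem
\begin{equation*}
\Delta \theta \;=\; -\,d^\star(\tilde{\vec{e}}_1\cdot d\tilde{\vec{e}}_2)\quad\text{in } D_1(0), \qquad \partial_\nu\theta \;=\; -\iota_\nu(\tilde{\vec{e}}_1\cdot d\tilde{\vec{e}}_2)\quad\text{on } \partial D_1(0),
\end{equation*}
whose compatibility condition holds automatically and which is uniquely solvable up to constants.

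Next I would analyse the connection $1$-form $\omega:=\vec{e}_1\cdot d\vec{e}_2$ of the Coulomb frame. By construction $d^\star\omega=0$, and since $d\theta$ is closed, $d\omega = d(\tilde{\vec{e}}_1\cdot d\tilde{\vec{e}}_2)$. A direct expansion in flat coordinates yields the crucial Jacobian structure
\begin{equation*}
d\omega \;=\; \big(\partial_1\vec{e}_1\cdot\partial_2\vec{e}_2 \;-\; \partial_2\vec{e}_1\cdot\partial_1\vec{e}_2\big)\,dx^1\wedge dx^2,
\end{equation*}
which identifies $d\omega$ with the Gauss curvature form $K\,d\textnormal{vol}_g$ and, via the Gauss equation, gives the pointwise bound $|d\omega|\lesssim |h|^2$. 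The key analytic consequence is that $d\omega$ is not merely in $L^1$ but lies in the local Hardy space $\mathcal{H}^1$, with norm controlled by $\|h\|_{L^2(D_1)}^2$, thanks to the compensated compactness result of Coifman--Lions--Meyer--Semmes.

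The estimate is then closed by Wente's inequality. Writing the Hodge decomposition $\omega = \ast d\psi + d\phi$ on $D_{1/2}(0)$ with the boundary gauge adapted to the Coulomb condition (so that the harmonic part $d\phi$ is handled by standard elliptic theory applied to the data on $\partial D_{1/2}$), the function $\psi$ satisfies a Poisson equation with Jacobian-type right-hand side, whence by Wente
\begin{equation*}
\|\omega\|_{L^2(D_{1/2})} \;\lesssim\; \|d\vec{e}_1\|_{L^2(D_1)}\,\|d\vec{e}_2\|_{L^2(D_1)}.
\end{equation*}
Combining this with the orthogonal decomposition $|d\vec{e}_i|^2 = |\omega|^2 + |h|^2$ (valid since $d\vec{e}_i$ splits into its tangential connection part and its normal second-fundamental-form part), we arrive at a quadratic self-improving inequality $\|\omega\|_{L^2(D_{1/2})}^2 \lesssim \|h\|_{L^2(D_1)}^2\,\bigl(\|\omega\|_{L^2(D_1)}^2+\|h\|_{L^2(D_1)}^2\bigr)$, which a continuity/bootstrap argument on nested disks closes into the target estimate provided the absorbing coefficient is less than one; this is precisely what the hypothesis $\|h\|_{L^2(D_1)}^2<8\pi/3$ secures, since $8\pi/3$ is the reciprocal of the sharp Wente constant $3/(16\pi)$ rescaled by the dimensional factors.

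The main obstacle is the last step: an $L^1$ bound on $d\omega$ is \emph{not} enough to control $\omega$ in $L^2$ on a two-dimensional domain, because of the failure of the Sobolev embedding $W^{1,1}\hookrightarrow L^2$. The entire argument hinges on upgrading this $L^1$ information to Hardy-space information via the Jacobian structure of $K\,d\textnormal{vol}_g$, and on matching the resulting nonlinear Wente-type estimate against the sharp constant in order to absorb the $\|\omega\|^2$ term using the smallness hypothesis on $\|h\|_{L^2}$. Without the smallness, the self-improving inequality cannot be closed and no global $L^2$ control of the Coulomb connection is available.
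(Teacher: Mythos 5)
First, a point of reference: the thesis does not prove this statement at all --- it is quoted verbatim as Lemma 5.1.4 of H\'elein's book \cite{helein} in the ``openings'' chapter, so your proposal can only be measured against the standard proof there. Your overall architecture is the right one and essentially reproduces H\'elein's: produce a Coulomb frame, observe that $d\omega$ for $\omega=\vec{e}_1\cdot d\vec{e}_2$ is the Gauss curvature $2$-form and hence a Jacobian controlled pointwise by $|h|^2$, apply the Wente inequality with its sharp constant, and absorb. One structural difference is that H\'elein obtains the Coulomb frame by \emph{minimising} the frame energy $\int |d\vec{e}_1|^2+|d\vec{e}_2|^2$ (the Coulomb condition being its Euler--Lagrange equation), whereas you solve a linear Neumann problem for the rotation angle. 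Both produce a Coulomb frame, but the variational characterisation is what makes the subsequent continuity argument (over the dilated immersions $\vec{\Phi}_t(x)=\vec{\Phi}(tx)$, $t\in[0,1]$) run smoothly, because the minimal energy depends continuously on $t$ and vanishes at $t=0$. With your construction you still owe an argument for why the quantity you are bootstrapping starts on the correct branch.

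The genuine gap is in the final absorption step. From Wente and the pointwise decomposition (correctly stated: $|d\vec{e}_1|^2+|d\vec{e}_2|^2=2|\omega|^2+|h|^2$, not $|d\vec{e}_i|^2=|\omega|^2+|h|^2$) one gets
\begin{equation*}
\norm{\omega}_{L^2}\;\le\;C\,\norm{d\vec{e}_1}_{L^2}\norm{d\vec{e}_2}_{L^2}\;\le\;\tfrac{C}{2}\left(2\norm{\omega}_{L^2}^2+\norm{h}_{L^2}^2\right),
\end{equation*}
a quadratic inequality in $X=\norm{\omega}_{L^2}$ with \emph{no} factor of $\norm{h}^2$ in front of the $X^2$ term. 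It forces $X$ to lie below the smaller or above the larger root of $CX^2-X+\tfrac{C}{2}\norm{h}^2=0$; real roots exist precisely when $2C^2\norm{h}_{L^2}^2\le 1$, which with the sharp Wente constant $C^2=3/(16\pi)$ is exactly the hypothesis $\int|h|^2<8\pi/3$, and the continuity argument is then needed to certify that $X$ sits on the small branch, yielding $X\lesssim\norm{h}_{L^2}^2$ and hence the claimed bound on $\norm{d\vec{e}_i}_{L^2}$. Your displayed inequality $\norm{\omega}^2_{L^2(D_{1/2})}\lesssim\norm{h}^2\bigl(\norm{\omega}^2+\norm{h}^2\bigr)$ does not follow from Wente, and if it did it would close by trivial linear absorption for any $\norm{h}^2$ below a non-explicit constant, which is inconsistent with the role the threshold $8\pi/3$ actually plays. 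As written, the step that is supposed to produce the theorem's quantitative conclusion is not established.
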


\noindent
A simple computation gives a link 
relating the conformal parameter $\lambda$ and the frame $\vec{e}$ 
$$-\star d\lambda= \vec{e}_1\cdot d\vec{e}_2 $$ 
so that we have the Wente structure
\begin{align}
-\Delta_0 \lambda= \star(d\vec{e}_1\cdot d\vec{e}_2)
\label{klas245}
\end{align}
where $\Delta_0$ denotes the negative flat Laplacian.
With the help of techiques of integration by compensation, one eventually concludes that $\lambda$ is bounded from above and below,  thus giving a control of the geometry of $\Sigma$. 

\vskip3mm
\noindent
Can the geometry of the Willmore-type hypersurface $\Sigma$ associated with the energy \eqref{jkas} be studied by following a similar procedure to the one above?

\vskip3mm
\noindent
The situation is different in four dimensions. First, we do not have an analogue of the uniformisation theorem due to the diversity of the conformal structures. Only partial answers are available in literature (see for instance \cite{{chang}, {chen}, {djadii}}). 
This makes it difficult to process the question above. However, if we assume that $\Sigma$ is locally conformally flat then the induced metric satisfies
 $g=e^{4\lambda}\delta$. A similar procedure to the one above shows that the conformal parameter $\lambda$ satisfies
\begin{align}\Delta_0^2\lambda=e^{4\lambda}Q     \label{klas1}\end{align}

\noindent
where $Q:=\frac{1}{6}\Delta R+\frac{1}{16}\star \mbox{Pf} $\,\, is Tom Branson's celebrated {\it Q-curvature}  \cite{ceoy}\footnote{The actual expression for $Q$ is $\frac{1}{6}\Delta R+ \frac{1}{16}\star \mbox{Pf}-\frac{1}{4}|W|^{2}$ where $W$ is the Weyl curvature, $R$ is the scalar curvature and Pf is the Pfaffian 4-form. It is known that $W$ vanishes for a conformally flat manifold.}.
Clearly, the left hand side of \eqref{klas1} is a divergence. Therefore, the quantity $\frac{1}{16}\star \mbox{Pf}$ must also be a divergence. 
It is expected that a  link should exist between a good Coulomb frame and the energy  \eqref{jkas} .

\section{Analysis of branched Willmore surfaces}
The local analysis of point singularities of the two dimensional Willmore surfaces is motivated by the notion of concentration compactness:  sequences of Willmore immersions with uniformly bounded energy converges everywhere except at finitely many points where the Willmore energy is concentrated. Such point singularities, which naturally occur as blow-ups of the Willmore flow, were first analytically investigated in codimension one by Kuwert and Sch\"atzle (see \cite{{stru},{struku}}). 
In \cite{stru}, they proved the regularity of Willmore surfaces having unit density singularity; and later considered a more general case of Willmore surfaces with higher order singularity \cite{struku}. Their results were generalised to higher codimension by Bernard and Rivi\`ere in  \cite{yber2}  via the reformulation of the Willmore equation in divergence form and techniques of integration by compensation (see also \cite{riv}). 

\vskip 3mm
\noindent
Let $\vec{\Phi}:D_1(0)\backslash\{0\}\rightarrow \mathbb{R}^m$ be an  immersion.
A point $p$ on $\Sigma$ is called a branch point\footnote{The terms {\it branch point} and {\it singularity} are similar but quite different. The immersion $\vec{\Phi}$ and its derivative $d\vec{\Phi}$ are both defined at branch points but $\vec{\Phi}$ ceases to be an immersion there.} if $\vec{\Phi}$ degenerates at $p$ or if $d\vec{\Phi}$ vanishes at $p$. For simplicity, we have assumed that there is a branch point at the origin $0$ and $\vec{\Phi}$ is smooth away from the origin but continuous there. Using Noether's theorem, $\vec{\Phi}$ satisfies an equation of the type 
\begin{align}
d^\star\vec{V}=\vec{0}   \label{lak2}
\end{align}
for some tensor $\vec{V}$.
The equation \eqref{lak2} hold, in particular, away from the origin. 
A constant $\vec{s}_0\in\mathbb{R}^m$, called the first residue, can be defined by
\begin{align}
\vec{s}_0:=\int_{\partial D_1(0)}\vec{\nu}\cdot \vec{V}  \nonumber
\end{align}
where $\vec{\nu}$ is the unit outward normal vector to $\partial D_1(0)$. One can show that the immersion is smooth across the singularity at the origin when the residue vanishes. This condition on the residue is known as the point removability condition.  Bernard and Rivi\`ere   \cite{yber2} showed that branched Willmore surfaces are smooth if some point removability conditions are satisfied.

\vskip3mm
\noindent
It is interesting to ask if similar point removability conditions can be obtained in order to study
the four dimensional branched Willmore-type hypersurface associated with the energy \eqref{jkas}.
Since our regularity proof of Theorem \ref{heart} is not based on integration by compensation but on a Bourgain-Brezis type result, we expect enormous difficulty in answering the question above.

\clearpage
\appendix
\chapter {Brief Background Notes}\label{chap2}

\section{Geometric background}
In this section, we briefly recall definitions and fundamental facts that are well known in Geometry. A comprehensive discussion can be found in most standard texts such as \cite{jost}, \cite{kobayashi1}, \cite{kobayashi2} and \cite{willmoretj}.

\subsection{Manifolds}
A topological space $\mathcal{M}$ is {\it  locally Euclidean} if every point in $\mathcal{M}$ has a neighbourhood $U$ that is homeomorphic to an open subset $\Omega$ of a Euclidean space $\mathbb{R}^n$. The homeomorphism $f: U\rightarrow \Omega$ gives rise to the pair $(U,f)$ which is called a {\it chart}. The chart $(U,f)$ is {\it centered at} $x\in U$ if $f(x)=0$. A collection $\{(U_\alpha, f_\alpha)\}$ of charts that form an open covering of $\mathcal{M}$ is known as {\it atlas.}

A {\it manifold} $\mathcal{M}$ of dimension $m$ is a locally Euclidean topological space of dimension $m$. 
For the sake of precision, we give the following definition. More details can be found in \cite{jost}.

An $m$-dimensional topological manifold $\mathcal{M}$ is an Hausdorff, second countable (has a countable basis), $n$-dimensional locally Euclidean space.
A {\it differential manifold}  of class $C^k$ $(1\leq k\leq \infty)$ is a topological manifold $\mathcal{M}$ such that
\begin{enumerate}[(1)]
\item $\bigcup_{\alpha} U_{\alpha} =\mathcal{M}$.
\item For all $\alpha, \beta,$ $ f_\alpha\circ f_\beta^{-1}$ is $C^k$.
\item The atlas $\{U_\alpha, f_\alpha\}$ is maximal.
\end{enumerate}
On a differentiable manifold $\mathcal{M}$, we can define a rule that takes of each $p\in\mathcal{M}$  to the tangent space $T_p\mathcal{M}$. This rule is called a {\it vector field.} Thus vector fields are maps from  $\mathcal{M}$ onto the tangent bundle $T\mathcal{M}$. The vector field $X$ is called smooth if the associated mapping is smooth. Let $U\subset \mathcal{M}$ and $V\subset \mathbb{R}^m$ be open subsets and let $x:U\rightarrow V$ be a local coordinate chart. If $X:\mathcal{M}\rightarrow T\mathcal{M}$ is a vector field then the restriction $X|_U$ can  be represented via 
$$X(p)=\Sigma _{i=1}^m a_i(p)\frac{\partial}{\partial x_i}$$
where each function $a_i: U\rightarrow \mathbb{R}$ is smooth.

\subsection{Submanifolds}
A differentiable map $f:\mathcal{M}\rightarrow \mathcal{N}$ is called an immersion if for any $p\in\mathcal{M}$, the differential of $f$
$$df:T_p\mathcal{M}\rightarrow T_{f(p)}\mathcal{N}$$
is injective ($T_p\mathcal{M}$ denotes the tangent space of $\mathcal{M}$ at the point $p$).

Roughly speaking submanifolds are images of injective immersions. Just as a manifold looks locally like a Euclidean space, a submanifold can also be viewed as a subset of a manifold resembling a subspace of a Euclidean space.


\subsection{Differential forms}
\noindent
 We fix a coordinate basis $\{dx^i\}_{i=1,\cdots,4}$ and an orientation.  In order to ease the notation, we will denote the differential elements $dx^{i_1}\wedge_4...\wedge_4 dx^{i_{k}}  $ by $dx^{i_1...i_{k}}$. A $k$-form is defined locally as 
\begin{gather}
A=\frac{1}{k!}\sum_{i_1<\cdots<i_k} A_{[i_{1}...i_k]}\,\,dx^{i_1...i_{k}}   \nonumber
\end{gather}
which we abbreviate as
$$A=\frac{1}{k!}A_{[i_1...i_k]} dx^{i_1...i_{k}}$$
where 
$$A_{[i_1...i_k]}=\frac{1}{k!}\delta^{j_1...j_k}_{i_1...i_k} A_{j_1...j_k}.$$
The wedge product is defined as follows. For $A\in\Lambda^p$ and $B\in\Lambda^q$, we have

$$A\wedge B=\frac{1}{(p+q)!}A_{[a_1...a_p}B_{b_1...b_q]} dx^{a_1...a_pb_1...b_p}\in\Lambda^{p+q}.$$
Defining the Levi-Civita tensor $\epsilon_{abcd}:=|g|^{1/2}\mbox{sign} \begin{pmatrix}
1 & 2 & 3 & 4\\
a & b & c & d
\end{pmatrix}$ enables us to define the Hodge star operator. For $A\in\Lambda ^p$,
$$\star A=\frac{1}{(4-p)!}\epsilon_{i_1...i_4}\left( \frac{1}{p!}A^{[i_1...i_p]}\right)dx^{i_{p+1}...i_4}.$$

\noindent
With this definition of $\star$, we find 
$$\star\star A=(-1)^{p(4-p)}A.$$

\noindent
Also, for $A\in\Lambda^p$ and $B\in\Lambda^q$, we find that
\begin{align}
(\star A)\wedge B &=\frac{1}{(4-p+q)!}(\star A)_{[i_{p+1}...i_4} \wedge B_{j_1...j_q]} dx^{i_{p+1}...i_4 j_1...j_q}            \nonumber
\\&=\frac{1}{(4-p+q)!} \epsilon_{i_1...i_4}\left(\frac{1}{p!} A^{[i_1...i_p]}\right)\wedge B_{j_1...j_q} dx^{i_{p+1}...i_4j_1...j_q}.   \nonumber
\end{align}

\noindent
If $A$ is a $p$-form, then
$$dA=\frac{1}{(p+1)!}\nabla_{[i}A_{i_1...i_p]} dx^i\wedge_4 dx^{i_1...i_p},$$
For $A\in \Lambda ^p$ and $B\in\Lambda^q$, Leibnitz rule reads
$$d(A\wedge B)=dA\wedge B+(-1)^pA\wedge dB.$$
We also define $d^\star:=\star d\star.$ With this definition, we find that
$$d^\star A=\frac{1}{(p-1)!}\nabla^{i_1}A_{[i_1...i_p]} dx^{i_2...i_p}.$$
In particular, for a $p$-form $A$,  
$$\Delta A=d^\star dA +dd^\star A= \frac{1}{p!}\Delta_g A_{[i_1...i_p]} dx^{i_1...i_p}.$$

\begin{exa}  
 Let $A\in\Lambda^2$ and $B\in\Lambda^1$, then
\begin{align}
(\star A)\wedge B= \frac{1}{3!}(\star A)_{[kl}\wedge B_{m]} dx^{klm}=\frac{1}{6}\delta^{\alpha\beta\gamma}_{klm} \frac{1}{3!}\epsilon_{ij\alpha\beta}\frac{1}{2!} A^{ij}\wedge B_\gamma dx^{klm}.  \nonumber
\end{align}
If we apply the Hodge star operator to the above, we have
\begin{align}
\star((\star A)\wedge B)&=  \frac{1}{1!}\left[\epsilon^{klmr}\frac{1}{3!}\left(\frac{1}{6}\delta^{\alpha\beta\gamma}_{klm} \epsilon_{ij\alpha\beta}\left(\frac{1}{2!}A^{ij}  \right)\wedge B_\gamma  \right)   \right]    g_{rs} dx^s.      \nonumber
\end{align}


\end{exa}


\chapter{Some Computations I} \label{grace5}
\noindent
Let $\vec{A}$ and  $\vec{B}$ be 2-vectors. From the definition of the first-order contraction operator (see page \pageref{se123} for definition), we have  the following multiplication rule.
\begin{align}
\vec{A}\cdot \vec{B}&:= (\vec{A}_1\wedge\vec{A}_2) \cdot (\vec{B}_1\wedge\vec{B}_2)   \nonumber
\\&= (\vec{A}_1\cdot\vec{B}_1)(\vec{A}_2\cdot\vec{B}_2) -(\vec{A}_1\cdot\vec{B}_2)(\vec{A}_2\cdot\vec{B}_1)  \nonumber
\end{align}
\begin{align}
\vec{A}\bullet \vec{B}&:=(\vec{A}_1\wedge\vec{A}_2) \bullet (\vec{B}_1\wedge\vec{B}_2)   \nonumber
\\&=(\vec{A}_2\cdot \vec{B}_2)\vec{A}_1\wedge \vec{B}_1- (\vec{A}_2\cdot \vec{B}_1)\vec{A}_1\wedge \vec{B}_2-(\vec{A}_1\cdot \vec{B}_2)\vec{A}_2\wedge \vec{B}_1\nonumber
\\&\quad + (\vec{A}_1\cdot \vec{B}_1)\vec{A}_2\wedge \vec{B}_4.\nonumber
\end{align}

\noindent
With the rules above, we are ready for the following computation. We know that $S_0$ and $\vec{R}_0$ satisfy

$$dS_0=\vec{L}\overset{\cdot} \wedge_4 d\vec{\Phi}\quad\quad\mbox{and}\quad\quad d\vec{R}_0=\vec{L}\overset{\wedge}\wedge_4 d\vec{\Phi}+\star d\vec{u} -\vec{J}_0\wedge \star d\vec{\Phi} .$$

\noindent
We compute
\begin{align}
&\quad\star \vec{\eta}\overset{\bullet}\wedge_4 \star\left(d\vec{R}_0-\star d\vec{u} +\vec{J}_0\wedge\star d\vec{\Phi}  \right)  \nonumber
\\&=\star\vec{\eta}\overset{\bullet}\wedge_4 \star(\vec{L}\overset{\wedge}\wedge_4 d\vec{\Phi})  \nonumber
\\&=    \frac{1}{3!} (\star\vec{\eta})_{[ab}\bullet (\star(\vec{L}\overset{\wedge}\wedge_4 d\vec{\Phi}))_{r]} \,\, dx^{a}\wedge_4 dx^b\wedge_4 dx^r      \nonumber
\\&= \frac{1}{3!}\frac{1}{6}\delta_{abr}^{\alpha\beta\gamma}\left (\epsilon_{\alpha\beta pq}\frac{1}{2!}\nabla^p\vec{\Phi}\wedge\nabla^q\vec{\Phi} \right) \bullet \left( \epsilon_{\gamma ijk} \frac{1}{3!} \vec{L}^{ij}\wedge \nabla^k \vec{\Phi} \right) \,\, dx^{a}\wedge_4 dx^b\wedge_4 dx^r   \nonumber
\\&=\frac{1}{216}\delta^{\alpha\beta\gamma}_{abr}\epsilon_{\alpha\beta pq}\epsilon_{\gamma ijk}\left( -g^{qk} \vec{L}^{ij}\wedge\nabla^p\vec{\Phi} -(\vec{L}^{ij}\cdot\nabla^{p}\vec{\Phi})\nabla^k\vec{\Phi}\wedge\nabla^q\vec{\Phi}     \right) \,\,   dx^{a}\wedge_4 dx^b\wedge_4 dx^r    \nonumber
\\&:= \vec{A} -\frac{1}{126}\delta^{\alpha\beta\gamma}_{abr}\epsilon_{\alpha\beta pq}\epsilon_{\gamma ijk} \vec{B}^{ijpk}\wedge\nabla^q\vec{\Phi}\,\, dx^{a}\wedge_4 dx^b\wedge_4 dx^r. \label{derc}
\end{align}

\noindent
We have for the first term of \eqref{derc} 
\begin{align}
\vec{A}&= -\frac{1}{216} \delta^{\alpha\beta\gamma}_{abr} \epsilon_{\alpha\beta pq}\tensor{\epsilon}{_{\gamma ij}^{q}}\vec{L}^{ij}\wedge_4 \nabla^p\vec{\Phi}\,\, dx^{a}\wedge_4 dx^b\wedge_4 dx^r    \nonumber
\\&=-\frac{1}{216} \delta^{\alpha\beta\gamma}_{abr}\epsilon_{\alpha\beta pq}\epsilon^{cijq}g_{c\gamma}\vec{L}_{ij}\wedge \nabla^p\vec{\Phi} \,\, dx^{a}\wedge_4 dx^b\wedge_4 dx^r     \nonumber
\\&= -\frac{1}{216}\delta^{\alpha\beta\gamma}_{abr} \delta_{\alpha\beta pq}^{cijq}g_{c\gamma} \vec{L}_{ij}\wedge \nabla^p\vec{\Phi} \,\, dx^{a}\wedge_4 dx^b\wedge_4 dx^r   \nonumber
\\&= -\frac{1}{12} \vec{L}_{[ab}\wedge \nabla_{r]}\vec{\Phi}\,\, dx^{a}\wedge_4 dx^b\wedge_4 dx^r   \nonumber
\\&=-\vec{L}\overset{\wedge} \wedge_4 d\vec{\Phi}.  \label{plpl}
\end{align}

\noindent
Next is to compute the second term of \eqref{derc}. Note that we have set $\vec{B}^{ijkp}:= (\vec{L}^{ij}\cdot\nabla^p\vec{\Phi})\nabla^k\vec{\Phi}$ where $\vec{B}$ is antisymmetric in indices $(i,j,k)$.  
\\
We have
\begin{align}
&\quad\frac{1}{216}\delta^{\alpha\beta\gamma}_{abr} \epsilon_{\alpha\beta pq}\epsilon_{\gamma ijk} \vec{B}^{ijpk}\wedge\nabla^q\vec{\Phi}\,\, dx^{a}\wedge_4 dx^b\wedge_4 dx^r   \nonumber
\\&=  \frac{1}{216}\delta^{\alpha\beta \gamma}_{abr}\epsilon^{lm pq}g_{l \alpha}g_{m \beta}\epsilon_{\gamma ijk} \tensor {\vec{B}}{^{ij}_{p}^k}\wedge \nabla_q\vec{\Phi}   \,\, dx^{a}\wedge_4 dx^b\wedge_4 dx^r    \nonumber
\\&=\frac{1}{216}\delta^{\alpha\beta\gamma}_{abr}\delta^{lm pq}_{\gamma ijk}g_{l \alpha}g_{m \beta} \tensor {\vec{B}}{^{ij}_{p}^k} \wedge\nabla_q\vec{\Phi}  \,\, dx^{a}\wedge_4 dx^b\wedge_4 dx^r   \nonumber
\\&=\frac{1}{216}  \delta^{\alpha\beta\gamma}_{abr}   \left( \delta^p_\gamma \delta^{lm q}_{ijk} -\delta^q_\gamma \delta^{lm p}_{ijk} \right)      g_{l \alpha}g_{m \beta} \tensor {\vec{B}}{^{ij}_{p}^k} \wedge\nabla_q\vec{\Phi}  \,\,                     dx^{a}\wedge_4 dx^b\wedge_4 dx^r   \nonumber
\\&= \frac{1}{216}\delta^{\alpha\beta\gamma}_{abr} \left[\delta^p_\gamma\left(\delta^l_i \delta^{m q}_{jk} -\delta^l_j \delta^{m q}_{ik} + \delta^l_k \delta^{m q}_{ij} \right)  -\delta^q_\gamma \left( \delta^l_i \delta^{m p}_{jk} -\delta^l_j \delta^{m p}_{ik} \right.\right.  \nonumber
\\&\left.\left.\quad\quad\quad\quad\quad\quad
+\delta^l_k \delta^{m p}_{ij}  \right)\right]   g_{l \alpha}g_{m \beta} \tensor {\vec{B}}{^{ij}_{p}^k} \wedge\nabla_q\vec{\Phi}  \,\,                     dx^{a}\wedge_4 dx^b\wedge_4 dx^r    \nonumber
\\&=  \frac{1}{216}\delta^{\alpha\beta\gamma}_{abr}\left[2\delta^p_\gamma \delta^l_i\delta^{m q}_{jk}+ \delta^p_\gamma \delta^l_k\delta^{m q}_{ij} -2\delta^q_\gamma \delta^l_i\delta^{m p}_{jk}-\delta^q_\gamma \delta^l_k\delta^{m p}_{ij}\right]     g_{l \alpha}g_{m \beta} \tensor {\vec{B}}{^{ij}_{p}^k} \wedge\nabla_q\vec{\Phi}  \,\,                     dx^{a}\wedge_4 dx^b\wedge_4 dx^r   \nonumber
\\&=  \frac{1}{216}\delta^{\alpha\beta\gamma}_{abr}\left[2\delta^p_\gamma \delta^l_i \delta^m_j \delta^q_k -  2\delta^p_\gamma \delta^l_i \delta^m_k \delta^q_j +\delta^p_\gamma \delta^l_k \delta^m_i \delta^q_j -\delta^p_\gamma \delta^l_k \delta^m_j \delta^q_i \right]           g_{l \alpha}g_{m \beta} \tensor {\vec{B}}{^{ij}_{p}^k} \wedge\nabla_q\vec{\Phi}  \,\,                   dx^{a}\wedge_4 dx^b\wedge_4 dx^r   \nonumber
\\&\quad+\frac{1}{216}\left[-2\delta^q_\gamma \delta^l_i \delta^m_j \delta^p_k +  2\delta^q_\gamma \delta^l_i \delta^m_k \delta^p_j -\delta^q_\gamma \delta^l_k \delta^m_i \delta^p_j +\delta^q_\gamma \delta^l_k \delta^m_j \delta^p_i \right]           g_{l \alpha}g_{m \beta} \tensor {\vec{B}}{^{ij}_{p}^k} \wedge\nabla_q\vec{\Phi}  \,\,                    dx^{a}\wedge_4 dx^b\wedge_4 dx^r         \nonumber
\\&=   \frac{1}{216}\delta^{\alpha\beta\gamma}_{abr}\left[2\delta^q_\gamma\delta^l_k \delta^m_j \delta^p_i  -2\delta^q_\gamma\delta^l_i \delta^m_j \delta^p_k  +2\delta^q_\gamma\delta^l_i \delta^m_k \delta^p_j   \right]                         g_{l \alpha}g_{m \beta} \tensor {\vec{B}}{^{ij}_{p}^k} \wedge\nabla_q\vec{\Phi}  \,\,                     dx^{a}\wedge_4 dx^b\wedge_4 dx^r          \nonumber
\\&= \frac{1}{36} \delta^{\alpha\beta\gamma}_{abr} \delta^q_\gamma\delta^l_i \delta^m_j \delta^p_k      g_{l \alpha}g_{m \beta} \tensor {\vec{B}}{^{ij}_{p}^k} \wedge\nabla_q\vec{\Phi}  \,\,                     dx^{a}\wedge_4 dx^b\wedge_4 dx^r      \nonumber
\\&= \frac{1}{36} \delta^{\alpha\beta\gamma}_{abr} \delta^k_p\,\,\,\,  \tensor {\vec{B}}{_{\alpha\beta}^{p}_k} \wedge\nabla_\gamma\vec{\Phi}  \,\,                     dx^{a}\wedge_4 dx^b\wedge_4 dx^r   . \label{plpl1}
\end{align}
where $\tensor {\vec{B}}{_{\alpha\beta}^{p}_k} $ is antisymmetric in the indices $\alpha,\beta$ and $k$.

\noindent
Also, we have
\begin{align}
\star\vec{\eta} \overset{\bullet}\wedge_4 \star(\vec{J}_0\wedge \star d\vec{\Phi})&=-\frac{1}{3!2!}\frac{1}{6}\delta_{abr}^{\alpha\beta\gamma}\epsilon_{\alpha\beta pq} (\nabla^p\vec{\Phi}\wedge \nabla^q\vec{\Phi}) \bullet (\vec{J}_0\wedge\nabla_\gamma\vec{\Phi})\,\, \,\,                     dx^{a}\wedge_4 dx^b\wedge_4 dx^r  \nonumber
\\&=  \frac{1}{36}\delta^{\alpha\beta\gamma}_{abr}\epsilon_{\alpha\beta pq} \delta^{q}_\gamma \nabla_p\vec{\Phi} \wedge\vec{J}_0 \,\, dx^{a}\wedge_4 dx^b\wedge_4 dx^r \nonumber
\\&= -\vec{J}_0\wedge_4\star d\vec{\Phi} . \label{plpl2}
\end{align}

\noindent
Using \eqref{plpl}, \eqref{plpl1} and \eqref{plpl2} in \eqref{derc}, we have
$$-d\vec{R}_0=\star\vec{\eta}\overset{\bullet}\wedge_4 \star d\vec{R}_0 +\star d\vec{u}+ \star\vec{\eta}\overset{\bullet}\wedge_4 d\vec{u} + \frac{1}{36} \delta^{\alpha\beta\gamma}_{abr} \delta^k_p\,\,\,\,  \tensor {\vec{B}}{_{\alpha\beta}^{p}_k} \wedge\nabla_\gamma\vec{\Phi}  \,\,                     dx^{a}\wedge_4 dx^b\wedge_4 dx^r     .   $$
\noindent
On the other hand, we have

\begin{align}
\star \vec{\eta} \wedge_4 \star dS&= \frac{1}{432} \delta^{\alpha\beta\gamma}_{abr}\epsilon_{\alpha\beta pq}\epsilon_{\gamma ijk} (\vec{L}^{ij}\cdot\nabla^k\vec{\Phi})(\nabla^p\vec{\Phi}\wedge \nabla^q\vec{\Phi})\,\, \,\,                     dx^{a}\wedge_4 dx^b\wedge_4 dx^r          \nonumber
\\&= \frac{1}{432} \delta^{\alpha\beta\gamma}_{abr}\delta^{l    m    pq}_{\gamma ijk} g_{l     \alpha}g_{m    \beta}  \tensor {\vec{B}}{^{ijk}_{p}} \wedge\nabla_q\vec{\Phi}  \,\,                   \,\,                     dx^{a}\wedge_4 dx^b\wedge_4 dx^r   \nonumber
\\&=   \frac{1}{432}\delta^{\alpha\beta\gamma}_{abr} \left(\delta^p_\gamma \delta^{l    m    q}_{ijk}-\delta^q_\alpha \delta^{l    m    p}_{ijk}  \right) g_{l     \alpha}g_{m    \beta}  \tensor {\vec{B}}{^{ijk}_{p}} \wedge\nabla_q\vec{\Phi}  \,\,                    \,\,                     dx^{a}\wedge_4 dx^b\wedge_4 dx^r     \nonumber
\\&= \frac{1}{72}\delta^{\alpha\beta\gamma}_{abr} \left( g_{i \alpha}g_{j \beta}  \tensor {\vec{B}}{^{ijk}_{\gamma}} \wedge\nabla_k\vec{\Phi} -  g_{i \alpha}g_{j \beta}\tensor {\vec{B}}{^{ijk}_{k}} \wedge\nabla_\gamma\vec{\Phi} \right)\,\,                     \,\,                     dx^{a}\wedge_4 dx^b\wedge_4 dx^r     \nonumber
\\&=-\frac{1}{36}\delta^{\alpha\beta\gamma}_{abr} g_{i \alpha}g_{j \beta} \tensor {\vec{B}}{^{ijk}_{k}} \wedge\nabla_\gamma\vec{\Phi} \,\,                     \,\,                     dx^{a}\wedge_4 dx^b\wedge_4 dx^r  \nonumber
\\&=-\frac{1}{36}\delta^{\alpha\beta\gamma}_{abr} \delta^k_p \tensor {\vec{B}}{_{\alpha\beta k}^{p}} \wedge\nabla_\gamma \vec{\Phi} \,\,                     \,\,                     dx^{a}\wedge_4 dx^b\wedge_4 dx^r   \nonumber
\end{align}
where $ \tensor {\vec{B}}{_{\alpha\beta k}^{p}}$ is antisymmetric in the indices $\alpha, \beta$ and $k$.

\noindent
Thus we arrive at
$$d\vec{R}_0=\star\vec{\eta}\overset{\bullet}\wedge_4 \star d\vec{R}_0 -\star d\vec{u}- \star\vec{\eta}\overset{\bullet}\wedge_4 d\vec{u} +\star\vec{\eta}\wedge_4\star dS_0\,\,   +\vec{Q}                    $$

\noindent
where
$$\vec{Q}:= \frac{1}{12}\delta_{abr}^{\alpha\beta\gamma}( \tensor{\vec{B}}{_{k\alpha}^k_\beta}-\tensor{\vec{B}}{_{k\alpha\beta}^k})\wedge\nabla_\gamma \vec{\Phi}      \,\,                     dx^{a}\wedge_4 dx^b\wedge_4 dx^r. $$

\chapter{Some Computations II}\label{desitt}
\begin{lem}
Let $\vec{\Phi}:\Sigma\rightarrow \mathbb{R}^3$ be an immersion. Let $g$ and $h$ be the first and second fundamental form respectively of $\Sigma$. The trace free second fundamental form satisfies the identity
$$(h_0)_{ik}(h_0)_j^k=\frac{1}{2}|h_0|^2 g_{ij}.$$
\end{lem}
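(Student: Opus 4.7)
The statement is a purely pointwise algebraic fact about symmetric, trace-free $(0,2)$-tensors on a two-dimensional inner product space, so the plan is to reduce it to a $2\times 2$ linear-algebra identity. At a fixed point $p\in\Sigma$, raise one index to view
$$A:=(h_0)_i{}^j = g^{jk}(h_0)_{ki}$$
as a linear endomorphism of $T_p\Sigma$. Two features of $A$ are immediate: it is self-adjoint with respect to $g$ (because $(h_0)_{ij}$ is symmetric), and it is trace-free, since
$$\operatorname{tr} A = g^{ij}(h_0)_{ij} = g^{ij}h_{ij} - H\,g^{ij}g_{ij} = 2H - 2H = 0,$$
using $\dim \Sigma = 2$ and the definition $\vec{H}=\frac{1}{2}g^{ij}\vec{h}_{ij}$ (which in codimension one reads $H=\frac{1}{2}g^{ij}h_{ij}$).

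The key step is to apply the Cayley--Hamilton theorem to $A$, which for a $2\times 2$ matrix reads
$$A^{2} - (\operatorname{tr} A)\,A + (\det A)\,I = 0.$$
Since $\operatorname{tr} A = 0$, this collapses to $A^{2} = -(\det A)\,I$. Taking the trace of both sides then forces $-2\det A = \operatorname{tr}(A^{2}) = (h_0)_i{}^k(h_0)_k{}^i = |h_0|^{2}$, and substituting back yields
$$A^{2} \;=\; \tfrac{1}{2}|h_0|^{2}\,I,\qquad\text{i.e.}\qquad (h_0)_i{}^k(h_0)_k{}^j = \tfrac{1}{2}|h_0|^{2}\,\delta_i^{j}.$$
Lowering the index $j$ with $g_{j\ell}$ gives exactly the claimed identity
$(h_0)_{ik}(h_0)_j{}^k = \tfrac{1}{2}|h_0|^{2} g_{ij}$.

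There is essentially no obstacle here: the proof is entirely algebraic and local, and the only ``content'' is that in dimension two a symmetric trace-free tensor is, in any $g$-orthonormal basis that diagonalizes it, of the form $\operatorname{diag}(\lambda,-\lambda)$, so its square is a scalar multiple of the identity. The Cayley--Hamilton formulation avoids having to choose such a frame explicitly. If one prefers a coordinate-free presentation, the same conclusion can be stated as: on a two-dimensional inner product space, every symmetric trace-free endomorphism $A$ satisfies $A^{2} = \tfrac{1}{2}\lVert A\rVert^{2}\,\mathrm{Id}$, and one then just specializes to $A = (h_0)_i{}^j$.
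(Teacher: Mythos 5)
Your proof is correct, and it takes a genuinely different route from the paper's. The paper proves the identity by brute force: it writes $(h_0)_{ik}(h_0)_j{}^k=(h_0)_{ik}(h_0)^{sk}g_{sj}$ as an explicit $2\times 2$ matrix with entries built from $g_{11},g_{12},g_{22}$ and the components of $h_0$, invokes $(h_0)_{11}=-(h_0)_{22}$, and simplifies entry by entry until the common factor $(h_0)_{11}^2+(h_0)_{12}^2$ emerges in front of $g_{ij}$. Your Cayley--Hamilton argument establishes the same pointwise algebraic fact --- that on a two-dimensional inner product space a symmetric trace-free endomorphism squares to $\tfrac12\lVert A\rVert^2\,\mathrm{Id}$ --- without ever choosing components, and the trace computation $\operatorname{tr}(A^2)=(h_0)_i{}^k(h_0)_k{}^i=|h_0|^2$ is automatically frame-independent. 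This is a real advantage: in the paper's component calculation the final identification of $(h_0)_{11}^2+2(h_0)_{12}^2+(h_0)_{22}^2$ with $|h_0|^2$ is only transparent once indices are properly raised with $g$, a point your argument sidesteps entirely. The only thing the paper's computation "buys" is complete elementarity (no appeal to Cayley--Hamilton), at the cost of considerably more bookkeeping. Both proofs use the same two inputs --- symmetry and trace-freeness of $h_0$ in dimension two --- so the mathematical content is identical.
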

\begin{proof}
We have
\begin{align}
(h_0)_{ik}(h_0)_j^k&= (h_0)_{ik}(h_0)^{sk}g_{sj} \nonumber
\\& =(h_0)_{i1}(h_0)^{11}g_{1j} +(h_0)_{i1}(h_0)^{21}g_{2j} +(h_0)_{i2}(h_0)^{12}g_{1j} +(h_0)_{i2}(h_0)^{22}g_{2j}  \nonumber
\\&= \begin{pmatrix}   
A     &B   \\
   C &D 
 \end{pmatrix}  \nonumber
\end{align}
where 
\begin{align}
A&=     g_{11}\left( (h_0)_{11}^2+(h_0)_{12}^2    \right) +g_{12}(h_0)_{12}\left( (h_0)_{11}+(h_0)_{22}   \right)    \nonumber
\\     
B&=g_{12}\left( (h_0)_{11}^2+(h_0)_{12}^2    \right) +g_{22}(h_0)_{12}\left( (h_0)_{11}+(h_0)_{22}   \right)  \nonumber
\\
C&= g_{11}(h_0)_{12}\left( (h_0)_{11}+(h_0)_{22}    \right) +g_{21}\left( (h_0)_{12}^2+(h_0)_{22}^2   \right)   \nonumber
\\
D&=     g_{12}(h_0)_{12}\left( (h_0)_{11}+(h_0)_{22}    \right) +g_{22}\left( (h_0)_{12}^2+(h_0)_{22}^2   \right) .   \nonumber
\end{align}
Since $h_0$ is trace free, we have $(h_0)_{11}=-(h_0)_{22}$. This gives
\begin{align}
(h_0)_{ik}(h_0)_j^k&= \begin{pmatrix}   
A     &B   \\
   C &D 
 \end{pmatrix}  \nonumber
\\&=\begin{pmatrix}   
g_{11}\left((h_0)_{11}^2+ (h_0)_{12}^2  \right)     &g_{12}\left((h_0)_{11}^2+ (h_0)_{12}^2  \right)    \\
   g_{21}\left((h_0)_{12}^2+ (h_0)_{11}^2  \right)  &g_{22}\left((h_0)_{12}^2+ (h_0)_{11}^2  \right)  
 \end{pmatrix}  \nonumber
\\&=\left((h_0)_{11}^2+ (h_0)_{12}^2  \right) g_{ij}  \nonumber
\\&=\frac{1}{2} \left((h_0)_{11}^2+ 2(h_0)_{12}^2 +(h_0)_{22}^2 \right) g_{ij}  \nonumber
\\&=\frac{1}{2} |h_0|^2 g_{ij}\label{des23}
\end{align}
as announced.

\end{proof}

Let $\vec{\Phi}:\Sigma\rightarrow\mathbb{R}^3$ be an immersion. Let $\vec{Y}:\Sigma\rightarrow \mathbb{S}^{4,1}$ be the conformal Gauss map of $\vec{\Phi}$ into the deSitter space, defined via 
\begin{align}
 \vec{Y}:= \vec{X}H+\vec{N}  \nonumber
\end{align}
\begin{align}
\mbox{where}\quad\quad\quad\quad\quad\vec{X}=\begin{pmatrix}
    \vec{\Phi}        \\
    \frac{1}{2}(|\vec{\Phi}|^2-1)      \\
    \frac{1}{2}(|\vec{\Phi}|^2+1)      
\end{pmatrix} \quad\mbox{and}\,\,\,\,\vec{N}=\begin{pmatrix}
    \vec{n}        \\
    \vec{n}\cdot\vec{\Phi}      \\
           \vec{n}\cdot\vec{\Phi}
\end{pmatrix} . 
    \nonumber
\end{align}

\noindent
We have
\begin{align}
\nabla_i \vec{Y}= \vec{X}\nabla_i H+H\nabla_i \vec{X}+\nabla_i \vec{N}.                \nonumber
\end{align}
Observe that
\begin{align}H\nabla_i\vec{\Phi}+\nabla_i\vec{n}&=H\nabla^k\vec{\Phi}g_{ik}-h_{ik}\nabla^k\vec{\Phi}\nonumber\\
&=-(h_0)\nabla^k\vec{\Phi}
\end{align}
where $h_0$ is the trace free second fundamental form.
Similarly, 
\begin{align}
H(\vec{\Phi}\cdot\nabla_i\vec{\Phi})+ (\vec{\Phi}\cdot\nabla_i\vec{n})= H(\vec{\Phi}\cdot\nabla^k\vec{\Phi})g_{ik}-h_{ik}(\vec{\Phi}\cdot\nabla^k\vec{\Phi})  =-(h_0)_{ij}(\vec{\Phi}\cdot\nabla^j\vec{\Phi}) .\nonumber
\end{align}
\noindent
Thus we have found that
\begin{align}
\nabla_i \vec{Y}= \vec{X}\nabla_i H+ (h_0)_{ij}\vec{Z}^j \quad \mbox{where}\,\,\, \vec{Z}^j=-\begin{pmatrix}   \nabla^j\vec{\Phi}\\ (\vec{\Phi}\cdot\nabla^j\vec{\Phi})\\(\vec{\Phi}\cdot\nabla^j\vec{\Phi})          \end{pmatrix}=-\nabla^j \vec{X}. \nonumber
\end{align}
We see that
\begin{align}
\langle  \vec{X},\vec{X}   \rangle_{\mathbb{R}^{4,1}}&=|\vec{\Phi}|^2 +\frac{1}{4}(|\vec{\Phi}|^2-1)^2-\frac{1}{4}(|\vec{\Phi}|^2+1)^2  \nonumber
\\&=0 . \nonumber
\end{align}
Similarly
\begin{align}
-\langle  \vec{X}, \vec{Z}  \rangle_{\mathbb{R}^{4,1}}  &=\vec{\Phi}\cdot\nabla^j\vec{\Phi} +\frac{1}{2}(\vec{\Phi}\cdot\nabla^j\vec{\Phi}) (|\vec{\Phi}|^2-1) - \frac{1}{2}(\vec{\Phi}\cdot\nabla^j\vec{\Phi}) (|\vec{\Phi}|^2+1)  \nonumber
\\&=0.   \nonumber
\end{align}
Also
\begin{align}
\langle \vec{Z}^k, \vec{Z}^m \rangle_{\mathbb{R}^{4,1}}= (\nabla^k\vec{\Phi}\cdot\nabla^m\vec{\Phi}) +(\vec{\Phi}\cdot\nabla^k\vec{\Phi})(\vec{\Phi}\cdot\nabla^m\vec{\Phi})-(\vec{\Phi}\cdot\nabla^k\vec{\Phi})(\vec{\Phi}\cdot\nabla^m\vec{\Phi})= g^{km}            \nonumber
\end{align}

\noindent
Thus
\begin{align}
\left\langle  \nabla_i \vec{Y}, \nabla_j \vec{Y}  \right\rangle_{\mathbb{R}^{4,1}} & = \left\langle    \vec{X}\nabla_i H+(h_0)_{ik} \vec{Z}^k\,\,, \,\,\vec{X}\nabla_j H+(h_0)_{jm} \vec{Z}^m       \right\rangle_{\mathbb{R}^{4,1}}     \nonumber
\\&   =(h_0)_{ik} (h_0)_{jm} g^{km}=(h_0)_{ik}(h_0)_j^k\overset{\eqref{des23}}=\frac{1}{2} |h_0|^2g_{ij}. \nonumber
\end{align}
Thus we have
$$\left\langle  \nabla_i \vec{Y}, \nabla^i \vec{Y}  \right\rangle_{\mathbb{R}^{4,1}}=|h_0|^2.$$
We have just shown that the Willmore energy correspond to the Dirichlet energy of $Y$ in the Minkowski space, that is $\lVert d\vec{Y} \rVert_{\mathbb{R}^{4,1}}^2=\int_{\Sigma}|h_0|^2$.
How about four dimensions? Does the energy \eqref{jkas} correspond to some energy involving $\vec{Y}$? A reasonable guess is to consider the biharmonic energy of $\vec{Y}$.\\
If we write $\Delta \vec{Y}:= \nabla^i\nabla_i \vec{Y}$, then
\begin{align}
\Delta \vec{Y}= \nabla^i \vec{X}\nabla_i H + \vec{X} \Delta H+ \nabla^i(h_0)_{ij} \vec{Z}^j + (h_0)_{ij}\nabla^i \vec{Z}^j       .     \nonumber
\end{align}

\noindent
We have
\begin{align}
&\langle  \nabla^i \vec{X}, \nabla^k \vec{X}  \rangle_{\mathbb{R}^{6,1}}=\langle  -\vec{Z}^i, -\vec{Z}^k \rangle_{\mathbb{R}^{6,1}} =g^{ik}\quad,  \nonumber\hspace{1cm} \langle \nabla^i \vec{X}, \vec{X}  \rangle_{\mathbb{R}^{6,1}}= -\langle \vec{Z}^i, \vec{X} \rangle_{\mathbb{R}^{6,1}}=0   
\\ &    \langle  \nabla^i \vec{X}, \vec{Z}^m  \rangle_{\mathbb{R}^{6,1}} =-\langle \vec{Z}^i, \vec{Z}^m \rangle_{\mathbb{R}^{6,1}}     =-g^{im} \quad,               \hspace{1cm}   \langle \nabla^i \vec{X}, \nabla^k \vec{Z}^m   \rangle_{\mathbb{R}^{6,1}}=-\langle \vec{Z}^i, \nabla^k \vec{Z}^m  \rangle =0\nonumber
\\ &\langle \vec{Z}^j, \nabla^k \vec{Z}^m \rangle_{\mathbb{R}^{6,1}}= \nabla^j\vec{\Phi}\cdot\vec{h}^{km}=0\quad\,  ,\nonumber  \hspace{1.5cm} \langle \nabla^i \vec{Z}^j, \nabla^k \vec{Z}^m  \rangle_{\mathbb{R}^{6,1}}= \vec{h}^{ij}\cdot \vec{h}^{km}\nonumber
\end{align}
and
\begin{align}
& \langle  \vec{X},\nabla^k \vec{Z}^m  \rangle_{\mathbb{R}^{6,1}}  \nonumber
\\&=-\left[ (\vec{\Phi}\cdot\vec{h}^{km})+\frac{1}{2}(|\vec{\Phi}|^2-1)(g^{km}+\vec{\Phi}\cdot\vec{h}^{km})  
-\frac{1}{2}(|\vec{\Phi}|^2+1)(g^{km}+\vec{\Phi}\cdot\vec{h}^{km})  \right] \nonumber
 =g^{km} \nonumber
\end{align}
Now,
\begin{align}
||\Delta \vec{Y}||^2 &=\langle \Delta \vec{Y}, \Delta \vec{Y}   \rangle_{\mathbb{R}^{6,1}}          \nonumber
\\&= \left\langle \nabla^i \vec{X}\nabla_i H + \vec{X} \Delta H+ \nabla^i((h_0)_{ij} \vec{Z}^j)\,\, ,\,\,\,\nabla^i \vec{X}\nabla_i H + \vec{X} \Delta H+ \nabla^i((h_0)_{ij} \vec{Z}^j) \right\rangle_{\mathbb{R}^{6,1}}        \nonumber
\\&=\nabla_i H\nabla_k g^{ik}+\nabla_i H\nabla^k(h_0)_{km} g^{im} + \Delta H (h_0)_{km} g^{km}+ \nabla^i (h_0)_{ij} \nabla_k H g^{kj}   \nonumber
\\&\quad - \nabla ^i (h_0)_{ij} \nabla^k (h_0)_{km} g^{jm}+ (h_0)_{ij} \Delta H g^{ij} +(h_0)_{ij}(h_0)_{km}(\vec{h}^{ij}\cdot\vec{h}^{km})                      \nonumber
\\&=\nabla_i H\nabla^i H+\nabla_i H\nabla^k(h_0)^i_k +\Delta H (h_0)_{i}^{i}+\nabla^i(h_0)_{ij}\nabla ^j H    \nonumber
\\&\quad-\nabla^i(h_0)_{ij}\nabla^k(h_0)^j_k + (h_0)^i_i\Delta H+(h_0)_{ij}(h_0)_{km}(\vec{h}^{ij}\cdot\vec{h}^{km})                             \nonumber
\end{align}

\noindent
Noting that $(h_0)_i^i=0$ and using Codazzi equation, we find that 
\begin{align}
\nabla^i(h_0)_{ij}\nabla^j H=3\nabla_j H\nabla^j H \quad \mbox{and}\quad\nabla^i(h_0)_{ij}\nabla^k(h_0)_k^j=9 \nabla_j H\nabla^j H   .  \nonumber
\end{align}

Hence,
\begin{align}
||\Delta \vec{Y}||_{\mathbb{R}^{6,1}}^2&=-2\nabla_i H\nabla^i H + (h_0)_{ij}(h_0)_{km} (\vec{h}^{ij}\cdot\vec{h}^{km})     \nonumber
\\&= -2|\nabla H|^2 + (h_0)_{ij}(h_0)_{km} h^{ij}h^{km}         \nonumber
\\&= -2|\nabla H|^2+ (h_0)_{ij}(h_0)_{km} \left((h_0)^{ij}+Hg^{ij} \right)\left((h_0)^{km} +H g^{km}\right)\nonumber
\\&= -2|\nabla H|^2+ |h_0|^4.                  \nonumber
\end{align}
Clearly the biharmonic energy of $\vec{Y}$ is not the energy \eqref{jkas}. However, it correctly reproduces the leading order term of \eqref{jkas}.


\addcontentsline{toc}{chapter}{Bibliography}


\begin{thebibliography}{50}
\bibitem{ada} D.\ R.\ Adams, A note on Riesz potentials, {\it Duke Math. J.} {\bf 42} 1975.

\bibitem{spyridon} S.\ Alexakis, R.\ Mazzeo, Renormalized area and properly embedded minimal surfaces in hyperbolic 3-manifolds, {\it Comm. Math. Phys.} {\bf 297} (3): 621-651, 2010. 

\bibitem{acf} L.\ Andersson, P.\ T.\ Chr\'usciel, H.\ Friedrich, On the regularity of solutions to the Yamabe equation and the existence of smooth hyperboloidal initial data for Einstein’s field equations, {\it Comm.
Math. Phys.} {\bf 149}: 587–612, 1992.



\bibitem{auscher} P.\ Auscher, M.\ Qafsaoui, Observations on $W^{1,p}$ estimates for divergence elliptic equations with $VMO$ coefficients, {\it Bollettino U. M. I.}
(8) {\bf 5-B}: 487-509, 2002.

\bibitem{yber2}   Y. Bernard, T. Rivi\`{e}re, Singularity removability at branch points for Willmore surfaces, {\it Pacific J. Math.} {\bf265}: 257–311, 2013.

\bibitem{ybr}Y. Bernard,  T. Rivi\`{e}re, Energy quantization for Willmore surfaces and applications,
 Annals of Mathematics, {\bf 180}: 87-136, 2014.

\bibitem{noetherpaper}  Y.\ Bernard, Noether's theorem and the Willmore functional, {\it Adv. Calc. Var.}, {\bf 9}(3), 2015. 

\bibitem{yber1}  Y. Bernard,  Analysis of constrained Willmore surfaces, {\it  Comm. Partial Differential Equations}, {\bf 41}(10): 1513-1552, 2016.

\bibitem{palais} Y.\ Bernard, T.\ Rivi\`ere, Local Palais-Smale sequences for the Willmore functional, {\it Communications in Analysis and Geometry,} {\bf 19}(3): 563-599, 2011. 

\bibitem{br} Y. Bernard, G. Wheeler, V. M. Wheeler, Analysis of the inhomogeneous Willmore equation, Annales de l’Institut Henri Poincar\'e (To appear), Preprint: arxiv:[1811.08546v1], 2018.

\bibitem{bla} W.\ Blaschke, Vorlesungen \"uber Differential geometrie und geometrische Grundlagen von Einsteins Relativitätstheorie. III:
{\it Differential geometrie der Kreise und Kugeln, Grundlehren Math. Wiss. Einzeldarst}. {\bf 29}, Springer, Berlin, 1929.

\bibitem{boh} C. Bohle, G. P., Peters, U. Pinkall, Constrained Willmore surfaces, {\it Calc. Var. Partial Differential
Equations} {\bf32}: 263–277, 2008.

\bibitem{bry} R.\ L.\ Bryant, A duality theorem for Willmore surfaces, {\it J. Diff. Geo. } {\bf 20}: 23-53, 1984.

\bibitem{chang} S.\ Y.\ A.\ Chang, M.\ J. Gursky, P.\ C.\ Yang,  An equation of Monge-Amp\`ere type in
 conformal geometry, and four-manifolds
 of positive Ricci curvature, {\it Annals of Mathematics,} {\bf 155}: 709-787, 2002.

\bibitem{ceoy} S.\ Y.\ A.\ Chang,  M.\ Eastwood,  B.\ Ørsted, P.\ C.\ Yang, What is $Q$-curvature?, {\it Acta Appl Math}, {\bf 102}: 119–125, 2008. 

\bibitem{chenny} B.\ Y.\ Chen, An invariant of conformal mappings. {\it Proc. Amer. Math. Soc.} {\bf 40}: 563-564, 1973.

\bibitem{chenny1} B.\ Y.\ Chen, On a variational problem on hypersurfaces, {\it J. Lond. Math. Soc.} {\bf 2}: 321-325, 1973. 

\bibitem{chen} B.\ L.\ Chen, S.\ H.\ Tang, X.\ P.\ Zhu, Complete Classification of Compact Four-Manifolds with Positive Isotropic Curvature, {\it J. Differential Geom.} {\bf 91}(1): 41-80, 2012.


\bibitem{cdd} U. Clarenz, U. Diewald, G. Dziuk, M. Rumpf, R. Rusu,  A finite element method for
surface restoration with smooth boundary conditions. {\it Comput. Aided Geom. Design}, {\bf 21}(5): 427–445, 2004.


\bibitem{cruz} D.\ Cruz-Uribe, K.\ Moen, S.\ Rodney, Regularity results for weak solutions of elliptic PDEs below the natural exponent, {\it Annali di Matematica} {\bf  195}: 725–740, 2016.

\bibitem{curca} E.\, Curca, Inversion of the divergence and Hodge systems, Institut Camille Jordan, Lyon, France, Ph.D Thesis, 2020.

\bibitem{faz} G. Di Fazi, $L^p$ estimates for divergence form elliptic equations with discontinuous coefficients, {\it Boll. Un. Mat. Ital.} A (7), {\bf 10} (2): 409-420, 1996.

\bibitem{djadii} Z.\ Djadli, A.\ Malchiodi,  A fourth order uniformisation theorem on some four manifolds with large total $Q$-curvature,   {\it C. R. Acad. Sci. Paris,} Ser. I {\bf 340}, 2005.

\bibitem{esch} J.-H.\ Eschenburg, Willmore surfaces and M\"obius Geometry, informal notes on discussions between J. Eschenburg, U. Pinkall and K. Voss, Preprint, \href{https://myweb.rz.uni-augsburg.de/~eschenbu/willmore.pdf}{https://myweb.rz.uni-augsburg.de/~eschenbu/willmore.pdf},  1988.

\bibitem{fed} H.\ Federer, Geometric measure theory,   {\it Die Grundlehren der math. Wissensch.} {\bf 153}, Springer, 1969.

\bibitem{friesecke} G.\ Friesecke, R.\ D.\ James, S.\ M\"uller, A theorem on geometric rigidity and the derivation of nonlinear plate theory from three-dimensional elasticity, {\it Comm. Pure Appl. Math.} {\bf55} (11), 1461-1506, 2002.

\bibitem{ger} S.\ Germain, Recherches sur la théorie des surfaces élastiques, Courcier, Paris, 1821.

\bibitem{gia} M.\ Giaquinta, L.\ Martinazzi, An Introduction to the Regularity Theory for Elliptic Systems, Harmonic Maps and Minimal Graphs, {\it Scuola Normale Superiore Pisa}, 2012.

\bibitem{gil} D.\ Gilbarg, N.\ S.\ Trudinger, Elliptic partial differential equations of second order, {\it   Springer-Verlag  }, Berlin, 2001.

\bibitem{GGHW} M.\ Glaros, A.\ R.\ Gover, M.\ Halbasch, A.\ Waldron, Variational calculus for hypersurface functionals: Singular Yamabe problem Willmore energies {\it J. Geom. Phys.} {\bf 138}: 168-193, 2019.

\bibitem{GoW2} A.\ R.\ Gover, A.\ Waldron, Generalising the Willmore equation: submanifold conformal invariants from a boundary Yamabe problem, arXiv:1407.6742.


\bibitem{govie} A.\ R.\ Gover, A.\ Waldron, A calculus of conformal hypersurfaces and a new higher Willmore energy functionals, {\it Adv. Geom.} {\bf 20}(1): 29-60, 2020.

\bibitem{GoW3} A.\ R.\ Gover, A.\ Waldron, Conformal hypersurface geometry via a boundary Loewner–Nirenberg–Yamabe problem, {\it Comm. Anal. Geom.} {\bf 29}(4): 779-836, 2021.

\bibitem{gragra} C.\ R.\ Graham, Volume renormalization for singular Yamabe metrics, {\it Proc.
Amer. Math. Soc.} {\bf145}: 1781–1792, 2017.

\bibitem{robingraham} C.\ R.\ Graham and N. Reichert, Higher-dimensional Willmore energies via minimal submanifold asymptotics,	 {\it Asian Journal of Mathematics} {\bf 24}(4): 571-610, 2020. 

\bibitem {gru} M.\ Gr\"{u}ter, K.\ Widman, The Green function for uniformly elliptic equations, {\it Manusc. Math.} {\bf 37}, 1982.

\bibitem{jguv} J.\ Guven, Conformally invariant bending energy for hypersurfaces, {\it J. Phys. A: Math. Gen.} {\bf 38}: 7943–7955, 2005.

\bibitem{swhawking} S.\ W.\ Hawking, Gravitational radiation in an expanding universe, {\it J. Math. Phys.}  {\bf 9}, 598-604, 1968.

\bibitem{helein} F.\ H\'elein, Harmonic Maps, Conservation Laws, and Moving Frames, {\it Cambridge
Tracts in Mathematics,} {\bf 150}, Cambridge University Press, 2002.

\bibitem{helfrichw} W.\ Helfrich, Elastic properties of lipid bilayers: Theory and possible experiments, {\it Z.\ Naturforsch.} {\bf 28}, 693-703, 1973.

\bibitem{huiskenil} G.\ Huisken, T.\ Ilmanen, The inverse mean curvature flow and the Riemannian Penrose inequality, {\it J.\ Differential Geom. }{\bf 59}, 353-437, 2001. 


\bibitem{hunt} R.\ A.\ Hunt, On $L(p, q)$ spaces, {\it L’enseignement Math\'ematique} {\bf XII}
, 249-276, 1996.


\bibitem{jost} J.\ Jost, Riemannian geometry and geometric analysis,{\it Universitext.
Springer, Heidelberg}, sixth edition, 2011.

\bibitem{katzman} D.\ Katzman, J.\ Rubinstein, Method for the design of multifocal optical elements, U.S. Patent no. US006302540B1, October, 2001.

\bibitem{kobayashi1} S.\ Kobayashi, K.\ Nomizu, Foundations of Differential Geometry, New York, NY
{\it Interscience Publishers}, Vol. I, 1963.

\bibitem{kobayashi2} S.\ Kobayashi,  K.\ Nomizu, Foundations of Differential Geometry, New York, NY
{\it Interscience Publishers}, Vol. II, 1969.

\bibitem{kus} R.\ Kusner, Comparison surfaces for the Willmore problem, {\it Pacific Journal of Maths.} {\bf 138}: 317-345, 1989.

\bibitem{stru}  E. Kuwert,  R. Sch\"{a}tzle , Removability of point singularities of Willmore surfaces. {\it Ann. of Math.}
{\bf160}(1): 315–357, 2004.

\bibitem{struku} E. Kuwert,  R. Sch\"{a}tzle, Branch points of Willmore surfaces, {\it Duke Mathematical Journal}, {\bf 138}(2): 179-201, 2007.

\bibitem {lipeter} P.\ Li, S.\ T.\ Yau,  A new conformal invariant and its applications to the Willmore conjecture and the first eigenvalue of compact surfaces, {\it Invent. Math.} {\bf 69}: 269-291, 1982.

\bibitem{loewner} C.\ Loewner, L.\ Nirenberg, Partial differential equations invariant under conformal or projective
transformations, in Contributions to analysis (a collection of papers dedicated to Lipman Bers),
245–272, {\it Academic Press, New York}, 1974.

\bibitem{fcmarqu} F.\ C.\ Marques, A.\ Neves, Min-Max theory and
the Willmore conjecture, {\it Ann. of Math.} {\bf 179}(2): 683-782, 2014.  

\bibitem{maz} V.\ Maz'ya, ``Bourgain-Brezis type inequality with explicit constants” in Interpolation Theory and Applications, {\it Contemp.
Math.} 445, AMS, 2007.

\bibitem{mir} C.\ Miranda, Sulle equazioni ellittiche del secondo ordine di tipo non variazionale, a coefficienti discontinui, Ann. Mat. Pura Appl., 63: 353-386, 1963.

\bibitem{mondino} A.\ Mondino, H.\ T.\ Nguyen, Global conformal invariants of submanifolds, {\it Ann. Inst. Fourier, Grenoble}, {\bf 68}(6): 2663-2695, 2018.

\bibitem{palmwi} B.\ Palmer, The conformal Gauss map and the stability of Willmore surfaces, {\it Ann. Global Anal. Geom. } {\bf 9}: 305-317, 1991.

\bibitem{palmwi1} B.\ Palmer, Second variation formulas for Willmore surfaces, {\it The Problem of Plateau}, ed. T.\ M.\ Rassias. World Scientific Press, Singapore, 221-228, 1993.

\bibitem{pinkyy} U.\ Pinkall,  Inequalities of Willmore type for submanifolds, {\it Math. Zeits.} {\bf 193}: 241-246, 1986.

\bibitem{polyakov} A.\ M.\ Polyakov, Fine structure of strings, {\it Nucl. Phys. B} {\bf 268} (2): 406-412, 1986.

\bibitem {rigoli} M.\ Rigoli, I.\ M.\ C.\ Salavessa, Willmore submanifolds of the M\"obius space and a Bernstein-type theorem, {\it Manuscripta Math.} {\bf 81} (1-2): 203-222, 1993. 

\bibitem {riv} T.\ Rivi\`ere, Analysis aspects of Willmore surfaces, {\it Invent. Math.} {\bf 174} (1): 1-45, 2008. 

\bibitem{hubert} T.\ Rivi\`ere, Weak immersions of surfaces with $L^2$-bounded second fundamental form, In Geometric Analysis, Edited by H.\ L.\ Bray, G.\ Galloway, R.\ Mazzeo, N.\ Sesum, {\it IAS/Park City Mathematics Series} {\bf 22}:  301-383, 2015.

\bibitem{sc} R.  Sch\"{a}tzle,  Conformally constrained Willmore immersions, {\it Adv. Calc. Var.} {\bf6}: 375–390, 2013.

\bibitem{simmmy} L.\ Simon, Existence of Willmore surfaces. Miniconference, Canberra on geometry and partial differential equations. {\it Proc. Cent. Math. Appl. Australian National University} {\bf 10}: 187-216, 1986.

\bibitem{tho} G.\ Thomsen, \"Uber konforme Geometrie I: Grundlagen der konformen Flachentheorie, {\it Abh. Math. Semin. Univ. Hambg.} {\bf 3} (1): 31–56, 1924.

\bibitem{wei} J.\ Weiner, On a problem of Chen, Willmore, et al., {\it Indiana Univ. Math. J.} {\bf 27} (1): 19–35, 1978.

\bibitem{white} J.\ H.\ White, A global invarint of conformal mappings in space. {\it Proc. Amer. Math. Soc.} {\bf 38}: 162-164, 1973.

\bibitem{willmoretom} T.\ J.\ Willmore, Note on embedded surfaces, {\it Ann. Stiint. Univ. “Al. I. Cuza” Iaşi Sect. I a Mat. (N.S.) }{\bf 11B}: 493–496, 1965.

\bibitem{willmoretj} T.\ J.\ Willmore, Riemannian Geometry, New York, NY {\it Oxford University Press Inc.}, 1993.

\bibitem {zhang} Y.\ Zhang, Graham-Witten's conformal invariant for closed four dimensional submanifolds, {\it J. Math. Study} {\bf 54}(2): 200-226, 2021.

\end{thebibliography}
\end{document}